\documentclass[reqno]{amsart}
\PassOptionsToPackage{linktocpage}{hyperref}
\usepackage{stackengine}
\stackMath
\newcommand\tsup[2][2]{%
 \def\useanchorwidth{T}%
  \ifnum#1>1%
    \stackon[-.5pt]{\tsup[\numexpr#1-1\relax]{#2}}{\scriptscriptstyle\sim}%
  \else%
    \stackon[.5pt]{#2}{\scriptscriptstyle\sim}%
  \fi%
}
 \theoremstyle{definition}
 
 \theoremstyle{remark}

 \numberwithin{equation}{section}

\usepackage{verbatim}
\usepackage{xcolor}
\usepackage{graphicx}
\usepackage{amssymb, amsmath, amsthm}
\usepackage{amsfonts}
\usepackage{amssymb}
\usepackage{float}
\usepackage{hyperref}
\usepackage{mathrsfs}
\usepackage{enumitem}

\newtheorem{theorem}{Theorem}

\newtheorem{corollary}[theorem]{Corollary}

\newtheorem{definition}[theorem]{Definition}

\newtheorem{lemma}[theorem]{Lemma}

\newtheorem{proposition}[theorem]{Proposition}
\newtheorem{remark}[theorem]{Remark}

\newcommand{\dis}{\displaystyle}
\newcommand{\thetalambda}{\theta^{(\lambda)}}
\newcommand{\thetakappa}{\theta^{(\kappa)}}

\newcommand{\bK}{\bar{K}_{\infty}}
\newcommand{\Dh}{\delta_{h}}
\newcommand{\dt}{\partial_t}

\newcommand{\dimf}{\text{\rm dim}_{f}}
\newcommand{\A}{\mathcal{G}^{0}}

\newcommand{\Gg}{\mathcal{G}^{\lambda}}
\newcommand{\Ggo}{\mathcal{G}^{\lambda_{0}}}

\newcommand{\Lh}{\mathcal{L}_{h}}
\newcommand{\K}{K_{\infty}}

\newcommand{\N}{\mathbb N}
\newcommand{\Tr}{\text{\rm Tr}}
\newcommand{\T}{\mathbb T}
\newcommand{\Z}{\mathbb Z}

\newcommand{\pin}{\pi^{\lambda}}

\newcommand{\pio}{\pi^{0}}

\newcommand{\U}{\mathbf{U}}
\newcommand{\Uc}{\mathbf{K}}

\newcommand{\R}{\mathbb R}

\newcommand{\dist}{\text{\rm dist}}

\newcommand{\intox}{\int_{\Omega}}

\numberwithin{equation}{section}
\numberwithin{theorem}{section}

\numberwithin{figure}{section}
\usepackage{scalerel,stackengine}
\stackMath
\newcommand\reallywidehat[1]{%
\savestack{\tmpbox}{\stretchto{%
  \scaleto{%
    \scalerel*[\widthof{\ensuremath{#1}}]{\kern.1pt\mathchar"0362\kern.1pt}%
    {\rule{0ex}{\textheight}}
  }{\textheight}%
}{2.4ex}}%
\stackon[-6.9pt]{#1}{\tmpbox}%
}
\begin{document}

%
%
%
%
%
%
%
%
%

\title[viscous forced active scalar]
 {Long time dynamics and anomalous dissipation of energy in viscous forced active scalar equations}

\author{Susan Friedlander}

\address{Department of Mathematics\\
University of Southern California}

\email{susanfri@usc.edu}

\author{Anthony Suen}

\address{Department of Mathematics and Information Technology\\
The Education University of Hong Kong}

\email{acksuen@eduhk.hk}

\date{}

\keywords{active scalar equations, vanishing viscosity limit, statistical solutions, anomalous dissipation, global attractors}

\subjclass{76D03, 35Q35, 76W05}

\begin{abstract}
We study an abstract family of advection-diffusion equations within the framework of the fractional Laplacian. The system involves two independent diffusion parameters: one introduced via a damping operator acting on the scalar unknown and the other as the coefficient of the fractional Laplacian. We establish existence and convergence results in specific parameter regimes and limits. In particular, we demonstrate the absence of anomalous energy dissipation for long-time averaged solutions. Moreover, we investigate the long time dynamics and prove the existence of a unique global attractor. These results are then applied to two specific classes of active scalar equations in geophysical fluid dynamics, namely the surface quasigeostrophic equation and the magnetogeostrophic equation.
\end{abstract}

\maketitle
\tableofcontents
\section{Introduction}\label{introduction}

Active scalar equations are ubiquitous in nature, science and engineering. They are of great importance in practical applications ranging from fluid mechanics to atmospheric science, oceanography and geophysics. Various active scalar equations, such as drift-diffusion equations, surface quasi-geostrophic (SQG) equation and magnetogeostrophic (MG) equation, have been widely studied by different teams of researchers, yet due to the nonlinear and nonlocal nature of these equations, many fundamental issues in the rigorous mathematical analysis remain unsolved and highly challenging. In this present paper we study an abstract class of active scalar equations in $\Omega\times(0,\infty)$ with $\Omega=\mathbb{T}^d=[-\pi,\pi]^d$ and $d\in\{2,3\}$ of the following form
\begin{align}
\label{abstract active scalar eqn} \left\{ \begin{array}{l}
\partial_t\theta+u\cdot\nabla\theta+\lambda\mathcal{D}\theta+\kappa\Lambda^{\gamma}\theta=S, \\
u=u_j[\theta]=\partial_{x_i} T_{ij}[\theta],\theta(x,0)=\theta_0(x)
\end{array}\right.
\end{align}
where $\lambda\ge0$, $\kappa>0$, $\gamma\in(0,2]$ and $\Lambda:=\sqrt{-\Delta}$. Here $\theta_0$ is the initial datum and $S=S(x)$ is a given time-independent function that represents the forcing of the system. The damping operator $\mathcal{D}$ is given by
\begin{align}\label{def of damping}
\mathcal{D}=\Lambda+\mathbf{1}
\end{align}
with $\mathbf{1}$ being the identity map. We assume that
\begin{equation}\label{zero mean assumption on data and forcing}
\int_{\Omega}\theta_0(x)dx=\int_{\Omega}S(x)=0,
\end{equation}
and throughout this paper, we consider mean-zero (zero average) solutions. $T_{ij}$ is an operator which satisfies:
\begin{enumerate}
\item[A1.]  $\partial_{x_i}\partial_{x_j} T_{ij}f=0$ for any smooth functions $f$.
\item[A2.] $T_{ij}:L^\infty(\Omega)\rightarrow BMO(\Omega)$ is linear and bounded for all $i,j$.
\item[A3.] There exists a constant $C>0$ such that for all $1\le i,j\le 3$, $$|\widehat{T_{ij}}(k)|\le C|k|^{-1}, \qquad\forall k\neq0.$$
\item[A4.] For each $1\le i,j\le 3$, $\widehat{T_{ij}}(k)=0$ for $|k|=0$. 
\end{enumerate}
\begin{remark}
There are several remarks for the assumptions A1 to A4 as given above:
\begin{itemize}
\item A1 implies that $u=u[\theta]$ is divergence-free. Hence together with \eqref{zero mean assumption on data and forcing}, it immediately implies that $\theta$ obeys
\begin{equation}\label{zero mean assumption}
\int_{\Omega}\theta(x,t)dx=0,\qquad\forall t\ge0. 
\end{equation}
\item A2 implies that the drift velocity $u$ lies in the space $L^{\infty}_t BMO^{-1}_x$.
\item A3 implies that $u_j[\cdot]=\partial_{x_i} T_{ij}[\cdot]$ is a zero-order bounded operator in the sense that for any $s\ge0$ and $f\in L^p$ with $p>1$,
\begin{align}\label{zero order bounded for u when nu>0}
\|\Lambda^s u[f]\|_{L^p}\le C\|\Lambda^{s}f\|_{L^p},
\end{align}
where $C$ is a positive constant which depends on $p$ and $\Omega$ only.
\item A4 implies that $u$ has zero mean, which is consistent with $\theta$ also having zero mean.
\end{itemize}
\end{remark}

The motivation for addressing such a class of abstract family of active scalar equations mainly comes from several different physical systems, all of them take the form \eqref{abstract active scalar eqn} under particular parameter regimes. The critical surface quasi-geostrophic (SQG) equation is a two-dimensional example where the damping parameter $\lambda=0$, the thermal diffusion $\kappa>0$, the fractional power $\gamma=1$, and the relation between the velocity $u$ and the scalar field $\theta$ is given by the perpendicular Riesz transform \cite{CMT94}. It is worth noting that this is a singular integral operator of degree zero. Global well-posedness for the critical SQG equation was first proved in Kiselev, Nazarov and Volberg \cite{KFV07} and Caffarelli and Vasseur \cite{CV10}, and recently, there has been a considerable literature on the long time dynamics and regularity of the SQG equation including \cite{CIN25, CZ24, CLO25, JZ24, ZL25} and references therein.

Another model with related, but distinct, features is the three-dimensional magnetogeostrophic equation (MG) which was proposed by Moffatt and Loper as a model for magnetogeostrophic turbulence \cite{FV11a, FRV14, M08, ML94}. The MG equation is an example where the damping parameter $\lambda=0$, the thermal diffusion $\kappa\ge0$ and the fractional power $\gamma=2$. The mathematical properties of the MG equation have been determined in various settings of the diffusive parameters via an analysis of the Fourier multiplier symbol relating the velocity $u$ and the scalar field $\theta$. In a series of papers \cite{FS15, FS18, FS19}, the authors proved global existence of classical solutions to the forced MG equations and obtained convergences of solutions as various diffusive parameters vanish. Moreover, it was shown in \cite{FS21} that the MG equations possess global attractors with various interesting properties; also refer to the survey articles \cite{FRV14, FS23} for more details. 

The purpose of our current work is to investigate properties of the abstract system \eqref{abstract active scalar eqn} under assumptions A1--A4, with emphasis on the long time dynamics and convergence results in the context of the fractional Laplacian $\kappa\Lambda^{\gamma}$ and the damping parameter $\lambda$, and then we apply these results to physical models as mentioned above. Specifically, we examine the limiting behaviour of solutions as $\kappa$ or $\lambda$ vanishes, and two main studying directions are illustrated as follows:

\noindent{1.} With the presence of forcing, the long time behaviour of \eqref{abstract active scalar eqn} can be related to the study of {\it global attractors} or dynamical systems. There is a considerably rich literature on the topic of global attractors; see for example \cite{CCP12, CD14, CF85, HOR15, Ra02, R13, RP03} and the references therein. For the dissipative SQG equation, the global attractor has been addressed previously for the subcritical regime \cite{Ju05, WY13, Ze18} as well as the critical case \cite{CTV14, CZV16}. In the context of MG equation, the existence of compact global attractor in $L^2$ was proved in \cite{FS18}, the convergence and upper upper semicontinuity of the global attractors were later obtained in \cite{FS21}. With the presence of damping term $\lambda u$, recently the authors in \cite{GST25} showed the existence of a robust family of exponential attractors for the scalar-valued convective Cahn–Hilliard/Allen–Cahn (CH-AC) equation, and yet there is no known result on global attractors for active scalar equations with the damping operator $\lambda\mathcal{D}$. In this current study, we aim to fill this research gap by investigating the existence and convergence properties of global attractors for the abstract active scalar equations \eqref{abstract active scalar eqn} under the influence of $\lambda\mathcal{D}$.

\noindent{2.} We also propose to address the {\it average behaviour} of the energy dissipation rate given by
\begin{align*}
\varepsilon^{\kappa}=\kappa\lim_{t\to\infty}\frac{1}{t}\int_0^t\int|\nabla\theta^{(\kappa)}|^2dxds,
\end{align*}
where $\theta^{(\kappa)}$ are the solutions to the general active scalar equations \eqref{abstract active scalar eqn}. The number $\varepsilon^{\kappa}$ plays an important role in turbulence theory, in the case when $\lim_{\kappa\to0}\varepsilon^{\kappa}>0$, it suggests that there is still {\it anomalous dissipation} that refers to the dissipation of energy even though the limit inviscid equation conserves energy. Rigorous studies have been conducted on the phenomenon of anomalous dissipation in fluid mechanics \cite{Fr95, DF02, DN18, CCS23, EL24}. For example, in the context of Navier-Stokes equations in bounded domains with no-slip Dirichlet boundary conditions, the vanishing of energy dissipation rate \footnote{Here we replace $\theta^{(\kappa)}$ by the velocity $u^{(\kappa)}$ in the definition of $\varepsilon^{\kappa}$ for Navier-Stokes equations.} $\varepsilon^{\kappa}$ as $\kappa\to0$ is equivalent to the convergence of solutions of the Navier-Stokes equations to the corresponding solution of the Euler equations \cite{Ka83, Wa01}. When the forcing is absent, it was shown in \cite{CR07} that the limit $\lim_{\kappa\to0}\varepsilon^{\kappa}$ vanishes even for Leray weak solutions. On the other hand, it can also be shown that $\varepsilon^{\kappa}$ becomes infinite as $\kappa\to0$ when the forcing terms are chosen to be some spatially periodic eigenfunctions of the Stokes operator \cite{CTV14b}. Regarding scalar equations, when the damping operator $\lambda\mathcal{D}$ is enforced, it was proved in Constantin, Tarfulea and Vicol \cite{CTV14b} that the anomalous dissipation of energy for long time averaged solutions of the SQG with time independent deterministic forcing is indeed absent. In a very recent work, Armstrong and Vicol \cite{AV25} considered a class of {\it passive scalar equation} without forcing in which they constructed divergence-free velocity vector field $u\in C^0_t C^\alpha_x\cap C^\alpha_t C^0_x$ with $\alpha<\frac{1}{3}$ that is periodic in space and time, such that corresponding scalar
advection-diffusion equation exhibits anomalous dissipation of scalar variance for arbitrary $H^1$ initial data. These results demonstrate the substantial influence of forcing and initial data on the limiting behaviour of $\varepsilon^{\kappa}$. 

The rest of the paper is organised as follows. In Section~\ref{main results}, we state our main results on the forced active scalar equation \eqref{abstract active scalar eqn}, while in Section~\ref{preliminaries} we introduce some notations and useful formulae for later analysis. In Section~\ref{absence of anomalous dissipation section} the parameter $\lambda$ is taken to be positive and we investigate the anomalous dissipation for \eqref{abstract active scalar eqn}. Following the similar idea in the proof in \cite{CTV14b} for critical SQG, we show that there is no anomalous dissipation by introducing stationary statistical solution for the viscous $(\kappa>0)$ and inviscid $(\kappa=0)$ cases. We note that the results in Section~\ref{absence of anomalous dissipation section} are valid in the examples of the fractional SQG equation and MG equation. In contrast, in Section~\ref{Existence and convergence of Hs-solutions section}, when the viscosity $\kappa$ is kept to be positive but $\lambda$ is allowed to vary, we obtain global-in-time existence and convergence results as $\lambda\to0$ under an extra smoothing assumption on $T_{ij}$ that is consistent with the case of MG equations. In Section~\ref{long time behaviour and attractors section} we further study the long time behaviour of solutions to the abstract system when $\lambda\ge0$ and $\kappa>0$. We prove that the solution map associated with \eqref{abstract active scalar eqn} possesses a unique global attractor $\Gg$ in $H^1$. With the restriction that $\lambda>0$, we prove that $\Gg$ always has finite fractal dimension. In Section~\ref{Applications to magneto-geostrophic equations section} we apply the results proved in Section~\ref{long time behaviour and attractors section} to the specific example of the MG equation. We prove convergence of $\Gg$ as $\lambda$ goes to zero to the global attractor $\A$ in $H^1$ for the MG equation whose existence was demonstrated in \cite{FS21}.

\section{Main results}\label{main results}

The main results that we prove for the forced problem \eqref{abstract active scalar eqn} are stated in the following theorems, and they will be proved in Section~\ref{absence of anomalous dissipation section}, Section~\ref{Existence and convergence of Hs-solutions section} and Section~\ref{long time behaviour and attractors section}. These results will then be applied to the magnetogeostrophic active scalar equation which will be discussed in Section~\ref{Applications to magneto-geostrophic equations section}.

\begin{theorem}[Absence of anomalous dissipation of energy as $\kappa\to0$ when $\lambda>0$]\label{absence of anomalous dissipation thm}
Let $\theta_0,S\in L^\infty$, and assume that $\lambda>0$ and $\gamma\in(0,2]$ be fixed. Under the assumptions A1, A2, A3 and A4, for each $\kappa\ge0$, there exists a unique solution $\theta=\theta^{(\kappa)}(x,t)$ to \eqref{abstract active scalar eqn} satisfying
\begin{align}\label{absence of anomalous dissipation identity}
\lim_{\kappa\to0}\left(\limsup_{t\to\infty}\int_{0}^{t}\|\nabla\theta^{(\kappa)}(\cdot,s)\|_{L^2}^2ds\right)=0.
\end{align}
\end{theorem}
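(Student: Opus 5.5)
We interpret the quantity in \eqref{absence of anomalous dissipation identity} as the long-time averaged dissipation rate, in analogy with $\varepsilon^{\kappa}$ from the introduction:
\[
\varepsilon^{\kappa}:=\kappa\limsup_{t\to\infty}\frac1t\int_0^t\|\Lambda^{\gamma/2}\theta^{(\kappa)}(s)\|_{L^2}^2\,ds,
\]
which is $\kappa$ times the long-time average of $\|\nabla\theta^{(\kappa)}\|_{L^2}^2$ when $\gamma=2$. The literal expression without the factor $\kappa$ and the average $\frac1t$ need not be finite, so this reading is an assumption of the plan. We then follow the strategy of \cite{CTV14b}: for each $\kappa$, build a stationary statistical solution from long-time averages; pass to the limit $\kappa\to0$; and compare energy balances.

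\textbf{Step 1: well-posedness and uniform bounds.} For $\lambda>0$ the term $\lambda\Lambda\theta$ is a critical dissipation, so the system behaves like critical SQG with an additional $\kappa\Lambda^\gamma$ term.
\begin{itemize}
\item The C\'ordoba--C\'ordoba pointwise inequality for $\Lambda$ and $\Lambda^\gamma$, together with the zeroth-order term $\lambda\theta$, gives the maximum principle
\[
\|\theta(t)\|_{L^\infty}\le e^{-\lambda t}\|\theta_0\|_{L^\infty}+\lambda^{-1}\|S\|_{L^\infty}.
\]
This bound is uniform in $\kappa\ge0$.
\item By A2, $u\in L^\infty_tBMO^{-1}_x$. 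De Giorgi / Caffarelli--Vasseur (or Kiselev--Nazarov--Volberg modulus of continuity) arguments then give H\"older continuity and, by bootstrapping, smoothness for $t>0$. This yields global existence and uniqueness for every $\kappa\ge0$, including $\kappa=0$.
\item The energy equality
\[
\tfrac12\tfrac{d}{dt}\|\theta\|_{L^2}^2+\lambda\|\Lambda^{1/2}\theta\|_{L^2}^2+\lambda\|\theta\|_{L^2}^2+\kappa\|\Lambda^{\gamma/2}\theta\|_{L^2}^2=\int S\theta
\]
holds for all $\kappa\ge0$, because the solutions are smooth for positive times. Averaging in time bounds the averaged $H^{1/2}$ norm uniformly in $\kappa$.
\end{itemize}

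\textbf{Step 2: statistical solutions.} Fix $\kappa>0$ and choose times $t_n\to\infty$ realising the limsup in $\varepsilon^\kappa$. Define the probability measures
\[
\mu_\kappa^{n}=\frac1{t_n}\int_0^{t_n}\delta_{\theta^{(\kappa)}(s)}\,ds .
\]
They are supported in a fixed $L^\infty$-ball, and their $H^{1/2}$ moments are bounded uniformly in $n$ and $\kappa$. By Krylov--Bogoliubov (Prokhorov in $L^2$, using the compact embedding $H^{1/2}\hookrightarrow L^2$), a subsequence converges to a stationary statistical solution $\mu_\kappa$. Averaging the energy equality and using lower semicontinuity gives
\[
\varepsilon^\kappa+\lambda\int\big(\|\Lambda^{1/2}\theta\|_{L^2}^2+\|\theta\|_{L^2}^2\big)\,d\mu_\kappa=\int\!\!\int S\theta\,dx\,d\mu_\kappa .
\]
Here the $\mu_\kappa$-integral of $\kappa\|\Lambda^{\gamma/2}\theta\|^2$ equals $\varepsilon^\kappa$, or at least bounds it from below, which is the direction needed. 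This identification uses the uniform $L^\infty$ bound and the smoothing from Step 1.

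\textbf{Step 3: the limit $\kappa\to0$.} The family $\{\mu_\kappa\}$ is tight in $L^2$, so along any sequence $\kappa_j\to0$ a subsequence converges weakly to a measure $\mu_0$. We show that $\mu_0$ is a stationary statistical solution of the inviscid damped equation. This means passing to the limit in $\int\langle u\cdot\nabla\theta,\Phi'(\theta)\rangle\,d\mu_\kappa$ for cylindrical test functionals $\Phi$, using:
\begin{itemize}
\item the weak form $\langle \theta u,\nabla\Phi'\rangle$;
\item the zeroth-order boundedness of $u[\cdot]$ from A3, which makes the bilinear term continuous on $L^\infty$-bounded, $L^2$-compact sets;
\item $\kappa\Lambda^\gamma\to0$ in the weak sense.
\end{itemize}

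\textbf{Step 4 (main obstacle): energy equality for $\mu_0$.} The key step is
\[
\lambda\int\big(\|\Lambda^{1/2}\theta\|^2+\|\theta\|^2\big)\,d\mu_0=\int\!\!\int S\theta\,d\mu_0 .
\]
We plan to prove it by showing that $\mu_0$ is supported on the global attractor of the $\kappa=0$ damped equation. That attractor consists of smooth (bounded in $C^\alpha$, hence in $H^1$) complete trajectories by Step 1, and along such trajectories the energy equality holds exactly. One then integrates against $\mu_0$, using invariance of $\mu_0$ under the solution map, which follows from uniqueness and continuous dependence for $\kappa=0$.

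\textbf{Conclusion.} With Step 4 in hand, weak lower semicontinuity of the $H^{1/2}$ norm and strong $L^2$ convergence of the linear term $\int S\theta$ give
\[
\limsup_{j}\varepsilon^{\kappa_j}\le\int\!\!\int S\theta\,d\mu_0-\lambda\int\big(\|\Lambda^{1/2}\theta\|^2+\|\theta\|^2\big)\,d\mu_0=0.
\]
Since the sequence $\kappa_j$ was arbitrary, this proves \eqref{absence of anomalous dissipation identity}.
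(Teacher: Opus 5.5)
\textbf{Overall architecture.} Your plan matches the paper's structure: long-time-average (Krylov--Bogoliubov) measures $\mu_\kappa$, a weak limit $\mu_0$ as $\kappa\to0$ that is a stationary statistical solution of the damped inviscid equation, and a comparison of energy balances. Your reading of the statement as $\kappa$ times the long-time average is also what the paper's proof actually uses.

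\textbf{The gap is Step 4.} Step 3 gives you only the weak (Foias-type) stationarity $\int\langle N(\theta),\Psi'(\theta)\rangle\,d\mu_0=0$. The measure $\mu_0$ is a limit of measures invariant under $\pi^{\kappa}(t)$, not under the $\kappa=0$ flow. Invariance of $\mu_0$ under the $\kappa=0$ solution map does not follow from uniqueness and continuous dependence for $\kappa=0$. You would need one of two things, and you address neither:
\begin{itemize}
\item a vanishing-viscosity convergence $\pi^{\kappa}(t)\to\pi^{0}(t)$ in $L^2$, uniform on the common compact support, together with $L^2$-continuity of the $\kappa=0$ flow there;
\item or a uniqueness theorem for the associated Liouville equation.
\end{itemize}

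\textbf{The regularity you invoke is not available.} With only $S\in L^\infty$ you cannot bootstrap to smoothness, and a $C^\alpha$ bound does not give an $H^1$ bound. A global attractor for the $\kappa=0$ damped equation is an extra construction you do not supply; the paper's attractors require $\kappa>0$ and A3'. So energy equality along individual trajectories would itself need a commutator argument. The attractor is superfluous anyway: invariance plus trajectorywise energy equality already gives the balance by Fubini.

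\textbf{The missing idea.} The energy balance can be proved directly at the level of the measure, using no dynamics of the $\kappa=0$ equation at all.
\begin{enumerate}
\item Test the stationarity identity with $\Psi_m(\theta)=\frac12\sum_{j\le m}\langle J_\varepsilon\theta,w_j\rangle^2$ and let $m\to\infty$. This gives $\int\langle J_\varepsilon\theta,J_\varepsilon N(\theta)\rangle\,d\mu_0=0$.
\item Since $\nabla\cdot u=0$, the nonlinear part equals $-\langle\nabla J_\varepsilon\theta,\rho_\varepsilon(u[\theta],\theta)\rangle$, where $\rho_\varepsilon=J_\varepsilon(u\theta)-J_\varepsilon u\,J_\varepsilon\theta$.
\item A Constantin--E--Titi type commutator estimate, using the zero-order boundedness of $u[\cdot]$, gives $|\langle\nabla J_\varepsilon\theta,\rho_\varepsilon\rangle|\le C\|\theta\|_{L^\infty}\|\theta\|_{H^{1/2}}^2$. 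It also gives convergence to $0$ for each fixed $\theta\in H^{1/2}\cap L^\infty$.
\item Dominated convergence then yields $\int\big(\lambda\langle\mathcal{D}\theta,\theta\rangle-\langle S,\theta\rangle\big)\,d\mu_0=0$.
\end{enumerate}
The only inputs are that $\mu_0$ is supported in a fixed $L^\infty$-ball and that $\int\|\theta\|_{H^{1/2}}^2\,d\mu_0<\infty$. You already obtained both in Steps 1--3, so this argument replaces your Step 4, and your upper-semicontinuity conclusion then goes through as written.
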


\begin{theorem}[$H^s$-convergence as $\lambda\rightarrow0$ when $\kappa>0$]\label{Hs convergence theorem}
Let $\kappa>0$ and $\gamma\in(0,2]$ be given in \eqref{abstract active scalar eqn}, and let $\theta_0,S\in C^\infty$ be the initial datum and forcing term respectively which satisfy \eqref{zero mean assumption on data and forcing}. Under the assumptions A1, A2, A3' and A4, if $\thetalambda$ and $\theta^{(0)}$ are smooth solutions to \eqref{abstract active scalar eqn} for $\lambda>0$ and $\lambda=0$ respectively, then 
\begin{align}\label{Hs convergence thm} 
\lim_{\lambda\rightarrow0}\|(\thetalambda-\theta^{(0)})(\cdot,t)\|_{H^s}=0,
\end{align}
for all $s\ge0$ and $t\ge0$.
\end{theorem}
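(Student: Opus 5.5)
The plan is to estimate the difference $\phi:=\thetalambda-\theta^{(0)}$ directly in $H^s$ on a fixed finite time interval $[0,T]$. I will first get $\lambda$-uniform $H^s$ bounds on $\thetalambda$, and then run a Grönwall argument in which the only forcing term is proportional to $\lambda$.

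\textbf{Step 1 (uniform bounds).} For $\lambda\in[0,1]$, test \eqref{abstract active scalar eqn} against $\Lambda^{2s}\thetalambda$. The damping term contributes $\lambda\|\Lambda^{s+1/2}\thetalambda\|_{L^2}^2+\lambda\|\Lambda^s\thetalambda\|_{L^2}^2\ge0$, so it only helps and can be discarded. This leaves the standard $\kappa$-dissipative estimate, and the nonlinear term must be controlled using the extra smoothing assumption A3'. I would use A3' to say that $u[\cdot]$ gains regularity, so that $\|\Lambda^{s+1}u[\theta]\|_{L^2}\lesssim\|\Lambda^{s}\theta\|_{L^2}$ as in the MG case. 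Combined with the commutator (Kato--Ponce) estimate and incompressibility,
\[
\Big|\int \Lambda^s(u\cdot\nabla\theta)\Lambda^s\theta\Big|\lesssim \|\nabla u\|_{L^\infty}\|\Lambda^s\theta\|_{L^2}^2+\|\Lambda^s u\|_{L^2}\|\nabla\theta\|_{L^\infty}\|\Lambda^s\theta\|_{L^2}.
\]
The low norms are controlled by the $L^\infty$ maximum principle, $\|\thetalambda(t)\|_{L^\infty}\le\|\theta_0\|_{L^\infty}+Ct\|S\|_{L^\infty}$, which is uniform in $\lambda$. Absorbing the remaining derivatives into $\kappa\|\Lambda^{s+\gamma/2}\theta\|_{L^2}^2$, using the smoothing of $u$ from A3' together with interpolation, gives
\[
\sup_{\lambda\in[0,1]}\sup_{t\le T}\|\thetalambda(t)\|_{H^{s}}\le C(s,T,\kappa,\theta_0,S)\quad\text{for every } s\ge0.
\]
Equivalently, I can take the global smooth existence theory for $\lambda=0$ (MG-type) and check that each estimate there survives the addition of $\lambda\mathcal D$ with the same constants.

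\textbf{Step 2 (difference equation).} The difference satisfies
\[
\partial_t\phi+u[\thetalambda]\cdot\nabla\phi+u[\phi]\cdot\nabla\theta^{(0)}+\kappa\Lambda^\gamma\phi=-\lambda\mathcal D\thetalambda,\qquad \phi(0)=0.
\]
Test against $\Lambda^{2s}\phi$:
\begin{itemize}
\item The first transport term is handled by a commutator estimate and divergence-freeness. It is bounded by $C\|\thetalambda\|_{H^{s+1}}$ (a quantity of order $\|\thetalambda\|_{H^{s+1}}$ or $\|\nabla\thetalambda\|_{L^\infty}$) times $\|\phi\|_{H^s}^2$.
\item The second term is bounded by $C\|\theta^{(0)}\|_{H^{s+2}}\|\phi\|_{H^s}^2$, using \eqref{zero order bounded for u when nu>0}.
\item The forcing term is bounded by $\lambda\|\thetalambda\|_{H^{s+1}}\|\phi\|_{H^s}$.
\end{itemize}
All the higher norms are bounded uniformly by Step 1, applied with $s+2$. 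This gives
\[
\frac{d}{dt}\|\phi\|_{H^s}^2\le C_T\|\phi\|_{H^s}^2+C_T\lambda^2,
\]
so Grönwall yields $\|\phi(t)\|_{H^s}^2\le C_T\lambda^2e^{C_Tt}\to0$ for each $t\le T$. Since $s$ and $T$ are arbitrary, \eqref{Hs convergence thm} follows.

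\textbf{Main obstacle.} The main difficulty is Step 1: obtaining $H^s$ bounds uniform in $\lambda$ when $\gamma$ is small, where the dissipation alone does not dominate the nonlinearity. This is exactly where A3' is needed, and I would first try to show that A3' makes $u$ smoothing of order one. A fallback is to treat the $\lambda$-dependence perturbatively, using the fact that the damping has a favorable sign. This lets the a priori estimates for $\theta^{(0)}$ transfer verbatim to $\thetalambda$.
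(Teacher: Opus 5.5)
Your Step 1 is the paper's uniform bound; your ``fallback'' (carry the $\lambda=0$ estimates over, since the damping has a good sign) is exactly what the paper does. Your Step 2 takes a genuinely different route. The paper never estimates the difference in $H^s$. It tests the difference equation only against $\varphi=\theta^{(\lambda)}-\theta^{(0)}$ itself, where $u^{(\lambda)}\cdot\nabla\varphi$ drops out exactly by incompressibility. The cross term then needs only $\|\nabla\theta^{(0)}\|_{L^\infty}$ and $\|u[\varphi]\|_{L^2}\le C\|\varphi\|_{L^2}$. This gives $\|\varphi(t)\|_{L^2}\to 0$, and $H^s$ convergence follows from the interpolation $\|\varphi\|_{H^s}\le\|\varphi\|_{L^2}^{\sigma}\|\varphi\|_{H^{s+1}}^{1-\sigma}$ together with the $\lambda$-uniform $H^{s+1}$ bounds. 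That route avoids all commutator and product estimates on the difference, but it yields only a fractional power of $\lambda$ as a rate. Your direct $H^s$ Gr\"onwall argument costs Kato--Ponce and product estimates at level $s$, plus uniform bounds one or two derivatives higher. In return it gives the sharp rate $\|\phi(t)\|_{H^s}\le C_T\lambda$ on $[0,T]$. One detail: the commutator term $\|\Lambda^s u^{(\lambda)}\|_{L^2}\|\nabla\phi\|_{L^\infty}$ is controlled by $\|\phi\|_{H^s}$ only when $s>d/2+1$. So run the estimate for large $s$ and obtain smaller $s$ from $\|\cdot\|_{H^{s'}}\le\|\cdot\|_{H^{s}}$ for mean-zero functions.

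In Step 1, the factor $\|\nabla\theta\|_{L^\infty}$ in your Kato--Ponce bound is not one of the low norms the maximum principle controls. Absorption into $\kappa\Lambda^\gamma$ is also not what makes the estimate close, so small $\gamma$ is not the real obstacle. A3' gives order-two smoothing at once, since the symbol of $u_j=\partial_{x_i}T_{ij}$ is $O(|k|^{-2})$, and it is order two, not one, that is used. The paper writes $u\cdot\nabla\theta=\nabla\cdot(u\theta)$, so the commutator involves only three quantities:
\begin{itemize}
\item $\|\theta\|_{L^\infty}$;
\item $\|\Lambda^{s+1}u\|_{L^2}\le C\|\Lambda^s\theta\|_{L^2}$;
\item $\|\nabla u\|_{L^\infty}\le C\|\theta\|_{L^{d+1}}$.
\end{itemize}
This makes the $H^s$ inequality linear in $\|\Lambda^s\theta\|_{L^2}^2$ with a $\lambda$-independent coefficient, without using the dissipation at all.
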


\begin{theorem}[Existence of global attractors]\label{existence of global attractor theorem}
Let $S\in L^\infty\cap H^1$ be the forcing term. For $\lambda\ge0$, $\kappa>0$ and $\gamma\in(0,2]$, under the assumptions A1, A2, A3' and A4, let $\pin(t)$ be the solution operator for the initial value problem \eqref{abstract active scalar eqn} via
\begin{align*}
\pin(t): H^1\to H^1,\qquad \pin(t)\theta_0=\theta(\cdot,t),\qquad t\ge0.
\end{align*}
Then the solution map $\pin(t):H^1\to H^1$ associated to \eqref{abstract active scalar eqn} possesses a unique global attractor $\Gg$. In particular, if we assume that $\lambda$ and $\gamma$ satisfy either one of the following conditions:
\begin{align*}
\mbox{$\lambda\ge0$ and $\gamma\in[1,2]$};
\end{align*}
or
\begin{align*}
\mbox{$\lambda>0$ and $\gamma\in(0,2]$},
\end{align*}
then the global attractor $\Gg$ of $\pin(t)$ further enjoys the following properties:
\begin{itemize}
\item $\Gg$ is fully invariant, namely
\begin{align*}
\pin(t)\Gg=\Gg,\qquad \forall t\ge0.
\end{align*}
\item $\Gg$ is maximal in the class of $H^1$-bounded invariant sets.
\item $\Gg$ has finite fractal dimension.
\end{itemize}
\end{theorem}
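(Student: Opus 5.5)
The plan follows the standard scheme for dissipative dynamical systems in the sense of Temam/Robinson. First we show that $\pin(t)$ is a well-defined continuous semigroup on $H^1$; then that it has a bounded absorbing set in $H^1$; then that it is asymptotically compact. The abstract theorem then gives a unique global attractor $\Gg=\omega(\mathcal{B})$. The extra properties (full invariance, maximality, finite fractal dimension) come from continuity of $\pin(t)$ in $H^1$ together with a squeezing/Ladyzhenskaya-type estimate on differences of solutions.

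\textbf{Absorbing balls.} Multiply \eqref{abstract active scalar eqn} by $\theta$ and use A1 (so $u\cdot\nabla\theta$ integrates to zero) and the mean-zero Poincaré inequality. This gives
\[\frac{d}{dt}\|\theta\|_{L^2}^2+2\kappa\|\Lambda^{\gamma/2}\theta\|_{L^2}^2+2\lambda\|\theta\|_{L^2}^2\le 2\|S\|_{L^2}\|\theta\|_{L^2},\]
hence an $L^2$ absorbing ball with radius of order $\|S\|_{L^2}/\kappa$, uniform in $\lambda\ge0$. Since $S\in L^\infty$, the maximum principle for fractional drift-diffusion (Córdoba–Córdoba positivity of $\Lambda^\gamma$ and of $\mathcal{D}$) gives an $L^\infty$ bound that is eventually absorbing.

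Next we obtain $H^1$ absorption by testing with $-\Delta\theta$. The nonlinear term becomes $\int\partial_k u\cdot\nabla\theta\,\partial_k\theta$. Using A3' (the extra smoothing of $T_{ij}$, which makes $u$ gain regularity relative to $\theta$, as in MG), we bound it by $C\|\theta\|_{L^\infty}\|\nabla\theta\|_{L^2}\|\Lambda^{1+\gamma/2}\theta\|_{L^2}$ or by a similar interpolation. It is then absorbed into $\kappa\|\Lambda^{1+\gamma/2}\theta\|^2$. The uniform Gronwall lemma, combined with the time-integrated bound on $\|\Lambda^{\gamma/2}\theta\|^2$ from the $L^2$ step, yields a bounded absorbing set in $H^1$.

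\textbf{Asymptotic compactness and continuity.} Repeat the estimate one level higher, in $H^{1+\epsilon}$ with $\epsilon\le\gamma/2$, to obtain an absorbing ball in a space compactly embedded in $H^1$; this gives compactness of $\pin(t)\mathcal{B}$ for $t$ large. Continuity of $\pin(t)$ on $H^1$ follows from the energy estimate for the difference $w=\theta_1-\theta_2$. The term $u[w]\cdot\nabla\theta_2$ is controlled via the regularity of $\theta_2$ on the absorbing set. Together these give the existence and uniqueness of $\Gg$ under the general hypotheses.

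\textbf{Invariance, maximality and dimension, and the main obstacle.} Under either condition, $\lambda\ge0$ with $\gamma\ge1$, or $\lambda>0$, the difference estimate closes in $H^1$ with a constant uniform on the absorbing set. When $\gamma\ge1$, the dissipation $\kappa\Lambda^\gamma$ is at least critical, which controls the loss of derivative in $u[w]\cdot\nabla\theta_2$. When $\lambda>0$, the damping $\lambda\Lambda$ supplies the missing half derivative. This uniform Lipschitz continuity on $\Gg$ gives full invariance and maximality by standard arguments. For finite fractal dimension we would prove uniform differentiability of $\pin(t)$ on $\Gg$ and apply the trace formula (Constantin–Foias–Temam): we bound $\Tr(L(t)Q_N)$ for the linearized operator $L=-\lambda\mathcal{D}-\kappa\Lambda^\gamma-u\cdot\nabla-u[\cdot]\cdot\nabla\theta$ by $-c\kappa\sum_{j\le N}|k_j|^\gamma+C$, which becomes negative for $N$ large. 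We expect the main obstacle to be this step, namely controlling the nonlocal term $u[\cdot]\cdot\nabla\theta$ in the trace estimate for subcritical $\gamma<1$. There we rely on $\lambda>0$ and a squeezing-property version of the estimate (a Ladyzhenskaya-type argument) in place of full differentiability.
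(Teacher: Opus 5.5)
Two steps fail as written, both earlier than the dimension estimate you flag as the main obstacle. The first is the $L^\infty$ absorbing ball. The maximum principle only propagates an $L^\infty$ bound that is already present. It gives $\|\theta(t)\|_{L^\infty}\le\|\theta_0\|_{L^\infty}e^{-c_0\kappa t}+\|S\|_{L^\infty}/(c_0\kappa)$, which says nothing for $\theta_0$ in a bounded subset of $H^1$, because $H^1\not\subset L^\infty$ when $d=2,3$. What is missing is an instantaneous $L^2\to L^\infty$ regularization whose constants depend only on $\|\theta_0\|_{L^2}$ and the forcing. The paper proves this by De Giorgi iteration on the truncations $(\theta-M(1-2^{-n}))_+$ with time cut-offs $t(1-2^{-n})$. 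This gives $\|\theta(t)\|_{L^\infty}\lesssim(2/t+1)^{\frac{d+1-\gamma}{2\gamma}}(\|\theta_0\|_{L^2}+\kappa^{-1/2}\|S\|_{L^2})+\|S\|_{L^\infty}$ on $(0,1]$, and only then does the paper apply the maximum principle from $t=1$. This step carries real weight: every later absorbing set in your plan relies on $\|\nabla u\|_{L^\infty}\le C\|\theta\|_{L^\infty}$. Once the ball is available, your $H^1$ step is a legitimate shortcut. The paper instead goes through a $C^\alpha$ absorbing set, using a nonlinear maximum principle for $\delta_h\theta$ and the lower bound $D_\gamma[\nabla\theta]\ge|\nabla\theta|^{2+\frac{\gamma}{1-\alpha}}/(C[\theta]_{C^\alpha}^{\frac{\gamma}{1-\alpha}})$. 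You should close your step by interpolating $\|\nabla\theta\|_{L^2}$ between $L^2$ and $H^{1+\frac{\gamma}{2}}$ and applying Young's inequality. The time-integrated $H^{\frac{\gamma}{2}}$ bound you cite does not supply the control of $\int_t^{t+1}\|\nabla\theta\|_{L^2}^2$ that the uniform Gr\"onwall lemma needs when $\gamma<2$.

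The second is continuity when $\lambda=0$ and $\gamma<1$. Your existence argument is the classical one for a continuous semigroup, and you assert $H^1$-continuity in general. Test the difference equation with $-\Delta w$ and take $\lambda=0$. The term $\int(u[w]\cdot\nabla\theta_2)(-\Delta w)=\int\Lambda^{1-\frac{\gamma}{2}}(u[w]\cdot\nabla\theta_2)\,\Lambda^{1+\frac{\gamma}{2}}w$ naturally requires $\theta_2\in H^{2-\frac{\gamma}{2}}$. That follows from the $H^{1+\frac{\gamma}{2}}$ bound only if $\gamma\ge1$, which is exactly where the paper uses $\|\Lambda^{2-\frac{\gamma}{2}}f\|_{L^2}\le C\|\Lambda^{1+\frac{\gamma}{2}}f\|_{L^2}$. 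It follows from the time-averaged $H^{1+\gamma}$ bound only if $\gamma\ge 2/3$. Your last paragraph concedes this derivative loss, so the earlier general claim is unsupported, and with it your existence argument for $\lambda=0$ and small $\gamma$. The paper avoids the issue: existence and uniqueness of $\Gg$, as the minimal closed attracting set, follow from the framework of \cite{CCP12}. That framework needs only the $H^1$-compact absorbing ball $B_{1+\frac{\gamma}{2}}$ and no continuity. Continuity is proved only under the two stated conditions and is used, together with backward uniqueness, for invariance and maximality. Your route can also be repaired. Estimate the difference in $L^2$, where A3' gives $\|u[w]\|_{L^\infty}\le C\|w\|_{L^2}$ for $d\le3$ and hence $\frac{d}{dt}\|w\|_{L^2}^2\le C\|\theta_2\|_{H^1}\|w\|_{L^2}^2$. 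Then interpolate against the uniform $H^{1+\frac{\gamma}{2}}$ bound on the absorbing set. Finally, for the dimension estimate with $\lambda>0$ and $\gamma<1$, no squeezing argument is needed. The paper keeps $-\lambda\|\xi\|_{H^{3/2}}^2$ in $\int\Lambda^2\xi\,A_\theta[\xi]$ and uses the $\gamma$-independent $H^{3/2}$ bound on $\Gg$. The trace is then at most $-\frac{\lambda}{C}N^{1+\frac{1}{d}}+CN$; note the error term is $O(N)$, not $O(1)$ as you wrote. This is simply your own observation that the damping supplies the missing half derivative, applied to the trace.
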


\begin{remark}
There are several remarks for the main results as given above:
\begin{itemize} 
\item In Theorem~\ref{Hs convergence theorem} and Theorem~\ref{existence of global attractor theorem}, assumption A3 is replaced by A3' that will be given in Section~\ref{Existence and convergence of Hs-solutions section}.
\item Theorem~\ref{absence of anomalous dissipation thm} will be proved by constructing {\it stationary statistical solutions} to \eqref{abstract active scalar eqn} for both the cases when $\lambda>0$ and $\lambda=0$; refer to Section~\ref{absence of anomalous dissipation section} for the related definition and construction of stationary statistical solutions.
\item Regarding Theorem~\ref{Hs convergence theorem}, the existence of global-in-time $H^s$-solutions to \eqref{abstract active scalar eqn} when $\lambda\ge0$ will be given by Theorem~\ref{global-in-time wellposedness in Sobolev} in Subsection~\ref{existence of Hs solution subsec}, while the convergence of solutions as $\lambda\to0$ will be addressed in Subsection~\ref{convergence of Hs solution subsec}.
\item For the case when $\lambda\ge0$, $\kappa>0$ and $\gamma=2$, the regularity of the global attractor $\Gg$ as stated in Theorem~\ref{existence of global attractor theorem} can be further improved to $H^2$; refer to Remark~\ref{H2 global attractor rem} for a more detailed explanation. 
\end{itemize}
\end{remark}

\section{Preliminaries}\label{preliminaries}

We introduce the following notations and conventions:

\begin{itemize}
\item We write $u:=u[\theta]$, where $u$ is given by $u_j:=\partial_{x_i} T_{ij}[\theta]$. We also refer $u[\cdot]$ to an operator in the sense that $u_j[f]=\partial_{x_i} T_{ij}[f]$ for appropriate functions $f$.
\item To emphasise the dependence of solutions on $\kappa$ or $\lambda$, we sometimes write $\theta=\thetakappa$, $\theta=\theta^{(\lambda)}$, $u=u^{\kappa}:=u[\thetakappa]$ and $u=u^{(\lambda)}:=u[\theta^{(\lambda)}]$ for varying $\kappa,\lambda$.
\item As a consequence of the mean-zero setting, we may identify the identify the homogenous Sobolev spaces and the inhomogenous Sobolev spaces, and we denote these by $H^s$ for $s\in\R$. These spaces are the closure of mean-zero $C^\infty(\T^d)$ under the norm $\|\cdot\|_{H^s}=\|\Lambda^s(\cdot)\|_{L^2}$.
\item We define $\langle\cdot,\cdot\rangle$ to be the $L^2$-inner product on $\Omega$, that is
\begin{equation*}
\langle f,g\rangle:=\int_{\Omega}fgdx,
\end{equation*}
for any $f,g\in L^2$.
\item Regarding the constants used in this work, we have the following conventions:
\begin{itemize}
\item $C$ shall denote a positive and sufficiently large constant, whose value may change from line to line; 
\item $C$ is allowed to depend on the size of $\Omega$ when $\Omega=\mathbb{T}^d$ and other universal constants which are fixed throughout this work; 
\item In order to emphasise the dependence of $C$ on a certain quantity $Q$ we usually write $C_{Q}$ or $C(Q)$.
\end{itemize}
\item By abuse of notation, we denote by $\phi:\R^d\to\R$ the periodic extension to the whole space of a
$\T^d$ periodic function $\phi$ without further notice.
\end{itemize}

We recall the following Sobolev embedding inequalities from the literature (see for example Bahouri-Chemin-Danchin \cite{BCD11} and Ziemer \cite{Z89}): 
\begin{itemize}
\item Let $d\ge2$ be the dimension. There exists $C=C(d)>0$ such that
\begin{equation}
\|f\|_{L^\infty }\le C \|f\|_{W^{2,d} }.\label{L infty bound 1}
\end{equation}
\item For $q>d$, there exists $C=C(q)>0$ such that
\begin{equation}
\|f\|_{L^\infty }\le C \|f\|_{W^{1,q} }.\label{L infty bound 2}
\end{equation}
\item If $k>l$ and $k-\frac{d}{p}>l-\frac{d}{q}$, then there exists $C=C(k,l,d,p,q)>0$ such that
\begin{align}
\|f\|_{W^{l,q}}\le C\|f\|_{W^{k,p}}.\label{Kondrachov embedding theorem}
\end{align}
\item For $q>1$, $q'\in[q,\infty)$ and $\frac{1}{q'}=\frac{1}{q}-\frac{s}{d}$, if $\Lambda^s h\in L^q$, there exists $C=C(q,q',d,s)>0$ such that
\begin{align}\label{Sobolev inequality}
\|h\|_{L^{q'}}\le C\|\Lambda^s h\|_{L^{q}}.
\end{align}
\end{itemize}

We also recall the following product and commutator estimates: If $s>0$ and $p>1$, then for all $f,g\in H^s\cap L^\infty$, we have
\begin{align}\label{product estimate}
\|\Lambda^s(fg)\|_{L^p}\le C\Big(\|f\|_{L^{p_1}}\|\Lambda^s g\|_{L^{p_2}}+\|\Lambda^s f\|_{L^{p_3}}\|g\|_{L^{p_4}}\Big),
\end{align}
\begin{align}\label{commutator estimate}
\|\Lambda^s(fg)-f\Lambda^s(g)\|_{L^p}\le C\Big(\|\nabla f\|_{L^{p_1}}\|\Lambda^{s-1} g\|_{L^{p_2}}+\|\Lambda^s f\|_{L^{p_3}}\|g\|_{L^{p_4}}\Big),
\end{align}
where $\frac{1}{p}=\frac{1}{p_1}+\frac{1}{p_2}=\frac{1}{p_3}+\frac{1}{p_4}$, $p$, $p_2$, $p_3\in(1,\infty)$ and $p_1,p_4\in(1,\infty]$. 

\section{Stationary statistical solution and absence of anomalous dissipation when $\lambda>0$}\label{absence of anomalous dissipation section}

In this section, we address the anomalous dissipation for the forced active scalar equation \eqref{abstract active scalar eqn} and give the proof of Theorem~\ref{absence of anomalous dissipation thm} by showing that there is no dissipative anomaly, namely
\begin{align*}
\lim_{\kappa\to0}\left(\limsup_{t\to\infty}\int_{0}^{t}\|\nabla\theta(\cdot,s)\|_{L^2}^2ds\right)=0.
\end{align*}

Theorem~\ref{absence of anomalous dissipation thm} will be proved by a sequence of propositions and lemmas. To begin with, the following proposition gives a bound for $\|\theta\|_{L^p}$ for all $1\le p\le \infty$ in terms of $\lambda$.


\begin{proposition}\label{uniform bound on theta kappa>0 prop}
Let $\kappa>0$, $S\in L^\infty$, $\theta_0\in L^\infty$. There exists a unique solution $\theta=\theta(x,t)$ to \eqref{abstract active scalar eqn} satisfying
\begin{align}\label{energy equality with damping}
\frac{1}{2}\frac{d}{dt}\|\theta\|_{L^2}^2+\lambda\|\theta\|_{H^\frac{1}{2}}^2+\kappa\|\Lambda^\frac{\gamma}{2}\theta\|^2_{L^2}=\langle S,\theta\rangle,
\end{align}
and
\begin{align}\label{bound on theta by damping}
\|\theta(\cdot,t)\|_{L^p}\le e^{-\lambda t}\Big[\|\theta_0\|_{L^p}-\frac{1}{\lambda}\|S\|_{L^p}\Big]+\frac{1}{\lambda}\|S\|_{L^p}
\end{align}
for all $1\le p\le \infty$. Moreover the positive semi-orbit
\begin{align}\label{def of positive semi-orbit}
O_{+}(\theta_0)=\{\theta=\theta(\cdot,t): t\ge0\}\subset L^2
\end{align}
is uniformly integrable: for every $\varepsilon>0$, there exists $R>0$ such that
\begin{align}\label{uniform integrable}
\int_{|x|\ge R}|\theta(x,t)|^2dx\le \varepsilon
\end{align}
for all $t\ge0$.
\end{proposition}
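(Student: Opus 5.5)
We prove the three assertions in turn, on a regularised problem and then by passing to the limit.

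\emph{Existence, uniqueness and the energy equality.} We first construct smooth solutions of a regularised problem, either by Galerkin truncation to Fourier modes $|k|\le N$ or by adding $\epsilon\Delta\theta$ and mollifying $\theta_0,S$. For fixed $\kappa>0$, global existence of smooth solutions of the regularised system is standard. The velocity $u=u[\theta]$ is divergence free by A1, so $\langle u\cdot\nabla\theta,\theta\rangle=0$. Taking the $L^2$ inner product of the equation with $\theta$ then gives \eqref{energy equality with damping}, using that
\[
\langle\mathcal{D}\theta,\theta\rangle=\|\Lambda^{\frac12}\theta\|_{L^2}^2+\|\theta\|_{L^2}^2 .
\]
Here $\|\theta\|_{H^{\frac12}}^2$ is understood in the inhomogeneous sense, which agrees with the convention of Section~\ref{preliminaries} up to an equivalent norm.

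\emph{Uniform bounds and uniqueness.} We obtain bounds uniform in the regularisation from \eqref{bound on theta by damping} below. The maximum-principle bound on $\|\theta\|_{L^\infty}$, together with A2 (which puts $u$ in $L^\infty_tBMO^{-1}$), lets us pass to the limit. Uniqueness comes from an $L^2$ estimate for the difference $w=\theta_1-\theta_2$:
\[
\tfrac12\tfrac{d}{dt}\|w\|_{L^2}^2+\kappa\|\Lambda^{\gamma/2}w\|_{L^2}^2\le |\langle u[w]\theta_2,\nabla w\rangle| .
\]
We control the right side with the regularity of $\theta_2$ gained from $\kappa>0$, which is smooth for $t>0$ by the critical and supercritical regularity theory cited (or by the $H^s$ theory of Section~\ref{Existence and convergence of Hs-solutions section}), and then apply Gr\"onwall.

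\emph{The $L^p$ bound.} For $2\le p<\infty$ we multiply by $|\theta|^{p-2}\theta$. The transport term vanishes by incompressibility. The dissipative terms satisfy the C\'ordoba--C\'ordoba positivity
\[
\int|\theta|^{p-2}\theta\,\Lambda^{\alpha}\theta\,dx\ge0,\qquad \alpha\in(0,2],
\]
which applies to both $\Lambda$ and $\Lambda^\gamma$. The identity part of $\mathcal{D}$ contributes exactly $\lambda\|\theta\|_{L^p}^p$. Hölder on the forcing term gives
\[
\frac{d}{dt}\|\theta\|_{L^p}\le-\lambda\|\theta\|_{L^p}+\|S\|_{L^p}.
\]
Integrating this linear differential inequality yields \eqref{bound on theta by damping}. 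The cases $1\le p<2$ follow the same way with $|\theta|^{p-2}\theta$ regularised to $(\theta^2+\delta)^{(p-2)/2}\theta$ and $\delta\to0$. The case $p=\infty$ follows by letting $p\to\infty$ on the torus. Differentiating a norm that may vanish is handled by working with $\|\theta\|^p_{L^p}$ and dividing afterwards.

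\emph{Uniform integrability.} Since $\Omega=\mathbb T^d$ is bounded, there is little to prove: for $R$ larger than the diameter of $\Omega$ the region $\{|x|\ge R\}\cap\Omega$ is empty. Alternatively, \eqref{bound on theta by damping} with $p=\infty$ gives the uniform bound
\[
\sup_{t\ge0}\|\theta(t)\|_{L^\infty}\le\|\theta_0\|_{L^\infty}+\lambda^{-1}\|S\|_{L^\infty},
\]
which gives \eqref{uniform integrable} over any set of small measure.

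\emph{Main obstacle.} The hard step is rigorously justifying the positivity estimate and the uniqueness at the low regularity $\theta_0\in L^\infty$, especially when $\gamma<1$. For that case we would first try to rely on the damping $\lambda\Lambda$, which supplies critical dissipation, and invoke the Caffarelli--Vasseur type regularisation.
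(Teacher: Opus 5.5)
Your derivation of \eqref{energy equality with damping} and \eqref{bound on theta by damping} is essentially the paper's. The paper just invokes the pointwise estimate of \cite{CV12}, which is the same C\'ordoba--C\'ordoba-type positivity of $\Lambda^\alpha$ that you use, and your inequality $\frac{d}{dt}\|\theta\|_{L^p}\le-\lambda\|\theta\|_{L^p}+\|S\|_{L^p}$ integrates to exactly the stated bound. One precision: an \emph{identity} requires $\|\theta\|_{H^{1/2}}^2$ to mean exactly $\langle\mathcal{D}\theta,\theta\rangle=\|\Lambda^{1/2}\theta\|_{L^2}^2+\|\theta\|_{L^2}^2$; an equivalent norm is not enough. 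With the homogeneous convention of Section~\ref{preliminaries} the term $\lambda\|\theta\|_{L^2}^2$ would be missing.

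For \eqref{uniform integrable} your route is genuinely different. The paper runs the cutoff functional $Y_R(t)=\int\chi(x/R)\theta^2\,dx$, imported from the whole-space argument of \cite{CTV14b}. You note instead that on $\Omega=[-\pi,\pi]^d$ the set $\{|x|\ge R\}$ is empty once $R>\sqrt{d}\,\pi$. You are right, and this avoids the non-periodic cutoff computation altogether. The paper's argument carries content on $\mathbb{R}^d$, where it gives tightness, but on $\mathbb{T}^d$ the property is vacuous. So the relative compactness of $O_+(\theta_0)$ in $L^2$, which the paper later attributes to this proposition, has to come from a uniform Sobolev bound instead.

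For existence and uniqueness the paper only cites \cite{FS18,FS21}, so your sketch is no weaker there. However, the uniqueness mechanism you propose does not close in the regime you yourself flag.

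\begin{itemize}
\item For $\gamma\le1$ the dissipation $\lambda\Lambda+\kappa\Lambda^\gamma$ is of critical order one.
\item For $\theta_0$ merely in $L^\infty$, any bound on the Gr\"onwall weight obtained from the smoothing of $\theta_2$ is at best of order $s^{-1}$ near $s=0$. This applies to $\|\nabla\theta_2(s)\|_{L^\infty}$, or to $\|\theta_2(s)\|_{C^{1/2+\epsilon}}^2$ if you move half a derivative. The reason is that the scaling $\theta(x,t)\mapsto\theta(\ell x,\ell t)$ preserves $L^\infty$, and for data with a jump one expects this rate to be attained.
\item Hence the weight is not integrable at $s=0$. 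Smoothness for $t>0$ from Caffarelli--Vasseur does not repair this.
\end{itemize}

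By contrast, for $\gamma=2$ the difference estimate closes with the maximum principle alone. A3 gives $|\langle u[w]\theta_2,\nabla w\rangle|\le C\|\theta_2\|_{L^\infty}\|w\|_{L^2}\|\nabla w\|_{L^2}$, which $\kappa\|\nabla w\|_{L^2}^2$ absorbs. In the critical range you need one of two things rather than the Gr\"onwall argument as stated:
\begin{itemize}
\item a genuine uniqueness result in a specified class of solutions; or
\item an extra hypothesis. Examples are more regular data, or the two-derivative smoothing of $u$ in A3'. Under A3' a weight $\|\theta_2\|_{L^\infty}^2+\|\Lambda^{1/2}\theta_2\|_{L^2}^2$ suffices, and it is integrable by the energy identity.
\end{itemize}
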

\begin{proof}
The existence and uniqueness of solution $\theta$ to \eqref{abstract active scalar eqn} can be proved by the methods given in \cite{FS18} and \cite{FS21}, and the bound \eqref{bound on theta by damping} follows from the point-wise estimate given in \cite{CV12}. 

To prove the uniform integrability \eqref{uniform integrable}, as suggested in \cite{CTV14b}, one can consider the function
\begin{align*}
Y_{R}(t)=\int \chi(\frac{x}{R})\theta^2(x,t)dx,
\end{align*}
where $\chi(\cdot)$ is a nonnegative smooth function supported in the set $\{x\in\Omega:|x|\ge\frac{1}{2}\}$ and identically equal to 1 for $|x|\ge1$. Then we readily have
\begin{align*}
2\lambda\int_{\Omega}(\Lambda\theta(x))\chi(\frac{x}{R})\theta(x,t)dx\ge-\frac{C\lambda}{R}\|\theta(\cdot,t)\|_{L^2}^2,
\end{align*}
and using \eqref{zero order bounded for u when nu>0}, 
\begin{align*}
\int_{\Omega}\chi(\frac{x}{R})\theta u\cdot\nabla\theta(x,t)\le \frac{C}{R}\|\Big[\|u(\cdot,t)\|_{L^3}^3+\|\theta(\cdot,t)\|_{L^3}^3\Big]\le \frac{C}{R}\|\theta(\cdot,t)\|_{L^3}^3.
\end{align*}
The integral involving $S$ can be bound by
\begin{align*}
2\Big|\int_{\Omega} S\chi\theta dx\Big|\le C\|\theta(\cdot,t)\|_{L^2}\Big(\int_{|x|\ge\frac{R}{2}}|S(x)|^2dx\Big)^\frac{1}{2},
\end{align*}
and we obtain
\begin{align*}
\frac{d}{dt} Y_{R}(t)+2\lambda Y_{R}(t)\le \frac{C}{R}\|\Big[\|\theta(\cdot,t)\|_{L^3}^3+\lambda\|\theta(\cdot,t)\|_{L^2}^2\Big]+\frac{C\kappa}{R^2}\|\theta\|_{L^2}^2\\
+C\|\theta(\cdot,t)\|_{L^2}\Big(\int_{|x|\ge\frac{R}{2}}|S(x)|^2dx\Big)^\frac{1}{2}\le \frac{\lambda}{2},
\end{align*}
where the last inequality follows by choosing $R$ large enough. Upon integrating the above inequality over $[0,t]$, we conclude that \eqref{uniform integrable} holds.
\end{proof}
Next we give the definition of stationary statistical solution of \eqref{abstract active scalar eqn} when $\kappa>0$.
\begin{definition}
A stationary statistical solution of \eqref{abstract active scalar eqn} is a Borel probability measure $\mu^{(\kappa)}$ on $L^2(\Omega)$ such that
\begin{align*}
\int_{L^2(\Omega)}\|\theta\|_{H^1}^2d\mu^{(\kappa)}(\theta)<\infty,
\end{align*}
and the equation
\begin{align*}
\int_{L^2(\Omega)}\langle N^{(\kappa)}(\theta),\Psi'(\theta)\rangle d\mu^{(\kappa)}(\theta)=0,
\end{align*}
holds for all $\Psi\in\mathcal{T}$, and
\begin{align}\label{inequality on the integral on mu kappa>0}
\int_{E_1\le\|\theta\|_{H^\frac{1}{2}}\le E_2}(\lambda\|\theta\|_{H^\frac{1}{2}}^2+\kappa\|\nabla\theta\|_{L^2}^2-\langle S,\theta\rangle)d\mu^{(\kappa)}(\theta)\le 0
\end{align}
for all $E_1\le E_2$. Here $N^{(\kappa)}(\theta)$ is given by
\begin{align*}
N^{(\kappa)}(\theta)=u[\theta]\cdot\nabla\theta+\lambda\mathcal{D}\theta+\kappa\Lambda^{\gamma}\theta-S
\end{align*}
and the class of cylindrical test functionals $\mathcal{T}$ is defined as follows: $\Psi\in\mathcal{T}$ if there exists $N$, $w_1,..,w_N\in C^{\infty}_{0}(\Omega)$, $\varepsilon\ge0$ and $\psi:\R^N\to\R$, smooth function such that
\begin{align*}
\Psi(\theta)=\psi(\langle J_{\varepsilon}(\theta),w_1\rangle,...,\langle J_{\varepsilon}(\theta),w_N\rangle)
\end{align*}
with $J_{\varepsilon}$ being a standard mollifier.
\end{definition}
We aim at showing that the map $\theta\mapsto\langle N^{(\kappa)}(\theta),\Psi'(\theta)\rangle$ is locally bounded and weakly continuous. To this end, we further introduce $F_1$, $F_2$ and $F_3$ for computing $\langle N^{(\kappa)}(\theta),\Psi'(\theta)\rangle=\sum_{i=1}^3 F_i(\theta)$:
\begin{align*}
F_1(\theta)&=\lambda\langle\theta,\mathcal{D}\Psi'(\theta)\rangle - \langle S,\Psi'(\theta)\rangle,\\
F_2(\theta)&=\kappa\langle\theta,\Lambda^\gamma\Psi'(\theta)\rangle,\\
F_3(\theta)&=-\langle\theta u[\theta],\nabla\Psi'(\theta)\rangle.
\end{align*}
The following proposition gives the desired properties of the maps $F_1$, $F_2$ and $F_3$.
\begin{proposition}\label{weakly continuity of mu kappa prop}
For any fixed $\Psi\in\mathcal{T}$, the maps $\theta\mapsto F_i(\theta)$ with $i=1,2,3$, are locally bounded in $L^2$ and weakly continuous in $L^2$ on bounded sets of $L^p$, $1\le p<2$. In particular, the map
\begin{align}\label{nonlinear term map}
\theta\mapsto\langle N^{(\kappa)}(\theta),\Psi'(\theta)\rangle
\end{align}
is locally bounded in $L^2$ and weakly continuous in $L^2$ on bounded sets of $L^p$, $1\le p<2$.
\end{proposition}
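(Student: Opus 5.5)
The plan is to exploit the finite-dimensional structure of the cylindrical test functionals, so that $\Psi'(\theta)$ is always a smooth function depending continuously on $\theta$ under weak convergence. Then $F_1,F_2$ are handled by linearity, and $F_3$ by rewriting the quadratic term as a quadratic form of a compact (commutator) operator. Throughout, ``weakly continuous'' means: if $\theta_n\rightharpoonup\theta$ weakly in $L^2$ with $\sup_n\|\theta_n\|_{L^2}<\infty$, then $F_i(\theta_n)\to F_i(\theta)$.

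\textbf{Step 1: structure of $\Psi'$.} For $\Psi(\theta)=\psi(\langle J_\varepsilon\theta,w_1\rangle,\dots,\langle J_\varepsilon\theta,w_N\rangle)$, self-adjointness of the mollifier gives
\begin{align*}
\Psi'(\theta)=\sum_{k=1}^N\partial_k\psi\big(a(\theta)\big)\,J_\varepsilon w_k,\qquad a(\theta)=\big(\langle\theta,J_\varepsilon w_1\rangle,\dots,\langle\theta,J_\varepsilon w_N\rangle\big).
\end{align*}
Each $J_\varepsilon w_k$ is $C^\infty$ with all derivatives bounded, so $\mathcal{D}\Psi'(\theta)$, $\Lambda^\gamma\Psi'(\theta)$ and $\nabla\Psi'(\theta)$ lie in $L^\infty\cap L^2$. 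They are bounded by $C(\psi,w,\varepsilon)\sup_{|a|\le C\|\theta\|_{L^2}}|\nabla\psi(a)|$, which is finite on bounded sets of $L^2$. If $\theta_n\rightharpoonup\theta$ weakly in $L^2$ (which implies weak convergence in $L^p$, $p<2$), then $a(\theta_n)\to a(\theta)$ in $\R^N$. Hence $\Psi'(\theta_n)\to\Psi'(\theta)$ in every $C^m$ norm, as a finite combination of fixed smooth functions with converging coefficients.

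\textbf{Step 2: $F_1,F_2$.} Local boundedness follows from Cauchy--Schwarz together with Step 1, since for instance $|F_2(\theta)|\le\kappa\|\theta\|_{L^2}\|\Lambda^\gamma\Psi'(\theta)\|_{L^2}$. For continuity, write
\begin{align*}
\langle\theta_n,g_n\rangle-\langle\theta,g\rangle=\langle\theta_n,g_n-g\rangle+\langle\theta_n-\theta,g\rangle,
\end{align*}
with $g_n=\mathcal{D}\Psi'(\theta_n)$ or $\Lambda^\gamma\Psi'(\theta_n)$. The first term is at most $\sup_n\|\theta_n\|_{L^2}\,\|g_n-g\|_{L^2}\to0$, and the second tends to $0$ by weak convergence. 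The $S$-term is treated the same way.

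\textbf{Step 3: $F_3$, the main obstacle.} Local boundedness is immediate from A3 and \eqref{zero order bounded for u when nu>0}:
\begin{align*}
|F_3(\theta)|\le\|\theta\|_{L^2}\|u[\theta]\|_{L^2}\|\nabla\Psi'(\theta)\|_{L^\infty}\le C\|\theta\|_{L^2}^2\|\nabla\Psi'(\theta)\|_{L^\infty}.
\end{align*}
For continuity, first replace $\nabla\Psi'(\theta_n)$ by $\nabla\Psi'(\theta)=:\nabla\phi$; by Step 1 the error is $O(\|\theta_n\|_{L^2}^2\|\nabla\Psi'(\theta_n)-\nabla\phi\|_{L^\infty})\to0$. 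It then remains to show that the quadratic form
\begin{align*}
Q(\theta)=\langle\theta,M\theta\rangle,\qquad M\theta=\partial_j\phi\,u_j[\theta],
\end{align*}
is weakly sequentially continuous. Since $Q(\theta)=\langle\theta,\tfrac12(M+M^*)\theta\rangle$, it suffices that $M+M^*$ is compact on $L^2$: compactness gives $(M+M^*)\theta_n\to(M+M^*)\theta$ strongly, and combined with $\theta_n\rightharpoonup\theta$ this yields $Q(\theta_n)\to Q(\theta)$.

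Writing $M^*=u_j^*\,\partial_j\phi$, we have
\begin{align*}
M+M^*=[\partial_j\phi,u_j]+(u_j+u_j^*)\partial_j\phi.
\end{align*}
By A3, $u_j=\partial_iT_{ij}$ is a Fourier multiplier of order $0$, so its commutator with multiplication by the smooth function $\partial_j\phi$ is of order $-1$. On $\T^d$ such an operator maps $L^2$ into $H^1$, and is therefore compact by Rellich. The delicate point is the remainder $(u_j+u_j^*)\partial_j\phi$. I would show it vanishes, i.e.\ that $u_j$ is skew-adjoint, because its symbol $i k_i\widehat{T_{ij}}(k)$ is purely imaginary when $\widehat{T_{ij}}$ is real and even, as it is for the SQG and MG kernels. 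If that fails for a general $T_{ij}$, the fallback is to use only the divergence-free property A1, integrating by parts in the form $\langle\theta u,\nabla\phi\rangle$ to isolate a similar smoothing commutator.

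The uniform integrability in Proposition~\ref{uniform bound on theta kappa>0 prop} is not needed on $\T^d$, where the embedding $H^1\subset L^2$ is already compact. Summing Steps 2 and 3 gives the claimed properties of the map \eqref{nonlinear term map}.
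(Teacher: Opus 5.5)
Your Steps 1--2, the local bound on $F_3$, and the reduction of weak continuity of $F_3$ to compactness of $M+M^*$ on $L^2$ are all sound. You are also right that on $\mathbb{T}^d$ the paper's localisation to $B(0,2R)$ and its $L^p$ tail bounds are superfluous. The gap is exactly at the point you flag as delicate: neither ingredient you use to make $M+M^*$ compact follows from A1--A4.

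Write $m_j(k)=ik_i\widehat{T_{ij}}(k)$ for the symbol of $u_j$. First, A3 only gives $|m_j(k)|\le C$ and says nothing about regularity of $m_j$ in $k$. So $[\partial_j\phi,u_j]$ need not be of order $-1$; on the torus that requires something like $|m_j(k)-m_j(k-l)|\lesssim |l|/|k|$. Second, skew-adjointness ($m_j$ purely imaginary) is not among the hypotheses. Your claim that it holds for MG is wrong: the symbols $\widehat{\mathcal M}_j(k)$ in the last section are real and even, so $\mathcal M$ is self-adjoint and $u_j+u_j^*=2u_j$. MG is nevertheless fine, for a different reason. Under A3' one has $|m_j(k)|\le C|k|^{-2}$, so $u[\cdot]:L^2\to H^2$ is compact on $L^2(\mathbb{T}^3)$. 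Then $M$ itself is compact and no cancellation is needed.

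Your fallback via A1 alone cannot work, because under A1--A4 alone the conclusion for $F_3$ is false. On $\mathbb{T}^2$ take $h=(\pi/2,0)$ and
$\widehat{T_{12}}=-\widehat{T_{21}}=e^{ik\cdot h}|k|^{-1}$, $\widehat{T_{11}}=\widehat{T_{22}}=0$,
that is, $u[\theta]=\nabla^\perp\Lambda^{-1}\big(\theta(\cdot+h)\big)$; this satisfies A1--A4. Take $\Psi(\theta)=\langle\theta,\sin x_2\rangle$ (with $\varepsilon=0$) and $\theta_n=(1+\cos x_2)\cos(nx_1)$, which is mean-zero, bounded in $L^\infty$, and $\rightharpoonup0$. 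Then
$u_2[\theta_n]=-(1+\cos x_2)\sin(nx_1+n\pi/2)+O(n^{-2})$,
so
$F_3(\theta_n)=2\pi^2\sin(n\pi/2)+O(n^{-2})$,
which does not converge to $F_3(0)=0$. The culprit is exactly the non-vanishing real part of $m_j$ at high frequency, i.e.\ your remainder $(u_j+u_j^*)\partial_j\phi$ combined with a non-compact commutator.

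The paper avoids confronting this by using the CTV identity $\langle\theta u[\theta],\nabla\phi\rangle=\frac12\int(\Lambda^{-1}\theta)[\Lambda,\nabla\phi]u[\theta]\,dx$. There the commutator falls on $\Lambda$, which is bounded on $L^2$ without any regularity of $m_j$, and compactness comes from $\Lambda^{-1}$. But that identity is derived for $u=\nabla^\perp\Lambda^{-1}\theta$ and is not a consequence of A1--A4 either; if it were, the example above would be impossible.

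To close your proof you must add a structural hypothesis. One option is an SQG-type symbol (purely imaginary, smooth and homogeneous of degree $0$), in which case your argument is complete as written. The other is A3', in which case Step 3 is trivial by compactness of $u$.
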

\begin{proof}
The method is similar to the one given in \cite{CTV14b}, and we give the proof for $\Omega=\mathbb{T}^3$ as an example. Suppose $\theta_n$ is weakly converging to its weak limit $\theta$ in $L^2$ and $\theta_n$ satisfy the assumed bound
\begin{align}\label{bound on theta n weak convergence}
\sup_{n}\|\theta_n\|_{L^p}\le A_p
\end{align}
for some $A_p>0$. Then by integrating $\chi(\frac{x}{R})\theta^{p-1}$ against $\theta_n$, passing to the limit in $n$ and taking $R\to\infty$, we also have
\begin{align}\label{bound on theta weak convergence}
\|\theta\|_{L^p}\le A_p.
\end{align}

In view of the decomposition of the map \eqref{nonlinear term map}, it suffices to address the map $F_3$ since $F_1$ and $F_2$ satisfy the claimed properties trivially. The crucial convergence for consideration is the one that involves $u[\theta]$, which is given by
\begin{align}\label{convergence of F3 term}
\langle \theta_nu[\theta_n],\nabla J_{\varepsilon}w_k\rangle \to \langle \theta u[\theta],\nabla J_{\varepsilon}w_k\rangle
\end{align}
for $k=1,2,...,N$. For $\theta\in L^p$ and smooth compactly supported $\phi$, the key for proving \eqref{convergence of F3 term} is to rewrite $\langle \theta u[\theta],\nabla \phi\rangle$ as 
\begin{align*}
\langle \theta u[\theta],\nabla \phi\rangle =\frac{1}{2}\int(\Lambda^{-1}\theta)(x)[\Lambda,\nabla \phi]u[\theta](x)dx,
\end{align*}
where $[\Lambda,f]g=\Lambda(fg)-f\Lambda g$ is the commutator operator. Since we have
\begin{align*}
\Lambda(fg)(x)-f\Lambda g(x)=C\,P.V. \int_{\R^3}\frac{g(y)(f(x)-f(y))}{|x-y|^4}dy,
\end{align*}
if $f$ is compactly supported in a ball $B(0,R)$ of radius $R$, then for $|x|\ge 2R$,
\begin{align}\label{bound on commutator}
|\Lambda(fg)(x)-f\Lambda g(x)|\le C|x|^{-4}\|f\|_{L^2}\|g\|_{L^2}.
\end{align}
Take $\phi=J_{\varepsilon}w_k$ and choose $R>1$ such that $supp(\phi)\subset B(0,R)$, for $\theta\in L^p$, we define $C_{\phi}(\theta)(x)$ by
\begin{align*}
C_{\phi}(\theta)(x)=[\Lambda,\nabla\phi]u[\theta].
\end{align*}
Then by \eqref{zero order bounded for u when nu>0} and \eqref{bound on commutator}, we have
\begin{align*}
\Big|\int_{|x|\ge 2R}(\Lambda^{-1}\theta_n)(x)C_{\phi}(\theta_n)(x)dx\Big|\le CR^{-1}\|\theta_n\|_{L^2}\Big(\|\theta_n\|_{L^p}+\|\theta_n\|_{L^2}\Big),
\end{align*}
and hence using the bounds \eqref{bound on theta n weak convergence} and \eqref{bound on theta weak convergence},
\begin{align}\label{uniform bound on the theta n weak convergence}
\Big|\int_{|x|\ge 2R}(\Lambda^{-1}\theta_n)(x)C_{\phi}(\theta_n)(x)dx\Big|\le CR^{-1}A_2(A_p+A_2).
\end{align}
Now we consider the difference
\begin{align}\label{difference on weak convergence}
&\int(\Lambda^{-1}\theta_n)(x)C_{\phi}(\theta_n)(x)dx-\int(\Lambda^{-1}\theta)(x)C_{\phi}(\theta)(x)dx\\
&=\int_{B(0,2R)}(\Lambda^{-1}(\theta_n-\theta))(x)C_{\phi}(\theta_n)(x)dx+\int_{B(0,2R)}(\Lambda^{-1}(\theta))(x)C_{\phi}(\theta_n-\theta)(x)dx\notag\\
&\qquad+\int_{|x|\ge2R}(\Lambda^{-1}\theta_n)(x)C_{\phi}(\theta_n)(x)dx-\int_{|x|\ge2R}(\Lambda^{-1}\theta)(x)C_{\phi}(\theta)(x)dx.\notag
\end{align}
We let $\epsilon>0$ be given. Since $\Lambda^{-1}\theta,C_{\phi}(\theta)\in L^2$, together with the bound \eqref{uniform bound on the theta n weak convergence}, by choosing $R$ large enough, the third and the fourth term on the right side of \eqref{difference on weak convergence} can be bounded by $\frac{\epsilon}{2}$ each. On the other hand, the first term on the right side of \eqref{difference on weak convergence} can be bounded by
\begin{align*}
\Big|\int_{B(0,2R)}(\Lambda^{-1}(\theta_n-\theta))(x)C_{\phi}(\theta_n)(x)dx\Big|\le CA_2\|\Lambda^{-1}(\theta_n-\theta)\|_{L^2(B(0,2R)}.
\end{align*}
Since $\chi(\frac{x}{2R})\Lambda^{-1}(\theta_n-\theta)$ is uniformly bounded in $H^1$ and converges weakly to $0$ in $L^2$, $\|\Lambda^{-1}(\theta_n-\theta)\|_{L^2(B(0,2R)}$ converges to zero as $n\to\infty$. Finally, since $C_\phi(\cdot)$ is bounded linear on $L^2$, $C_{\phi}(\theta_n-\theta)$ converges weakly to 0 as $n\to0$ and hence as $n\to0$, we have
\begin{align*}
\int_{B(0,2R)}(\Lambda^{-1}(\theta))(x)C_{\phi}(\theta_n-\theta)(x)dx=\int\chi(\frac{x}{2R})(\Lambda^{-1}(\theta))(x)C_{\phi}(\theta_n-\theta)(x)dx\to0.
\end{align*}
The above shows that as $n\to\infty$, the difference on the left side of \eqref{difference on weak convergence} can be bounded in absolute value by $\epsilon$. Since $\epsilon$ is arbitrary, we conclude that $F_3$ is locally bounded in $L^2$ and weakly continuous in $L^2$ on bounded sets of $L^p$, $1\le p<2$.
\end{proof}
We give the definition of stationary statistical solution of \eqref{abstract active scalar eqn} when $\kappa=0$.
\begin{definition}
A stationary statistical solution of \eqref{abstract active scalar eqn} with $\kappa=0$, namely the inviscid active scalar equation 
\begin{align}\label{abstract active scalar eqn kappa=0}
\partial_t\theta+u\cdot\nabla\theta+\lambda\mathcal{D}\theta=S,
\end{align}
is a Borel probability measure $\mu$ on $L^2(\Omega)$ such that
\begin{align}\label{boundedness on mu on L2}
\int_{L^2(\Omega)}\|\theta\|_{H^1}^2d\mu(\theta)<\infty,
\end{align}
and the equation 
\begin{align}\label{integral identity on mu on L2}
\int_{L^2(\Omega)}\langle N(\theta),\Psi'(\theta)\rangle d\mu(\theta)=0,
\end{align}
holds for all $\Psi\in\mathcal{T}$, where
\begin{align*}
N(\theta)=u[\theta]\cdot\nabla\theta+\lambda\mathcal{D}\theta-S.
\end{align*}
We say that the stationary statistical solution satisfies the energy dissipation balance if
\begin{align}\label{energy dissipation balance}
\int_{L^2(\Omega)}(\lambda\|\theta\|_{H^\frac{1}{2}}^2-\langle S,\theta\rangle)d\mu(\theta)=0.
\end{align}
\end{definition}
The following theorem shows the convergence of stationary statistical solution as $\kappa$ tends to zero, which is a consequence of Proposition~\ref{weakly continuity of mu kappa prop}.
\begin{theorem}
Let $\mu^{(\kappa)}$ be a sequence of stationary statistical solutions of \eqref{abstract active scalar eqn} with $S\in L^2$. Assume that there exists $1\le p<2$ and $A_p$ such that the support of the measure $\mu^{(\kappa)}$ are included in the set
\begin{align}\label{set Bp}
B_{p}=\{\theta\in L^p:\|\theta\|_{L^p}\le A_p\},
\end{align}
where $A_p>0$ is given in \eqref{bound on theta n weak convergence}-\eqref{bound on theta weak convergence}. Then there exists a subsequence, denoted also $\mu^{(\kappa)}$ and a stationary statistical
solution $\mu$ of \eqref{abstract active scalar eqn kappa=0} such that
\begin{align}\label{weak limit integral identity for mu}
\lim_{\kappa\to0}\int_{L^2}\Phi(\theta)d\mu^{(\kappa)}(\theta)=\int_{L^2}\Phi(\theta)d\mu(\theta)
\end{align}
holds for all weakly continuous, locally bounded real valued functions $\Phi$.
\end{theorem}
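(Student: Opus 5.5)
We follow the Prokhorov-compactness route of \cite{CTV14b}: extract a weakly convergent subsequence of the measures, pass to the limit in the defining identity using Proposition~\ref{weakly continuity of mu kappa prop}, and keep the $H^1$ moment by lower semicontinuity.

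\emph{Step 1 (compactness).} We work on $B_p$ equipped with the weak topology of $L^2$. Since $\mu^{(\kappa)}$ is supported in $B_p$, and the uniform $L^2$ bound implicit in \eqref{bound on theta n weak convergence}--\eqref{bound on theta weak convergence} (coming from Proposition~\ref{uniform bound on theta kappa>0 prop}) confines $B_p\cap\{\|\theta\|_{L^2}\le A_2\}$ to a weakly compact, metrizable set, the family $\{\mu^{(\kappa)}\}$ is tight. We also want a bound on $\int\|\theta\|_{H^1}^2d\mu^{(\kappa)}$ that is uniform in $\kappa$. To get it, we take $E_1=0$ and $E_2\to\infty$ in \eqref{inequality on the integral on mu kappa>0}, which gives $\lambda\int\|\theta\|_{H^{1/2}}^2d\mu^{(\kappa)}\le\int\langle S,\theta\rangle d\mu^{(\kappa)}\le\|S\|_{L^2}A_2$, uniformly in $\kappa$. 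By Prokhorov's theorem on this compact metric space, there is a subsequence $\mu^{(\kappa)}\rightharpoonup\mu$. This is exactly \eqref{weak limit integral identity for mu} for bounded weakly continuous $\Phi$. For locally bounded $\Phi$ we use that all the supports lie in a fixed bounded set, so local boundedness becomes global boundedness there.

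\emph{Step 2 (limit identity).} For $\Psi\in\mathcal{T}$ we write $\langle N^{(\kappa)}(\theta),\Psi'(\theta)\rangle=\langle N(\theta),\Psi'(\theta)\rangle+F_2(\theta)$. By Proposition~\ref{weakly continuity of mu kappa prop}, the map $\theta\mapsto\langle N(\theta),\Psi'(\theta)\rangle=F_1+F_3$ is weakly continuous and locally bounded on $B_p$, so its integral against $\mu^{(\kappa)}$ converges to its integral against $\mu$. For the remaining term, $|F_2(\theta)|\le\kappa\|\theta\|_{L^2}\|\Lambda^\gamma\Psi'(\theta)\|_{L^2}\le C_\Psi\kappa A_2$, because $\Psi'(\theta)$ is a finite combination of the smooth functions $J_\varepsilon w_k$ with coefficients bounded on bounded sets. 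Hence this term tends to $0$, and \eqref{integral identity on mu on L2} follows.

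\emph{Step 3 (moment bound).} The functional $\theta\mapsto\|\theta\|_{H^1}^2$ is weakly lower semicontinuous, so it is a supremum of weakly continuous bounded functionals, namely Galerkin truncations $\min(M,\|P_n\theta\|_{H^1}^2)$. Monotone convergence combined with the uniform moment bound from Step~1 then transfers the moment bound to $\mu$, giving \eqref{boundedness on mu on L2}.

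The main obstacle is Step~3, and in particular Step~1's need for a uniform $H^1$ moment. The dissipation inequality \eqref{inequality on the integral on mu kappa>0} only controls $\kappa\int\|\nabla\theta\|_{L^2}^2d\mu^{(\kappa)}$, which degenerates as $\kappa\to0$, together with $\lambda\int\|\theta\|_{H^{1/2}}^2d\mu^{(\kappa)}$ uniformly. Two repairs are available: either one reads \eqref{boundedness on mu on L2} at the $H^{1/2}$ level, which is what the damping actually provides, or one gets higher regularity from the stationary solutions supplied by Proposition~\ref{uniform bound on theta kappa>0 prop}. We would try the first option, and would check that everything downstream, in particular \eqref{energy dissipation balance}, only uses the $H^{1/2}$ moment.
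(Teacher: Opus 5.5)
Your route is the paper's: Prokhorov in the weak $L^2$ topology, then Proposition~\ref{weakly continuity of mu kappa prop}. Your explicit bound $|F_2|\le C_\Psi\kappa$ on the common support is a step the paper skips, since that proposition is stated for fixed $\kappa$.

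\textbf{The gap is in Step~1.} You assume the supports of $\mu^{(\kappa)}$ lie in a fixed $L^2$-ball of radius $A_2$, but this is not a hypothesis. The set $B_p$ with $p<2$ is unbounded in $L^2$, and Proposition~\ref{uniform bound on theta kappa>0 prop} concerns solutions, not an arbitrary sequence of stationary statistical solutions. Your $H^{1/2}$ moment bound also leans on $A_2$.

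The missing idea is to use \eqref{inequality on the integral on mu kappa>0} with $E_1>\|S\|_{L^2}/\lambda$ instead of $E_1=0$. Since $\langle S,\theta\rangle\le\|S\|_{L^2}\|\theta\|_{H^{1/2}}$ for mean-zero functions on $\mathbb{T}^d$, the integrand is at least $E_1(\lambda E_1-\|S\|_{L^2})>0$ on $\{E_1\le\|\theta\|_{H^{1/2}}\le E_2\}$. So this set is $\mu^{(\kappa)}$-null, and every $\mu^{(\kappa)}$ is supported in $B=\{\|\theta\|_{H^{1/2}}\le\|S\|_{L^2}/\lambda\}$.

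The paper's proof relies on this support property, not on a second moment: it works on $B\cap B_p$. Because $B$ is bounded in $L^2$ (Poincar\'e), $B\cap B_p$ is weakly compact and metrizable. Locally bounded $\Phi$, as well as $F_1,F_2,F_3$, are then bounded on the common support. A uniform bound on $\int\|\theta\|^2_{H^{1/2}}\,d\mu^{(\kappa)}$ alone gives tightness. It does not give $\int\Phi\,d\mu^{(\kappa)}\to\int\Phi\,d\mu$ for unbounded, locally bounded $\Phi$.

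\textbf{On Step~3 your diagnosis is correct, and the paper does not close this point either.} It writes $\int_B\|\theta\|^2_{H^1}\,d\mu\le\int_B\|S\|^2_{L^2}\lambda^{-2}\,d\mu$, but membership in $B$ only bounds $\|\theta\|_{H^{1/2}}$. Nothing in the hypotheses controls $\|\nabla\theta\|_{L^2}$ uniformly in $\kappa$; the dissipation inequality gives only $\kappa\int\|\nabla\theta\|^2_{L^2}\,d\mu^{(\kappa)}\le\|S\|^2_{L^2}/(4\lambda)$.

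What the argument actually yields is $\mu(B\cap B_p)=1$. Hence $\int\|\theta\|^2_{H^{1/2}}\,d\mu\le\|S\|^2_{L^2}/\lambda^2$ follows immediately, and your lower-semicontinuity step is not needed.

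Reading \eqref{boundedness on mu on L2} at the $H^{1/2}$ level, as in the critical SQG setting of \cite{CTV14b}, is the right repair. It is consistent with the next theorem, whose dominated-convergence step uses only $C\|\theta\|_{L^\infty}\|\theta\|^2_{H^{1/2}}$ on the bounded set $A$.
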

\begin{proof}
Define a set $B$ by
\begin{align*}
B=\Big\{\theta\in H^\frac{1}{2}:\|\theta\|_{H^\frac{1}{2}}\le\frac{\|S\|_{L^2}}{\lambda}\Big\}.
\end{align*}
Then the intersection $B\cap B_p$ is weakly closed in $L^2$ and it is a separable metrizable compact space with the weak $L^2$ topology. By Prokhorov's theorem, the sequence $\mu^{(\kappa)}$ has a weakly convergent subsequence that converges to $\mu$. It is clear that $\mu$ is a Borel probability measure on $B\cap B_p$, and we can further extend it to $L^2$ by defining $\mu(X)=\mu(X\cap(B\cap B_p))$ for all measurable set $X\subset L^2$. Then we have
\begin{align*}
\int_{L^2(\Omega)}\|\theta\|_{H^1}^2d\mu(\theta)\le \int_{B}\|\theta\|_{H^1}^2d\mu(\theta)\le \int_{B}\frac{\|S\|_{L^2}^2}{\lambda^2}d\mu(\theta)<\infty,
\end{align*}
and hence \eqref{boundedness on mu on L2} holds. To prove \eqref{integral identity on mu on L2}, using Proposition~\ref{weakly continuity of mu kappa prop}, we have
\begin{align*}
\int_{L^2(\Omega)}\langle N(\theta),\Psi'(\theta)\rangle d\mu(\theta)=\lim_{\kappa\to0}\int_{L^2(\Omega)}\langle N^{(\kappa)}(\theta),\Psi'(\theta)\rangle d\mu^{(\kappa)}(\theta)=0.
\end{align*}
Therefore $\mu$ is a stationary statistical solution $\mu$ of \eqref{abstract active scalar eqn kappa=0} such that \eqref{weak limit integral identity for mu} holds for all weakly continuous, locally bounded real valued functions $\Phi$ as guaranteed by Proposition~\ref{weakly continuity of mu kappa prop}.
\end{proof}

In the next theorem, it shows that if a sequence of stationary statistical solutions of \eqref{abstract active scalar eqn} is supported in the set $A$ given by
\begin{align*}
A=\Big\{\theta:\|\theta\|_{L^p}\le A_p,\,\|\theta\|_{L^\infty}\le A_{\infty},\,\|\theta\|_{H^\frac{1}{2}}\le\frac{\|S\|_{L^2}}{\lambda}\Big\},
\end{align*}
then the weak limit of the sequence is a stationary statistical solutions of \eqref{abstract active scalar eqn kappa=0} that further satisfies the energy dissipation balance \eqref{energy dissipation balance}.

\begin{theorem}
Let $\mu^{(\kappa)}$ be a sequence of stationary statistical solutions of \eqref{abstract active scalar eqn} is supported in the set $A$ as given above. Let $\mu$ be any weak limit of $\mu^{(\kappa)}$ in $L^2$. Then $\mu$ is a stationary statistical solution of \eqref{abstract active scalar eqn kappa=0} that satisfies the energy dissipation balance \eqref{energy dissipation balance}.
\end{theorem}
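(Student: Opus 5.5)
The plan follows the strategy of \cite{CTV14b} for critical SQG. We first obtain the stationary statistical solution property of $\mu$ from the preceding theorem: the support set $A$ is contained in $B\cap B_p$, so along a subsequence $\mu^{(\kappa)}\to\mu$ weakly, and $\mu$ satisfies \eqref{boundedness on mu on L2} and \eqref{integral identity on mu on L2}. The new content is the energy dissipation balance \eqref{energy dissipation balance}, which we prove as two opposite inequalities.

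\emph{Upper bound.} Take $E_1=0$ and $E_2=\|S\|_{L^2}/\lambda$ in \eqref{inequality on the integral on mu kappa>0}; since $\mu^{(\kappa)}$ is supported in $A$, this range covers the whole support. Dropping the nonnegative term $\kappa\|\nabla\theta\|_{L^2}^2$ gives
\begin{align*}
\int(\lambda\|\theta\|_{H^\frac12}^2-\langle S,\theta\rangle)\,d\mu^{(\kappa)}\le 0 .
\end{align*}
The map $\theta\mapsto\langle S,\theta\rangle$ is weakly continuous and bounded on $A$. The map $\theta\mapsto\|\theta\|_{H^{1/2}}^2$ is weakly lower semicontinuous and nonnegative. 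By a Portmanteau-type argument (write it as a supremum of weakly continuous bounded functionals, e.g.\ Fourier truncations $\sum_{|k|\le M}|k||\hat\theta(k)|^2$, apply \eqref{weak limit integral identity for mu}, then let $M\to\infty$ by monotone convergence), we get
\begin{align*}
\int(\lambda\|\theta\|_{H^\frac12}^2-\langle S,\theta\rangle)\,d\mu\le 0 .
\end{align*}

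\emph{Lower bound.} Here we use the equation \eqref{integral identity on mu on L2} for $\mu$, with test functionals approximating $\Psi(\theta)=\frac12\|\theta\|_{L^2}^2$. Concretely, take $\Psi_{\varepsilon,n}(\theta)=\frac12\sum_{j\le n}\langle J_\varepsilon\theta,w_j\rangle^2$, where $\{w_j\}$ is an orthonormal basis (Fourier modes) of mean-zero $L^2$. This choice lies in $\mathcal T$, and $\Psi'(\theta)=J_\varepsilon P_n J_\varepsilon\theta$. Substituting into \eqref{integral identity on mu on L2} gives
\begin{align*}
\int\Big(\lambda\langle \mathcal{D}\theta, J_\varepsilon P_nJ_\varepsilon\theta\rangle-\langle S,J_\varepsilon P_nJ_\varepsilon\theta\rangle-\langle\theta u[\theta],\nabla J_\varepsilon P_nJ_\varepsilon\theta\rangle\Big)d\mu=0 .
\end{align*}
We first let $n\to\infty$ and then $\varepsilon\to0$. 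The linear terms converge to $\lambda\|\theta\|_{H^{1/2}}^2$ (the identity part contributes $\lambda\|\theta\|_{L^2}^2$; we keep it consistent with the paper's norm convention) and to $\langle S,\theta\rangle$. This is dominated convergence, using \eqref{boundedness on mu on L2} and the uniform bounds on $A$.

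\emph{Main obstacle: the nonlinear flux term.} The crux is to show that
\begin{align*}
\langle\theta u[\theta],\nabla J_\varepsilon^2\theta\rangle\to0,
\end{align*}
with a $\mu$-integrable dominating bound. Since $\operatorname{div}u=0$, this term equals the commutator expression $\langle J_\varepsilon(\theta u)-(J_\varepsilon\theta)u,\nabla J_\varepsilon\theta\rangle$. By Constantin--E--Titi type estimates it is bounded by
\begin{align*}
C\|\theta\|_{L^\infty}\|u\|_{B^{s}_{2,\infty}}\|\theta\|_{B^s_{2,\infty}}\varepsilon^{2s-1}+\cdots .
\end{align*}
The point is that with regularity $\theta\in H^{1/2}$ alone, one only reaches the critical threshold $s=\frac12$. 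We would first try to exploit the $L^\infty$ bound $\|\theta\|_{L^\infty}\le A_\infty$ together with \eqref{zero order bounded for u when nu>0} for the $L^p$ boundedness of $u$, and control the commutator by $\|\theta\|_{L^\infty}$ times the $H^{1/2}$ norm localized to frequencies $\gtrsim\varepsilon^{-1}$. That high-frequency tail $\sum_{|k|\gtrsim1/\varepsilon}|k||\hat\theta(k)|^2$ tends to zero pointwise for each $\theta\in H^{1/2}$. It is also dominated by $\|\theta\|_{H^{1/2}}^2\le \|S\|_{L^2}^2/\lambda^2$ on the support, so dominated convergence gives vanishing flux. If this direct approach fails, the fallback is the $H^1$-moment \eqref{boundedness on mu on L2}: $\theta\in H^1$ $\mu$-a.e.\ with $u\in H^1$. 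In $d\le3$ with $\theta\in L^\infty$, the term $\theta u\cdot\nabla\theta$ is then integrable, and the flux vanishes exactly as $\varepsilon\to0$ by $\int u\cdot\nabla(\theta^2/2)=0$. Together the two limits give equality in \eqref{energy dissipation balance}.
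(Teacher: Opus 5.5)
Your proposal is correct and follows essentially the same route as the paper. You test the stationary equation for $\mu$ against $\frac12\sum_{j\le n}\langle J_\varepsilon\theta,w_j\rangle^2$, let $n\to\infty$ and then $\varepsilon\to0$, and kill the flux by writing it as a mollifier commutator (you use $J_\varepsilon(\theta u)-(J_\varepsilon\theta)u$, the paper uses $J_\varepsilon(u\theta)-J_\varepsilon u\,J_\varepsilon\theta$) that vanishes pointwise on $H^{1/2}\cap L^\infty$ and is dominated by $C\|\theta\|_{L^\infty}\|\theta\|_{H^{1/2}}^2$ on the support $A$. This alone gives the balance as an equality, so your separate upper bound via weak lower semicontinuity is redundant; and avoid the $H^1$-moment fallback, because the support set $A$ only controls $\theta$ in $H^{1/2}$ and so does not actually give $\int\|\theta\|_{H^1}^2\,d\mu<\infty$ for the limit measure.
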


\begin{proof}
The proof is essentially the same as the one given in \cite{CTV14b} for the 2D case, and we only sketch some of the key steps. To prove the energy dissipation balance \eqref{energy dissipation balance}, we observe that
\begin{align*}
\lim_{\varepsilon\to0}\int_{L^2(\Omega)}\langle J_{\varepsilon}(\theta),J_{\varepsilon}(\kappa\mathcal{D}-S)\rangle d\mu(\theta)=\int_{L^2(\Omega)}(\lambda\|\theta\|_{H^\frac{1}{2}}^2-\langle S,\theta\rangle)d\mu(\theta)
\end{align*}
where $J_{\varepsilon}$ is the standard mollifier as defined before. Assuming that for all $\varepsilon>0$, we have
\begin{align}\label{equality on integral with varepsilon}
\int_{L^2(\Omega)}\langle J_{\varepsilon}(\theta),J_{\varepsilon}(\kappa\mathcal{D}-S)\rangle d\mu(\theta)=-\int_{L^2(\Omega)}\langle J_{\varepsilon}(\theta),J_{\varepsilon}(u[\theta]\cdot\nabla\theta)\rangle d\mu(\theta),
\end{align}
then it suffices to show that 
\begin{align}\label{zero limit involving u mu} 
\lim_{\varepsilon\to0}\int_{L^2(\Omega)}\langle J_{\varepsilon}(\theta),J_{\varepsilon}(u[\theta]\cdot\nabla\theta)\rangle d\mu(\theta)=0.
\end{align}
Since $\nabla\cdot u[\theta]=0$, upon integration by parts, we have
\begin{align*}
\int_{L^2(\Omega)}\langle J_{\varepsilon}(\theta),J_{\varepsilon}(u[\theta]\cdot\nabla\theta)\rangle d\mu(\theta)=\int_{L^2(\Omega)}\langle \nabla J_{\varepsilon}(\theta),\rho_{\varepsilon}(u[\theta],\theta)\rangle d\mu(\theta),
\end{align*}
where 
\begin{align}\label{def of rho function with varepsilon}
\rho_{\varepsilon}(u[\theta],\theta)=J_{\varepsilon}(u[\theta]\theta)-(J_{\varepsilon}(u[\theta])(J_{\varepsilon}(\theta)).
\end{align} 
Notice that for any $\theta\in L^\frac{1}{2}\cap L^\infty$,
\begin{align*}
\lim_{\varepsilon\to0}\langle \nabla J_{\varepsilon}(\theta),\rho_{\varepsilon}(u[\theta],\theta)\rangle=0,
\end{align*}
and using the boundedness of $u[\cdot]$, we obtain the point-wise bound that
\begin{align*}
|\langle \nabla J_{\varepsilon}(\theta),\rho_{\varepsilon}(u[\theta],\theta)\rangle|\le C\|\theta\|_{L^\infty}\|\theta\|_{H^\frac{1}{2}}^2.
\end{align*}
Therefore, by the Lebegue dominated convergence theorem, we have
\begin{align*}
\lim_{\varepsilon\to0}\int_{L^2(\Omega)}\langle \nabla J_{\varepsilon}(\theta),\rho_{\varepsilon}(u[\theta],\theta)\rangle d\mu(\theta)=0,
\end{align*}
which implies \eqref{zero limit involving u mu} holds. It remains to show the identity \eqref{equality on integral with varepsilon}. Notice that
\begin{align*}
J_{\varepsilon}(N(\theta))=J_{\varepsilon}(\kappa\mathcal{D}-S)+J_{\varepsilon}(u[\theta]\cdot\nabla\theta),
\end{align*}
hence it is equivalent to show that
\begin{align*}
\int_{L^2(\Omega)}\langle J_{\varepsilon}(\theta),J_{\varepsilon}(N(\theta))\rangle d\mu(\theta)=0.
\end{align*}
We choose ${w_j}_{j=1}^{\infty}\subset C^{\infty}_0$ an orthonormal basis of $L^2$. For each $\varepsilon>0$, consider a sequence of cylindrical test functionals $\Psi_m(\cdot)\in \mathcal{T}$ such that
\begin{align*}
\Psi_m(\theta)=\frac{1}{2}\sum_{j=1}^m\langle J_{\varepsilon}(\theta),w_j\rangle^2,
\end{align*}
then we have
\begin{align*}
\langle N(\theta),\Psi_m'(\theta)\rangle = \sum_{j=1}^m\langle J_{\varepsilon}(\theta),w_j\rangle\langle J_{\varepsilon}(N(\theta)),w_j\rangle.
\end{align*}
Moreover, it can proved that $\langle N(\theta),\Psi_m'(\theta)\rangle\to0$ point-wisely as $m\to\infty$ and
\begin{align*}
|\langle N(\theta),\Psi_m'(\theta)\rangle|\le \|J_{\varepsilon}(\theta)\|_{L^2}\|J_{\varepsilon}(N(\theta))\|_{L^2}
\end{align*}
uniformly in $m$. Hence by applying Lebegue dominated convergence theorem and using \eqref{integral identity on mu on L2},
\begin{align*}
\int_{L^2(\Omega)}\langle J_{\varepsilon}(\theta),J_{\varepsilon}(N(\theta))\rangle d\mu(\theta)=\lim_{m\to\infty}\int_{L^2(\Omega)}\langle N(\theta),\Psi_m'(\theta)\rangle d\mu(\theta)=0,
\end{align*}
which implies \eqref{equality on integral with varepsilon}.
\end{proof}

We recall the concept of generalized (Banach) limit that will be used for addressing the long time average of solutions $\theta$ and the corresponding stationary statistical solutions.

\begin{definition}
A generalized limit (Banach limit) is a bounded linear functional
\begin{align*}
{\rm Lim}_{t\to\infty}:BC([0,\infty))\to\R
\end{align*}
such that
\begin{itemize}
\item ${\rm Lim}_{t\to\infty}(g)\ge0,\,\forall g\in BC([0,\infty)),\, g\ge0.$
\item ${\rm Lim}_{t\to\infty}(g)=\lim_{t\to\infty}g(t)$ whenever the usual limit exists.
\end{itemize}
\end{definition}
The following proposition gives the relation between the long time averages of \eqref{abstract active scalar eqn} with $\kappa>0$ and the stationary statistical solutions. Basically, it shows that the generalized limit (Banach limit) of the long time averages defines a stationary statistical solution $\mu^{(\kappa)}$ for each $\kappa>0$.
\begin{proposition}\label{long time average as ss kappa>0 prop}
Let $S\in L^\infty$ and $\theta_0\in L^\infty$, and denote the set of strongly continuous real-valued functionals on a set $X\subseteq L^2$ by $\mathcal{C}(X)$. If $\theta=\theta^{(\kappa)}(x,t)$ is the solution to \eqref{abstract active scalar eqn}, then for $\Phi\in\mathcal{C}(L^2)$, the map
\begin{align}\label{realisation map on solution kappa>0}
\Phi\mapsto{\rm Lim}_{t\to\infty}\frac{1}{t}\int_{0}^{t}\Phi(\theta^{(\kappa)}(x,s))ds
\end{align}
defines a stationary statistical solution $\mu^{(\kappa)}$ of \eqref{abstract active scalar eqn}:
\begin{align}\label{representation of theta as measure kappa>0}
\int_{L^2}\Phi(\theta)d\mu^{(\kappa)}(\theta)={\rm Lim}_{t\to\infty}\frac{1}{t}\int_{0}^{t}\Phi(\theta^{(\kappa)}(x,s))ds.
\end{align}
The measure is supported in the set
\begin{align*}
\bar{A}=\Big\{\theta:\|\theta\|_{H^\frac{1}{2}}\le\frac{\|S\|_{L^2}}{\lambda},\,\|\theta\|_{L^p}\le \bar{A}_p,\,1\le p\le \infty\,\Big\}
\end{align*}
with
\begin{align*}
\bar{A}_p=\|\theta_0\|_{L^p}+\frac{\|S\|_{L^p}}{\lambda},\,1\le p\le\infty.
\end{align*}
The inequality 
\begin{align}\label{inequality on nabla theta measure mu kappa>0}
\int_{L^2}\Big[\kappa\|\nabla\theta\|_{L^2}^2+\lambda\|\theta\|_{H^\frac{1}{2}}^2-\langle S,\theta\rangle\Big]d\mu^{(\kappa)}(\theta)\le0
\end{align}
holds.
\end{proposition}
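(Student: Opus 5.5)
The plan follows the Constantin--Tarfulea--Vicol construction in \cite{CTV14b}: first build the measure by the Riesz--Markov representation on a weakly compact set, then verify the three defining properties of a stationary statistical solution.

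\textbf{Support and construction of $\mu^{(\kappa)}$.} By \eqref{bound on theta by damping}, every $\theta^{(\kappa)}(\cdot,t)$ lies in the $L^p$ balls of radius $\bar{A}_p$ for all $1\le p\le\infty$. For the $H^{\frac12}$ bound, integrate the energy equality \eqref{energy equality with damping} over $[0,t]$ and bound $\langle S,\theta\rangle\le \|S\|_{L^2}\|\theta\|_{H^{\frac12}}$, using the Poincaré inequality for mean-zero functions. This only gives time-averaged control, namely
\[
\frac1t\int_0^t\Big[\kappa\|\Lambda^{\frac\gamma2}\theta\|_{L^2}^2+\frac{\lambda}{2}\|\theta\|_{H^{\frac12}}^2\Big]ds\le \frac{\|S\|_{L^2}^2}{2\lambda}+\frac{\|\theta_0\|_{L^2}^2}{2t},
\]
so the $H^{\frac12}$ part of the support will be obtained at the level of the measure rather than pointwise in time.

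The strategy is to work on $X=\{\|\theta\|_{L^p}\le\bar{A}_p,\ 1\le p\le\infty\}$ with the weak $L^2$ topology, which is compact and metrizable. For $\Phi$ weakly continuous on $X$, the map \eqref{realisation map on solution kappa>0} is positive, linear, and sends $1$ to $1$, because $s\mapsto \frac1t\int_0^t\Phi\,ds$ is bounded and continuous. Riesz--Markov then gives a Borel probability measure $\mu^{(\kappa)}$ on $X$, which I extend to $L^2$ by zero. To pass from weakly continuous to strongly continuous $\Phi$, I will use the uniform integrability \eqref{uniform integrable} together with the time-averaged $H^{\frac12}$ bound, which give tightness in the strong topology, in the usual way. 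The $H^{\frac12}$ support bound then comes from testing with truncations $\Phi_E(\theta)=\min(\|J_\varepsilon\theta\|_{H^{\frac12}}^2,E)$, which are weakly continuous. Combined with the energy inequality, this yields $\int\|\theta\|_{H^{\frac12}}^2d\mu\le \|S\|_{L^2}^2/\lambda^2$, and I will refine it to support in $\bar{A}$ via the localized inequality below.

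\textbf{Stationarity.} For $\Psi\in\mathcal{T}$, differentiate $\Psi(\theta(t))$ along the solution to get
\[
\frac{d}{dt}\Psi(\theta(t))=-\langle N^{(\kappa)}(\theta),\Psi'(\theta)\rangle .
\]
Averaging over $[0,t]$ gives $\frac1t(\Psi(\theta(t))-\Psi(\theta_0))$, which is $O(1/t)$ because $\Psi$ is bounded on $X$. Applying the Banach limit, and using that $\theta\mapsto\langle N^{(\kappa)}(\theta),\Psi'(\theta)\rangle$ is weakly continuous and locally bounded on $L^p$-bounded sets (Proposition~\ref{weakly continuity of mu kappa prop}), the identity transfers to $\int\langle N^{(\kappa)},\Psi'\rangle d\mu^{(\kappa)}=0$.

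\textbf{Energy inequality.} This is the main obstacle, because $\theta\mapsto\kappa\|\nabla\theta\|_{L^2}^2+\lambda\|\theta\|_{H^{\frac12}}^2$ is not weakly continuous, only weakly lower semicontinuous. I would first apply the Banach limit to the time-averaged energy equality with mollified functionals $\|J_\varepsilon\theta\|_{H^1}^2$, which are weakly continuous on $X$. The term $\langle S,\theta\rangle$ is weakly continuous, and the average of $\frac{d}{dt}\|\theta\|^2_{L^2}$ vanishes in the limit. Monotone convergence as $\varepsilon\to0$ then gives \eqref{inequality on nabla theta measure mu kappa>0}, with the dissipation term written as in the stated inequality; when $\gamma<2$ the gradient term needs the additional regularity of $\theta^{(\kappa)}$, otherwise one reads it as $\Lambda^{\frac\gamma2}$. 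The localized version \eqref{inequality on the integral on mu kappa>0} on shells $E_1\le\|\theta\|_{H^{\frac12}}\le E_2$ follows the same argument with smooth cutoffs $\psi(\|J_\varepsilon\theta\|_{H^{\frac12}})$ applied to the energy equation, as in \cite{CTV14b}. Taking shells $E_1>\|S\|_{L^2}/\lambda$ forces $\mu^{(\kappa)}$ to vanish there, which completes the claim that $\mu^{(\kappa)}$ is supported in $\bar{A}$.
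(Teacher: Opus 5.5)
Your stationarity argument and your derivation of \eqref{inequality on nabla theta measure mu kappa>0} are sound. The latter is simpler than the paper's: the paper mollifies the equation and then has to kill the commutator term $\langle\rho_\varepsilon(u[\theta],\theta),\nabla J_\varepsilon\theta\rangle$, whereas you use the exact energy equality \eqref{energy equality with damping} and mollify only the nonnegative dissipation functionals. What this yields is the dissipation $\kappa\|\Lambda^{\gamma/2}\theta\|_{L^2}^2$. For $\gamma<2$ the version with $\kappa\|\nabla\theta\|_{L^2}^2$ is strictly stronger, and the energy balance does not imply it, whatever the regularity of $\theta^{(\kappa)}$.

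The genuine gap is the $H^{1/2}$ constraint in $\bar A$. Like the paper, you obtain it from the shell inequality \eqref{inequality on the integral on mu kappa>0}, but the cutoff argument you propose does not produce that inequality. If $\Psi(\theta)=\psi(\|J_\varepsilon\theta\|_{H^{1/2}})$ is used as a test functional, $\frac{d}{dt}\Psi(\theta(t))$ involves the balance for the $H^{1/2}$ norm, not the $L^2$ energy balance. That balance has a transport term $\langle J_\varepsilon(u\cdot\nabla\theta),\Lambda J_\varepsilon\theta\rangle$ that does not cancel, and higher-order dissipation. If instead you multiply the energy equality by the weight $\psi(\|J_\varepsilon\theta(t)\|_{H^{1/2}})$, the term $\frac1t\int_0^t\psi(\|J_\varepsilon\theta(s)\|_{H^{1/2}})\frac{d}{ds}\|\theta(s)\|_{L^2}^2\,ds$ is not a total derivative and need not vanish in the limit. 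The localization that does work uses functions of $\|\theta\|_{L^2}^2$. Since $\frac{d}{dt}G(\|\theta\|_{L^2}^2)=2G'(\|\theta\|_{L^2}^2)(\langle S,\theta\rangle-\lambda\|\theta\|_{H^{1/2}}^2-\kappa\|\Lambda^{\gamma/2}\theta\|_{L^2}^2)$, it gives the inequality on shells $E_1\le\|\theta\|_{L^2}\le E_2$. That yields only support in the $L^2$ ball $\{\|\theta\|_{L^2}\le\|S\|_{L^2}/\lambda\}$.

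Worse, for $E_1>\|S\|_{L^2}/\lambda$ the integrand in \eqref{inequality on the integral on mu kappa>0} is strictly positive on the shell, because $\langle S,\theta\rangle\le\|S\|_{L^2}\|\theta\|_{L^2}\le\|S\|_{L^2}\|\theta\|_{H^{1/2}}$ for mean-zero $\theta$. So on exactly the shells you use, the inequality is equivalent to the shell having $\mu^{(\kappa)}$-measure zero, i.e.\ to the support statement itself. Invoking it is therefore circular unless it is proved independently. That would require $\|\theta(t)\|_{H^{1/2}}\le\|S\|_{L^2}/\lambda$ outside a set of times of vanishing density. This is a pointwise-in-time bound the energy equality does not provide, since it controls $\|\theta\|_{H^{1/2}}$ only on time average. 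The paper does not supply it either: it asserts the support ``by \eqref{inequality on the integral on mu kappa>0}'' and defers the verification to \cite{CTV14b}. As written, your argument gives the $L^p$ constraints, the $L^2$ ball, and the averaged bound $\int\|\theta\|_{H^{1/2}}^2\,d\mu^{(\kappa)}\le\|S\|_{L^2}^2/\lambda^2$, but not support in the $H^{1/2}$ ball.

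A smaller point: your tightness upgrade (Chebyshev on the averaged $H^{1/2}$ bound plus Rellich) covers only strongly continuous $\Phi$ that are bounded on $X$. For a general $\Phi\in\mathcal{C}(L^2)$ the Banach limit is defined only if $s\mapsto\Phi(\theta(s))$ is bounded. This is why the paper works on the closure of $O_+(\theta_0)$, taken to be compact in $L^2$.
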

\begin{proof}
Notice that the positive semi-orbit $O_{+}(\theta_0)$ given in \eqref{def of positive semi-orbit} is relatively compact in $L^2$ as shown by Proposition~\ref{uniform bound on theta kappa>0 prop}. Since $C(L^2)\subset C(\overline{O_{+}(\theta_0)})$, for any $\Phi\in C(L^2)$, the map \eqref{realisation map on solution kappa>0} is a positive functional on $\overline{O_{+}(\theta_0)}$, and by the Riesz representation theorem on compact spaces, there exists a Borel measure $\mu^{(\kappa)}$ such that \eqref{representation of theta as measure kappa>0} holds. The verification of $\mu^{(\kappa)}$ for being a stationary statistical solution of \eqref{abstract active scalar eqn} can be found in \cite{CTV14b}, and using the bound \eqref{bound on theta by damping}, it shows that $O_{+}(\theta_0)$ is included in the set $B_p$ as given in \eqref{set Bp}, hence the measure $\mu^{(\kappa)}$ is supported in the set $\bar{A}$ by \eqref{inequality on the integral on mu kappa>0}.

To show the inequality \eqref{inequality on nabla theta measure mu kappa>0}, by using $J_{\varepsilon}$, we mollify $\theta$ and $u[\theta]$ and obtain the integral identity:
\begin{align*}
&\frac{1}{t}\int_{0}^{t}\left(\lambda\|J_{\varepsilon}(\theta)\|_{H^\frac{1}{2}}^2-\langle J_{\varepsilon}(S),J_{\varepsilon}(\theta)\rangle+\kappa\|\nabla J_{\varepsilon}(\theta)\|_{L^2}^2\right)ds\\
&=\frac{1}{2t}\Big(\|J_{\varepsilon}(\theta_0)\|_{L^2}^2-\|J_{\varepsilon}(\theta)(\cdot,t)\|_{L^2}^2\Big)+\frac{1}{t}\int_{0}^{t}\langle\rho_\varepsilon(u[\theta],\theta),\nabla J_{\varepsilon}(\theta)\rangle ds,
\end{align*}
where $\rho_\varepsilon$ is defined in \eqref{def of rho function with varepsilon}. Taking $t\to\infty$,
\begin{align}\label{integral involving rho function and varepsilon}
&\int_{L^2}\left(\kappa\|\nabla J_{\varepsilon}(\theta)\|_{L^2}^2+\lambda\|J_{\varepsilon}(\theta)\|_{H^\frac{1}{2}}^2-\langle J_{\varepsilon}(S),J_{\varepsilon}(\theta)\rangle\right)d\mu^{(\kappa)}\notag\\
&={\rm Lim}_{t\to\infty}\frac{1}{t}\int_{0}^{t}\langle\rho_\varepsilon(u[\theta],\theta),\nabla J_{\varepsilon}(\theta)\rangle ds.
\end{align}
Using \eqref{zero order bounded for u when nu>0}, we can bound $\rho_\varepsilon(u[\theta],\theta)$ by
\begin{align*}
\|\rho_\varepsilon(u[\theta],\theta)\|_{L^2}\le C\sqrt{\varepsilon}\|\theta\|_{L^\infty}\|\theta\|_{L^2},
\end{align*}
and hence the right side of \eqref{integral involving rho function and varepsilon} tends to zero as $\varepsilon\to0$. By Fatou's Lemma, this implies 
\begin{align*}
&\int_{L^2}\Big[\kappa\|\nabla\theta^{(\kappa)}\|_{L^2}^2+\lambda\|\theta^{(\kappa)}\|_{H^\frac{1}{2}}^2-\langle S,\theta^{(\kappa)}\rangle\Big]d\mu^{(\kappa)}(\theta^{(\kappa)})\\
&\le\limsup_{\varepsilon\to0}\int_{L^2}\left(\kappa\|\nabla J_{\varepsilon}(\theta)\|_{L^2}^2+\lambda\|J_{\varepsilon}(\theta)\|_{H^\frac{1}{2}}^2-\langle J_{\varepsilon}(S),J_{\varepsilon}(\theta)\rangle\right)d\mu^{(\kappa)}\le 0,
\end{align*}
and the inequality \eqref{inequality on nabla theta measure mu kappa>0} holds.
\end{proof}
Finally, we conclude this section by ahowing the absence of anomalous dissipation of energy for long time averaged solutions of the abstract active scalar equation \eqref{abstract active scalar eqn}, thereby proving Theorem~\ref{absence of anomalous dissipation thm}. 
\begin{proof}[Proof of Theorem~\ref{absence of anomalous dissipation thm}]
It can be argued by contraction. Suppose there exists $\delta>0$, a sequence $\kappa_k\to0$ and a sequence of times $t_j\to\infty$ such that for all $t_j$,
\begin{align*}
\frac{\kappa_k}{t_j}\int_{0}^{t_j}\|\nabla\theta^{(\kappa_k)}(x,t)\|_{L^2}^2 ds\ge\delta.
\end{align*}
Integrating \eqref{energy equality with damping} from $0$ to $t_j$ and taking limit of supreme, 
\begin{align*}
\limsup_{t\to\infty}\frac{1}{t}\int_{0}^{t}\left(-\lambda\|\theta^{(\kappa_k)}(\cdot,s)\|_{H^\frac{1}{2}}^2+\langle S(\cdot),\theta^{(\kappa_k)}(\cdot,s)\rangle\right)ds\ge\delta.
\end{align*}
Hence by Proposition~\ref{long time average as ss kappa>0 prop}, for each $\kappa_k>0$, there exists a corresponding stationary statistical solution $\mu^{(\kappa_k)}$ satisfying
\begin{align}\label{lower bound of mu kappa k}
\int_{L^2}\left(-\lambda\|\theta(\cdot,s)\|_{H^\frac{1}{2}}^2+\langle S(\cdot),\theta(\cdot,s)\rangle\right)d\mu^{(\kappa_k)}(\theta)\ge\delta.
\end{align}
By taking a weakly convergent subsequence of $\mu^{(\kappa_k)}$, there exists a stationary statistical solution $\mu$ that satisfies \eqref{energy dissipation balance}. On the other hand, in view of \eqref{lower bound of mu kappa k}, the weak limit $\mu$ further satisfies
\begin{align*}
\int_{L^2}\left(\lambda\|\theta(\cdot,s)\|_{H^\frac{1}{2}}^2-\langle S(\cdot),\theta(\cdot,s)\rangle\right)d\mu(\theta)\le -\delta<0,
\end{align*}
which contradicts to \eqref{energy dissipation balance}. This completes the proof of \eqref{absence of anomalous dissipation identity}.
\end{proof}

\section{Existence and convergence of $H^s$-solutions when $\kappa>0$}\label{Existence and convergence of Hs-solutions section}

In this section, we first obtain the global-in-time existence of $H^s$-solutions to \eqref{abstract active scalar eqn} when $\lambda\ge0$, $\kappa>0$ and the assumption A3 is replacing by 
\begin{itemize}
\item[A3'.] There exists a constant $C>0$ such that for all $1\le i,j\le 3$, $$|\widehat{T_{ij}}(k)|\le C|k|^{-3}, \qquad\forall k\neq0.$$
\end{itemize}
Under assumption A3', it implies that $u_j[\cdot]=\partial_{x_i} T_{ij}[\cdot]$ is an operator of smoothing order 2, in the sense that for any $s\ge0$ and $f\in L^p$ with $p>1$,
\begin{align}\label{two order smoothing for u when nu>0}
\|\Lambda^s u[f]\|_{L^p}\le C\|\Lambda^{s'}f\|_{L^p},
\end{align}
where $s'=\max\{s-2,0\}$. Here $C$ is a positive constant which depends on $p$ and $\Omega$ only. After obtaining global-in-time existence, we then proceed to prove $H^s$-convergence of the solutions $\theta$ as $\lambda\rightarrow0$. More precisely, given $\kappa>0$ and $\gamma\in(0,2]$, we prove that if $\theta^{(\lambda)}$ and $\theta^{(0)}$ are smooth solutions to \eqref{abstract active scalar eqn} for $\lambda>0$ and $\lambda=0$ respectively, then $\|(\theta^{(\lambda)}-\theta^{(0)})(\cdot,t)\|_{H^s}\rightarrow0$ as $\lambda\rightarrow0$ for all $t>0$ and $s\ge0$. 

\begin{remark}
It is clear that assumption A3 implies A3', and the class of MG equations given by \eqref{MG active scalar} later in Section~\ref{Applications to magneto-geostrophic equations section} is an example of active scalar equations that satisfy assumption A3'.
\end{remark}

\subsection{Existence of $H^s$-solutions}\label{existence of Hs solution subsec}

In this subsection, we prove global-in-time wellposedness for \eqref{abstract active scalar eqn} in Sobolev space $H^s$ when $\lambda\ge0$ and $\kappa>0$. The results are given in Theorem~\ref{global-in-time wellposedness in Sobolev}.

\begin{theorem}[Global-in-time wellposedness in Sobolev space]\label{global-in-time wellposedness in Sobolev}
Fix $\kappa>0$, $\gamma\in(0,2]$ and $s\ge0$, and let $\theta_0\in H^s$ and $S\in H^s\cap L^\infty$ be given. 
\begin{itemize}
\item For any $\lambda>0$, there exists a global-in-time solution to \eqref{abstract active scalar eqn} such that 
\begin{align}\label{functional spaces for theta}
\thetalambda\in C([0,\infty);H^s)\cap L^2([0,\infty);H^{s+\frac{\bar{\gamma}}{2}}),
\end{align}
where $\bar\gamma=\max\{1,\gamma\}$.
\item For $\lambda=0$, there exists a global-in-time solution to \eqref{abstract active scalar eqn} such that 
\begin{align}\label{functional spaces for theta lambda=0}
\theta^{(0)}\in C([0,\infty);H^s)\cap L^2([0,\infty);H^{s+\frac{\gamma}{2}}).
\end{align}
\end{itemize}
\end{theorem}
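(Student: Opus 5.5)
The argument follows the standard Galerkin/a priori estimate scheme. A3' makes $u[\theta]$ two orders smoother than $\theta$, so the nonlinearity is a mild, low-order perturbation of the linear damped fractional heat equation. I would first construct solutions of a regularized problem, either by a Fourier--Galerkin truncation $P_N$ onto modes $|k|\le N$ or by mollifying the drift $u\mapsto J_\varepsilon u[\theta]$. Local existence for the truncated ODE system is immediate. The work is to derive $H^s$ bounds that are uniform in $N$ (or $\varepsilon$) on every interval $[0,T]$, and, for $\lambda>0$, uniform in time. One then passes to the limit by Aubin--Lions compactness. Time continuity with values in $H^s$ follows from the energy equality together with weak continuity in the usual way. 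Uniqueness follows from an $L^2$ estimate on the difference of two solutions; there the nonlinear term is controlled using \eqref{two order smoothing for u when nu>0}, in the form $\|u[\theta_1-\theta_2]\|_{W^{1,\infty}}\lesssim\|\theta_1-\theta_2\|_{L^2}$ up to lower order terms.

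For the base level, I take the $L^2$ inner product with $\theta$. The transport term vanishes because $\nabla\cdot u=0$ (A1), which yields \eqref{energy equality with damping}. When $\lambda>0$, the damping gives dissipation $\lambda\|\theta\|_{H^{1/2}}^2$, and together with $\kappa\|\Lambda^{\gamma/2}\theta\|_{L^2}^2$ this produces the gain $H^{\bar\gamma/2}$ with $\bar\gamma=\max\{1,\gamma\}$. When $\lambda=0$, only the $\Lambda^{\gamma/2}$ gain is available. On the mean-zero torus, Poincaré's inequality gives $\|\theta\|_{L^2}\le\|\Lambda^{\gamma/2}\theta\|_{L^2}$. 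A Gronwall argument then yields a uniform-in-time $L^2$ bound, and the time-integrated dissipation gives $L^2_tH^{\bar\gamma/2}$ (resp.\ $H^{\gamma/2}$) control. In addition, the $L^\infty$ bound \eqref{bound on theta by damping} from Proposition~\ref{uniform bound on theta kappa>0 prop}, or its maximum-principle analogue when $\lambda=0$, holds on finite intervals.

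For the $H^s$ level, I apply $\Lambda^s$ and pair with $\Lambda^s\theta$. Using $\nabla\cdot u=0$, I rewrite
\[
\langle\Lambda^s(u\cdot\nabla\theta),\Lambda^s\theta\rangle=\langle\Lambda^s(u\cdot\nabla\theta)-u\cdot\nabla\Lambda^s\theta,\Lambda^s\theta\rangle .
\]
The commutator estimate \eqref{commutator estimate} bounds this by
\[
C\big(\|\nabla u\|_{L^\infty}\|\Lambda^s\theta\|_{L^2}+\|\Lambda^s u\|_{L^{p_3}}\|\nabla\theta\|_{L^{p_4}}\big)\|\Lambda^s\theta\|_{L^2}.
\]
By \eqref{two order smoothing for u when nu>0}, $\|\Lambda^s u\|\lesssim\|\Lambda^{s-2}\theta\|$. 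Also $\|\nabla u\|_{L^\infty}\lesssim\|u\|_{W^{2,d}}\lesssim\|\theta\|_{L^d}$ by \eqref{L infty bound 1}, and this is bounded by the $L^\infty$ (or $L^2$ plus interpolation) bound. The term with $\nabla\theta$ I would handle by interpolating between $L^2$ and $H^{s+\gamma/2}$ and absorbing into the dissipation $\kappa\|\Lambda^{s+\gamma/2}\theta\|_{L^2}^2$ via Young's inequality. This leads to
\[
\frac{d}{dt}\|\Lambda^s\theta\|_{L^2}^2+\kappa\|\Lambda^{s+\gamma/2}\theta\|_{L^2}^2+\lambda\|\Lambda^{s+1/2}\theta\|_{L^2}^2\le C\big(1+\|\theta\|_{L^\infty}\big)^{m}\|\Lambda^s\theta\|_{L^2}^2+C\|S\|_{H^s}^2 .
\]
Gronwall then gives $H^s$ bounds on every finite interval. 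The forcing term is handled by Cauchy--Schwarz and Young.

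I expect the main obstacle to be the regime where $\gamma$ is small, in particular $\lambda=0$ with $\gamma<1$. There the dissipation gains less than one derivative, so $\|\nabla\theta\|_{L^{p_4}}$ cannot be absorbed directly. I would first try an induction on $s$ in steps of $\gamma/2$: the $L^2_tH^{s+\gamma/2}$ bound from the previous step controls the lower-order factor. The two-derivative smoothing of $u$ ensures that $\|\nabla u\|_{L^\infty}$ and $\|\Lambda^s u\|$ are always lower order, so the estimate closes linearly in the top norm, and the resulting bound then holds for all $t$ on every finite interval.
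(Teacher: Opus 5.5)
There is a genuine gap, at the point the paper singles out as the most delicate. Your closing inequality carries the factor $(1+\|\theta\|_{L^\infty})^m$, and you bound $\|\theta\|_{L^\infty}$ by \eqref{bound on theta by damping} or the maximum principle. Both require $\theta_0\in L^\infty$, but the theorem assumes only $\theta_0\in H^s$ with $s\ge0$ arbitrary. So for $s\le d/2$ (e.g.\ $s=0$) the coefficient in your Gr\"onwall inequality is not controlled.

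The fallback (``$L^2$ plus interpolation'', induction in steps of $\gamma/2$) does not rescue the regime you yourself flag. First, your bound for $\|\nabla u\|_{L^\infty}$ needs $\theta\in L^p$ with $p>d$; the $L^d$ version is borderline and fails, which is why the paper uses $L^{d+1}$. For $d=3$, $\lambda=0$, $\gamma\le1$, the space $L^\infty_tL^2\cap L^2_tH^{\gamma/2}$ does not give $\theta\in L^1_tL^p$ with $p>3$. Second, and more decisively, count derivatives for $d=3$, $\lambda=0$. The trilinear term at the $\dot H^s$ level costs $2s+\frac12$ derivatives in $L^2$-based norms: $2s+1$ derivatives, minus the $2$ gained by $u$, plus $\frac32$ for the embeddings. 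Interpolating between the energy and the dissipation $\|\Lambda^{s+\gamma/2}\theta\|_{L^2}$ therefore produces the power $(2s+\frac12)/(s+\frac{\gamma}{2})$, which is $\ge2$ exactly when $\gamma\le\frac12$. Equivalently, under $\theta\mapsto\mu^{1+\gamma}\theta(\mu x,\mu^\gamma t)$ the energy is critical at $\gamma=\frac12$ and supercritical below. In that regime you cannot get a Gr\"onwall inequality linear in $\|\Lambda^s\theta\|_{L^2}^2$, only local-in-time bounds, so the claim that the estimate ``closes linearly in the top norm'' is unjustified.

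What is missing is an a priori $L^\infty$ bound that does not use $\theta_0\in L^\infty$. The paper gets it from the instantaneous $L^2\to L^\infty$ smoothing of Lemma~\ref{De Giorgi iteration lemma}. This is a De Giorgi iteration on the truncations $(\theta-L_n)_+$ over shrinking time intervals. The divergence-free drift drops out of the truncated energy inequalities, so no regularity of $u$ is needed. It gives $\|\theta(t)\|_{L^\infty}\lesssim(2/t+1)^{\frac{d+1-\gamma}{2\gamma}}\big(\|\theta_0\|_{L^2}+\kappa^{-1/2}\|S\|_{L^2}\big)+\|S\|_{L^\infty}$ on $(0,1]$. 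Together with \eqref{infty bound on theta with kappa>0} this yields $\|\theta(t)\|_{L^\infty}\le C_\tau$ for all $t\ge\tau$.

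With that bound, the paper writes $u\cdot\nabla\theta=\nabla\cdot(u\theta)$ and applies \eqref{commutator estimate} so that the undifferentiated $\theta$ sits in $L^\infty$, while $\|\Lambda^{s+1}u\|_{L^2}\lesssim\|\Lambda^s\theta\|_{L^2}$. This gives the linear inequality \eqref{differential inequality on theta final}. Gr\"onwall on $[\tau,\infty)$ is then combined with the local $H^s$ solution of Lemma~\ref{local-in-time existence lemma} on $[0,\tau]$.

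Separately, your uniqueness step relies on $\|u[\theta_1-\theta_2]\|_{W^{1,\infty}}\lesssim\|\theta_1-\theta_2\|_{L^2}$. This is false for $d=2,3$, since $\nabla u$ is then only in $H^1$. Uniqueness is not part of the statement, but that argument would need more regularity.
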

\begin{remark}
When $\lambda>0$ and $\gamma\le1$, thanks to the damping term $\lambda\mathcal{D}\thetalambda$, Theorem~\ref{global-in-time wellposedness in Sobolev} ensures that $\thetalambda\in C([0,\infty);H^s)\cap L^2([0,\infty);H^{s+\frac{1}{2}})$.
\end{remark}
The most subtle part for proving Theorem~\ref{global-in-time wellposedness in Sobolev} is to estimate the $L^\infty$-norm of $\theta^{(\lambda)}(\cdot,t)$ when the initial datum $\theta_0$ is {\it not} necessarily in $L^\infty$. In achieving our goal, we apply De Giorgi iteration method which will be illustrated in Lemma~\ref{De Giorgi iteration lemma}. 

We first recall the following energy inequalities which were presented in \cite{FS21} for the case $\lambda=0$. The general cases for $\lambda>0$ follows similarly and we omit the details here.

\begin{proposition}\label{boundedness on theta with kappa>0 prop}
Assume that $S\in L^\infty$, and let $\theta^{(\lambda)}$ be a smooth solution to \eqref{abstract active scalar eqn}. For $\lambda\ge0$, we have
\begin{align}\label{energy inequality kappa>0}
\|\theta^{(\lambda)}(\cdot,t)\|^2_{L^2}+&\lambda\int_0^t\|\theta^{(\lambda)}(\cdot,\tau)\|^2_{H^\frac{1}{2}}d\tau+\kappa\int_0^t\|\Lambda^\frac{\gamma}{2}\theta^{(\lambda)}(\cdot,\tau)\|^2_{L^2}d\tau\notag\\
&\le\|\theta_0\|^2_{L^2}+\frac{t}{c_0\kappa}\|S\|^2_{L^2}, \qquad\forall t\ge0.
\end{align}
If we further assume that $\theta_0\in L^\infty$, for all $\kappa>0$, it gives
\begin{align}\label{infty bound on theta with kappa>0}
\|\theta^{(\lambda)}(\cdot,t)\|_{L^\infty}\le\|\theta_0\|_{L^\infty}e^{-c_o\kappa t}+\frac{\|S\|_{L^\infty}}{c_0\kappa}, \qquad\forall t\ge0,
\end{align}
where $c_0>0$ is a universal constant which depends only on the dimension $d$.
\end{proposition}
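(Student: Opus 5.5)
For the energy inequality \eqref{energy inequality kappa>0}, I would take the $L^2$ inner product of \eqref{abstract active scalar eqn} with $\thetalambda$. The transport term vanishes because $\nabla\cdot u=0$, which follows from A1: indeed $\langle u\cdot\nabla\theta,\theta\rangle=\frac12\int u\cdot\nabla(\theta^2)\,dx=0$. The damping term contributes $\lambda\langle\mathcal{D}\theta,\theta\rangle=\lambda(\|\Lambda^{1/2}\theta\|_{L^2}^2+\|\theta\|_{L^2}^2)\ge\lambda\|\theta\|_{H^{1/2}}^2$. This yields \eqref{energy equality with damping}, up to the harmless extra nonnegative term $\lambda\|\theta\|_{L^2}^2$.

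For the forcing term I would use mean-zero Poincaré on $\T^d$ in the form $\|\Lambda^{\gamma/2}\theta\|_{L^2}^2\ge c_0\|\theta\|_{L^2}^2$, valid since all nonzero frequencies satisfy $|k|\ge1$. Then Cauchy–Schwarz and Young give
\begin{align*}
\langle S,\theta\rangle\le \|S\|_{L^2}\|\theta\|_{L^2}\le \frac{\kappa c_0}{2}\|\theta\|_{L^2}^2+\frac{1}{2c_0\kappa}\|S\|_{L^2}^2\le\frac{\kappa}{2}\|\Lambda^{\gamma/2}\theta\|_{L^2}^2+\frac{1}{2c_0\kappa}\|S\|_{L^2}^2 .
\end{align*}
Absorbing the first term, multiplying by $2$ and integrating on $[0,t]$ gives \eqref{energy inequality kappa>0}, with constant bookkeeping possibly changing $c_0$ by a fixed factor.

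For the $L^\infty$ bound \eqref{infty bound on theta with kappa>0}, I would follow the pointwise argument of Córdoba–Córdoba / Constantin–Vicol \cite{CV12}. Since $\theta$ is smooth on the compact torus, at each time $t$ there is a point $\bar x(t)$ where $\theta(\cdot,t)$ attains its maximum $M(t)$. I would work with $M(t)$ and its Rademacher-type derivative, which is Lipschitz by smoothness. At $\bar x$ we have $\nabla\theta=0$, so the advection term drops out. We also have $\Lambda\theta(\bar x)\ge0$ and $\Lambda^\gamma\theta(\bar x)\ge0$ from the singular-integral representation; for $\gamma=2$ this is just $-\Delta\theta\ge0$ at a maximum. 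The key quantitative step is the nonlinear lower bound for the fractional Laplacian at an extremum of a mean-zero periodic function, namely $\Lambda^\gamma\theta(\bar x)\ge c_0\,\theta(\bar x)$ whenever $\theta(\bar x)\ge0$. I would prove this from the periodic kernel representation
\begin{align*}
\Lambda^\gamma\theta(\bar x)=c\,\mathrm{P.V.}\int_{\T^d}(\theta(\bar x)-\theta(y))K_\gamma(\bar x-y)\,dy,
\end{align*}
where $K_\gamma$ is bounded below by a positive constant on $\T^d$. Combining this lower bound with the zero mean of $\theta$ gives $\Lambda^\gamma\theta(\bar x)\ge c\,\theta(\bar x)\int K_\gamma\cdots\ge c_0 M$. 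The damping term has the good sign, $\lambda\mathcal{D}\theta(\bar x)\ge\lambda M\ge0$ when $M\ge0$, so it can be dropped. This yields $M'(t)\le -c_0\kappa M+\|S\|_{L^\infty}$, and the symmetric argument applies to the minimum. Grönwall's inequality then gives \eqref{infty bound on theta with kappa>0}.

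I expect the main obstacle to be the positivity lemma $\Lambda^\gamma\theta(\bar x)\ge c_0\theta(\bar x)$ uniformly for $\gamma\in(0,2]$. The difficulty is a uniform positive lower bound on the periodized kernel, with a separate elementary treatment for $\gamma=2$. I would first try to derive the kernel bound from the heat-semigroup subordination formula $\Lambda^\gamma f=c_\gamma\int_0^\infty(f-e^{t\Delta}f)t^{-1-\gamma/2}dt$. This formula works directly on $\T^d$: at a maximum of a mean-zero function, $f(\bar x)-e^{t\Delta}f(\bar x)\ge0$ for all $t$, and it tends to $f(\bar x)$ as $t\to\infty$. Integrating over $t\ge1$ then gives the bound.
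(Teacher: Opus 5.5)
Your energy estimate is fine. On mean-zero functions $|k|\ge1$, so you may take $c_0\le1$, and Young's inequality plus absorption gives \eqref{energy inequality kappa>0} exactly. This is the standard argument the paper defers to \cite{FS21}.

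The $L^\infty$ part, however, breaks at exactly the step you flagged, and the obstruction is not technical. There is no $c_0=c_0(d)$ with $\Lambda^\gamma\theta(\bar x)\ge c_0\theta(\bar x)$ at a maximum for all $\gamma\in(0,2]$. At $\gamma=2$ there is no ``separate elementary treatment'': $-\Delta\theta(\bar x)\ge0$ is all you get. Your subordination formula shows why. Its constant is $c_\gamma=\frac{\gamma/2}{\Gamma(1-\gamma/2)}$. The heat kernel bound $p_t\ge c_d>0$ for $t\ge1$, together with the zero mean, gives $\theta(\bar x)-(e^{t\Delta}\theta)(\bar x)\ge c_d|\T^d|\,\theta(\bar x)$ for $t\ge1$. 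Hence $\Lambda^\gamma\theta(\bar x)\ge \frac{c_d|\T^d|}{\Gamma(1-\gamma/2)}\,\theta(\bar x)$, but $1/\Gamma(1-\gamma/2)\sim1-\frac{\gamma}{2}\to0$ as $\gamma\to2$. This rate is sharp. If $\theta$ is constant on a ball $B(\bar x,r)$, then $\theta(\bar x)-(e^{t\Delta}\theta)(\bar x)\le C\|\theta\|_{L^\infty}e^{-r^2/(8t)}$. So $\Lambda^\gamma\theta(\bar x)\le C_r(2-\gamma)\|\theta\|_{L^\infty}$, and $-\Delta\theta(\bar x)=0$.

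In fact \eqref{infty bound on theta with kappa>0} is false at $\gamma=2$, so no argument can close this gap. Take $T_{ij}\equiv0$ (admissible under A1--A4 and A3'), $\lambda=0$, $S=0$, $\kappa=1$, and a smooth mean-zero $\theta_0$ with $\|\theta_0\|_{L^\infty}=1$ and $\theta_0\equiv1$ on $B(0,r)$. Then $\theta(0,t)\ge1-2\int_{\T^d\setminus B(0,r)}p_t\,dy\ge1-Ce^{-r^2/(8t)}$. This exceeds $e^{-c_0t}\le1-\frac{c_0t}{2}$ for all small $t>0$, whatever $c_0>0$ is. A smooth bounded drift does not change this, and for $\gamma<2$ the same data force $c_0\lesssim2-\gamma$.

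What your argument actually proves is the bound for each fixed $\gamma\in(0,2)$ with $c_0=c_0(d,\gamma)\gtrsim1/\Gamma(1-\gamma/2)$. This is uniform only for $\gamma$ bounded away from $2$. For $\gamma=2$ you must settle for something weaker:
\begin{itemize}
\item the non-decaying maximum principle;
\item decay with a prefactor $C>1$ in front of $\|\theta_0\|_{L^\infty}e^{-c\kappa t}$, e.g.\ from $L^2$ decay plus $L^2\to L^\infty$ smoothing on unit time intervals;
\item when $\lambda>0$, decay at rate $\lambda$ from $\lambda\mathcal{D}\theta(\bar x)\ge\lambda\theta(\bar x)$, as in \eqref{bound on theta by damping}.
\end{itemize}
Since the paper only cites earlier work for this proposition, this is a defect of the statement as written, not only of your plan.
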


\begin{remark}
Instead of the decay estimate given in \eqref{infty bound on theta with kappa>0}, we also have the following bound on $\|\theta^{(\lambda)}(\cdot,t)\|_{L^\infty}$ for all $\kappa\ge0$ provided that $\theta_0\in L^\infty$:
\begin{align}\label{infty bound on theta with kappa=0 or >0}
\|\theta^{(\lambda)}(\cdot,t)\|_{L^\infty}\le \|\theta_0\|_{L^\infty}+\|S\|_{L^\infty},\qquad\forall t\ge0.
\end{align} 
\end{remark}

Next we state and prove the following lemma which gives local-in-time existence results for the equation \eqref{abstract active scalar eqn}. 

\begin{lemma}\label{local-in-time existence lemma}
Let $\lambda>0$, $\kappa>0$, $\gamma\in(0,2]$ and fix $s \ge 0$. Assume that $\theta_0\in H^s$ and $S\in H^s\cap L^\infty$, then there exists $T_*=T_*(\theta_0,S)>0$ and a unique solution $\theta^{(\lambda)}$ of \eqref{abstract active scalar eqn} such that
\begin{align*}
\theta^{(\lambda)}\in C([0,T_*);H^s)\cap L^2([0,T_*);H^{s+\frac{\bar{\gamma}}{2}}),\qquad \bar\gamma=\max\{1,\gamma\}.
\end{align*}
When $\lambda=0$ and $s \ge 0$, if $\theta_0$, $S\in H^s\cap L^\infty$, then there exists $T_*=T_*(\theta_0,S)>0$ and a unique solution $\theta^{(0)}$ of \eqref{abstract active scalar eqn} such that $\theta^{(0)}\in C([0,T_*);H^s)\cap L^2([0,T_*);H^{s+\frac{\gamma}{2}})$.
\end{lemma}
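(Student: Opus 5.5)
We will prove Lemma~\ref{local-in-time existence lemma} by a Galerkin (Fourier truncation) scheme combined with a priori $H^s$ energy estimates and a compactness argument. Let $P_N$ denote the projection onto Fourier modes $0<|k|\le N$. We consider the truncated system
\begin{align*}
\partial_t\theta_N+P_N\big(u[\theta_N]\cdot\nabla\theta_N\big)+\lambda\mathcal{D}\theta_N+\kappa\Lambda^\gamma\theta_N=P_NS,\qquad \theta_N(0)=P_N\theta_0.
\end{align*}
This is a finite-dimensional ODE with locally Lipschitz right-hand side, so it has a local solution. The mean-zero property is preserved by A1 and A4.

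\textbf{A priori estimate.} We apply $\Lambda^s$ to the truncated system and pair with $\Lambda^s\theta_N$. The divergence-free condition (A1) cancels $\langle u\cdot\nabla\Lambda^s\theta_N,\Lambda^s\theta_N\rangle$, so only the commutator $\Lambda^s(u\cdot\nabla\theta_N)-u\cdot\nabla\Lambda^s\theta_N$ remains. We bound it with \eqref{commutator estimate}:
\begin{align*}
\big|\langle [\Lambda^s,u\cdot\nabla]\theta_N,\Lambda^s\theta_N\rangle\big|\le C\big(\|\nabla u\|_{L^\infty}\|\Lambda^s\theta_N\|_{L^2}+\|\Lambda^s u\|_{L^{p_3}}\|\nabla\theta_N\|_{L^{p_4}}\big)\|\Lambda^s\theta_N\|_{L^2}.
\end{align*}
Under A3', the smoothing bound \eqref{two order smoothing for u when nu>0} gives $\|\nabla u\|_{L^\infty}\le C\|u\|_{H^{3}}\le C\|\theta_N\|_{H^{1}}$ in $d\le3$, up to a harmless $\epsilon$ of regularity. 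Likewise $\Lambda^s u$ is controlled by $\Lambda^{s-2}\theta_N$. Thus the whole nonlinear term is bounded by $C\|\theta_N\|_{H^{\sigma}}\|\theta_N\|_{H^s}^2$ with $\sigma\le\max\{s,1\}$ after interpolation. The damping contributes $\lambda\|\theta_N\|_{H^{s+1/2}}^2$ and the diffusion contributes $\kappa\|\theta_N\|_{H^{s+\gamma/2}}^2$. Together these give control of $H^{s+\bar\gamma/2}$ with $\bar\gamma=\max\{1,\gamma\}$. Part of this dissipation absorbs any excess derivatives by Young's inequality, and the forcing is handled by $|\langle\Lambda^sS,\Lambda^s\theta_N\rangle|\le\|S\|_{H^s}\|\theta_N\|_{H^s}$. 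The result is a Riccati-type inequality
\begin{align*}
\frac{d}{dt}\|\theta_N\|_{H^s}^2+c\|\theta_N\|_{H^{s+\bar\gamma/2}}^2\le C\big(1+\|\theta_N\|_{H^s}^2\big)^{m}
\end{align*}
for some $m>1$. This yields a time $T_*$, depending only on $\|\theta_0\|_{H^s}$ and $\|S\|_{H^s}$, on which the bounds hold uniformly in $N$.

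\textbf{The low-regularity case.} The main obstacle is small $s$, in particular $s<1$, where $\|\nabla u\|_{L^\infty}$ is not directly bounded by $\|\theta\|_{H^s}$. Here we plan to exploit the two-order smoothing more fully. We write the nonlinearity as $\nabla\cdot(u\theta)$ and use $u\in H^{s+2}\subset W^{1,\infty}$ for $s>1/2$ in $d=3$, while for smaller $s$ we use the $L^2$ energy \eqref{energy inequality kappa>0}. For $\lambda=0$ the damping is unavailable, so only $\kappa\Lambda^\gamma$ provides dissipation. This is why the statement assumes $\theta_0,S\in L^\infty$ in that case. We would then use the maximum-principle bound \eqref{infty bound on theta with kappa=0 or >0} (or \eqref{infty bound on theta with kappa>0}) to control $\|\theta\|_{L^\infty}$, hence $u$ in $W^{1,p}$, and close the product estimate \eqref{product estimate} with $f=u\in L^\infty$ and $g=\theta$. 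The estimate then loses nothing in derivatives even when $\gamma<1$.

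\textbf{Passage to the limit and uniqueness.} With uniform bounds in $L^\infty(0,T_*;H^s)\cap L^2(0,T_*;H^{s+\bar\gamma/2})$ and a bound on $\partial_t\theta_N$ in $L^2(0,T_*;H^{-r})$ for some $r$, Aubin--Lions gives strong convergence in $L^2(0,T_*;H^{s'})$ for $s'<s+\bar\gamma/2$. This is enough to pass to the limit in the nonlinear term, because $u[\cdot]$ is linear and smoothing. Continuity in time with values in $H^s$ follows from the standard argument: weak continuity, together with the energy equality, gives continuity of the norm. For uniqueness, we take the difference $w=\theta_1-\theta_2$ of two solutions and estimate it in $L^2$. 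The term $\langle u[w]\cdot\nabla\theta_2,w\rangle$ is bounded by $\|u[w]\|_{L^\infty}\|\nabla\theta_2\|_{L^2}\|w\|_{L^2}\le C\|w\|_{L^2}^2\|\theta_2\|_{H^1}$, using A3'. Since $\theta_2\in L^2_tH^{1}$ holds at least after using the dissipation, or else via $L^\infty$ bounds and integration by parts, Gr\"onwall's inequality gives $w\equiv0$.
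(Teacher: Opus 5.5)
Your route (Galerkin approximation, $H^s$ energy estimates driven by the two-order smoothing \eqref{two order smoothing for u when nu>0}, compactness, and a difference estimate for uniqueness) is the ``standard energy method'' the paper invokes via \cite{FRV12}, with $\lambda=0$ and time continuity deferred to \cite{FS21}. As written, however, it does not cover the low-regularity range the lemma asserts, and that is exactly where the hypothesis $\theta_0\in H^s$ only (for $\lambda>0$) matters.

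\textbf{The a priori estimate.} Applying \eqref{commutator estimate} with $g=\nabla\theta_N$ leaves the term $\|\Lambda^s u\|_{L^{p_3}}\|\nabla\theta_N\|_{L^{p_4}}$. Since $p_3<\infty$, we have $p_4>2$, so this requires more than a full $H^1$ norm of $\theta_N$. That can be absorbed only if $s+\bar\gamma/2>1$.
\begin{itemize}
\item For $\lambda>0$, $\gamma\le1$ and $0<s\le\frac12$ it cannot be absorbed.
\item ``Use the $L^2$ energy'' gives no control of $\|\theta\|_{H^s}$.
\item The maximum principle is unavailable, because $\theta_0\notin L^\infty$ is allowed when $\lambda>0$.
\end{itemize}
The missing idea is to use A1 so that no derivative falls on the low-frequency copy of $\theta$. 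Since $\hat u(j)\cdot j=0$, up to a constant
\[
\langle[\Lambda^s,u\cdot\nabla]\theta,\Lambda^s\theta\rangle=\sum_{j,k}\bigl(|k|^s-|k-j|^s\bigr)\,i\,\hat u(j)\cdot k\,\hat\theta(k-j)\,|k|^s\,\overline{\hat\theta(k)}.
\]
Combine this with $\bigl||k|^s-|k-j|^s\bigr|\le|j|^s$ for $0<s\le1$, the splitting $|k|^{1+s}\le|k|^{s+\frac12}\bigl(|j|^{\frac12}+|k-j|^{\frac12}\bigr)$, and \eqref{two order smoothing for u when nu>0}. For $d\le3$ and small $\epsilon>0$ this gives
\[
\bigl|\langle[\Lambda^s,u\cdot\nabla]\theta,\Lambda^s\theta\rangle\bigr|\le C\bigl(\|\theta\|_{H^{s+\epsilon}}\|\theta\|_{L^2}+\|\theta\|_{H^s}\|\theta\|_{H^{\frac12}}\bigr)\|\theta\|_{H^{s+\frac12}}.
\]
Interpolate $H^{s+\epsilon}$ and $H^{\frac12}$ between $H^s$ and $H^{s+\frac12}$. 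The right side then carries a power strictly less than $2$ of $\|\theta\|_{H^{s+\frac12}}$ for every $s>0$, so it is absorbed by $\lambda\|\theta\|^2_{H^{s+\frac12}}$. This is what the damping provides, and why no $L^\infty$ hypothesis is needed when $\lambda>0$.

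\textbf{The case $\lambda=0$.} Your appeal to \eqref{infty bound on theta with kappa=0 or >0}, which gives the loss-free bound \eqref{estimate on convective term step 3}, is the right mechanism. But the maximum principle does not hold for your Galerkin system, since $P_N$ destroys the $L^p$ structure. You need approximants that are genuine solutions, for example smooth solutions with data $(J_\epsilon\theta_0,J_\epsilon S)$. Mollification does not increase the $L^\infty$ or $H^s$ norms, so the bounds are uniform in $\epsilon$.

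\textbf{Uniqueness.} Your estimate needs $\theta_2\in L^1_tH^1$, which fails in the same range (e.g.\ $s=0$, $\gamma<2$). For $\lambda>0$, write $\langle u[w]\cdot\nabla\theta_2,w\rangle=-\langle u[w]\theta_2,\nabla w\rangle$. Using $\|u[w]\|_{L^\infty}+\|\Lambda^{\frac12}u[w]\|_{L^6}\le C\|w\|_{L^2}$, bound it by $C\|w\|_{L^2}\|\theta_2\|_{H^{\frac12}}\|w\|_{H^{\frac12}}$ and absorb into $\lambda\|w\|^2_{H^{\frac12}}$. For $\lambda=0$ and $\gamma<1$, estimate $w$ in $H^{-1}$ instead, for solutions obeying the $L^\infty$ bound. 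There $|\langle u[\theta_1]\cdot\nabla w,\Lambda^{-2}w\rangle|\le\|\nabla u[\theta_1]\|_{L^\infty}\|w\|^2_{H^{-1}}$ and $|\langle u[w]\cdot\nabla\theta_2,\Lambda^{-2}w\rangle|\le C\|\theta_2\|_{L^\infty}\|w\|_{H^{-2}}\|w\|_{H^{-1}}$. Both are controlled by $C(\|\theta_1\|_{L^\infty}+\|\theta_2\|_{L^\infty})\|w\|^2_{H^{-1}}$, and Gr\"onwall closes.
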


\begin{proof}
We consider only $\lambda >0$ and the case for $\lambda=0$ was fully addressed in \cite{FS21}. Following the argument given in the proof of \cite[Theorem~3.1]{FRV12}, applying \eqref{two order smoothing for u when nu>0} and using standard energy method, there exists $T_*=T_*(\theta_0,S)>0$ and a unique solution $\theta^{(\lambda)}$ of \eqref{abstract active scalar eqn} such that
\begin{align*}
\theta^{(\lambda)}\in L^\infty([0,T_*);H^s)\cap L^2([0,T_*);H^{s+\frac{\gamma}{2}}),
\end{align*}
and in particular, the following inequality holds for $t\in[0,T_*)$:
\begin{align}\label{H^s bound on theta with S and higher norm}
\|\theta^{(\lambda)}(\cdot,t)\|^2_{H^s}+\lambda\int_0^t\|\theta^{(\lambda)}(\cdot,\tau)\|^2_{H^{s+\frac{1}{2}}}d\tau+\kappa\int_0^t\|\theta^{(\lambda)}(\cdot,\tau)\|^2_{H^{s+\frac{\gamma}{2}}}d\tau\notag\\
\le \|\theta_0\|^2_{H^s}+Ct\|S\|^2_{H^s},
\end{align}
where $C$ is a positive constant which is independent of $t$. The continuity of $\theta^{(\lambda)}$ in time follows by the same argument given in \cite{FS21} for the case $\lambda=0$ and we omit the details here.
\end{proof}

The lemma below gives the desired bound on $\|\theta^{(\lambda)}(\cdot,t)\|_{L^\infty}$ for the case when the time $t$ is small, which will be used for proving Theorem~\ref{global-in-time wellposedness in Sobolev}. The idea follows from the work given in \cite{CV10}, \cite{CZV16} and \cite{FS21}.
\begin{lemma}[From $L^2$ to $L^\infty$]\label{De Giorgi iteration lemma}
Fix $\lambda\ge0$, $\kappa>0$, $s\ge0$ and $\gamma\in(0,2]$. Let $\theta=\thetalambda(t)$ be the solution to \eqref{abstract active scalar eqn} with initial datum $\theta_0\in H^s$. Then for all $t\in(0,1]$, we have
\begin{align}\label{infty bound on theta in terms of L2 norm of initial theta}
\|\thetalambda(t)\|_{L^\infty}\le C\Big[\Big(\frac{2}{t}+1\Big)^{\frac{d+1-\gamma}{2\gamma}}\Big(\|\theta_0\|_{L^2}+\frac{\|S\|_{L^2}}{c_0^\frac{1}{2}\kappa^\frac{1}{2}}\Big)+\|S\|_{L^\infty}\Big],
\end{align}
where $C=C(d)>0$ is a constant which only depends on the dimension $d$ and is independent of $\lambda$.
\end{lemma}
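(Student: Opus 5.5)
We run a De Giorgi iteration on the level-set truncations of $\theta$, in the spirit of \cite{CV10}, \cite{CZV16} and \cite{FS21}. By symmetry (replace $\theta,S$ by $-\theta,-S$, which preserves the structure since $u$ is linear in $\theta$) it suffices to bound $\theta_+$ from above.

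Fix $t\in(0,1]$ and $M>0$ to be chosen. Set times $T_k=t(1-2^{-k})$, levels $c_k=\|S\|_{L^\infty}+M(1-2^{-k})$, and truncations $\theta_k=(\theta-c_k)_+$. Testing \eqref{abstract active scalar eqn} with $\theta_k$ gives the following:
\begin{itemize}
\item The transport term vanishes, since $\nabla\cdot u=0$ by A1.
\item The damping term satisfies $\lambda\langle\mathcal{D}\theta,\theta_k\rangle\ge 0$. This uses the pointwise inequality $\varphi'(\theta)\Lambda\theta\ge\Lambda\varphi(\theta)$ for convex $\varphi$ (C\'ordoba--C\'ordoba), and $\langle\theta,\theta_k\rangle\ge0$. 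So it can be dropped, which is why the constant is independent of $\lambda$.
\item The dissipation obeys $\kappa\langle\Lambda^\gamma\theta,\theta_k\rangle\ge\kappa\|\Lambda^{\gamma/2}\theta_k\|_{L^2}^2$, by the same convexity argument.
\item The forcing term $\langle S,\theta_k\rangle$ is nonpositive where it matters after subtracting the constant $c_k\ge\|S\|_{L^\infty}$. To arrange this, write the equation for $\theta-c_k$; this produces the source $S-\lambda c_k$, whose positive part is at most $\|S\|_{L^\infty}$. We handle it by the standard trick of absorbing $\|S\|_{L^\infty}$ via the time cutoff, or by building the shift $\|S\|_{L^\infty}t$ into the levels.
\end{itemize}

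This yields the level-set energy inequality
\[
\sup_{s\ge T_k}\|\theta_k(s)\|_{L^2}^2+2\kappa\int_{T_k}^{t}\|\Lambda^{\gamma/2}\theta_k\|_{L^2}^2\le \|\theta_k(\sigma)\|_{L^2}^2 ,\qquad \sigma\in[T_{k-1},T_k].
\]
Average in $\sigma$ over $[T_{k-1},T_k]$ and define $U_k=\sup_{s\ge T_k}\|\theta_k\|_{L^2}^2+2\kappa\int_{T_k}^t\|\Lambda^{\gamma/2}\theta_k\|_{L^2}^2$. We obtain $U_k\le \frac{2^k}{t}\int_{T_{k-1}}^t\|\theta_k\|_{L^2}^2$.

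Next we gain the nonlinear power. Interpolate the $L^\infty_tL^2_x$ bound with the $L^2_t\dot H^{\gamma/2}_x$ bound through the Sobolev inequality \eqref{Sobolev inequality}. This gives $\theta_{k-1}\in L^{q}_{t,x}$ with $q=2(1+\gamma/d)$ and $\|\theta_{k-1}\|_{L^q}^q\lesssim \kappa^{-1}U_{k-1}^{1+\gamma/d}$ (up to the $\kappa$-factors). On the set $\{\theta_k>0\}$ we have $\theta_{k-1}\ge M2^{-k}$, so Chebyshev gives
\[
\int_{T_{k-1}}^t\|\theta_k\|_{L^2}^2\le \frac{2^{k(q-2)}}{M^{q-2}}\|\theta_{k-1}\|_{L^q}^q .
\]
Combining, $U_k\le C\,\frac{2^{Ck}}{t\,M^{2\gamma/d}}\,U_{k-1}^{1+\gamma/d}$. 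The standard nonlinear recursion lemma then gives $U_k\to0$ provided $U_0$ is below a threshold of the form $M^{2}\big(t\,\cdot\big)^{d/\gamma}$. We bound $U_0$ by the energy inequality \eqref{energy inequality kappa>0} on $[0,1]$, namely $U_0\le\|\theta_0\|_{L^2}^2+\|S\|_{L^2}^2/(c_0\kappa)$. Choosing $M$ equal to a power of $(2/t+1)$ times $\big(\|\theta_0\|_{L^2}+\|S\|_{L^2}/(c_0\kappa)^{1/2}\big)$ then yields $\theta\le c_\infty=\|S\|_{L^\infty}+M$ at time $t$, which is \eqref{infty bound on theta in terms of L2 norm of initial theta}.

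The main obstacle is the exponent bookkeeping, which must land exactly on $\frac{d+1-\gamma}{2\gamma}$. That exponent suggests the authors track $\kappa$ and the time-cutoff factor $2^k/t$ together with a $(1+t)$ factor. I would first carry out the iteration with the time interval normalised to length one, using the parabolic-type scaling $t\mapsto t^{1/\gamma}x$ only heuristically, and then match exponents at the end. The other delicate point is justifying the positivity of the truncated nonlocal terms $\langle\Lambda\theta,\theta_k\rangle$ and $\langle\Lambda^\gamma\theta,\theta_k\rangle$ on $\mathbb{T}^d$. For this we cite the pointwise convexity inequality, applied for smooth solutions and then passed to the limit.
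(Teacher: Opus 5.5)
Your scheme is the paper's own: truncations at levels $M(1-2^{-n})$, cutoffs $T_n=t(1-2^{-n})$, the C\'ordoba--C\'ordoba inequality to discard the $\lambda$-terms and keep $\kappa\|\Lambda^{\gamma/2}\theta_n\|_{L^2}^2$, averaging in the starting time, Chebyshev on $\{\theta_n>0\}$, and the nonlinear recursion started from the energy inequality. The step that fails is the last one, ``match exponents at the end''. With $\kappa$ restored, your recursion is $U_k\le C\,2^{Ck}(\kappa t)^{-1}M^{-2\gamma/d}U_{k-1}^{1+\gamma/d}$. This forces $M\gtrsim(\kappa t)^{-d/(2\gamma)}U_0^{1/2}$, and no rescaling changes that. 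For $\gamma\le1$ nothing needs matching: $d\le d+1-\gamma$ and $t^{-1}\le 2/t+1$, so your bound already implies the stated one. For $\gamma\in(1,2]$ the exponent $\frac{d+1-\gamma}{2\gamma}<\frac{d}{2\gamma}$ cannot be reached by any argument, because the inequality is false as $t\to0$. To see this, take $S=0$, $\lambda=0$, and either $T_{ij}\equiv0$ (which satisfies A1--A4) or small-amplitude data. The solution is then, to leading order, $e^{-\kappa t\Lambda^\gamma}\theta_0$, and data concentrated at scale $(\kappa t)^{1/\gamma}$ give $\|\theta(t)\|_{L^\infty}\gtrsim(\kappa t)^{-d/(2\gamma)}\|\theta_0\|_{L^2}$ for $\kappa t$ small. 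For the heat equation in $d=2$ this is $t^{-1/2}$, against the claimed $t^{-1/4}$. The paper obtains $d+1-\gamma$ from the asserted embedding $Q_{n-1}\ge C_d\|\theta_{n-1}\|^2_{L^{2(d+1)/(d+1-\gamma)}}$, which is the Caffarelli--Vasseur exponent. On the bounded cylinder this follows from the parabolic exponent $2(d+\gamma)/d$ only when $d(d+1)\le(d+\gamma)(d+1-\gamma)$, i.e.\ only when $\gamma\le1$.

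Two further points. First, the constant cannot be $C(d)$. The Sobolev step uses the term $2\kappa\int\|\Lambda^{\gamma/2}\theta_{k-1}\|_{L^2}^2$, so your ``$\kappa$-factors'' must survive into the final bound. Indeed, with $S=0$, $\lambda=0$, $T_{ij}\equiv0$ and $\kappa\to0$ at $t=1$, the stated estimate would give $\|\theta_0\|_{L^\infty}\lesssim\|\theta_0\|_{L^2}$; the paper silently absorbs $\kappa$ into $C_d$. Second, your forcing bullet is wrong as written. The pairing $\langle S,\theta_k\rangle$ has no sign, and a constant shift $c_k\ge\|S\|_{L^\infty}$ only helps through the damping when $\lambda c_k\ge\|S\|_{L^\infty}$, which depends on $\lambda$ and is void at $\lambda=0$. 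Your fallback, the time-dependent shift $\theta-\|S\|_{L^\infty}\tau$, does work: transport, $\Lambda$ and $\Lambda^\gamma$ annihilate constants, and the zero-order damping contributes with a favourable sign. It gives $\theta(t)\le\|S\|_{L^\infty}t+M$. Alternatively, as the paper does, keep $2\|S\|_{L^\infty}\int\|\theta_n\|_{L^1}$, bound it by the same Chebyshev argument, and absorb it with $M\ge2\|S\|_{L^\infty}$. With either fix, your argument proves the bound with exponent $\frac{d}{2\gamma}$ and a constant depending on $d$, $\gamma$ and $\kappa$. That is the correct statement, and, up to the $\kappa$-dependence of the constant, it implies the stated exponent exactly when $\gamma\le1$.
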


\begin{remark}
By exploiting the bounds \eqref{infty bound on theta with kappa>0} and \eqref{infty bound on theta in terms of L2 norm of initial theta} on $\|\theta\|_{L^\infty}$, for $\theta_0\in H^s$ with $s\ge0$, we have the following bound for $t\ge1$ (see also \cite[Theorem~3.2]{CZV16}):
\begin{align}\label{infty bound on theta with kappa>0 and t>1}
\|\thetalambda(t)\|_{L^\infty}\le\frac{C}{\kappa}\left[\|\theta_0\|_{L^2}+\frac{\|S\|_{L^2}}{\kappa^\frac{1}{2}}\right]e^{-c_0\kappa t}+\frac{1}{c_0\kappa}\|S\|_{L^\infty},
\end{align}
for some constant $C>0$ independent of $\lambda$. 
\end{remark}
\begin{remark}
The bound \eqref{infty bound on theta with kappa>0 and t>1} can be regarded as an alternative $L^\infty$ bound for $\theta$ compared with the bound \eqref{bound on theta by damping} given in Proposition~\ref{uniform bound on theta kappa>0 prop}, in the sense that the right side of \eqref{infty bound on theta with kappa>0 and t>1} is independent of $\lambda$.
\end{remark}

\begin{proof}[Proof of Lemma~\ref{De Giorgi iteration lemma}]
Fix $\lambda\ge0$ and $\kappa>0$, we drop $(\lambda)$ from $\thetalambda$ and write $\theta=\thetalambda$ for simplicity. Suppose that $M\ge2\|S\|_{L^\infty}$, where $M>0$ to be fixed later, we define
\begin{align*}
 L_{n}:=M(1-2^{-n}),\qquad n\in\N\cup\{0\},
\end{align*}
and $\theta_{n}$ to be the truncated function $\theta_n=\max\{\theta(t)- L_{n},0\}$. Fix $t\in(0,1]$, we define the time cutoffs by
\begin{align*}
T_{n}:=t(1-2^{-n}),
\end{align*}
and we denote the level set of energy by
\begin{align*}
Q_{n}:=\sup_{T_{n}\le\tau\le1}\|\theta_n(\cdot,\tau)\|^2_{L^2}+2\kappa\int_{T_{n}}^1\|\Lambda^{\frac{\gamma}{2}}\theta_{n}(\cdot,\tau)\|^2_{L^2}d\tau.
\end{align*}
Using the point-wise inequality given in \cite[Proposition 2.3]{CC04}, for all $s\in(T_{n-1},T_{n})$, we have the following level set inequality:
\begin{align*}
&\sup_{T_{n}\le\tau\le1}\|\theta_{n}(\cdot,\tau)\|^2_{L^2}+2\lambda\int_{T_{n}}^1\|\Lambda^{\frac{1}{2}}\theta_{n}(\cdot,\tau)\|^2_{L^2}d\tau\\
&\qquad+2\kappa\int_{T_{n}}^1\|\Lambda^{\frac{\gamma}{2}}\theta_{n}(\cdot,\tau)\|^2_{L^2}d\tau\le\|\theta_{n}(\cdot,s)\|^2_{L^2}+2\|S\|_{L^\infty}\int_{T_{n-1}}^1\|\theta_{n}(\cdot,\tau)\|_{L^1}d\tau.
\end{align*}
Since $\lambda\ge0$, we take the mean value in $s$ on $(T_{n-1},T_{n})$ and multiply by $\frac{2^n}{t}$ to obtain
\begin{align}\label{bound on Qn}
Q_{n}\le \frac{2^{n}}{t}\int_{T_{n-1}}^1\|\theta_{n}(\cdot,\tau)\|^2_{L^2}d\tau+2\|S\|_{L^\infty}\int_{T_{n-1}}^1\|\theta_{n}(\cdot,\tau)\|_{L^1}d\tau
\end{align}
and using \eqref{energy inequality kappa>0}, we also have
\begin{align}\label{bound on Q0}
Q_{0}\le\|\theta_0\|^2_{L^2}+\frac{1}{c_0\kappa}\|S\|^2_{L^2}.
\end{align}
We aim at bounding the right side of \eqref{bound on Qn} by a power of $Q_{n-1}$. Using H\"{o}lder inequality and Sobolev embedding, there exists $C_d>0$ which depends on $d$ such that
\begin{align}\label{bound on Qn-1 using theta n-1}
Q_{n-1}\ge C_{d}\|\theta_{n-1}\|^2_{L^\frac{2(d+1)}{d+1-\gamma}(\Omega\times[T_{n-1},1])}\qquad\forall n\in\N.
\end{align}
Since $\theta_{n-1}\ge 2^{-n}M$ on the set $\{(x,\tau):\theta_{n}(x,\tau)>0\}$, together with \eqref{bound on Qn-1 using theta n-1}, we have
\begin{align*}
\frac{2^{n}}{t}\int_{T_{n}}^1\|\theta_{n}(\cdot,\tau)\|^2_{L^2}d\tau\le 2C_{d}\Big(\frac{2^{n(\frac{d+1+\gamma}{d+1-\gamma})}}{tM^{\frac{2\gamma}{d+1-\gamma}}}\Big)Q_{n-1}^\frac{d+1}{d+1-\gamma}.
\end{align*}
Similarly, we have
\begin{align*}
2\|S\|_{L^\infty}\int_{T_{n-1}}^1\|\theta_{n}(\cdot,\tau)\|_{L^1}d\tau\le 2C_{d}\|S\|_{L^\infty}\Big(\frac{2^{n(\frac{d+1+\gamma}{d+1-\gamma})}}{M^{\frac{2\gamma}{d+1-\gamma}+1}}\Big)Q_{n-1}^\frac{d+1}{d+1-\gamma}.
\end{align*}
Since $M\ge2\|S\|_{L^\infty}$, we further obtain
\begin{align*}
Q_{n}\le \Big(\frac{2C_{d}}{t}+C_{d}\Big)\Big(\frac{2^{n(\frac{d+1+\gamma}{d+1-\gamma})}}{M^{\frac{2\gamma}{d+1-\gamma}}}\Big)Q_{n-1}^\frac{d+1}{d+1-\gamma}.
\end{align*}
If we assume that
\begin{align}\label{bounding condition on M}
M\ge\Big(\frac{2C_d}{t}+C_d\Big)^\frac{d+1-\gamma}{2\gamma}Q_{0}^\frac{1}{2},
\end{align} 
then $Q_{n}\to0$ as $n\to\infty$. Hence using \eqref{bound on Q0}, if we choose $M>0$ such that
\begin{align*}
M\ge\Big(\frac{2C_d}{t}+C_d\Big)^\frac{d+1-\gamma}{2\gamma}\Big(\|\theta_0\|_{L^2}+\frac{\|S\|_{L^2}}{c_0^\frac{1}{2}\kappa^\frac{1}{2}}\Big)+2\|S\|_{L^\infty},
\end{align*}
then it implies that $\theta$ is bounded above by $M$. Applying the same argument to $-\theta$, we conclude that the bound \eqref{infty bound on theta in terms of L2 norm of initial theta} holds for all $t\in(0,1]$.
\end{proof}
By exploiting the bound \eqref{infty bound on theta in terms of L2 norm of initial theta} on $\|\thetalambda(\cdot,t)\|_{L^\infty}$, we give the following proof for Theorem~\ref{global-in-time wellposedness in Sobolev}.

\begin{proof}[Proof of Theorem~\ref{global-in-time wellposedness in Sobolev}]
It is enough to establish an {\it a priori} estimate on $\theta^{(\lambda)}$ with initial data $\theta_0\in H^s$ for $s\ge0$. We multiply \eqref{abstract active scalar eqn} by $\Lambda^{2s}\thetalambda$ and integrate over $\Omega$ to obtain
\begin{align}\label{differential inequality on theta}
&\frac{1}{2}\frac{d}{dt}\|\Lambda^s\theta^{(\lambda)}\|^2_{L^2}+\lambda\Big(\|\Lambda^{s}\theta^{(\lambda)}\|^2_{L^2}+\|\Lambda^{s+\frac{1}{2}}\theta^{(\lambda)}\|^2_{L^2}\Big)+\kappa\|\Lambda^{s+\frac{\gamma}{2}}\theta^{(\lambda)}\|^2_{L^2}\notag\\
&\qquad\le\Big|\int_{\Omega}S\Lambda^{2s}\theta^{(\lambda)}\Big|+\Big|\int_{\Omega}u^{(\lambda)}\cdot\nabla\theta^{(\lambda)}\Lambda^{2s}\theta^{(\lambda)}\Big|,
\end{align}
where $u^{(\lambda)}=u[\thetalambda]$. The term $\dis\Big|\int_{\Omega}S\Lambda^{2s}\theta^{(\lambda)}\Big|$ is readily bounded by $\dis\|\Lambda^sS\|_{L^2}\|\Lambda^s\theta^{(\lambda)}\|_{L^2}$, and for $\dis\Big|\int_{\Omega}u^{(\lambda)}\cdot\nabla\theta^{(\lambda)}\Lambda^{2s}\theta^{(\lambda)}\Big|$, it can be estimated as follows.
\begin{align}\label{estimate on convective term}
&\Big|\int_{\Omega}u^{(\lambda)}\cdot\nabla\theta^{(\lambda)}\Lambda^{2s}\theta^{(\lambda)}\Big|\notag\\
&=\Big|\int_{\Omega}(u^{(\lambda)}\theta^{(\lambda)})\cdot\nabla\Lambda^{2s}\theta^{(\lambda)}\Big|\notag\\
&\le\Big|\int_{\Omega}[\Lambda^{s+1}(u^{(\lambda)}\theta^{(\lambda)})-u^{(\lambda)}\Lambda^{s+1}\theta^{(\lambda)}]\cdot\nabla\Lambda^{s-1}\theta^{(\lambda)}\Big|+\Big|\int_{\Omega}u^{(\lambda)}\Lambda^{s+1}\theta^{(\lambda)}\cdot\nabla\Lambda^{s-1}\theta^{(\lambda)}\Big|.
\end{align}
Since $\nabla\cdot u^{(\lambda)}=0$, the second term on the right side of \eqref{estimate on convective term} can be replaced by
\begin{align*}
\Big|\int_{\Omega}u^{(\lambda)}\Lambda^{s+1}\theta^{(\lambda)}\cdot\nabla\Lambda^{s-1}\theta^{(\lambda)}\Big|=\Big|\int_{\Omega}\Lambda^{s}\theta^{(\lambda)}(\Lambda(u^{(\lambda)}\cdot\Lambda^{s-1}\theta^{(\lambda)})-u^{(\lambda)}\cdot\Lambda^{s}\theta^{(\lambda)})\Big|.
\end{align*}
Hence using \eqref{commutator estimate}, we have
\begin{align*}
\Big|\int_{\Omega}u^{(\lambda)}\Lambda^{s+1}\theta^{(\lambda)}\cdot\nabla\Lambda^{s-1}\theta^{(\lambda)}\Big|=&\Big|\int_{\Omega}\Lambda^{s}\theta^{(\lambda)}(\Lambda(u^{(\lambda)}\cdot\Lambda^{s-1}\theta^{(\lambda)})-u^{(\lambda)}\cdot\Lambda^{s}\theta^{(\lambda)})\Big|\\
&\le C\|\Lambda^{s}\theta^{(\lambda)}\|_{L^2}\|\nabla u^{(\lambda)}\|_{L^\infty}\|\nabla\Lambda^{s-1}\theta^{(\lambda)}\|_{L^2}\\
&\le C\|\Lambda u^{(\lambda)}\|_{L^\infty}\|\Lambda^{s}\theta^{(\lambda)}\|^2_{L^2}.
\end{align*}
For the leading term on the right side of \eqref{estimate on convective term}, we apply \eqref{commutator estimate} to obtain
\begin{align*}
&\Big|\int_{\Omega}[\Lambda^{s+1}(u^{(\lambda)}\theta^{(\lambda)})-u^{(\lambda)}\Lambda^{s+1}\theta^{(\lambda)}]\cdot\nabla\Lambda^{s-1}\theta^{(\lambda)}\Big|\\
&\le \|\Lambda^{s+1}(u^{(\lambda)}\theta^{(\lambda)})-u^{(\lambda)}\Lambda^{s+1}\theta^{(\lambda)}\|_{L^2}\|\nabla\Lambda^{s-1}\theta^{(\lambda)}\|_{L^2}\\
&\le\Big(\|\Lambda u^{(\lambda)}\|_{L^\infty}\|\Lambda^{s}\theta^{(\lambda)}\|_{L^2}+\|\Lambda^{s+1}u^{(\lambda)}\|_{L^2}\|\theta^{(\lambda)}\|_{L^\infty}\Big)\|\nabla\Lambda^{s-1}\theta^{(\lambda)}\|_{L^2}.
\end{align*}
Hence we get from \eqref{estimate on convective term} that 
\begin{align}\label{estimate on convective term step 2}
&\Big|\int_{\Omega}u^{(\lambda)}\cdot\nabla\theta^{(\lambda)}\Lambda^{2s}\theta^{(\lambda)}\Big|\notag\\
&\le C\Big(\|\Lambda u^{(\lambda)}\|_{L^\infty}\|\Lambda^{s}\theta^{(\lambda)}\|_{L^2}+\|\Lambda^{s+1}u^{(\lambda)}\|_{L^2}\|\theta^{(\lambda)}\|_{L^\infty}\Big)\|\Lambda^{s}\theta^{(\lambda)}\|_{L^2}.
\end{align}
By \eqref{two order smoothing for u when nu>0}, the term $\|\Lambda^{s+1}u^{(\lambda)}\|_{L^2}$ can be bounded by $C\|\Lambda^{s}\theta^{(\lambda)}\|_{L^2}$, and for the term $\|\Lambda u^{(\lambda)}\|_{L^\infty}$, using \eqref{L infty bound 2} for $q=d+1$ and together with \eqref{two order smoothing for u when nu>0}, we have
\begin{align}\label{L infty bound on nabla u}
\|\Lambda u^{(\lambda)}\|_{L^\infty}&\le C\|\Lambda u^{(\lambda)}\|_{W^{1,d+1}}\notag\\
&\le C\|\theta^{(\lambda)}\|_{L^{d+1}}\le C\|\thetalambda\|_{L^\infty}.
\end{align}
Therefore, we obtain from \eqref{estimate on convective term step 2} that
\begin{align}\label{estimate on convective term step 3}
\Big|\int_{\Omega}u^{(\lambda)}\cdot\nabla\theta^{(\lambda)}\Lambda^{2s}\theta^{(\lambda)}\Big|\le C\|\theta^{(\lambda)}\|_{L^\infty}\|\Lambda^{s}\theta^{(\lambda)}\|^2_{L^2}.
\end{align}
We apply \eqref{estimate on convective term step 3} on \eqref{differential inequality on theta} and infer that
\begin{align}\label{differential inequality on theta final}
&\frac{d}{dt}\|\Lambda^s\theta^{(\lambda)}\|^2_{L^2}+2\lambda\Big(\|\Lambda^{s}\theta^{(\lambda)}\|^2_{L^2}+\|\Lambda^{s+\frac{1}{2}}\theta^{(\lambda)}\|^2_{L^2}\Big)+2\kappa\|\Lambda^{s+\frac{\gamma}{2}}\theta^{(\lambda)}\|^2_{L^2}\notag\\
&\qquad\le C\|\theta^{(\lambda)}\|_{L^\infty}\|\Lambda^{s}\theta^{(\lambda)}\|^2_{L^2}+2\|\Lambda^sS\|_{L^2}\|\Lambda^s\theta^{(\lambda)}\|_{L^2}.
\end{align}
Using the bounds \eqref{infty bound on theta in terms of L2 norm of initial theta} and \eqref{infty bound on theta with kappa>0 and t>1} on $\theta^{(\lambda)}$, for each $\tau>0$, there exists $C_\tau >0$ which depends on $\kappa$, $\tau$, $c_0$, $\|\theta_0\|_{L^2}$, $\|S\|_{L^\infty}$ but independent of $\lambda$ and $t$ such that
\begin{equation}\label{L infty bound on theta for t>tau}
\|\theta^{(\lambda)}(\cdot,t)\|_{L^\infty}\le C_\tau ,\qquad \forall t\ge\tau.
\end{equation}
Furthermore, by the continuity in time as proved in Lemma~\ref{local-in-time existence lemma}, we choose $\tau>0$ sufficiently small such that
\[\sup_{0\le t\le\tau}\|\Lambda^{s}\theta^{(\lambda)}(\cdot,t)\|_{L^2}\le 2\|\Lambda^{s}\theta_0\|_{L^2},\]
Hence with the help of Gr\"{o}nwall's inequality, we conclude from \eqref{differential inequality on theta final} that for $\kappa>0$,
\begin{align}\label{H^s bound for theta kappa>0 and nu>0}
\|\Lambda^{s}\theta^{(\lambda)}(\cdot,t)\|_{L^2}+2\lambda\int_{0}^{t}\Big(\|\Lambda^{s}\theta^{(\lambda)}\|^2_{L^2}+\|\Lambda^{s+\frac{1}{2}}\theta^{(\lambda)}\|^2_{L^2}\Big)+2\kappa\int_{0}^{t}\|\Lambda^{s+\frac{\gamma}{2}}\theta^{(\lambda)}\|^2_{L^2}\notag\\
\le (2\|\Lambda^{s}\theta_0\|_{L^2}+C\|\Lambda^{s}S\|_{L^2})(e^{CC_\tau t}+1),\qquad \forall t\ge0.
\end{align}
Therefore, the above inequality implies that $\|\Lambda^{s}\theta^{(\lambda)}(\cdot,t)\|_{L^2}$ remains finite for all positive time when $\lambda\ge0$ and $\kappa>0$.
\end{proof}

\subsection{Convergence of $H^s$-solutions as $\lambda\to0$}\label{convergence of Hs solution subsec}

We are now ready to give the proof of Theorem~\ref{Hs convergence theorem} which shows the convergence of f $H^s$-solutions as claimed by \eqref{Hs convergence thm}. 

\begin{proof}[Proof of Theorem~\ref{Hs convergence theorem}]
We let $\theta_0$, $S\in C^\infty$. The proof is divided into three parts.

\medskip

\noindent{\bf Uniform $H^s$-bound:}
For fixed $\lambda\ge0$, we first obtain a uniform (independent of $\lambda$) $H^{s}$-bound for all $s\ge0$ on $\theta^{(\lambda)}$. Apply the estimate \eqref{differential inequality on theta final} on $\theta^{(\lambda)}$, we have
\begin{align}\label{differential inequality on theta s+1}
&\frac{d}{dt}\|\Lambda^{s}\theta^{(\lambda)}\|^2_{L^2}+2\lambda\Big(\|\Lambda^{s}\theta^{(\lambda)}\|^2_{L^2}+\|\Lambda^{s+\frac{1}{2}}\theta^{(\lambda)}\|^2_{L^2}\Big)+2\kappa\|\Lambda^{s+\frac{\gamma}{2}}\theta^{(\lambda)}\|^2_{L^2}\notag\\
&\qquad\le C\|\theta^{(\lambda)}\|_{L^\infty}\|\Lambda^{s+1}\theta^{(\lambda)}\|^2_{L^2}+2\|\Lambda^{s}S\|_{L^2}\|\Lambda^{s}\theta^{(\lambda)}\|_{L^2}.
\end{align}
Since $\theta_0\in C^\infty\subset L^\infty$, we can apply the uniform bound \eqref{infty bound on theta with kappa=0 or >0} to deduce from \eqref{differential inequality on theta s+1} that 
\begin{align*}
&\frac{d}{dt}\|\Lambda^{s}\theta^{(\lambda)}\|^2_{L^2}+2\kappa\|\Lambda^{s+\frac{\gamma}{2}}\theta^{(\lambda)}\|^2_{L^2}\\
&\le C(\|\theta_0\|_{L^\infty}+\|S\|_{L^\infty})\|\Lambda^{s}\theta^{(\lambda)}\|^2_{L^2}+2\|\Lambda^{s}S\|_{L^2}\|\Lambda^{s}\theta^{(\lambda)}\|_{L^2}.
\end{align*}
Upon integrating the above inequality from $0$ to $t$, we obtain, for all $\kappa\ge0$ that
\begin{align}\label{uniform Hs+1 bound on theta for all kappa}
&\|\Lambda^{s}\theta^{(\lambda)}(\cdot,t)\|_{L^2}\notag\\
&\le C(\|\Lambda^{s}\theta_0\|_{L^2}+\|\Lambda^{s}S\|_{L^2})\exp\Big(Ct(\|\theta_0\|_{L^\infty}+\|S\|_{L^\infty})\Big),\qquad\forall t>0.
\end{align}

\noindent{\bf $L^2$-convergence:}
Fix $\kappa>0$ and $\gamma\in(0,2]$. We let $\theta^{(\lambda)}$, $\theta^{(0)}$ be the smooth solution to \eqref{abstract active scalar eqn} for $\lambda>0$ and $\lambda=0$ respectively. Define $\varphi=\theta^{(\lambda)}-\theta^{(0)}$, then $\varphi$ satisfies
\begin{align}\label{equation for varphi}
\partial_t\varphi+(u^{(\lambda)}-u^0)\cdot\nabla\theta^{(0)}+u^{(\lambda)}\cdot\nabla\varphi=-\kappa\Lambda^{\gamma}\varphi-\lambda\varphi-\lambda\Lambda^\frac{1}{2}\varphi,
\end{align}
where $u^{(\lambda)}$ and $u^0$ are given by
\begin{align*}
u_j^\lambda:=\partial_{x_i} T_{ij}[\theta^{(\lambda)}],\qquad u_j^0:=\partial_{x_i} T_{ij}[\theta^{(0)}]
\end{align*}
for $1\le i,j\le d$. Multiply \eqref{equation for varphi} by $\varphi$ and integrate,
\begin{align}\label{integral of varphi}
&\frac{1}{2}\frac{d}{dt}\|\varphi(\cdot,t)\|^2_{L^2}+\kappa\|\Lambda^{\frac{\gamma}{2}}\varphi\|^2_{L^2}\notag\\
&\qquad=-\int_{\Omega}(u^{(\lambda)}-u^0)\cdot\nabla\theta^{(0)}\cdot\varphi-\int_{\Omega}u^{(\lambda)}\cdot\nabla\varphi\cdot\varphi-\lambda\int_{\Omega}\theta^{(\lambda)}\cdot\varphi-\lambda\int_{\Omega}\Lambda\theta^{(\lambda)}\cdot\varphi.
\end{align}
Using the divergence-free assumption on $u^{(\lambda)}$, the term $\dis-\int_{\Omega}u^{(\lambda)}\cdot\nabla\varphi\cdot\varphi$ is zero. On the other hand, using \eqref{two order smoothing for u when nu>0}, we have
\begin{equation*}
\|(u^{(\lambda)}-u^0)(\cdot,t)\|_{L^2}\le C\|(\theta^{(\lambda)}-\theta^{(0)})(\cdot,t)\|_{L^2}.
\end{equation*}
Hence the first term on the right side of \eqref{integral of varphi} can be bounded as follows.
\begin{align*}
-\intox(u^{(\lambda)}-u^0)\cdot\nabla\theta^{(0)}\cdot\varphi&\le\|\nabla\theta^{(0)}(\cdot,t)\|_{L^\infty}\|(u^{(\lambda)}-u^0)(\cdot,t)\|_{L^2}\|\varphi(\cdot,t)\|_{L^2}\notag\\
&\le C\|\nabla\theta^{(0)}(\cdot,t)\|_{L^\infty}\|\varphi(\cdot,t)\|_{L^2}^2.
\end{align*}
For the third and fourth terms on the right side of \eqref{integral of varphi}, we can bound them by
\begin{align*}
-\lambda\int_{\Omega}\theta^{(\lambda)}\cdot\varphi-\lambda\int_{\Omega}\Lambda\theta^{(\lambda)}\cdot\varphi\le4\lambda\|\Lambda\theta^{(\lambda)}(\cdot,t)\|^2_{L^2}+4\lambda\|\theta^{(\lambda)}(\cdot,t)\|^2_{L^2}+\frac{\lambda}{2}\|\varphi(\cdot,t)\|^2_{L^2}.
\end{align*}
Applying the above estimates on \eqref{integral of varphi}, we obtain
\begin{align}\label{differential of varphi}
&\frac{1}{2}\frac{d}{dt}\|\varphi(\cdot,t)\|_{L^2}^2+\kappa\|\Lambda^{\frac{\gamma}{2}}\varphi\|^2_{L^2}\notag\\
&\le4\lambda\|\Lambda\theta^{(\lambda)}(\cdot,t)\|^2_{L^2}+4\lambda\|\theta^{(\lambda)}(\cdot,t)\|^2_{L^2}+\frac{\lambda}{2}\|\varphi(\cdot,t)\|^2_{L^2}+C\|\nabla\theta^{(0)}(\cdot,t)\|_{L^\infty}\|\varphi(\cdot,t)\|_{L^2}^2\notag\\
&\le\Big[C\|\nabla\theta^{(0)}(\cdot,t)\|_{L^\infty}+\frac{\lambda}{2}\Big]\|\varphi(\cdot,t)\|_{L^2}^2+4\lambda\|\Lambda\theta^{(\lambda)}(\cdot,t)\|^2_{L^2}+4\lambda\|\theta^{(\lambda)}(\cdot,t)\|^2_{L^2}.
\end{align}
Using the bound \eqref{uniform Hs+1 bound on theta for all kappa}, for sufficiently large $s>1$, we have
\begin{align*}
\|\nabla\theta^{(0)}(\cdot,t)\|_{L^\infty}&\le C\|\Lambda^{s}\theta^{(\lambda)}(\cdot,t)\|_{L^2}\\
&\le C(\|\Lambda^{s}\theta_0\|_{L^2}+\|\Lambda^{s}S\|_{L^2})\exp\Big(Ct(\|\theta_0\|_{L^\infty}+\|S\|_{L^\infty})\Big),
\end{align*}
as well as
\begin{align*}
\|\Lambda\theta^{(\lambda)}\|_{H^\frac{1}{2}}\le C(\|\Lambda^{s}\theta_0\|_{L^2}+\|\Lambda^{s}S\|_{L^2})\exp\Big(Ct(\|\theta_0\|_{L^\infty}+\|S\|_{L^\infty})\Big).
\end{align*}
Hence by integrating \eqref{differential of varphi} over $t$ and using Gr\"{o}nwall's inequality, for $t>0$ there exist positive functions $C_1(t)$, $C_2(t)$ depending on $t$, $\theta_0$ and $S$ such that
\begin{align*}
\|\varphi(\cdot,t)\|_{L^2}^2\le \lambda C_1(t)e^{C_2(t)},
\end{align*}
where we recall that $\varphi(\cdot,0)=0$. Therefore, as $\lambda\rightarrow0$, we have 
\begin{align}\label{L2 convergence}
\lim_{\lambda\rightarrow0}\int_{\Omega}|\theta^{(\lambda)}-\theta^{(0)}|^2(x,t)dx=\lim_{\lambda\rightarrow0}\|\varphi(\cdot,t)\|_{L^2}^2=0.
\end{align}

\noindent{\bf $H^s$-convergence:} By Sobolev inequality, for $s>0$, there exists $\sigma=\sigma(s)\in(0,1)$ such that for $t>0$,
\begin{align}\label{Hs estimate on varphi}
\|(\theta^{(\lambda)}-\theta^{(0)})(\cdot,t)\|_{H^s}\le\|(\theta^{(\lambda)}-\theta^{(0)})(\cdot,t)\|_{L^2}^\sigma\|(\theta^{(\lambda)}-\theta^{(0)})(\cdot,t)\|_{H^{s+1}}^{1-\sigma}.
\end{align}
Using the uniform bound \eqref{uniform Hs+1 bound on theta for all kappa} on $\|(\theta^{(\lambda)}-\theta^{(0)})(\cdot,t)\|_{H^{s+1}}^{1-\sigma}$ and the $L^2$-convergence result \eqref{L2 convergence}, by taking $\lambda\to0$ on \eqref{Hs estimate on varphi}, the result \eqref{Hs convergence thm} holds for all $s>0$ as well. 
\end{proof}
\begin{remark}[Convergence of solutions as $\kappa\to0$]
For each $\lambda>0$ and $\gamma\in(0,2]$, suppose that $\theta_0,S\in C^\infty$ are the initial datum and forcing term respectively which satisfy \eqref{zero mean assumption on data and forcing}. Following the similar argument as given above, if $\theta^{(\kappa)}$ and $\theta^{(0)}$ are smooth solutions to \eqref{abstract active scalar eqn} for $\kappa>0$ and $\kappa=0$ respectively, then we also have
\begin{align}\label{Hs convergence thm kappa to 0} 
\lim_{\kappa\rightarrow0}\|(\theta^{(\kappa)}-\theta^{(0)})(\cdot,t)\|_{H^s}=0,
\end{align}
for all $s\ge0$ and $t\ge0$. We note that the case for $\lambda=0$ was proved in \cite[Theorem~2.1]{FS21}.
\end{remark}

\section{Long time dynamics of solutions when $\kappa>0$}\label{long time behaviour and attractors section}

In this section, we study the long time behaviour for solutions to the active scalar equations \eqref{abstract active scalar eqn} when $\lambda\ge$ and $\kappa>0$ with assumptions A1, A2, A3' and A4 are being in force. Based on the global-in-time existence results established in Theorem~\ref{global-in-time wellposedness in Sobolev}, for fixed $\lambda\ge0$ and $\kappa>0$, we can define a solution operator $\pin(t)$ for the initial value problem \eqref{abstract active scalar eqn} via
\begin{align}\label{def of solution map nu>0 and kappa>0}
\pin(t): H^1\to H^1,\qquad \pin(t)\theta_0=\theta(\cdot,t),\qquad t\ge0.
\end{align}
We study the long-time dynamics of $\pin(t)$ on the phase space $H^1$. Specifically, we establish the existence of global attractors for $\pin(t)$ in $H^1$, which will be given in Subsection~\ref{existence of global attractors subsection}. Once we obtain the existence of global attractors, we further address some properties for the attractors which will be explained in Subsection~\ref{Further properties for the attractors subsection}. The results obtained in Subsection~\ref{existence of global attractors subsection} and Subsection~\ref{Further properties for the attractors subsection} will be sufficient for proving Theorem~\ref{existence of global attractor theorem}.


\subsection{Existence of global attractors in $H^1$-space}\label{existence of global attractors subsection}

The following theorem gives the main results for this subsection:

\begin{theorem}[Existence of $H^1$-global attractor]\label{Existence of H1 global attractor}
Let $S\in L^\infty\cap H^1$. For $\lambda\ge0$, $\kappa>0$ and $\gamma\in(0,2]$, the solution map $\pin(t):H^1\to H^1$ associated to \eqref{abstract active scalar eqn} possesses a unique global attractor $\Gg$ in $H^1$. Moreover, there exists $\tilde{M}_{\Gg}$ which depends on $\lambda$, $\kappa$, $\gamma$, $\|S\|_{L^\infty\cap H^1}$ and universal constants, such that if $\theta_0\in\Gg$, we have
\begin{align}\label{uniform bound on theta from the attractor} 
\|\theta(\cdot,t)\|_{H^{1+\frac{\gamma}{2}}}\le \tilde{M}_{\Gg},\qquad\forall t\ge0,
\end{align}
and
\begin{align}\label{uniform bound on time integral of theta from the attractor}
\frac{1}{T}\int_t^{t+T}\|\theta(\cdot,\tau)\|_{H^{1+\gamma}}d\tau\le M_{\Gg},\qquad \mbox{$\forall t\ge0$ and $T>0$,}
\end{align}
where $\theta(\cdot,t)=\pin(t)\theta_0$.
\end{theorem}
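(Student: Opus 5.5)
The plan is to verify the hypotheses of the standard abstract existence theorem for global attractors (Temam, Robinson). The criterion is: if $\pin(t)$ is a continuous semigroup on $H^1$ with a compact absorbing set, then $\Gg:=\omega(\mathcal{B})$ is the unique global attractor. The work therefore splits into three parts: (i) existence and continuity of the semigroup, (ii) an $H^1$-absorbing ball, and (iii) an $H^{1+\frac{\gamma}{2}}$-absorbing ball. Since $H^{1+\frac{\gamma}{2}}\hookrightarrow H^1$ compactly on $\T^d$, part (iii) gives compactness.

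\textbf{Semigroup.} Existence and uniqueness of $\pin(t)$ on $H^1$ follow from Theorem~\ref{global-in-time wellposedness in Sobolev} with $s=1$. For continuity of $\theta_0\mapsto\pin(t)\theta_0$ in $H^1$, I will use a difference estimate of the kind used in the $L^2$-convergence step of the proof of Theorem~\ref{Hs convergence theorem}. Two ingredients control this: the smoothing property \eqref{two order smoothing for u when nu>0}, and the $L^\infty$ bound from Lemma~\ref{De Giorgi iteration lemma} and \eqref{infty bound on theta with kappa>0 and t>1}. Together they give
\[
\|\nabla u\|_{L^\infty}\lesssim\|\theta\|_{L^\infty},
\]
so Gr\"onwall yields Lipschitz dependence on $[0,T]$. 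The $H^1$ version is obtained the same way, using the commutator estimate \eqref{commutator estimate}.

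\textbf{Absorbing balls.} The crucial input is \eqref{infty bound on theta with kappa>0 and t>1}. After time $t_0(\|\theta_0\|_{L^2})$ it gives the uniform bound
\[
\|\theta(t)\|_{L^\infty}\le \frac{2}{c_0\kappa}\|S\|_{L^\infty}=:K_\infty,
\]
which is independent of $\lambda$ and of the data. From \eqref{energy equality with damping} and Poincaré (mean zero), $\|\theta\|_{L^2}$ enters a ball of radius $C\|S\|_{L^2}/\kappa$. For $H^1$, the estimate \eqref{differential inequality on theta final} with $s=1$ has growth term $CK_\infty\|\Lambda\theta\|^2$, which cannot simply be absorbed by $\kappa\|\Lambda^{1+\gamma/2}\theta\|^2$. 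I will handle this by interpolation together with the uniform Gr\"onwall lemma. First, bound $\|\Lambda\theta\|_{L^2}\le\|\theta\|_{L^2}^{a}\|\Lambda^{1+\gamma/2}\theta\|_{L^2}^{1-a}$ and use Young, giving
\[
\frac{d}{dt}\|\Lambda\theta\|^2+\kappa\|\Lambda^{1+\gamma/2}\theta\|^2\le C(K_\infty,\kappa,\|S\|_{H^1}).
\]
Second, with $\int_t^{t+1}\|\Lambda\theta\|^2$ controlled by the time-integrated $L^2$ energy (which bounds $\int\|\Lambda^{\gamma/2}\theta\|^2$), the uniform Gr\"onwall lemma gives an absorbing ball in $H^1$. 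If $\gamma<2$ the time integral only controls $H^{\gamma/2}$, so I will instead interpolate between $L^2$ and $H^{1+\gamma/2}$ directly; this works since $a>0$. Repeating the argument at level $s=1+\frac{\gamma}{2}$, and using that the product estimates with $L^\infty$ and $\nabla u\in L^\infty$ still close, gives an absorbing ball in $H^{1+\frac{\gamma}{2}}$. This is \eqref{uniform bound on theta from the attractor} on $\Gg\subset\mathcal{B}$, and the dissipation term integrated over $[t,t+T]$ gives \eqref{uniform bound on time integral of theta from the attractor}. Note that integrating gives a bound on $\frac1T\int\|\theta\|^2_{H^{1+\gamma}}$; by Cauchy--Schwarz this implies the stated bound on the average of the norm itself.

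\textbf{Main obstacle.} The hard step is making every bound uniform in the initial datum after an entering time. Lemma~\ref{De Giorgi iteration lemma} depends on $\|\theta_0\|_{L^2}$, so the $L^\infty$ bound must be applied only after the $L^2$ absorption time. The $L^\infty$ estimate also holds only for $t\ge1$, so all later estimates must be restarted from that time.
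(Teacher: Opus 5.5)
Your absorbing-ball argument is sound, and simpler than the paper's (see the second paragraph). The weak step is continuity of $\pin(t)$ in $H^1$, which your abstract criterion needs and which you justify only by ``the $H^1$ version is obtained the same way''. It does not go through as written.

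\textbf{The continuity step.} In the $H^1$ estimate for $\bar\theta=\theta-\tilde\theta$, the bound $\|\nabla u\|_{L^\infty}\lesssim\|\theta\|_{L^\infty}$ only controls $\int\partial_k\tilde u\cdot\nabla\bar\theta\,\partial_k\bar\theta$. The cross term $\int(\bar u\cdot\nabla\theta)\Delta\bar\theta$ puts the derivative on the reference solution. Pairing it against the dissipation needs $\|\bar u\cdot\nabla\theta\|_{H^{1-\frac{\gamma}{2}}}$, i.e.\ $\theta\in L^2_tH^{2-\frac{\gamma}{2}}$. Data in $H^1$ give only $L^2_tH^{1+\frac{\gamma}{2}}$, plus $L^2_tH^{\frac32}$ when $\lambda>0$. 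So Gr\"onwall closes for $\gamma\ge1$ or $\lambda>0$, but not for $\lambda=0$, $\gamma\in(0,1)$. This is exactly where \eqref{bound on Lambda theta for gamma>1} enters Lemma~\ref{continuity of solution map lemma}, and why the paper leaves invariance open in that regime.

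Separately, the De Giorgi bound \eqref{infty bound on theta in terms of L2 norm of initial theta} grows like $t^{-\frac{d+1-\gamma}{2\gamma}}$, which is not integrable at $t=0$ once $\gamma\le\frac{d+1}{3}$. So it cannot drive a Gr\"onwall argument on $[0,T]$ for $H^1$ data. Under A3' in $d\le3$ you can use $\|\nabla u\|_{L^\infty}\le C\|\theta\|_{H^1}$ instead.

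The paper avoids continuity altogether. It invokes \cite[Proposition~8]{CCP12}, which produces the unique global attractor from a compact absorbing set alone, understood as the minimal closed attracting set $\Gg=\omega(B_{1+\frac{\gamma}{2}})$. That is all the statement asserts; invariance is proved only later, under \eqref{conditions on gamma} or \eqref{conditions on gamma lambda>0}.

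If you want to keep your framework, you can recover continuity for $t>0$ another way. The $L^2$ difference estimate does close: $|\langle\bar u\cdot\nabla\theta,\bar\theta\rangle|\le\|\bar u\|_{L^\infty}\|\nabla\theta\|_{L^2}\|\bar\theta\|_{L^2}$, and A3' gives $\|\bar u\|_{L^\infty}\le C\sum_{k\neq0}|k|^{-2}|\hat{\bar\theta}(k)|\le C\|\bar\theta\|_{L^2}$ in $d\le3$. Interpolating this $L^2$-Lipschitz bound against uniform $H^{1+\frac{\gamma}{2}}$ bounds at positive times gives $H^1$ continuity, though not Lipschitz continuity.

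\textbf{Comparison of the absorbing-ball step.} The paper reaches the $H^1$ ball through a $C^\alpha$ absorbing set (Lemmas~\ref{Estimates in C alpha-space lemma} and \ref{Existence of an C alpha absorbing set lemma}). It then uses the pointwise lower bound \eqref{lower bound for D[nabla theta]}, whose superquadratic damping absorbs $|\nabla u||\nabla\theta|^2$ pointwise. That machinery comes from critical SQG, where $\nabla u$ is not controlled by $\|\theta\|_{L^\infty}$. Under A3' it is, by \eqref{L infty bound on nabla u}. Your interpolation $\|\Lambda\theta\|_{L^2}\le\|\theta\|_{L^2}^{\frac{\gamma}{2+\gamma}}\|\Lambda^{1+\frac{\gamma}{2}}\theta\|_{L^2}^{\frac{2}{2+\gamma}}$ with Young then closes the $H^1$ estimate in one line. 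Since $\|\Lambda^{1+\frac{\gamma}{2}}\theta\|_{L^2}\ge\|\Lambda\theta\|_{L^2}$ for mean-zero functions, plain Gr\"onwall already gives exponential absorption, so your uniform-Gr\"onwall detour is unnecessary. The paper's route is robust to order-zero drifts and yields a $C^\alpha$ absorbing set as a by-product; yours depends essentially on A3' but is much shorter. Your $H^{1+\frac{\gamma}{2}}$ step is essentially the paper's.

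Two small points, both shared with the paper:
\begin{enumerate}
\item With only $S\in H^1$, at level $s=1+\frac{\gamma}{2}$ pair the forcing as $\langle\Lambda S,\Lambda^{1+\gamma}\theta\rangle$ and absorb it into the dissipation, rather than using $\|\Lambda^sS\|_{L^2}$.
\item Integrating over $[t,t+T]$ leaves a term $\frac{1}{\kappa T}\|\theta(t)\|^2_{H^{1+\frac{\gamma}{2}}}$. So this argument gives \eqref{uniform bound on time integral of theta from the attractor} uniformly only for $T$ bounded below, not for all $T>0$.
\end{enumerate}
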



Theorem~\ref{Existence of H1 global attractor} will be proved in a sequence of lemmas. In the following lemma, we first show the existence of an $L^\infty$-absorbing set by using the bound \eqref{infty bound on theta with kappa>0 and t>1} on $\|\theta(t)\|_{L^\infty}$ for $t\ge1$.

\begin{lemma}[Existence of an $L^\infty$-absorbing set]\label{Existence of an L infty absorbing set lemma}
Let $c_0>0$ be defined in Proposition~\ref{boundedness on theta with kappa>0 prop}. Then the set 
\begin{align*}
B_{\infty}=\left\{\phi\in L^\infty\cap H^1:\|\phi\|_{L^\infty}\le\min\{\frac{2}{c_0\kappa},\frac{2}{c_0\lambda}\}\times\|S\|_{L^\infty}\right\}
\end{align*}
is an absorbing set for $\pin(t)$. Moreover, using \eqref{infty bound on theta with kappa>0} and \eqref{bound on theta by damping} for $p=\infty$, we have
\begin{align}\label{bound on solution map in L infty}
\sup_{t\ge0}\sup_{\theta_0\in B_{\infty}}\|\pin(t)\theta_0\|_{L^\infty}\le\min\{\frac{3}{c_0\kappa},\frac{3}{c_0\lambda}\}\times\|S\|_{L^\infty}.
\end{align}
\end{lemma}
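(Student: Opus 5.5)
The plan is to combine the two available $L^\infty$ decay mechanisms: the $\kappa$-driven one in \eqref{infty bound on theta with kappa>0 and t>1}, and the $\lambda$-driven one in \eqref{bound on theta by damping} with $p=\infty$ (the latter only when $\lambda>0$). The radius in the definition of $B_\infty$ is then simply the minimum of the two asymptotic levels, each enlarged by a factor of two. Throughout, ``absorbing'' is meant with respect to bounded subsets of the phase space $H^1$.

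\textbf{Absorption.} Let $B\subset H^1$ be bounded, say $\|\theta_0\|_{H^1}\le R$ for all $\theta_0\in B$. Since $\|\theta_0\|_{L^2}\le C\|\theta_0\|_{H^1}\le CR$, the bound \eqref{infty bound on theta with kappa>0 and t>1} gives, for $t\ge1$,
\begin{align*}
\|\pin(t)\theta_0\|_{L^\infty}\le \frac{C}{\kappa}\Big[R+\frac{\|S\|_{L^2}}{\kappa^{1/2}}\Big]e^{-c_0\kappa t}+\frac{\|S\|_{L^\infty}}{c_0\kappa}.
\end{align*}
The exponential term drops below $\frac{\|S\|_{L^\infty}}{c_0\kappa}$ after some time $t_1(R)$, so $\|\pin(t)\theta_0\|_{L^\infty}\le \frac{2}{c_0\kappa}\|S\|_{L^\infty}$ for $t\ge t_1$. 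When $\lambda>0$, I restart at time $1$, where Lemma~\ref{De Giorgi iteration lemma} provides an $L^\infty$ bound $K(R)$ on $\pin(1)\theta_0$. Applying \eqref{bound on theta by damping} with $p=\infty$ from time $1$ onward gives
\begin{align*}
\|\pin(t)\theta_0\|_{L^\infty}\le e^{-\lambda(t-1)}K(R)+\frac{\|S\|_{L^\infty}}{\lambda}.
\end{align*}
Here the constant $c_0$ should be taken no larger than $1$, so that $\frac{1}{\lambda}\le\frac{1}{c_0\lambda}$; this only weakens the claimed radius. It follows that $\|\pin(t)\theta_0\|_{L^\infty}\le \frac{2}{c_0\lambda}\|S\|_{L^\infty}$ for $t\ge t_2(R)$. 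Taking $t\ge\max\{t_1,t_2\}$, the solution satisfies both bounds, hence it satisfies the minimum. Membership in $H^1$ is supplied by Theorem~\ref{global-in-time wellposedness in Sobolev} with $s=1$.

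\textbf{Uniform bound \eqref{bound on solution map in L infty}.} Let $\theta_0\in B_\infty$, so that $\|\theta_0\|_{L^\infty}\le m\|S\|_{L^\infty}$ with $m=\min\{\frac{2}{c_0\kappa},\frac{2}{c_0\lambda}\}$. By \eqref{infty bound on theta with kappa>0},
\begin{align*}
\|\pin(t)\theta_0\|_{L^\infty}\le \|\theta_0\|_{L^\infty}+\frac{\|S\|_{L^\infty}}{c_0\kappa}.
\end{align*}
By \eqref{bound on theta by damping}, which is a convex combination of $\|\theta_0\|_{L^\infty}$ and $\|S\|_{L^\infty}/\lambda$,
\begin{align*}
\|\pin(t)\theta_0\|_{L^\infty}\le \|\theta_0\|_{L^\infty}+\frac{\|S\|_{L^\infty}}{\lambda}.
\end{align*}
If the minimum $m$ is attained by the $\kappa$-term, the first estimate gives at most $\frac{3}{c_0\kappa}\|S\|_{L^\infty}$. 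If it is attained by the $\lambda$-term, the second estimate gives at most $\frac{3}{c_0\lambda}\|S\|_{L^\infty}$. In that second case we also have $\frac{2}{c_0\lambda}\le\frac{2}{c_0\kappa}$, so $\frac{3}{c_0\lambda}$ is exactly the minimum of the two candidate constants in \eqref{bound on solution map in L infty}. The same reasoning applies in the first case, so in both cases the bound is the claimed minimum.

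\textbf{Main obstacle.} The delicate point is the mixed case in the uniform bound: we must make sure that the estimate corresponding to the attained minimum is the one actually applied. The case split above does this, and it only requires the normalization $c_0\le1$.

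When $\lambda=0$, the $\lambda$-term is read as $+\infty$, so only the $\kappa$-estimate is in play and the argument reduces to the $\kappa$-mechanism alone.
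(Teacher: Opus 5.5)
Your proof is correct and follows the paper's overall plan. You control the $\kappa$-branch with \eqref{infty bound on theta with kappa>0 and t>1} plus Poincar\'e, and the $\lambda$-branch with \eqref{bound on theta by damping} at $p=\infty$. You then wait until both exponential transients fall below their asymptotic levels.

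The one substantive difference is in the $\lambda$-branch. The paper derives its $\lambda$-estimate directly from \eqref{bound on theta by damping} ``and the Poincar\'e inequality''. That bound requires $\|\theta_0\|_{L^\infty}$, and Poincar\'e cannot supply this for $H^1$ data when $d=2,3$, since $H^1\not\subset L^\infty$. Your restart at $t=1$ fixes this. You use Lemma~\ref{De Giorgi iteration lemma} to get $\|\pin(1)\theta_0\|_{L^\infty}\le K(R)$, then apply \eqref{bound on theta by damping} from time $1$ using the autonomy of the equation. This is what actually justifies the step, and it relies only on $\kappa>0$, which is in force.

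You also make explicit the normalization $c_0\le 1$. The paper uses it tacitly when it writes $e^{-c_0\lambda t}$ and $\|S\|_{L^\infty}/(c_0\lambda)$. It is harmless, because every bound involving $c_0$ remains valid when $c_0$ is decreased.

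Your case split also supplies a proof of \eqref{bound on solution map in L infty}, which the paper only asserts.

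One caveat applies to both your argument and the paper's: choosing $t_1,t_2$ requires $\|S\|_{L^\infty}>0$. For $S=0$ the radius of $B_\infty$ is zero, and these estimates give no finite entry time.
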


\begin{proof}
The proof follows by the same argument given in \cite[Theorem~3.1]{CZV16}. For a fixed bounded set $B\subset H^1$, we let 
\[R=\sup_{\phi\in B}\|\phi\|_{H^1}.\]
Using the bound \eqref{infty bound on theta with kappa>0 and t>1} and the Poincar\'{e} inequality, we conclude that if $\theta_0\in B$, then
\begin{align*}
\|\pin(t)\theta_0\|_{L^\infty}\le\frac{C}{\kappa}\left[R+\frac{\|S\|_{L^\infty}}{\kappa^\frac{1}{2}}\right]e^{-c_0\kappa t}+\frac{\|S\|_{L^\infty}}{c_0\kappa},\qquad\forall t\ge1.
\end{align*}
Similar, using the bound \eqref{bound on theta by damping} for $p=\infty$ and the Poincar\'{e} inequality, we also have
\begin{align*}
\|\pin(t)\theta_0\|_{L^\infty}\le\frac{C}{\lambda}\left[R+\frac{\|S\|_{L^\infty}}{\lambda^\frac{1}{2}}\right]e^{-c_0\lambda t}+\frac{\|S\|_{L^\infty}}{c_0\lambda},\qquad\forall t\ge1.
\end{align*}
Choose $t_B=t_B(R,\|S\|_{L^2\cap L^\infty})\ge1$ such that
\begin{align*}
\|\pin(t)\theta_0\|_{L^\infty}\le\min\{\frac{2}{c_0\kappa},\frac{2}{c_0\lambda}\}\times\|S\|_{L^\infty},
\end{align*}
then we have $\pin(t)\theta_0\in B_{\infty}$ and hence $B_\infty$ is absorbing.
\end{proof}

Next we prove the following lemma which gives the necessary {\it a priori} estimate in $C^{\alpha}$-space with some appropriate exponent $\alpha\in(0,1)$. In view of Lemma~\ref{Existence of an L infty absorbing set lemma}, we can see that the solutions to \eqref{abstract active scalar eqn} emerging from data in a bounded subset of $H^1$ are absorbed in finite time by $B_{\infty}$. Hence we assume that $\theta_0\in L^\infty$ and derive {\it a priori} bounds in terms of $\|\theta_0\|_{L^\infty}$.

\begin{lemma}[Estimates in $C^\alpha$-space]\label{Estimates in C alpha-space lemma}
Assume that $\theta_0\in H^1\cap L^\infty$ and fix $\lambda\ge0$ and $\kappa>0$. There exists $\alpha=\alpha(\gamma)\in(0,\frac{\gamma}{3+\gamma}]$ which depends on $\|\theta_0\|_{L^\infty}$, $\|S\|_{L^\infty}$, $\kappa$, $\gamma$ such that
\begin{align}\label{bound on C alpha norm of theta with gamma}
\|\theta(\cdot,t)\|_{C\alpha}\le C(\K+\bK),\qquad\forall t\ge t_{\alpha}:=\frac{3}{2\gamma(1-\alpha)},
\end{align}
where $C>0$ is a positive constant, $\K$ and $\bK$ are given respectively by
{\footnotesize
\begin{align}\label{def of K and bar K}
\K:=\|\theta_0\|_{L^\infty}+\frac{1}{c_0\kappa}\|S\|_{L^\infty},\,\,\,\bK:=\kappa^{-\frac{1}{\gamma}}\K+\|S\|^\frac{2+\gamma}{2(1+\gamma)}_{L^\infty}\kappa^{-\frac{1}{2(1+\gamma)}}\K^{\frac{\gamma}{2(1+\gamma)}}+\kappa^{-\frac{1}{\gamma}}\K^{\frac{6+\gamma}{4}}.
\end{align}
}
For the case when $\lambda>0$ and $\gamma\le1$, there exists $\alpha\in(0,\frac{1}{4}]$ which depends on $\|\theta_0\|_{L^\infty}$, $\|S\|_{L^\infty}$, $\lambda$ but is independent of $\gamma$ such that 
\begin{align}\label{bound on C alpha norm of theta with gamma=1}
\|\theta(\cdot,t)\|_{C\alpha}\le C(K_{1,\infty}+\bar{K}_{1,\infty}),\qquad\forall t\ge \frac{3}{2(1-\alpha)},
\end{align}
where $C>0$ is a positive constant, $K_{1,\infty}$ and $\bar{K}_{1,\infty}$ are given respectively by
{\footnotesize
\begin{align}\label{def of K and bar K gamma=1}
K_{1,\infty}:=\|\theta_0\|_{L^\infty}+\frac{1}{c_0\lambda}\|S\|_{L^\infty},\,\,\,\bar{K}_{1,\infty}:=\lambda^{-1}K_{1,\infty}+\|S\|^\frac{3}{4}_{L^\infty}\lambda^{-\frac{1}{4}}K_{1,\infty}^{\frac{1}{4}}+\lambda^{-1}K_{1,\infty}^{\frac{7}{4}}.
\end{align}
}
\end{lemma}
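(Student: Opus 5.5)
We will follow the approach of \cite{CZV16} (see also \cite{CV12, FS21}), which derives H\"older bounds for critical and supercritical drift-diffusion equations by controlling finite differences of $\theta$ against a nonlinear lower bound for the dissipation. The idea is to show that, after a fixed time, the solution has a modulus of continuity $|\theta(x+h,t)-\theta(x,t)|\lesssim |h|^\alpha$ with a constant independent of $t$.

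\textbf{Step 1: the finite-difference quantity.} For $h\in\R^d$ we set
\[
\delta_h\theta(x,t)=\theta(x+h,t)-\theta(x,t).
\]
Since $S$ is time independent, $\delta_h\theta$ satisfies
\[
\partial_t\delta_h\theta+u\cdot\nabla_x\delta_h\theta+(\delta_hu)\cdot\nabla_h\delta_h\theta+\lambda\mathcal{D}\delta_h\theta+\kappa\Lambda^\gamma\delta_h\theta=\delta_hS .
\]
We introduce the weighted quantity
\[
v(x,t;h)=\frac{|\delta_h\theta(x,t)|}{(\xi(t)^2+|h|^2)^{\alpha/2}},
\]
where $\xi(t)$ decreases to $0$ at time $t_\alpha$ and stays $0$ afterwards. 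Concretely, $\xi(t)^{1-\alpha}$ is linear in $t$, which explains the time $t_\alpha=\frac{3}{2\gamma(1-\alpha)}$. The goal is to show that $\|v(\cdot,t;\cdot)\|_{L^\infty_{x,h}}\le C(\K+\bK)$ for all $t\ge t_\alpha$, which is exactly \eqref{bound on C alpha norm of theta with gamma}.

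\textbf{Step 2: the equation for $v^2$.} We compute the evolution of $v^2$ and use the pointwise identity
\[
2f\Lambda^\gamma f=\Lambda^\gamma(f^2)+D_\gamma[f],
\]
together with the nonlinear lower bound
\[
D_\gamma[\delta_h\theta](x)\ge c\,\frac{|\delta_h\theta(x)|^{2+\gamma}}{|h|^\gamma\|\theta\|_{L^\infty}^\gamma}.
\]
This lower bound follows from a Constantin--Vicol-type argument with a smooth cutoff at scale $r$, optimised in $r$. The remaining terms are handled as follows.
\begin{itemize}
\item The damping term $\lambda\mathcal{D}$ is treated in the same way with $\gamma=1$, and it only contributes nonnegative terms. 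This is why the first estimate is uniform in $\lambda\ge0$, and why the second estimate \eqref{bound on C alpha norm of theta with gamma=1} follows by using $\lambda\Lambda$ in place of $\kappa\Lambda^\gamma$ when $\gamma\le1$.
\item The nonlinear term $\delta_hu\cdot\nabla_h\delta_h\theta$ is controlled using A3'. Since $u$ is two derivatives smoother than $\theta$, we obtain $|\delta_hu|\le C|h|\,\|\nabla u\|_{L^\infty}\le C|h|\,\|\theta\|_{L^\infty}$, as in \eqref{L infty bound on nabla u}.
\item The forcing term is bounded by $\|S\|_{L^\infty}$.
\end{itemize}
We use throughout the uniform $L^\infty$ bounds from Proposition~\ref{boundedness on theta with kappa>0 prop}, namely $\|\theta(t)\|_{L^\infty}\le\K$, or $\|\theta(t)\|_{L^\infty}\le K_{1,\infty}$ in the damped case via \eqref{bound on theta by damping}.

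\textbf{Step 3: maximum principle for $v$.} At a point where $v^2$ attains a large maximum, the dissipation term is at least of order
\[
\kappa\,v^{2+\gamma}(\xi^2+|h|^2)^{\alpha(2+\gamma)/2-\gamma/2}\K^{-\gamma}.
\]
Since $\alpha\le\frac{\gamma}{3+\gamma}$, this term dominates three competing contributions:
\begin{itemize}
\item the drift contribution, of order $\alpha v^2\K$;
\item the forcing contribution, of order $\|S\|_{L^\infty}v(\xi^2+|h|^2)^{-\alpha/2}$;
\item the $\dot\xi$ contribution, of order $\alpha|\dot\xi|\xi^{-1}v^2$.
\end{itemize}
These comparisons hold whenever $v$ exceeds a threshold, and we obtain this threshold by balancing each competing term in turn against the dissipation, via Young's inequality. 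The three balances produce the three summands of $\bK$ in \eqref{def of K and bar K}.

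The main obstacle is this balancing. The delicate point is the small-$|h|$ regime before $\xi$ reaches $0$, where the $\dot\xi$ term is largest. This forces the choice $\xi^{1-\alpha}\propto(t_\alpha-t)$, and we must check that the exponents close exactly under the constraint $\alpha\le\frac{\gamma}{3+\gamma}$. Once the threshold is established, a standard argument at the first time the maximum would exceed it shows that $v$ can never reach it. Since $\xi(t)=0$ for $t\ge t_\alpha$, this gives the $C^\alpha$ bound for all $t\ge t_\alpha$.
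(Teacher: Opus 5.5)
Your overall route is the paper's: the finite difference $\delta_h\theta$, the weighted quotient $v=|\delta_h\theta|/(\xi^2+|h|^2)^{\alpha/2}$, the nonlinear lower bound on $D_\gamma[\delta_h\theta]$, the Lipschitz bound $|\delta_h u|\le C|h|K_\infty$ from A3', the damping treated as extra nonnegative $\gamma=1$ dissipation, and a maximum-principle closing. The paper absorbs the three error terms by Young's inequality and compares with $\frac{d}{dt}\psi+\frac{\kappa}{32c_2K_\infty^\gamma}\psi^{1+\gamma/2}\le C\bar K_\infty^2$; your first-crossing threshold argument is an equivalent way to close. The gap is in the step you single out as the main obstacle and then leave unchecked, namely the choice of $\xi$. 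As you state it, that choice fails.

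Start with the dissipation, written correctly:
$\kappa D_\gamma[\delta_h\theta]/(\xi^2+|h|^2)^\alpha\ge c\kappa K_\infty^{-\gamma}v^{2+\gamma}(\xi^2+|h|^2)^{-\gamma(1-\alpha)/2}$.
Your exponent omits the division by $(\xi^2+|h|^2)^\alpha$.

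Next, the $\dot\xi$-term must be kept in its $h$-dependent form $2\alpha|\dot\xi|\xi(\xi^2+|h|^2)^{-1}v^2$. The cruder bound $\alpha|\dot\xi|\xi^{-1}v^2$ that you wrote is never dominated at $|h|\sim1$: there the dissipation coefficient stays bounded, while $|\dot\xi|/\xi\to\infty$ as $\xi\to0$.

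With the $h$-dependence kept, the worst case is $|h|\lesssim\xi$. Domination then requires $v^\gamma\gtrsim\alpha\kappa^{-1}K_\infty^\gamma|\dot\xi|\,\xi^{\gamma(1-\alpha)-1}$. If $\xi^{1-\alpha}$ is linear in $t$, then $|\dot\xi|\sim\xi^{\alpha}$, and the threshold for $v$ is of order $\xi^{(1-\alpha)(\gamma-1)/\gamma}$. This blows up as $t\uparrow t_\alpha$ whenever $\gamma<1$. So for $\lambda=0$ and $\gamma\in(0,1)$ your argument gives no bound at $t_\alpha$ in terms of $K_\infty$ alone. Restarting at $t_\alpha$ does not help, because it would involve the uncontrolled quantity $[\theta(t_\alpha)]_{C^\alpha}$.

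So the choice $\xi^{1-\alpha}\propto(t_\alpha-t)$ is not ``forced''; what is needed is $|\dot\xi|\lesssim\xi^{1-\gamma(1-\alpha)}$. A choice consistent with the paper's $t_\alpha$ is $\xi(0)=1$ and $\xi^{2\gamma(1-\alpha)/3}=1-\frac{2\gamma(1-\alpha)}{3}t$, i.e.\ $1-a=\frac{2\gamma(1-\alpha)}{3}$ in the paper's formula for $\xi$.
\begin{itemize}
\item This normalization is what produces $t_\alpha=\frac{3}{2\gamma(1-\alpha)}$; a linear $\xi^{1-\alpha}$ does not explain that value.
\item With it, the $\dot\xi$-term is dominated as soon as $v\gtrsim(\alpha/\kappa)^{1/\gamma}K_\infty$, which is exactly the first summand of $\bar K_\infty$.
\end{itemize}
Your choice of $\xi$ only survives for $\gamma\ge1$. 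It also works in the damped case $\lambda>0$, $\gamma\le1$, where $\gamma$ is replaced by $1$ and $\xi^{1-\alpha}$ is exactly the borderline choice.
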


\begin{remark}
Using the bound \eqref{infty bound on theta with kappa>0}, under the assumption that $\theta_0\in L^\infty$, for $\kappa>0$, we have
\begin{align}\label{infty bound on theta using K for kappa>0}
\|\theta(\cdot,t)\|_{L^\infty}\le \K,\qquad \forall t\ge0.
\end{align}
\end{remark}

\begin{proof}[Proof of Lemma~\ref{Estimates in C alpha-space lemma}]
The proof is reminiscent of the one given in proving \cite[Lemma~6.3]{FS21}. We introduce the following finite difference
\begin{align*}
\Dh\theta(x,t)=\theta(x+h)-\theta(x,t),
\end{align*}
where $x$, $h\in\Omega$. Then $\Dh\theta$ satisfies
\begin{align}\label{eqn for theta with operator Lh}
\Lh(\Dh\theta)^2+D_{\gamma}[\Dh\theta]=2(\delta_h S)(\delta_h \theta),
\end{align}
where $\Lh$ is the operator given by $$\Lh:=\dt+u\cdot\nabla+(\Dh u)\cdot\nabla_h+\lambda\mathcal{D}+\kappa\Lambda^\gamma,$$ and $D_{\gamma}[f](x)$ is the functional given by
\begin{align}\label{def of functional D gamma []}
D_{\gamma}[f](x)=P.V.\int_{\Omega}\frac{[f(x)-f(x+y)]^2}{|y|^{d+\gamma}}dy.
\end{align}
Let $\xi(t)=\xi:[0,\infty)\to[0,\infty)$ be a bounded decreasing differentiable function which will be defined later, and for $\alpha\in(0,\frac{\gamma}{3+\gamma}]$, we define $v$ by
\begin{align*}
v(x,t;h)=\frac{|\Dh\theta(x,t)|}{(\xi(t)^2+|h|^2)^\frac{\alpha}{2}}.
\end{align*}
Using \eqref{eqn for theta with operator Lh}, we can see that $v$ satisfies the following inequality
\begin{align}\label{ineq for v with operator Lh}
&\Lh v^2+\kappa\cdot\frac{D_{\gamma}[\Dh\theta(x,t)]}{(\xi^2+|h|^2)^\alpha}\notag\\
&\le\Lh v^2+\lambda\cdot\left(\frac{v^2}{\xi^2+|h|^2}+\frac{D_{1}[\Dh\theta(x,t)]}{(\xi^2+|h|^2)^\alpha}\right)+\kappa\cdot\frac{D_{\gamma}[\Dh\theta(x,t)]}{(\xi^2+|h|^2)^\alpha}\notag\\
&\le 2\alpha|\dot{\xi}|\frac{\xi}{\xi^2+|h|^2}v^2+2\alpha\frac{|h|}{\xi^2+|h|^2}|\Dh u|v^2+\frac{2\|S\|_{L^\infty}}{(\xi^2+|h|^2)^\frac{\alpha}{2}}v,
\end{align}
where $D_{1}=D_{\gamma}|_{\gamma=1}$. Following the details given in \cite{FS21}, the terms on the right side of \eqref{ineq for v with operator Lh} can be bounded by
\begin{align}\label{bound on the term with derivative on xi}
2\alpha|\dot{\xi}|\frac{\xi}{\xi^2+|h|^2}v^2\le \frac{\kappa|v|^{2+\gamma}}{8c_2\|\theta\|^\gamma_{L^\infty}(\xi^2+|h|^2)^\frac{\gamma(1-\alpha)(2+\gamma)}{6}}+C\kappa^{-\frac{2}{\gamma}}\|\theta\|^2_{L^\infty},
\end{align}
\begin{align}\label{bound on the term with nabla u}
2\alpha\frac{|h|}{\xi^2+|h|^2}|\Dh u|v^2\le \frac{\kappa|v|^{2+\gamma}}{8c_2\|\theta\|^\gamma_{L^\infty}(\xi^2+|h|^2)^\frac{\gamma(1-\alpha)(2+\gamma)}{6}}+C\|\nabla v\|^\frac{2+\gamma}{2}_{L^\infty}\kappa^{-\frac{2}{\gamma}}\|\theta\|^2_{L^\infty}
\end{align}
and
\begin{align}\label{bound on the term with S}
\frac{2\|S\|_{L^\infty}}{(\xi^2+|h|^2)^\frac{\alpha}{2}}v\le\frac{\kappa|v|^{2+\gamma}}{8c_2\|\theta\|^\gamma_{L^\infty}(\xi^2+|h|^2)^\frac{\gamma(1-\alpha)(2+\gamma)}{6}}+C\|S\|^\frac{2+\gamma}{1+\gamma}_{L^\infty}\kappa^{-\frac{1}{1+\gamma}}\|\theta\|^{\frac{\gamma}{1+\gamma}}_{L^\infty}.
\end{align}
The bounds \eqref{bound on the term with derivative on xi}-\eqref{bound on the term with S} hold upon choosing $\xi=\xi(t)$ that satisfies
\begin{equation} \label{def of xi 2}
			\xi(t)=\begin{cases}
				[1-t(1-a)]^\frac{1}{1-a},\qquad &t\in[0,t_{\alpha}] \\ 
				0,\qquad &t\in(t_{\alpha},\infty),
			\end{cases}
\end{equation}
with $t_{\alpha}$ being given by
\begin{align}\label{def of t alpha}
t_{\alpha}=\frac{3}{2\gamma(1-\alpha)}.
\end{align}
On the other hand, for $r\ge4|h|$, there exists some constant $C\ge1$ such that
\begin{align*}
D_{\gamma}[\Dh\theta](x)\ge\frac{1}{2r^\gamma}|\Dh\theta(x)|^2-C|\Dh\theta(x)|\|\theta\|_{L^\infty}\frac{|h|}{r^{1+\gamma}}.
\end{align*}
Choose $r\ge4(\xi^2+|h|^2)^\frac{1}{2}\ge 4|h|$ such that
\begin{align*}
r=\frac{4C\|\theta\|_{L^\infty}}{|\Dh\theta(x)|}(\xi^2+|h|^2)^{\frac{(1-\alpha)(2+\gamma)}{6}+\frac{\alpha}{2}},
\end{align*}
then we have
\begin{align}\label{lower bound for Dh alpha}
\frac{D_{\gamma}[\Dh\theta](x,t)}{(\xi^2+|h|^2)^\alpha}\ge\frac{|v(x,t;h)|^{2+\gamma}}{c_2\|\theta\|^\gamma_{L^\infty}(\xi^2+|h|^2)^\frac{\gamma(1-\alpha)(2+\gamma)}{6}},
\end{align}
for some positive constant $c_2$. Apply the bounds \eqref{bound on the term with derivative on xi}, \eqref{bound on the term with nabla u}, \eqref{bound on the term with S} and  \eqref{lower bound for Dh alpha} on \eqref{ineq for v with operator Lh}, we obtain
\begin{align*}
&\Lh v^2+\kappa\cdot\frac{D_{\gamma}[\Dh\theta(x,t)]}{(\xi^2+|h|^2)^\alpha}+\frac{\kappa|v|^{2+\gamma}}{8c_2\|\theta\|^\gamma_{L^\infty}(\xi^2+|h|^2)^\frac{\gamma(1-\alpha)(2+\gamma)}{6}}\\
&\le C\Big[\kappa^{-\frac{2}{\gamma}}\|\theta\|^2_{L^\infty}+\|S\|^\frac{2+\gamma}{1+\gamma}_{L^\infty}\kappa^{-\frac{1}{1+\gamma}}\|\theta\|^{\frac{\gamma}{1+\gamma}}_{L^\infty}+\|\nabla v\|^\frac{2+\gamma}{2}_{L^\infty}\kappa^{-\frac{2}{\gamma}}\|\theta\|^2_{L^\infty}\Big].
\end{align*}
Moreover, we apply the bound \eqref{infty bound on theta using K for kappa>0} on $\|\theta\|_{L^\infty}$ to obtain
\begin{align*}
\|\theta\|_{L^\infty}\le\K,
\end{align*}
and using \eqref{L infty bound on nabla u}, we can bound $\nabla u$ by 
\begin{align}\label{L infty bound on nabla u general}
\|\nabla u\|_{L^\infty}\le C\|\theta\|_{L^\infty}\le C\K.
\end{align}
Since $\|S\|_{L^\infty}\le c_0\kappa\K$, it gives
\begin{align*}
\Lh v^2+\frac{\kappa|v|^{2+\gamma}}{8c_2\|\theta\|^\gamma_{L^\infty}(\xi^2+|h|^2)^\frac{\gamma(1-\alpha)(2+\gamma)}{6}}\le C\bK^2,
\end{align*}
where $\bK$ is defined in \eqref{def of K and bar K}. Since $(\xi^2+|h|^2)^\frac{\gamma(1-\alpha)(2+\gamma)}{6}\le 2^\frac{\gamma(1-\alpha)(2+\gamma)}{6}\le 4$, this implies
\begin{align}\label{ineq for v with operator Lh step 2}
\Lh v^2+\frac{\kappa|v|^{2+\gamma}}{32c_2\K^\gamma}\le C\bK^2.
\end{align}
If we take $\psi(t)=\|v(t)\|^2_{L^\infty_{x,h}}$ in \eqref{ineq for v with operator Lh step 2}, then $\psi$ satisfies the following differential inequality
\begin{align}\label{diff ineq for psi}
\frac{d}{dt}\psi+\frac{\kappa\psi^{1+\frac{\gamma}{2}}}{32c_2\K^\gamma}\le C\bK^2,
\end{align}
with $\psi(0)$ satisfying the bound 
\begin{align*}
\psi(0)\le\frac{4\|\theta_0\|^2_{L^\infty}}{\xi(0)^{2\alpha}}\le 4\K^2.
\end{align*}
Hence it follows from \eqref{diff ineq for psi} that 
\begin{align}\label{bound on psi all time}
\psi(t)\le C(\K+\bK),\qquad\forall t\ge0,
\end{align}
for some $C>0$. We conclude from \eqref{bound on psi all time} that 
\begin{align}
[\theta(t)]^2_{C^\alpha}\le C(\K+\bK),\qquad \forall t\ge t_{\alpha},
\end{align}
where $t_{\alpha}$ is given by \eqref{def of t alpha}. Finally, together with the bound \eqref{infty bound on theta using K for kappa>0}, 
\begin{align}
\|\theta(t)\|_{C^\alpha}=\|\theta(t)\|_{L^\infty}+[\theta(t)]^2_{C^\alpha}\le C(\K+\bK),\qquad \forall t\ge t_{\alpha}.
\end{align}

For the case when $\lambda>0$ and $\gamma\le1$, using the damping term $\lambda\mathcal{D}\theta$, we can choose $\alpha\in(0,\frac{1}{4}]$ which is independent of $\gamma$ such that \eqref{bound on C alpha norm of theta with gamma}-\eqref{def of K and bar K} hold by replacing $\gamma$ with $1$. The key for obtaining \eqref{bound C alpha norm of theta for all time} is the following inequality that can be deduced from \eqref{ineq for v with operator Lh}:
\begin{align*}
&\Lh v^2+\lambda\cdot\left(\frac{v^2}{\xi^2+|h|^2}+\frac{D_{1}[\Dh\theta(x,t)]}{(\xi^2+|h|^2)^\alpha}\right)\notag\\
&\le\Lh v^2+\lambda\cdot\left(\frac{v^2}{\xi^2+|h|^2}+\frac{D_{1}[\Dh\theta(x,t)]}{(\xi^2+|h|^2)^\alpha}\right)+\kappa\cdot\frac{D_{\gamma}[\Dh\theta(x,t)]}{(\xi^2+|h|^2)^\alpha}\notag\\
&\le 2\alpha|\dot{\xi}|\frac{\xi}{\xi^2+|h|^2}v^2+2\alpha\frac{|h|}{\xi^2+|h|^2}|\Dh u|v^2+\frac{2\|S\|_{L^\infty}}{(\xi^2+|h|^2)^\frac{\alpha}{2}}v.
\end{align*}
By the same argument given in the proof of Lemma~\ref{Estimates in C alpha-space lemma}, we can obtain bounds \eqref{bound on the term with derivative on xi}, \eqref{bound on the term with nabla u}, \eqref{bound on the term with S} and  \eqref{lower bound for Dh alpha} as well as inequalities \eqref{ineq for v with operator Lh step 2} and \eqref{diff ineq for psi} for $\gamma=1$. Choosing $K_{1,\infty}$ and $\bar{K}_{1,\infty}$ that satisfy \eqref{def of K and bar K gamma=1}, the bounds \eqref{bound on C alpha norm of theta with gamma=1} follows.
\end{proof}

\begin{remark}
Given $\alpha\in(0,\frac{\gamma}{3+\gamma}]$, if we further assume that $\theta_0\in C^{\alpha}$, then following the similar argument given in the proof of Lemma~\ref{Estimates in C alpha-space lemma} as shown above (also see \cite{CTV14} for reference), we have
\begin{align}\label{bound C alpha norm of theta for all time}
\|\theta(t)\|_{C^\alpha}\le [\theta_0]_{C^\alpha}+C(\K+\bK),\qquad\forall t\ge0.
\end{align}
\end{remark}

With the help of Lemma~\ref{Estimates in C alpha-space lemma}, we obtain the following result which can be regarded as an improvement of the regularity of the absorbing set $B_{\infty}$ defined in Lemma~\ref{Existence of an L infty absorbing set lemma}. A proof can be found in \cite{FS21} and omit the details here.

\begin{lemma}[Existence of an $C^\alpha$-absorbing set]\label{Existence of an C alpha absorbing set lemma}
Given $\lambda\ge0$ and $\kappa>0$, there exists $\alpha\in(0,\frac{\gamma}{3+\gamma}]$ and a constant $C_{\alpha}=C_{\alpha}(\|S\|_{L^\infty},\alpha,\kappa,\gamma)\ge1$ such that the set
\begin{align}\label{def of B alpha set}
B_{\alpha}=\left\{\phi\in C^{\alpha}\cap H^1:\|\phi\|_{C^\alpha}\le C_{\alpha}\right\}
\end{align}
is an absorbing set for $\pin(t)$. Moreover, we have
\begin{align}\label{bound on solution map in C alpha}
\sup_{t\ge0}\sup_{\theta_0\in B_{\alpha}}\|\pin(t)\theta_0\|_{C^\alpha}\le 2C_{\alpha}.
\end{align}
Here the constant $C_\alpha$ is given by
{\footnotesize
\begin{align*}
C_\alpha:=\max\left\{1,(1+\kappa^{-\frac{1}{\gamma}})\Big(\frac{3\|S\|_{L^\infty}}{c_0\kappa}\Big)+\|S\|^\frac{2+\gamma}{2(1+\gamma)}_{L^\infty}\kappa^{-\frac{1}{2(1+\gamma)}}\Big(\frac{3\|S\|_{L^\infty}}{c_0\kappa}\Big)^{\frac{\gamma}{2(1+\gamma)}}+\kappa^{-\frac{1}{\gamma}}\Big(\frac{3\|S\|_{L^\infty}}{c_0\kappa}\Big)^{\frac{6+\gamma}{4}}\right\}.
\end{align*}
}
\end{lemma}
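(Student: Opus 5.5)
The plan is to combine the $L^\infty$-absorbing set of Lemma~\ref{Existence of an L infty absorbing set lemma} with the $C^\alpha$ estimate of Lemma~\ref{Estimates in C alpha-space lemma}, and then to obtain the uniform bound \eqref{bound on solution map in C alpha} from the all-time estimate \eqref{bound C alpha norm of theta for all time}.

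\textbf{Absorption.} Fix a bounded set $B\subset H^1$. By Lemma~\ref{Existence of an L infty absorbing set lemma}, there is $t_B$ such that $\pin(t)B\subset B_\infty$ for all $t\ge t_B$. I restart the flow at time $t_B$ with datum $\tilde\theta_0=\pin(t_B)\theta_0\in B_\infty$. This is legitimate because the equation is autonomous ($S$ does not depend on time). For this datum,
\[
\|\tilde\theta_0\|_{L^\infty}\le \frac{2\|S\|_{L^\infty}}{c_0\kappa}.
\]
Inserting this into the definition \eqref{def of K and bar K} gives
\[
\K\le \frac{3\|S\|_{L^\infty}}{c_0\kappa}.
\]
Substituting this bound for $\K$ into $\bK$, the quantity $C(\K+\bK)$ from \eqref{bound on C alpha norm of theta with gamma} is dominated by $C_\alpha$ as defined in the statement. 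This matches term by term: $(1+\kappa^{-1/\gamma})\K$, the $S$-term, and the $\K^{(6+\gamma)/4}$ term. Any harmless universal constant can be absorbed by enlarging $C_\alpha$.

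Lemma~\ref{Estimates in C alpha-space lemma} then yields $\|\pin(t)\theta_0\|_{C^\alpha}\le C_\alpha$ for all $t\ge t_B+t_\alpha$. The solution also stays in $H^1$ by Theorem~\ref{global-in-time wellposedness in Sobolev}. Hence $B_\alpha$ absorbs $B$, with entering time $t_B+t_\alpha$.

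\textbf{Choice of $\alpha$.} A subtle point is that Lemma~\ref{Estimates in C alpha-space lemma} allows $\alpha$ to depend on $\|\theta_0\|_{L^\infty}$. Here all restarted data lie in $B_\infty$, so a single $\alpha\in(0,\frac{\gamma}{3+\gamma}]$ works uniformly, and it depends only on $\|S\|_{L^\infty},\kappa,\gamma$.

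\textbf{Uniform bound \eqref{bound on solution map in C alpha}.} Take $\theta_0\in B_\alpha$. I apply the remark's bound \eqref{bound C alpha norm of theta for all time}, which needs only $\theta_0\in C^\alpha$:
\[
\|\pin(t)\theta_0\|_{C^\alpha}\le[\theta_0]_{C^\alpha}+C(\K+\bK)\qquad\text{for all }t\ge0.
\]
Since $\|\theta_0\|_{L^\infty}\le C_\alpha$, the constant $\K$ is now bounded in terms of $C_\alpha$ rather than $\|S\|_{L^\infty}/\kappa$.

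\textbf{Main obstacle.} The difficulty is keeping the right-hand side below $2C_\alpha$, because the naive constant is not closed under the flow. I would handle this in two steps.
\begin{itemize}
\item[(i)] For $t\ge t_\alpha$, I first use \eqref{bound on theta by damping} or \eqref{infty bound on theta with kappa>0} to say that the $L^\infty$ norm of $\pin(t)\theta_0$ re-enters the $B_\infty$ regime. Then I reapply the absorption argument, which gives a bound of $C_\alpha$ for large times.
\item[(ii)] For $t\le t_\alpha$, I work on the bounded window $[0,t_\alpha]$ and carry out the $\psi$ ODE argument with $\xi\equiv0$. The initial value is $\psi(0)\le[\theta_0]^2_{C^\alpha}\le C_\alpha^2$, and the damping in \eqref{diff ineq for psi} keeps $\psi(t)\le\max\{\psi(0),\text{equilibrium}\}$.
\end{itemize}
Matching these two estimates yields the constant $2C_\alpha$, if necessary after redefining $C_\alpha$ by a universal multiple. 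This bookkeeping of constants is the step I expect to need care; it follows \cite{FS21}.
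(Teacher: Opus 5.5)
Your absorption step is the intended route; the paper itself only defers to \cite{FS21} here. Restarting at $t_B$ from data in $B_\infty$ gives $K_\infty\le 3\|S\|_{L^\infty}/(c_0\kappa)$ and a single $\alpha$, so $\pin(t)B\subset B_\alpha$ for $t\ge t_B+t_\alpha$.

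\textbf{The gap.} It lies in the uniform bound \eqref{bound on solution map in C alpha}, and your steps (i)--(ii) do not close it.

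For $\theta_0\in B_\alpha$, the only $L^\infty$ information you have is $\|\theta_0\|_{L^\infty}\le C_\alpha$. So the $K_\infty$ that enters \eqref{bound C alpha norm of theta for all time}, and also the coefficients of \eqref{diff ineq for psi}, is of size $C_\alpha$, not of size $\|S\|_{L^\infty}/(c_0\kappa)$. The coefficients in question are the damping $\kappa/(32c_2K_\infty^\gamma)$ and the source $C\bar K_\infty^2$.

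Now $\bar K_\infty$ is superlinear in $K_\infty$, through the term $\kappa^{-1/\gamma}K_\infty^{(6+\gamma)/4}$. Already the $\delta_h u$ term balances the dissipation only at $v\sim(\alpha/\kappa)^{1/\gamma}K_\infty^{1+1/\gamma}$. Hence the equilibrium in (ii) is in general much larger than $(2C_\alpha)^2$. Enlarging $C_\alpha$ by a universal factor makes this worse, because it enlarges $B_\alpha$ and so raises the equilibrium superlinearly.

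Step (i) does not rescue this. When $\|\theta_0\|_{L^\infty}\approx C_\alpha$, \eqref{infty bound on theta with kappa>0} returns the solution to $B_\infty$ only after a time of order $(c_0\kappa)^{-1}\log\big(c_0\kappa C_\alpha/\|S\|_{L^\infty}\big)$, not $t_\alpha$. On the intervening window you face the same uncontrolled equilibrium.

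\textbf{The fix.} The missing observation is that the uniform bound should be proved for data that also lie in $B_\infty$.
\begin{itemize}
\item Trajectories from a bounded set enter $B_\infty$ at $t_B$ and stay there, since the bound in the proof of Lemma~\ref{Existence of an L infty absorbing set lemma} decreases in $t$.
\item Hence $B_\alpha\cap B_\infty$ is still absorbing, and this is all the subsequent $H^1$-absorbing-set argument needs.
\item For $\theta_0\in B_\alpha\cap B_\infty$ you have $K_\infty\le 3\|S\|_{L^\infty}/(c_0\kappa)$. Therefore $C(K_\infty+\bar K_\infty)\le C_\alpha$, with the universal constant absorbed as in your first step.
\item Then \eqref{bound C alpha norm of theta for all time} gives at once $\|\pin(t)\theta_0\|_{C^\alpha}\le[\theta_0]_{C^\alpha}+C_\alpha\le 2C_\alpha$ for all $t\ge0$, with no time splitting.
\end{itemize}

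If you want the bound on $B_\alpha$ exactly as defined, the only mechanism that makes the equilibrium linear in $\|\theta_0\|_{L^\infty}$ is the factor $\alpha$ in front of the $\delta_h u$ term in \eqref{ineq for v with operator Lh}. Take $\xi\equiv0$ and $\alpha\lesssim\kappa/C_\alpha$; this is allowed because the formula for $C_\alpha$ does not involve $\alpha$. Use $\|S\|_{L^\infty}\le c_0\kappa K_\infty$ for the forcing term. You then get $[\pin(t)\theta_0]_{C^\alpha}\le\max\{[\theta_0]_{C^\alpha},C\,C_\alpha\}$, i.e.\ a bound by a universal multiple of $C_\alpha$ rather than exactly $2C_\alpha$.
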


\begin{remark}\label{Existence of an C alpha absorbing set remark}
Similar as before, for the case when $\lambda>0$ and $\gamma\le1$, there exists $\alpha\in(0,\frac{1}{4}]$ such that the constant $\bar{C}_\alpha=\bar{C}_{\alpha}(\|S\|_{L^\infty},\alpha,\lambda)\ge1$ can be chosen to be independent of $\gamma$ as 
\begin{align*}
\bar{C}_\alpha:=\max\left\{1,(1+\lambda^{-1})\Big(\frac{3\|S\|_{L^\infty}}{c_0\lambda}\Big)+\|S\|^\frac{3}{4}_{L^\infty}\lambda^{-\frac{1}{4}}\Big(\frac{3\|S\|_{L^\infty}}{c_0\lambda}\Big)^{\frac{1}{4}}+\lambda^{-1}\Big(\frac{3\|S\|_{L^\infty}}{c_0\lambda}\Big)^{\frac{7}{4}}\right\}.
\end{align*}
By replacing $C_\alpha$ with $\bar{C}_\alpha$, the set $B_\alpha$ as given in \eqref{def of B alpha set} is an absorbing set for $\pin(t)$ and the bound \eqref{bound on solution map in C alpha} holds. 
\end{remark}

We now proceed to prove the existence of a bounded absorbing set in $H^1$:

\begin{lemma}[Existence of an $H^1$-absorbing set]\label{Existence of an H1 absorbing set lemma}
For $\lambda\ge0$ and $\kappa>0$, there exists $\alpha\in(0,\frac{\gamma}{3+\gamma}]$ and a constant $R_1=R_1(\|S\|_{L^\infty\cap H^1},\alpha,\kappa,\gamma)\ge1$ such that the set
\begin{align*}
B_1=\{\phi\in C^{\alpha}\cap H^1:\|\phi\|^2_{H^1}+\|\phi\|^2_{C^\alpha}\le R^2_{1}\}
\end{align*}
is an absorbing set for $\pin(t)$. Moreover, we have
\begin{align}\label{bound on time integral on theta in H 1+gamma/2}
\sup_{t\ge0}\sup_{\theta_0\in B_1}\left[\|\pin(t)\theta_0\|^2_{H^1}+\|\pin(t)\theta_0\|^2_{C^\alpha}+\int_t^{t+1}\|\pin(\tau)\theta_0\|^2_{H^{1+\frac{\gamma}{2}}}d\tau\right]\le 2R_1^2.
\end{align}
For $\lambda>0$ and $\gamma\le1$, there exists $\alpha\in(0,\frac{1}{4}]$ such that the bound 
\begin{align}\label{bound on time integral on theta in H 1+1/2 gamma=1}
\sup_{t\ge0}\sup_{\theta_0\in B_1}\left[\|\pin(t)\theta_0\|^2_{H^1}+\|\pin(t)\theta_0\|^2_{C^\alpha}+\int_t^{t+1}\|\pin(\tau)\theta_0\|^2_{H^{\frac{3}{2}}}d\tau\right]\le 2\bar{R}_1^2
\end{align}
holds for some constant $\bar{R}_1=\bar{R}_1(\|S\|_{L^\infty\cap H^1},\alpha,\lambda)\ge1$ independent of $\gamma$, and the set $B_1$ is also an absorbing set for $\pin(t)$ by replacing $R_1$ with $\bar{R}_1$.
\end{lemma}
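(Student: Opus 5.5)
Since $B_\alpha$ is already absorbing (Lemma~\ref{Existence of an C alpha absorbing set lemma}), I will work with data $\theta_0\in B_\alpha$. The goal is an $H^1$ energy estimate in which the nonlinear term is controlled by the uniform $C^\alpha$ bound \eqref{bound on solution map in C alpha}, with no assumption on $\nabla\theta$ beyond $L^2$. Concretely, I multiply \eqref{abstract active scalar eqn} by $-\Delta\theta$ and integrate. This gives
\begin{align*}
\frac12\frac{d}{dt}\|\nabla\theta\|_{L^2}^2+\lambda\big(\|\nabla\theta\|_{L^2}^2+\|\Lambda^{\frac32}\theta\|_{L^2}^2\big)+\kappa\|\Lambda^{1+\frac{\gamma}{2}}\theta\|_{L^2}^2=\langle \nabla S,\nabla\theta\rangle-\int_\Omega \partial_k u\cdot\nabla\theta\,\partial_k\theta\,dx,
\end{align*}
where the term $u\cdot\nabla\partial_k\theta\,\partial_k\theta$ has vanished by incompressibility (A1). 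The forcing term is bounded by $\|S\|_{H^1}\|\theta\|_{H^1}$. By \eqref{two order smoothing for u when nu>0} and \eqref{L infty bound on nabla u}, $\|\nabla u\|_{L^\infty}\le C\|\theta\|_{L^\infty}\le 2C C_\alpha$. The cubic term is therefore at most $CC_\alpha\|\nabla\theta\|_{L^2}^2$. This is a \emph{linear} bound in $\|\nabla\theta\|^2$, which is the main benefit of A3'.

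The difficulty is that a linear term with a large coefficient cannot be absorbed by Gr\"onwall uniformly in time. To absorb it, I interpolate:
\[
\|\nabla\theta\|_{L^2}\le \|\theta\|_{H^{\alpha'}}^{\sigma}\|\theta\|_{H^{1+\gamma/2}}^{1-\sigma},\qquad \alpha'<\alpha .
\]
Here $\|\theta\|_{H^{\alpha'}}$ is bounded by the $C^\alpha$ norm on the torus, hence by $2C_\alpha$. Young's inequality then gives
\[
CC_\alpha\|\nabla\theta\|^2\le \tfrac{\kappa}{2}\|\Lambda^{1+\gamma/2}\theta\|^2+C(\kappa,C_\alpha).
\]
Poincar\'e gives $\|\Lambda^{1+\gamma/2}\theta\|\ge\|\nabla\theta\|$, which yields
\[
\frac{d}{dt}\|\nabla\theta\|^2+\tfrac{\kappa}{2}\|\nabla\theta\|^2+\tfrac{\kappa}{2}\|\theta\|_{H^{1+\gamma/2}}^2\le C(\kappa,C_\alpha,\|S\|_{H^1}).
\]
A uniform Gr\"onwall argument then makes $\|\nabla\theta\|^2$ decay exponentially to a ball of radius $\sim C/\kappa$. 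I choose $R_1^2$ to be larger than this radius plus $C_\alpha^2$. Every orbit from a bounded set enters $B_\alpha$ and then, after finite time, $B_1$, so $B_1$ is absorbing. For $\theta_0\in B_1$ the same inequality gives $\sup_t\|\nabla\theta\|^2\le\max(R_1^2,\cdot)$. Integrating over $[t,t+1]$ bounds $\int_t^{t+1}\|\theta\|_{H^{1+\gamma/2}}^2$, and adjusting constants gives \eqref{bound on time integral on theta in H 1+gamma/2}.

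For $\lambda>0$, $\gamma\le1$, I repeat the argument using the damping dissipation $\lambda\|\Lambda^{3/2}\theta\|^2$ instead of the $\kappa$ term, with interpolation between $H^{\alpha'}$ and $H^{3/2}$. I also use the $\gamma$-independent $\bar C_\alpha$ from Remark~\ref{Existence of an C alpha absorbing set remark}, so that $\bar R_1$ depends only on $\lambda$, $\alpha$ and $S$.

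The main obstacle I expect is absorbing the transport term without a Gr\"onwall factor growing in time. If the interpolation through $C^\alpha$ proves too lossy, my fallback is to use the integrated energy bound \eqref{energy inequality kappa>0}, which gives $\int_t^{t+1}\|\Lambda^{\gamma/2}\theta\|^2\le C$. I would combine this with interpolation of $\|\nabla\theta\|^2$ between $H^{\gamma/2}$ and $H^{1+\gamma/2}$ inside the uniform Gr\"onwall lemma.
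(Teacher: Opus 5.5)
Your argument is correct, but you absorb the cubic term $\int_\Omega \partial_k u\cdot\nabla\theta\,\partial_k\theta\,dx$ by a different mechanism from the paper's.

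\textbf{The paper's route.} It works pointwise. It derives a transport--diffusion inequality for $|\nabla\theta|^2$ that contains the nonlocal dissipation $\kappa D_\gamma[\nabla\theta]$. It then uses the Constantin--Vicol lower bound $D_\gamma[\nabla\theta]\gtrsim |\nabla\theta|^{2+\frac{\gamma}{1-\alpha}}[\theta]_{C^\alpha}^{-\frac{\gamma}{1-\alpha}}$; this is where the $C^\alpha$-absorbing set is genuinely used. Young's inequality, applied pointwise, swallows $|\nabla u|\,|\nabla\theta|^2$ and leaves $|\nabla u|^{1+\frac{2(1-\alpha)}{\gamma}}\lesssim K_\infty^{1+\frac{2(1-\alpha)}{\gamma}}$. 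Only then does the paper integrate over $\Omega$ and apply Gr\"onwall.

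\textbf{Your route.} You stay with the global $H^1$ energy identity and bound the cubic term by $\|\nabla u\|_{L^\infty}\|\nabla\theta\|_{L^2}^2$. You then absorb it by interpolating $\|\nabla\theta\|_{L^2}$ between a low norm and the dissipative norm: $H^{1+\gamma/2}$ in general, or $H^{3/2}$ from the damping when $\lambda>0$, $\gamma\le 1$.

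\textbf{What each buys.} Your route is more elementary, and it shows that under A3' the H\"older regularity plays no real role in the $H^1$ estimate. Interpolating between $L^2$ and $H^{1+\gamma/2}$ works just as well, so only an $L^\infty$ (indeed $L^{d+1}$) bound on $\theta$ is needed. The set $B_\alpha$ enters only because $\|\phi\|_{C^\alpha}$ is built into $B_1$. Moreover, $H^1\subset L^{d+1}$ for $d\le 3$ and $L^p$ norms are propagated by the equation. So your inequality can also control the transient before an orbit reaches $B_\alpha$, a step that both you and the paper leave implicit. The paper's pointwise route is inherited from critical SQG, where $\nabla u$ is not controlled by $\|\theta\|_{L^\infty}$ and the H\"older bound genuinely carries the argument. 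Here it is more machinery than the estimate requires.

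\textbf{A caveat shared with the paper.} The uniform bound for $\theta_0\in B_1$ is asserted rather than derived. Your constants were computed for orbits whose $C^\alpha$ norm stays below $2C_\alpha$. Elements of $B_1$, however, may have $C^\alpha$ and $L^\infty$ norms as large as $R_1$. The constant in your differential inequality would then depend on $R_1$ itself, so the closing step $\sup_t\|\nabla\theta\|^2\le\max(R_1^2,\cdot)\le 2R_1^2$ does not follow as written. The fix is to state the bound for $\theta_0\in B_1\cap B_\alpha$, which is still absorbing. You should also replace $2R_1^2$ by a suitable constant, since the $\int_t^{t+1}$ term carries a factor $1/\kappa$.
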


\begin{proof}
It suffices to establish an {\it a priori} estimate for initial data in $H^1\cap C^\alpha$ for some $\alpha\in(0,\frac{\gamma}{3+\gamma}]$. Suppose that $\theta_0\in H^1\cap C^{\alpha}$. We apply $\nabla$ to \eqref{abstract active scalar eqn}$_1$ and take the inner product with $\nabla\theta$ to obtain
\begin{align}\label{eqn for nabla u with D[nabla theta]}
&(\dt+u\cdot\nabla+\lambda\Lambda+\kappa\Lambda^\gamma)|\nabla\theta|^2+\lambda|\nabla\theta|^2+\lambda D_{1}[\nabla\theta]+\kappa D_{\gamma}[\nabla\theta]\notag\\
&=-2\partial_{x_j} u_l\partial_{x_l}\theta\partial_{x_j}\theta+2\nabla S\cdot\nabla\theta,
\end{align}
where $D_{\gamma}[\cdot]$ is given by \eqref{def of functional D gamma []}. Applying \cite[Theorem~2.2]{CV12}, we obtain
\begin{align}\label{lower bound for D[nabla theta]}
D_{\gamma}[\nabla\theta]\ge\frac{|\nabla\theta|^{2+\frac{\gamma}{1-\alpha}}}{C[\theta]^\frac{\gamma}{1-\alpha}_{C^\alpha}}\ge\frac{|\nabla\theta|^{2+\frac{\gamma}{1-\alpha}}}{C(2C_\alpha)^\frac{\gamma}{1-\alpha}}
\end{align}
and
\begin{align}\label{lower bound for D[nabla theta] gamma=1}
D_{1}[\nabla\theta]\ge\frac{|\nabla\theta|^{2+\frac{1}{1-\alpha}}}{C(2C_\alpha)^\frac{1}{1-\alpha}}.
\end{align}
Also, by applying Young's inequality, we have
\begin{align}\label{bound on terms involving nabla u}
|-2\partial_{x_l} u_j\partial_{x_j}\theta\partial_{x_j}\theta|\le \frac{\kappa}{4}\frac{|\nabla\theta|^{2+\frac{\gamma}{1-\alpha}}}{C(2C_\alpha)^\frac{\gamma}{1-\alpha}}+\Big(\frac{4C}{\kappa}\Big)^\frac{2-2\alpha}{\gamma}(2C_\alpha)^2|\nabla u|^{1+\frac{2(1-\alpha)}{\gamma}},
\end{align}
where $C_\alpha$ is given in Lemma~\ref{Existence of an C alpha absorbing set lemma}. We apply \eqref{lower bound for D[nabla theta]}, \eqref{lower bound for D[nabla theta] gamma=1} and \eqref{bound on terms involving nabla u} on \eqref{eqn for nabla u with D[nabla theta]} to get
\begin{align}\label{ineq for nabla u with D[nabla theta]}
&(\dt+u\cdot\nabla+\lambda\Lambda+\kappa\Lambda^\gamma)|\nabla\theta|^2+\lambda|\nabla\theta|^2+\lambda D_{1}[\nabla\theta]+\frac{\kappa}{2}D_{\gamma}[\nabla\theta]\notag\\
&\le\Big(\frac{4C}{\kappa}\Big)^\frac{2-2\alpha}{\gamma}(2C_\alpha)^2|\nabla u|^{1+\frac{2(1-\alpha)}{\gamma}}+2|\nabla S||\nabla \theta|.
\end{align}
Integrate \eqref{ineq for nabla u with D[nabla theta]} over $\Omega$ and dropping the terms involving $\lambda$,
\begin{align}\label{ineq for nabla u with D[nabla theta] final}
\frac{d}{dt}\|\theta\|^2_{H^1}+\frac{\kappa}{2}\|\theta\|^2_{H^{1+\frac{\gamma}{2}}}
&\le\Big(\frac{4C}{\kappa}\Big)^\frac{2-2\alpha}{\gamma}(2C_\alpha)^2\|\nabla u\|^{1+\frac{2(1-\alpha)}{\gamma}}_{L^\infty}+2\|S\|_{H^1}\|\theta\|_{H^1}\notag\\
&\le\Big(\frac{4C}{\kappa}\Big)^\frac{2-2\alpha}{\gamma}(2C_\alpha)^2\K^{1+\frac{2(1-\alpha)}{\gamma}}+\frac{8}{c_{\gamma,d}\kappa}\|S\|^2_{H^1}+\frac{\kappa}{4}\|\theta\|^2_{H^{1+\frac{\gamma}{2}}},
\end{align}
where the last inequality follows by the bound \eqref{L infty bound on nabla u general} and Young's inequality, and $c_{\gamma,d}>0$ is the dimensional constant for which it satisfies the bound
\begin{align}\label{def of c gamma d}
\|\theta\|^2_{H^1}\le(c_{d,\gamma})^{-1}\|\theta\|^2_{H^{1+\frac{\gamma}{2}}}.
\end{align}
If we choose $K_1\ge1$ such that
\begin{align*}
K_1=\frac{8}{c_{\gamma,d}\kappa}\Big[\Big(\frac{4C}{\kappa}\Big)^\frac{2-2\alpha}{\gamma}(2C_\alpha)^2\K^{1+\frac{2(1-\alpha)}{\gamma}}+\frac{8}{c_{\gamma,d}\kappa}\|S\|^2_{H^1}\Big],
\end{align*}
then by applying Gr\"{o}nwall's inequality on \eqref{ineq for nabla u with D[nabla theta] final}, we conclude that
\begin{align}\label{bound on H^1 in terms of initial H^1 data}
\|\theta(t)\|^2_{H^1}\le\|\theta_0\|^2_{H^1}e^{-\frac{c_{\gamma,d}}{8}\kappa t}+K_1.
\end{align}
Now we define $R_1=2K_1$, then it is straight forward to see that the set $B_1$ is an absorbing set for $\pin(t)$. Moreover, upon integrating \eqref{ineq for nabla u with D[nabla theta] final} on the time interval $(t,t+1)$ and applying \eqref{bound on H^1 in terms of initial H^1 data}, we further obtain \eqref{bound on time integral on theta in H 1+gamma/2}.

For the case when $\lambda>0$ and $\gamma\le1$, using Young's inequality, we can rewrite the inequality given in \eqref{bound on terms involving nabla u} as
\begin{align*}
|-2\partial_{x_l} u_j\partial_{x_j}\theta\partial_{x_j}\theta|\le \frac{\lambda}{4}\frac{|\nabla\theta|^{2+\frac{1}{1-\alpha}}}{C(2\bar{C}_\alpha)^\frac{1}{1-\alpha}}+\Big(\frac{4C}{\lambda}\Big)^{2-2\alpha}(2\bar{C}_\alpha)^2|\nabla u|^{1+2(1-\alpha)},
\end{align*}
for some $\alpha\in(0,\frac{1}{4}]$ and $\bar{C}_\alpha$ is chosen in Remark~\ref{Existence of an C alpha absorbing set remark} that is independent of $\gamma$. The inequality \eqref{ineq for nabla u with D[nabla theta]} can then be rewritten as
\begin{align*}
&(\dt+u\cdot\nabla+\lambda\Lambda+\kappa\Lambda^\gamma)|\nabla\theta|^2+\lambda|\nabla\theta|^2+\frac{\lambda}{2} D_{1}[\nabla\theta]+\kappa D_{\gamma}[\nabla\theta]\notag\\
&\le\Big(\frac{4C}{\lambda}\Big)^{2-2\alpha}(2\bar{C}_\alpha)^2|\nabla u|^{1+2(1-\alpha)}+2|\nabla S||\nabla \theta|,
\end{align*}
which further implies
\begin{align*}
\frac{d}{dt}\|\theta\|^2_{H^1}+\frac{\lambda}{2}\|\theta\|^2_{H^{\frac{3}{2}}}
\le\Big(\frac{4C}{\lambda}\Big)^{2-2\alpha}(2\bar{C}_\alpha)^2K_{1,\infty}^{1+2(1-\alpha)}+\frac{8}{c_{\gamma,d}\lambda}\|S\|^2_{H^1}+\frac{\lambda}{4}\|\theta\|^2_{H^\frac{3}{2}}.
\end{align*}
Following the previous steps, the bound \eqref{bound on time integral on theta in H 1+1/2 gamma=1} holds for some $\bar{R}_1\ge1$ independent of $\gamma$. The proof of Lemma~\ref{Existence of an H1 absorbing set lemma} is complete.
\end{proof}

With the help of Lemma~\ref{Existence of an H1 absorbing set lemma}, we can improve the regularity of the absorbing set $B_1$, which is illustrated in the next lemma. 

\begin{lemma}[Existence of an $H^{1+\frac{\gamma}{2}}$-absorbing set]\label{Existence of an H 1+gamma/2 absorbing set lemma}
For $\lambda\ge0$ and $\kappa>0$, there exists a constant $R_{1+\frac{\gamma}{2}}\ge1$ which depends on $\|S\|_{L\infty\cap H^1}$, $\kappa$, $\gamma$ such that the set
\begin{align*}
B_{1+\frac{\gamma}{2}}=\left\{\phi\in H^{1+\frac{\gamma}{2}}:\|\phi\|_{H^{1+\frac{\gamma}{2}}}\le R_{1+\frac{\gamma}{2}}\right\}
\end{align*}
is an absorbing set for $\pin(t)$. Moreover
\begin{align}\label{bound on theta in H 1+gamma/2}
\sup_{t\ge0}\sup_{\theta_0\in B_{1+\frac{\gamma}{2}}}\|\pin(t)\theta_0\|_{H^{1+\frac{\gamma}{2}}}\le 2R_{1+\frac{\gamma}{2}}.
\end{align}
For $\lambda>0$ and $\gamma\le1$, there exists a constant $\bar{R}_{\frac{3}{2}}\ge1$ which depends on $\|S\|_{L\infty\cap H^1}$, $\lambda$ but is independent of $\gamma$ such that the set
\begin{align*}
\bar{B}_{\frac{3}{2}}=\left\{\phi\in H^{1+\frac{\gamma}{2}}:\|\phi\|_{H^{1+\frac{\gamma}{2}}}\le \bar{R}_{\frac{3}{2}}\right\}
\end{align*}
is an absorbing set for $\pin(t)$. Moreover, the bound 
\begin{align}\label{bound on theta in H 1+gamma/2 gamma=1}
\sup_{t\ge0}\sup_{\theta_0\in \bar{B}_{\frac{3}{2}}}\|\pin(t)\theta_0\|_{H^{\frac{3}{2}}}\le 2\bar{R}_{\frac{3}{2}}
\end{align}
holds.
\end{lemma}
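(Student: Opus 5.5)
We argue exactly as in Lemma~\ref{Existence of an H1 absorbing set lemma}, one derivative level higher, and use $B_1$ in place of $B_\infty$ and $B_\alpha$. Since $B_1$ is absorbing, it suffices to prove an \emph{a priori} estimate for data $\theta_0\in B_1$. For such data \eqref{bound on time integral on theta in H 1+gamma/2} gives three uniform bounds:
\begin{align*}
\sup_{t\ge0}\|\theta(t)\|_{H^1}\le CR_1,\qquad \sup_{t\ge0}\|\theta(t)\|_{C^\alpha}\le CR_1,\qquad \int_t^{t+1}\|\theta(\tau)\|^2_{H^{1+\frac{\gamma}{2}}}d\tau\le 2R_1^2 .
\end{align*}
By Lemma~\ref{Existence of an L infty absorbing set lemma} and \eqref{L infty bound on nabla u general} we also have $\|\theta\|_{L^\infty}+\|\nabla u\|_{L^\infty}\le C\K$, uniformly in $t$.

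\textbf{Step 1 (energy inequality at level $s=1+\frac{\gamma}{2}$).} Take $s=1+\frac{\gamma}{2}$ in the energy computation \eqref{differential inequality on theta}--\eqref{estimate on convective term step 3} from the proof of Theorem~\ref{global-in-time wellposedness in Sobolev}. This computation uses the commutator estimate \eqref{commutator estimate} together with the two-order smoothing \eqref{two order smoothing for u when nu>0}, which bounds $\|\Lambda^{s+1}u\|_{L^2}$ by $C\|\Lambda^{s-1}\theta\|_{L^2}$, and \eqref{L infty bound on nabla u}. Dropping the nonnegative $\lambda$ terms, we expect
\begin{align*}
\frac{d}{dt}\|\theta\|^2_{H^{1+\frac{\gamma}{2}}}+2\kappa\|\theta\|^2_{H^{1+\gamma}}\le C\K\|\theta\|^2_{H^{1+\frac{\gamma}{2}}}+C\|S\|_{H^{1}}\|\theta\|_{H^{1+\gamma}}.
\end{align*}
The forcing term must be handled so that only $\|S\|_{H^1}$ appears. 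Write it as $\langle \Lambda^{1}S,\Lambda^{1+\gamma}\theta\rangle$ and absorb half of it into the dissipation with Young's inequality; this leaves the constant $C\kappa^{-1}\|S\|^2_{H^1}$.

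\textbf{Step 2 (uniform Gronwall lemma).} Put $y(t)=\|\theta(t)\|^2_{H^{1+\frac{\gamma}{2}}}$. Step 1 gives $y'\le ay+b$ with $a=C\K$ and $b=C\kappa^{-1}\|S\|^2_{H^1}$, both independent of $\theta_0\in B_1$, and $\int_t^{t+1}y\,d\tau\le 2R_1^2$. The uniform Gronwall lemma then yields
\begin{align*}
y(t+1)\le \big(2R_1^2+b\big)e^{a}\qquad\text{for all } t\ge0 .
\end{align*}
Hence every trajectory starting in $B_1$ enters, after time $1$, the ball of radius $R_{1+\frac{\gamma}{2}}:=\big((2R_1^2+b)e^{a}\big)^{1/2}$ in $H^{1+\frac{\gamma}{2}}$. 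Since $B_1$ absorbs bounded sets of $H^1$, the ball $B_{1+\frac{\gamma}{2}}$ is absorbing as well.

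\textbf{Step 3 (the bound \eqref{bound on theta in H 1+gamma/2}).} Let $\theta_0\in B_{1+\frac{\gamma}{2}}$. Then $\theta_0$ is bounded in $H^1$ and lies in $L^\infty$, so after an absorption time it sits in $B_1$ and Step 2 applies from then on. On the initial interval $[0,t_*]$ we instead integrate the inequality of Step 1 directly from $\|\theta_0\|_{H^{1+\frac{\gamma}{2}}}\le R_{1+\frac{\gamma}{2}}$. Enlarging $R_{1+\frac{\gamma}{2}}$ if necessary, the two regimes together give the factor $2$. Alternatively, one can define $B_{1+\frac{\gamma}{2}}$ as the absorbing ball intersected with $B_1$, which avoids the initial interval altogether.

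\textbf{Case $\lambda>0$, $\gamma\le1$.} Repeat Steps 1--3 with $s=\frac32$, using the damping dissipation $\lambda\|\Lambda^{s+\frac12}\theta\|^2_{L^2}$ in place of the $\kappa$ term. The integral input is now \eqref{bound on time integral on theta in H 1+1/2 gamma=1}, and the constants involve only $\lambda$, $\bar R_1$ and $K_{1,\infty}$, so they are independent of $\gamma$.

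\textbf{Main obstacle.} The delicate point is Step 1 when $\gamma$ is small. The dissipation then gains only $\frac{\gamma}{2}$ derivatives, while the convective term must be closed without losing a derivative. This is exactly where the smoothing assumption A3' is needed: it makes $\|\Lambda^{s+1}u\|_{L^2}$ controlled by lower norms of $\theta$, so the nonlinearity is bounded by $\|\nabla u\|_{L^\infty}\|\theta\|^2_{H^s}$ plus lower-order terms. If the commutator bound fails to close directly, the first thing to try is interpolating the $\|\theta\|_{L^\infty}\|\Lambda^{s+1}u\|_{L^2}$ term between $H^1$ and $H^{1+\gamma}$ and absorbing part of it into $\kappa\|\theta\|^2_{H^{1+\gamma}}$.
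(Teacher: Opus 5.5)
Your Steps 1--2, and your treatment of the case $\lambda>0$, $\gamma\le1$, follow the paper's proof. You reduce to data in $B_1$, run the $H^s$ estimate \eqref{differential inequality on theta final} at $s=1+\frac{\gamma}{2}$ (resp.\ $s=\frac32$, with the damping dissipation) using $\|\theta\|_{L^\infty}\le\K$ (resp.\ $K_{1,\infty}$), and close with the uniform Gronwall lemma fed by \eqref{bound on time integral on theta in H 1+gamma/2} (resp.\ \eqref{bound on time integral on theta in H 1+1/2 gamma=1}). Two of your details are cleaner than the paper's. First, writing the forcing as $\langle\Lambda S,\Lambda^{1+\gamma}\theta\rangle$ (resp.\ $\langle\Lambda S,\Lambda^{2}\theta\rangle$) needs only $\|S\|_{H^1}$. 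By contrast, \eqref{differential inequality on theta 1+gamma} carries $\|S\|_{H^{1+\frac{\gamma}{2}}}$, which is not available under $S\in L^\infty\cap H^1$ and does not match the stated dependence of $R_{1+\frac{\gamma}{2}}$. Second, $(2R_1^2+b)e^{a}$ is exactly what the uniform Gronwall lemma yields; the paper's bound differs only by harmless constants.

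The part that does not work as written is Step 3, i.e.\ the factor $2$ in \eqref{bound on theta in H 1+gamma/2}.

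\emph{Integrability near $t=0$.} A datum $\theta_0\in B_{1+\frac{\gamma}{2}}$ need not lie in $L^\infty$: for $d=3$ and $\gamma\le1$ one has $H^{1+\frac{\gamma}{2}}\not\subset L^\infty$. The only available control on the coefficient $C\|\theta(t)\|_{L^\infty}$ near $t=0$ is \eqref{infty bound on theta in terms of L2 norm of initial theta}, of size $t^{-\frac{4-\gamma}{2\gamma}}$, which is not integrable. So Gronwall on $[0,t_*]$ does not go through as written.

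\emph{Circularity.} Where the embedding does hold, the growth over $[0,t_*]$ is a factor $e^{C\K t_*}$. Both $\K\gtrsim\|\theta_0\|_{L^\infty}$ and the absorption time $t_*$ into $B_1$ increase with the radius. Hence ``enlarging $R_{1+\frac{\gamma}{2}}$'' makes matters worse, and the argument is circular.

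\emph{The intersection with $B_1$.} This does not remove the initial interval, since the uniform Gronwall lemma only controls $t\ge1$. What it does achieve is that on $[0,1]$ plain Gronwall applies with $a=C\K$ and $\K\le R_1+\frac{\|S\|_{L^\infty}}{c_0\kappa}$, independent of the radius. This gives $\sup_{t\ge0}\|\pin(t)\theta_0\|^2_{H^{1+\frac{\gamma}{2}}}\le(R^2_{1+\frac{\gamma}{2}}+b)e^{a}\le 2e^{a}R^2_{1+\frac{\gamma}{2}}$, using $b\le R^2_{1+\frac{\gamma}{2}}$. The intersected set is still absorbing, because $B_1$ absorbs itself. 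This is a correct uniform-in-time bound, and it is all that is used later (e.g.\ in Lemma~\ref{continuity of solution map lemma} and for \eqref{uniform bound on theta from the attractor}). However, the constant is $\sqrt2\,e^{a/2}$ rather than $2$, and it holds for a smaller set than the stated ball. The paper gives no argument for the factor $2$ either, so you should state the bound with a $\K$-dependent constant instead of claiming $2$.

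You also rightly read $\bar B_{\frac32}$ as a ball in $H^{\frac32}$. With the $H^{1+\frac{\gamma}{2}}$ norm used in the statement, \eqref{bound on theta in H 1+gamma/2 gamma=1} already fails at $t=0$ when $\gamma<1$.
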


\begin{proof}
It is sufficient to show that $B_{1+\frac{\gamma}{2}}$ absorbs the $H^1$-absorbing set $B_1$ obtained in Lemma~\ref{Existence of an H1 absorbing set lemma}. Suppose $\theta_0\in B_{1}$, then from \eqref{bound on time integral on theta in H 1+gamma/2}, we have
\begin{align}\label{bound on time integral for theta in H 1+gamma/2 from t to t+1}
\sup_{t\ge0}\int_{t}^{t+1}\|\pin(\tau)\theta\|^2_{H^{1+\frac{\gamma}{2}}}d\tau\le 2R_1^2.
\end{align}
By the same argument given in the proof of Theorem~\ref{global-in-time wellposedness in Sobolev}, for $t\ge0$, 
\begin{align}\label{differential inequality on theta 1+gamma}
&\frac{d}{dt}\|\Lambda^{1+\frac{\gamma}{2}}\thetakappa\|^2_{L^2}+\kappa\|\thetakappa\|^2_{H^{1+\gamma}}\notag\\
&\le C\K\|\Lambda^{1+\frac{\gamma}{2}}\thetakappa\|^2_{L^2}+\frac{4}{c_{\gamma,d}\kappa}\|S\|^2_{H^{1+\frac{\gamma}{2}}}+\frac{\kappa}{2}\|\theta\|^2_{H^{1+\gamma}},
\end{align}
where we have used the bound \eqref{infty bound on theta using K for kappa>0} on $\|\thetakappa\|_{L^\infty}$ and $c_{\gamma,d}$ is defined in \eqref{def of c gamma d}. By applying the local integrability \eqref{bound on time integral for theta in H 1+gamma/2 from t to t+1} and the uniform Gr\"{o}nwall's lemma,
\begin{align}\label{bound on theta in H 1+gamma/2 for t>1}
\|\pin(t)\theta_0\|^2_{H^{1+\frac{\gamma}{2}}}\le\Big[2C\K R_1^2+\frac{4}{c_{\gamma,d}\kappa}\|S\|^2_{H^{1+\frac{\gamma}{2}}}\Big]e^{C\K R_1^2},\qquad \forall t\ge1.
\end{align}
Choosing 
\begin{align*}
R_{1+\frac{\gamma}{2}}:=\Big[2C\K R_1^2+\frac{4}{c_{\gamma,d}\kappa}\|S\|^2_{H^{1+\frac{\gamma}{2}}}\Big]^\frac{1}{2}e^{\frac{C\K R_1^2}{2}},
\end{align*}
we conclude that $\pin(t) B_1\subset B_{1+\frac{\gamma}{2}}$ holds for all $t\ge1$. Since $B_{1+\frac{\gamma}{2}}$ absorbs $B_1$, the bound \eqref{bound on theta in H 1+gamma/2} then follows  by \eqref{bound on time integral on theta in H 1+gamma/2} and \eqref{bound on theta in H 1+gamma/2 for t>1}. For the case when $\lambda>0$ and $\gamma\le1$, we apply \eqref{bound on time integral on theta in H 1+1/2 gamma=1} to show that \eqref{bound on time integral for theta in H 1+gamma/2 from t to t+1} holds by replacing $\gamma$ and $R_1$ by $1$ and $\bar{R}_1$ respectively, and choosing
\begin{align*}
\bar{R}_{\frac{3}{2}}:=\Big[2CK_{1\infty} \bar{R}_1^2+\frac{4}{c_{\gamma,d}\lambda}\|S\|^2_{H^{\frac{3}{2}}}\Big]^\frac{1}{2}e^{\frac{CK_{1,\infty} \bar{R}_1^2}{2}}.
\end{align*}
The remaining steps are just similar and we omit the details here.
\end{proof}

\begin{remark}
By combining \eqref{differential inequality on theta 1+gamma} with \eqref{bound on theta in H 1+gamma/2 for t>1}, if $\theta_0\in B_1$, then for $t\ge1$ and $T>0$, we further obtain
\begin{align}\label{bound on time integral on theta in H 1+gamma}
\frac{1}{T}\int_{t}^{t+T}\|\pin(\tau)\theta_0\|^2_{H^{1+\gamma}}d\tau\le R_{1+\gamma}^2,
\end{align}
where
\begin{align*}
R_{1+\gamma}:=\Big[C\K R_{1+\frac{\gamma}{2}}^2+\frac{4}{c_{\gamma,d}\kappa}\|S\|^2_{H^{1+\frac{\gamma}{2}}}\Big]^\frac{1}{2},
\end{align*}
while for the case when $\lambda>0$ and $\gamma\le1$, we choose
\begin{align*}
\bar{R}_{2}:=\Big[CK_{1,\infty} \bar{R}_{\frac{3}{2}}^2+\frac{4}{c_{\gamma,d}\lambda}\|S\|^2_{H^{\frac{3}{2}}}\Big]^\frac{1}{2}
\end{align*}
such that the following bound holds for $t\ge1$ and $T>0$ that
\begin{align}\label{bound on time integral on theta in H 1+gamma gamma=1}
\frac{1}{T}\int_{t}^{t+T}\|\pin(\tau)\theta_0\|^2_{H^{2}}d\tau\le \bar{R}_{2}^2.
\end{align}
\end{remark}

The existence and regularity of the global attractor claimed by Theorem~\ref{Existence of H1 global attractor} now follows from Lemma~\ref{Existence of an H 1+gamma/2 absorbing set lemma} by applying the argument given in \cite[Proposition~8]{CCP12}, and the bounds \eqref{uniform bound on theta from the attractor}-\eqref{uniform bound on time integral of theta from the attractor} are immediate consequences of \eqref{bound on theta in H 1+gamma/2} and \eqref{bound on time integral on theta in H 1+gamma} by taking 
\begin{align*}
\tilde{M}_{\Gg}:=\max\{M_{\Gg},\bar{M}_{\Gg}\},
\end{align*}
where $M_{\Gg}$ and $\tilde M_{\Gg}$ are defined by
\begin{align}\label{def of constant M on global attractor}
M_{\Gg}:=\max\{2R_{1+\frac{\gamma}{2}},R_{1+\gamma}\},\qquad\bar{M}_{\Gg}:=\max\{2\bar{R}_{\frac{3}{2}},\bar{R}_{2}\}.
\end{align}
We summarise the properties of the global attractor $\Gg$ as claimed by Theorem~\ref{Existence of H1 global attractor}:
\begin{corollary}\label{Unique attractor general corollary}
The solution map $\pin(t):H^1\to H^1$ associated to \eqref{abstract active scalar eqn} possesses a unique global attractor $\Gg$ with the following properties:
\begin{itemize}
\item $\Gg\subset H^{1+\frac{\gamma}{2}}$ and is the $\omega$-limit set of $B_{1+\frac{\gamma}{2}}$, namely
\begin{align*}
\Gg=\omega(B_{1+\frac{\gamma}{2}})=\bigcap_{t\ge0}\overline{\bigcup_{\tau\ge t}\pin(\tau) B_{1+\frac{\gamma}{2}}}.
\end{align*}
\item For every bounded set $B\subset H^1$, we have
\begin{align*}
\lim_{t\to\infty}\dist(\pin(t)B,\Gg)=0,
\end{align*}
where $\dist$ stands for the usual Hausdorff semi-distance between sets given by the $H^1$-norm.
\item $\Gg$ is minimal in the class of $H^1$-closed attracting set.
\end{itemize}
\end{corollary}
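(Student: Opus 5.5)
We derive Corollary~\ref{Unique attractor general corollary} from the abstract theory of dissipative semigroups, applied to the absorbing sets built above. The standard criterion (as in \cite[Proposition~8]{CCP12}) says that $\pin(t)$ has a unique global attractor in $H^1$, equal to $\omega(B)$, provided two conditions hold. First, there must be a compact absorbing set $B\subset H^1$. Second, $\pin(t)$ must be a continuous map on $H^1$ for each fixed $t$, or at least have closed graph.

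\textbf{Step 1 (compact absorbing set).} We take $B=B_{1+\frac{\gamma}{2}}$ from Lemma~\ref{Existence of an H 1+gamma/2 absorbing set lemma}. Since $1+\frac{\gamma}{2}>1$ and $\Omega=\T^d$, Rellich's theorem makes the $H^{1+\frac{\gamma}{2}}$-ball compact in $H^1$. It is also closed in $H^1$, by weak lower semicontinuity of the $H^{1+\frac{\gamma}{2}}$ norm. The lemma shows that $B_{1+\frac{\gamma}{2}}$ absorbs $B_1$. Lemma~\ref{Existence of an H1 absorbing set lemma} shows that $B_1$ absorbs every bounded set of $H^1$. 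Combining the two, $B_{1+\frac{\gamma}{2}}$ absorbs every bounded set of $H^1$.

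\textbf{Step 2 (continuity of $\pin(t)$).} This is the main technical point. Fix $t$, and take two solutions $\theta_1,\theta_2$ starting from data in a bounded set of $H^1$. By the absorbing estimates and \eqref{bound on theta in H 1+gamma/2}, both solutions are bounded in $L^\infty$ after a finite time. Before that time we rely on Lemma~\ref{De Giorgi iteration lemma} for $t>0$, and on the $H^1$-bound \eqref{bound on H^1 in terms of initial H^1 data}.

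We write $\varphi=\theta_1-\theta_2$. Then $\varphi$ satisfies an equation analogous to \eqref{equation for varphi}, with the $\lambda$-terms now dissipative. We take the $H^1$ inner product with $\varphi$. The term $u[\varphi]\cdot\nabla\theta_2$ is controlled by \eqref{two order smoothing for u when nu>0}, which gives $\|u[\varphi]\|_{W^{1,\infty}}\lesssim\|\varphi\|_{H^1}$. The term $u[\theta_1]\cdot\nabla\varphi$ is handled by the commutator, with $\|\nabla u[\theta_1]\|_{L^\infty}\le C\|\theta_1\|_{L^\infty}$ as in \eqref{L infty bound on nabla u}. The remaining factor $\|\nabla\theta_2\|_{L^{p}}$ is absorbed using the dissipation $\kappa\|\varphi\|^2_{H^{1+\gamma/2}}$ together with the time-integrability \eqref{bound on time integral on theta in H 1+gamma/2}. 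Grönwall's inequality then gives
\[
\|\varphi(t)\|_{H^1}\le C(t,R)\|\varphi(0)\|_{H^1}.
\]
This is the step I expect to be delicate for small $\gamma$, since $\nabla\theta_2$ is only in $L^2_tH^{\gamma/2}_x$. If direct $H^1$ estimates fail, I would instead prove $L^2$-Lipschitz continuity, which is easy. I would then upgrade it to $H^1$ by interpolating against the uniform $H^{1+\gamma/2}$ bound for $t>0$, as in the $H^s$-convergence argument of Theorem~\ref{Hs convergence theorem}. This yields continuity for $t>0$, which suffices for the attractor theory; closed graph also suffices.

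\textbf{Step 3 (conclusions).} With Steps 1 and 2 in hand, the abstract theorem gives the three listed properties:
\begin{itemize}
\item $\Gg=\omega(B_{1+\frac{\gamma}{2}})$ is nonempty and compact. It lies in $B_{1+\frac{\gamma}{2}}\subset H^{1+\frac{\gamma}{2}}$, because that set is closed and absorbing.
\item $\Gg$ attracts every bounded set $B\subset H^1$ in the Hausdorff semi-distance, because $B$ enters $B_{1+\frac{\gamma}{2}}$ in finite time.
\item $\Gg$ is minimal among closed attracting sets. If $A$ is closed and attracting, it attracts the bounded set $\Gg$. Invariance of $\Gg$ then forces $\dist(\Gg,A)=\dist(\pin(t)\Gg,A)\to0$, hence $\Gg\subset A$.
\end{itemize}
Uniqueness follows from minimality, since two minimal closed attracting sets must contain each other.
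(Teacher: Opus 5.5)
Your Step 1 is exactly what the paper uses. After that you take the classical route (continuity $\Rightarrow$ invariance $\Rightarrow$ minimality), and the continuity hypothesis you attribute to \cite{CCP12} is not part of that criterion. The point of the Chepyzhov--Conti--Pata ``minimal'' approach is that a compact absorbing set alone gives the attractor; a closed-graph property is needed only for invariance. Concretely, $\omega(B_{1+\frac{\gamma}{2}})$ is nonempty, compact and contained in $B_{1+\frac{\gamma}{2}}$. It attracts $B_{1+\frac{\gamma}{2}}$, and hence every bounded set, by compactness alone. Indeed, if $\dist(\pin(\tau_n)x_n,\omega(B_{1+\frac{\gamma}{2}}))\ge\epsilon$ with $x_n\in B_{1+\frac{\gamma}{2}}$ and $\tau_n\to\infty$, then these points eventually lie in the $H^1$-compact set $B_{1+\frac{\gamma}{2}}$. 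A subsequence therefore converges, and its limit lies in the $\omega$-limit set by definition, a contradiction. Minimality holds because each point of $\omega(B_{1+\frac{\gamma}{2}})$ is a limit of some $\pin(\tau_n)x_n$, and any closed attracting set must capture such limits. That is everything the corollary claims; note that it does not assert invariance. This is why the paper proves continuity (Lemma~\ref{continuity of solution map lemma}) and invariance only afterwards, under \eqref{conditions on gamma} or \eqref{conditions on gamma lambda>0}. Your minimality argument via invariance is a detour that makes the whole proof hinge on Step 2.

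Your primary version of Step 2 does not go through for small $\gamma$. In the $H^1$ estimate, the cross term $\int(u[\varphi]\cdot\nabla\theta_2)\Delta\varphi$ leaves $2-\frac{\gamma}{2}$ derivatives on $\theta_2$ once the dissipation has been used on $\varphi$. The available bounds do not give $\theta_2\in L^2_tH^{2-\frac{\gamma}{2}}$ in that range, which is exactly why Lemma~\ref{continuity of solution map lemma} relies on \eqref{bound on Lambda theta for gamma>1} (i.e.\ $\gamma\ge1$) or on $\lambda>0$. Your fallback does work, but only on the absorbing set. By A3', $|\hat u(k)|\le C|k|^{-2}|\hat\varphi(k)|$, and since $d\le3$ this gives $\|u[\varphi]\|_{L^\infty}\le C\|\varphi\|_{L^2}$. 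Hence $\frac{d}{dt}\|\varphi\|^2_{L^2}\le C\|\theta_2\|_{H^1}\|\varphi\|^2_{L^2}$. Interpolating with the uniform bound \eqref{bound on theta in H 1+gamma/2} then gives $H^1$-H\"older continuity of $\pin(t)$ on $B_{1+\frac{\gamma}{2}}$, which is all the classical theory needs. For general bounded $H^1$ data at small $t>0$, however, the uniform $H^{1+\frac{\gamma}{2}}$ bound you invoke is not available. In short, the paper's route gets the corollary from Step 1 alone, for every $\lambda\ge0$ and $\gamma\in(0,2]$. Yours, with Step 2 done this way, costs an extra estimate but gives invariance of $\Gg$ as a by-product. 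In fact it does so without assuming \eqref{conditions on gamma} or \eqref{conditions on gamma lambda>0}.
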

\begin{remark}
When $\lambda>0$ and $\gamma\le1$, the global attractor $\Gg$ can be chosen independent of $\gamma$, in the sense that $\Gg\subset H^{\frac{3}{2}}$ and is the $\omega$-limit set of $\bar{B}_{\frac{3}{2}}$ such that 
\begin{align*}
\Gg=\omega(\bar{B}_{\frac{3}{2}})=\bigcap_{t\ge0}\overline{\bigcup_{\tau\ge t}\pin(\tau) \bar{B}_{\frac{3}{2}}},
\end{align*}
where $\bar{B}_{\frac{3}{2}}$ is given in Lemma~\ref{Existence of an H 1+gamma/2 absorbing set lemma}.
\end{remark}
\begin{remark}[global attractors in $H^2$-space when $\gamma=2$]\label{H2 global attractor rem}
For the case $\gamma=2$, the regularity of the global attractor given in Theorem~\ref{Existence of H1 global attractor} can be further improved to $H^2$. More precisely, when $\gamma=2$, we restrict $\tilde{\pi}^\lambda(t)=\pin(t)\Big|_{H^2}$ and consider solution map $\tilde{\pi}^\lambda(t):H^2\to H^2$ associated to \eqref{abstract active scalar eqn}. Following the same argument for proving Theorem~\ref{Existence of H1 global attractor}, it shows that $\tilde{\pi}^\lambda$ possesses a unique global attractor $\tilde{\mathcal{G}}^\lambda$ with the following refined properties:
\begin{itemize}
\item $\tilde{\mathcal{G}}^\lambda\subset H^2$ and is the $\omega$-limit set of $B_{2}$, namely
\begin{align*}
\tilde{\mathcal{G}}^\lambda=\omega(B_{2})=\bigcap_{t\ge0}\overline{\bigcup_{\tau\ge t}\tilde{\pi}^\lambda(\tau) B_{2}}.
\end{align*}
\item For every bounded set $B\subset H^2$, we have
\begin{align*}
\lim_{t\to\infty}\dist(\pin(t)B,\tilde{\mathcal{G}}^\lambda)=0,
\end{align*}
where $\dist$ stands for the usual Hausdorff semi-distance between sets given by the $H^2$-norm.
\item $\tilde{\mathcal{G}}^\lambda$ is minimal in the class of $H^2$-closed attracting set.
\end{itemize}
\end{remark}

\subsection{Further properties for the global attractors}\label{Further properties for the attractors subsection}

In this subsection, we prove some additional properties for the global attractors obtained in Theorem~\ref{Existence of H1 global attractor}. Recall that we have $d\in\{2,3\}$, and throughout this subsection, assume that $\lambda$ and $\gamma$ satisfy either one of the following conditions:
\begin{align}\label{conditions on gamma}
\mbox{$\lambda\ge0$ and $\gamma\in[1,2]$};
\end{align}
or
\begin{align}\label{conditions on gamma lambda>0}
\mbox{$\lambda>0$ and $\gamma\in(0,2]$}.
\end{align}
Our goal is to prove the following theorem: 
\begin{theorem}\label{Existence of H1 global attractor further properties thm}
Let $S\in L^\infty\cap H^1$. For $\kappa>0$, assume that $\lambda$ and $\gamma$ satisfy either \eqref{conditions on gamma} or \eqref{conditions on gamma lambda>0}. Then the global attractor $\Gg$ of $\pin(t)$ further enjoys the following properties:
\begin{itemize}
\item $\Gg$ is fully invariant, namely
\begin{align*}
\pin(t)\Gg=\Gg,\qquad \forall t\ge0.
\end{align*}
\item $\Gg$ is maximal in the class of $H^1$-bounded invariant sets.
\item $\Gg$ has finite fractal dimension.
\end{itemize}
\end{theorem}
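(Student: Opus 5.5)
The plan is to follow the standard route for attractors of dissipative equations, as in \cite{CCP12} and \cite{FS21}. Full invariance and maximality follow once $\pin(t)$ is shown to be continuous on $H^1$ (indeed on bounded sets of $B_{1+\frac{\gamma}{2}}$) for each fixed $t$. Finite fractal dimension then follows from a uniform differentiability or squeezing property of $\pin(t)$ on $\Gg$, in the spirit of the Constantin--Foias--Temam volume contraction argument, or via the Ladyzhenskaya/M\'alek--Pra\v{z}\'ak smoothing criterion.

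\textbf{Step 1: continuity and Lipschitz estimates.} Take $\theta_0,\tilde\theta_0\in B_{1+\frac{\gamma}{2}}$ and set $\varphi=\theta-\tilde\theta$. Then
\[
\partial_t\varphi+u[\varphi]\cdot\nabla\theta+\tilde u\cdot\nabla\varphi+\lambda\mathcal{D}\varphi+\kappa\Lambda^\gamma\varphi=0.
\]
Pairing with $-\Delta\varphi$ and using the divergence-free structure, we reduce $\tilde u\cdot\nabla\varphi$ to the term $\int\partial_j\tilde u\,\partial_j\varphi\cdot\nabla\varphi$, which is bounded by $\|\nabla\tilde u\|_{L^\infty}\|\varphi\|_{H^1}^2\le C\K\|\varphi\|_{H^1}^2$ via \eqref{L infty bound on nabla u general}. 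For the term $u[\varphi]\cdot\nabla\theta$, the smoothing property \eqref{two order smoothing for u when nu>0} gives $\|u[\varphi]\|_{W^{1,q}}\lesssim\|\varphi\|_{L^q}$. It therefore suffices to control $\|\nabla\theta\|_{L^4}$, or $\|D^2\theta\|_{L^2}$ after one integration by parts, against the dissipation.

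This is where the hypotheses \eqref{conditions on gamma} and \eqref{conditions on gamma lambda>0} enter. When $\gamma\ge1$, the uniform bound $\theta\in H^{1+\frac{\gamma}{2}}\subset H^{3/2}$ from \eqref{bound on theta in H 1+gamma/2}, together with $\int_t^{t+T}\|\theta\|_{H^{1+\gamma}}^2$ from \eqref{bound on time integral on theta in H 1+gamma}, makes the coefficient locally integrable in time. When instead $\lambda>0$, we use the $\gamma$-independent bounds \eqref{bound on theta in H 1+gamma/2 gamma=1} and \eqref{bound on time integral on theta in H 1+gamma gamma=1}, with dissipation $\lambda\|\varphi\|_{H^{3/2}}^2$ replacing $\kappa\|\varphi\|^2_{H^{1+\gamma/2}}$. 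Gr\"onwall then yields
\[
\|\pin(t)\theta_0-\pin(t)\tilde\theta_0\|_{H^1}\le e^{C(1+t)}\|\theta_0-\tilde\theta_0\|_{H^1}.
\]
Given continuity, the identity $\pin(t)\omega(B)=\omega(B)$ follows from the compactness of the absorbing set in $H^1$, since $H^{1+\frac{\gamma}{2}}$ embeds compactly into $H^1$. Maximality follows because any bounded invariant set $Y$ satisfies $Y=\pin(t)Y$, and $\pin(t)Y\to\Gg$ in Hausdorff semi-distance, which forces $Y\subset\Gg$.

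\textbf{Step 2: finite fractal dimension.} The approach is to show that $\pin(t)$ is uniformly differentiable on $\Gg$ in $H^1$, with derivative given by the linearized equation
\[
\partial_t\psi+u[\psi]\cdot\nabla\theta+u\cdot\nabla\psi+\lambda\mathcal{D}\psi+\kappa\Lambda^\gamma\psi=0.
\]
The remainder $\theta-\tilde\theta-\psi$ should be $O(\|\theta_0-\tilde\theta_0\|_{H^1}^{1+\epsilon})$, which is obtained by the same energy estimate as in Step 1 plus an estimate of the quadratic term $u[\varphi]\cdot\nabla\varphi$.

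Next comes the trace estimate. For an $N$-dimensional projection $Q_N$ onto a set of solutions of the linearized equation, the quantity $\Tr(L(\theta)Q_N)$ is bounded above by
\[
-\kappa\sum_{j\le N}\|\phi_j\|^2_{H^{1+\frac{\gamma}{2}}}+C(\K+\|\theta\|_{H^{1+\gamma}})N .
\]
Here the linear term $u[\psi]\cdot\nabla\theta$ is relatively compact, so its trace is bounded by $\|\theta\|_{H^{1+\gamma}}N$; the transport term is handled by $\|\nabla u\|_{L^\infty}$. Applying the Lieb--Thirring-type lower bound $\sum_{j\le N}\|\phi_j\|_{H^{1+\frac{\gamma}{2}}}^2\ge cN^{1+\gamma/d}$ for $H^1$-orthonormal families on $\T^d$, and time-averaging with \eqref{uniform bound on time integral of theta from the attractor}, the trace becomes negative for $N$ large. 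The Constantin--Foias--Temam theorem then gives $\dimf\Gg<\infty$.

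\textbf{Main obstacle.} I expect the hardest part to be the uniform differentiability and the trace bound for the nonlocal term $u[\psi]\cdot\nabla\theta$ at low $\gamma$. If this proves too delicate, the first fallback is the M\'alek--Pra\v{z}\'ak/Ladyzhenskaya criterion. That criterion only needs a squeezing estimate of the form
\[
\|\pin(1)\theta_0-\pin(1)\tilde\theta_0\|_{H^{1+\frac{\gamma}{2}}}\le C\|\theta_0-\tilde\theta_0\|_{H^1}
\]
on $\Gg$, derived from Step 1 by pairing with $\Lambda^{2+\gamma}\varphi$ and using $t$-weights. Since $H^{1+\frac{\gamma}{2}}\hookrightarrow H^1$ is compact, this criterion yields finite fractal dimension directly.
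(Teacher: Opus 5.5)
Your proposal is correct and follows essentially the same route as the paper. You prove $H^1$-continuity of $\pin(t)$ on the absorbing ball by pairing the difference equation with $-\Delta$, and then show uniform differentiability via the linearized equation with a time-averaged trace bound of the form $-cN^{1+\gamma/d}+CN$ (with $\lambda$ and exponent $1$ replacing $\kappa$ and $\gamma$ when $\lambda>0$), which feeds into the Constantin--Foias volume-contraction theorem.

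The one real difference is that you obtain full invariance and maximality from continuity plus compactness of the absorbing set alone, whereas the paper also invokes backwards uniqueness; that property is only needed to run the dynamics backwards on $\Gg$. One small fix: for $\gamma=1$, $d=3$, bound $\|\nabla\tilde u\|_{L^\infty}\le C\|\tilde\theta\|_{L^4}\le C\|\tilde\theta\|_{H^1}$ rather than by $\K$, since $H^{3/2}\not\subset L^\infty$ there.
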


\begin{remark}
For the case when $\lambda=0$ and $\gamma\in(0,1)$, it is unknown whether the global attractor $\Gg$ satisfies those properties given in Theorem~\ref{Existence of H1 global attractor further properties thm} or not. 
\end{remark}

We recall some auxiliary estimates on $u$ and $\theta$ under the assumption \eqref{conditions on gamma}, which are summarised in Proposition~\ref{auxiliary estimates on u with gamma>1}. The proof can be found in \cite{FS21}.

\begin{proposition}\label{auxiliary estimates on u with gamma>1}
Assume that $\gamma\in[1,2]$, then if $f\in H^{1+\frac{\gamma}{2}}$ and $g$, $\theta\in H^1$, we have 
\begin{align}\label{bound on Lambda theta for gamma>1}
\|\Lambda^{2-\frac{\gamma}{2}}f\|_{L^2}\le C\|\Lambda^{1+\frac{\gamma}{2}}f\|_{L^2},
\end{align}
\begin{align}\label{bound on L4 using H1}
\|g\|_{L^4}\le C\|\Lambda g\|_{L^2},
\end{align}
\begin{align}\label{bound on u in terms of nabla theta}
\|u\|_{L^\infty}\le C\|\Lambda^\frac{\gamma}{2}\theta\|_{L^2}\le C\|\Lambda\theta\|_{L^2},
\end{align}
\begin{align}\label{bound on nabla u in terms of nabla theta}
\|\nabla u\|_{L^\infty}\le C\|\Lambda^{1+\frac{\gamma}{2}}\theta\|_{L^2},
\end{align}
where $u=u[\theta]$, $C$ is a positive constant which depends on $d$ only.
\end{proposition}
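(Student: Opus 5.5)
Each of the four estimates reduces to a Fourier multiplier bound or a Sobolev embedding on $\T^d$ with $d\in\{2,3\}$, using the mean-zero convention so that only $k\neq0$ modes appear. For \eqref{bound on Lambda theta for gamma>1}, I will use that $\gamma\ge1$ gives $2-\frac{\gamma}{2}\le1+\frac{\gamma}{2}$. Since $|k|\ge1$ for every nonzero mode, $|k|^{2(2-\gamma/2)}\le|k|^{2(1+\gamma/2)}$, and Parseval yields the bound with $C=1$. For \eqref{bound on L4 using H1}, I will invoke \eqref{Sobolev inequality} with $q=2$ and $s=1$: in $d=3$ this controls $L^6$ and hence $L^4$ on the bounded torus, and in $d=2$ I take $s=\frac12$, which gives exactly $L^4$ and is then dominated by $\|\Lambda g\|_{L^2}$ again because $|k|\ge1$.

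The velocity estimates rely on the smoothing assumption A3'. Here $\widehat{u_j}(k)=ik_i\widehat{T_{ij}}(k)\hat\theta(k)$, so $|\hat u(k)|\le C|k|^{-2}|\hat\theta(k)|$.
\begin{itemize}
\item For \eqref{bound on u in terms of nabla theta}, I bound $\|u\|_{L^\infty}\le\sum_{k\ne0}|\hat u(k)|\le C\sum_{k\neq0}|k|^{-2-\frac{\gamma}{2}}\,|k|^{\frac{\gamma}{2}}|\hat\theta(k)|$ and apply Cauchy--Schwarz. This gives $\|u\|_{L^\infty}\le C\big(\sum_{k\ne0}|k|^{-4-\gamma}\big)^{1/2}\|\Lambda^{\frac\gamma2}\theta\|_{L^2}$. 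The lattice sum converges when $4+\gamma>d$, which holds for $d\le3$ and any $\gamma>0$. The second inequality in \eqref{bound on u in terms of nabla theta} is again $|k|^{\gamma/2}\le|k|$, valid since $\gamma\le2$.
\item For \eqref{bound on nabla u in terms of nabla theta}, the same argument gives $\|\nabla u\|_{L^\infty}\le C\sum_{k\ne0}|k|^{-1}|\hat\theta(k)|$. Writing $|k|^{-1}=|k|^{-2-\frac\gamma2}\cdot|k|^{1+\frac\gamma2}$, Cauchy--Schwarz produces the factor $\big(\sum_{k\ne0}|k|^{-4-\gamma}\big)^{1/2}$ times $\|\Lambda^{1+\frac\gamma2}\theta\|_{L^2}$, and this sum is finite by the same criterion.
\end{itemize}

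I do not expect a real obstacle. The only point needing care is to check the exponent bookkeeping so that each lattice sum converges in $d=3$, since that is where the requirement $\gamma\ge1$ or the smoothing in A3' enters. As an alternative, I could use \eqref{two order smoothing for u when nu>0} together with \eqref{L infty bound 2} (taking $q=4$, then \eqref{Kondrachov embedding theorem}), but the direct Fourier-series argument is cleaner and keeps every constant dependent on $d$ only.
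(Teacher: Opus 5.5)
Your proof is correct. On the mean-zero torus every nonzero mode has $|k|\ge1$, which gives \eqref{bound on Lambda theta for gamma>1} with $C=1$ and the second half of \eqref{bound on u in terms of nabla theta}. For the $L^4$ bound, your switch to $s=\frac12$ in $d=2$ is exactly what is needed, since \eqref{Sobolev inequality} excludes $q'=\infty$. Finally, A3' gives $|\hat u(k)|\le C|k|^{-2}|\hat\theta(k)|$. After that, $\|f\|_{L^\infty}\le\sum_k|\hat f(k)|$, Cauchy--Schwarz and the finiteness of $\sum_{k\neq0}|k|^{-4-\gamma}$ for $d\le3$ give both velocity bounds.

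The paper gives no argument here and cites \cite{FS21}. The natural route in the paper's own toolkit, used for instance in \eqref{L infty bound on nabla u}, is to combine Sobolev embeddings such as \eqref{L infty bound 2}--\eqref{Kondrachov embedding theorem} with the smoothing estimate \eqref{two order smoothing for u when nu>0}.

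Your lattice-sum computation is essentially the Fourier-side proof of $H^{\sigma}\subset L^\infty$ for $\sigma>\frac d2$, carried out inline. So the mechanism is the same: two orders of smoothing in $u[\cdot]$ plus $d\le3$.

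What your version buys is that it is purely $L^2$-based. It uses only the pointwise size bound on $\widehat{T_{ij}}$ in A3', via Parseval. By contrast, an $L^p$ estimate like \eqref{two order smoothing for u when nu>0} with $p\neq2$ generally needs more than a size bound on the symbol.

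It also makes clear that $\gamma\ge1$ enters only through \eqref{bound on Lambda theta for gamma>1}. The lattice sums converge for every $\gamma\in(0,2]$, so your closing remark slightly misplaces where that hypothesis is used.
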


We now show that the solution map $\pin(t)$ is indeed continuous in the $H^1$-topology. More precisely, we have the following lemma:

\begin{lemma}[Continuity of $\pin(t)$]\label{continuity of solution map lemma}
Let $\kappa>0$ be given and fixed.
\begin{itemize}
\item Assume that $\lambda$ and $\gamma$ satisfy \eqref{conditions on gamma}. For every $t>0$, the solution map $\pin(t):B_{1+\frac{\gamma}{2}}\to H^1$ is continuous in the topology of $H^1$.
\item If $\lambda$ and $\gamma$ satisfy \eqref{conditions on gamma lambda>0}, then for every $t>0$, the solution map $\pin(t):\bar{B}_{\frac{3}{2}}\to H^1$ is continuous in the topology of $H^1$. 
\end{itemize} 
Here $B_{1+\frac{\gamma}{2}}$ and $\bar{B}_{\frac{3}{2}}$ are the absorbing set for $\pin$ defined in Lemma~\ref{Existence of an H 1+gamma/2 absorbing set lemma}.
\end{lemma}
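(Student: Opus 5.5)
We plan to prove the continuity by an energy estimate for the difference of two solutions. Take $\theta_{1,0},\theta_{2,0}\in B_{1+\frac{\gamma}{2}}$ (or $\bar B_{\frac32}$ in the second case), and let $\theta_i(t)=\pin(t)\theta_{i,0}$, $u_i=u[\theta_i]$, $\varphi=\theta_1-\theta_2$. Then
\begin{align*}
\partial_t\varphi+u_1\cdot\nabla\varphi+u[\varphi]\cdot\nabla\theta_2+\lambda\mathcal{D}\varphi+\kappa\Lambda^\gamma\varphi=0 .
\end{align*}
By \eqref{bound on theta in H 1+gamma/2} (resp. \eqref{bound on theta in H 1+gamma/2 gamma=1}), both trajectories stay bounded in $H^{1+\frac{\gamma}{2}}$ (resp. $H^{\frac32}$) uniformly in time by $2R_{1+\frac{\gamma}{2}}$ (resp. $2\bar R_{\frac32}$). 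By \eqref{bound on time integral on theta in H 1+gamma} and its $\lambda>0$ analogue \eqref{bound on time integral on theta in H 1+gamma gamma=1}, they are also bounded in $L^2_{loc}H^{1+\gamma}$ (resp. $L^2_{loc}H^2$). These are the only a priori inputs we intend to use.

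Next we apply $\nabla$, pair with $\nabla\varphi$, and integrate over $\Omega$. The dissipative part gives $\frac12\frac{d}{dt}\|\varphi\|_{H^1}^2+\kappa\|\varphi\|^2_{H^{1+\frac{\gamma}{2}}}$, plus, when $\lambda>0$, the extra term $\lambda\|\varphi\|^2_{H^{\frac32}}$. The transport term reduces, using $\nabla\cdot u_1=0$, to $\int\partial_ju_1\cdot\nabla\varphi\,\partial_j\varphi$. We bound it by $\|\nabla u_1\|_{L^\infty}\|\varphi\|_{H^1}^2$. Under \eqref{conditions on gamma}, \eqref{bound on nabla u in terms of nabla theta} controls $\|\nabla u_1\|_{L^\infty}$ by $2R_{1+\frac{\gamma}{2}}$. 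Under \eqref{conditions on gamma lambda>0}, \eqref{two order smoothing for u when nu>0} and Sobolev embedding bound it by $\|\theta_1\|_{H^{\frac32}}$.

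For the cross term $\int\nabla(u[\varphi]\cdot\nabla\theta_2)\cdot\nabla\varphi$ we split it as
\begin{align*}
\int\nabla u[\varphi]\,\nabla\theta_2\,\nabla\varphi+\int u[\varphi]\cdot\nabla^2\theta_2\,\nabla\varphi .
\end{align*}
For the first piece we use H\"older with $L^4$ bounds from \eqref{bound on L4 using H1}, together with the smoothing \eqref{two order smoothing for u when nu>0}, which gives $\|\nabla u[\varphi]\|_{L^4}\le C\|\varphi\|_{H^1}$ in $d\le3$. For the second piece we use \eqref{bound on u in terms of nabla theta} or \eqref{two order smoothing for u when nu>0} to get $\|u[\varphi]\|_{L^\infty}\le C\|\varphi\|_{H^1}$, and then $\|\nabla^2\theta_2\|_{L^2}\le C\|\theta_2\|_{H^{1+\frac{\gamma}{2}}}$ when $\gamma\ge1$ (respectively $\le\|\theta_2\|_{H^2}$). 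Altogether this should yield
\begin{align*}
\frac{d}{dt}\|\varphi\|_{H^1}^2\le C\big(1+\|\theta_1\|_{H^{1+\frac{\gamma}{2}}}+\|\theta_2\|_{H^{1+\gamma}}\big)\|\varphi\|_{H^1}^2 .
\end{align*}
Where a derivative is missing, we absorb the excess into $\kappa\|\varphi\|^2_{H^{1+\frac{\gamma}{2}}}$ (or $\lambda\|\varphi\|^2_{H^{\frac32}}$) via interpolation and Young's inequality. Gr\"onwall's inequality together with the local-in-time integrability of the coefficient then gives $\|\varphi(t)\|_{H^1}\le e^{C(t)}\|\varphi(0)\|_{H^1}$, which is Lipschitz and hence continuous dependence in $H^1$.

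The main obstacle is the cross term $u[\varphi]\cdot\nabla\theta_2$ in the regime $\lambda>0$, $\gamma<1$. There $\kappa\Lambda^\gamma$ gives too little smoothing, and we must rely on the damping $\lambda\Lambda$ to supply the half derivative: $\|\varphi\|_{H^{\frac32}}$ and the $L^2_tH^2$ control of $\theta_2$. We therefore expect the $\lambda$-dependent constants $\bar R_{\frac32}$ and $\bar R_2$ to enter exactly there. This is also why the case $\lambda=0$, $\gamma<1$ is excluded: neither dissipation is strong enough to close the estimate.
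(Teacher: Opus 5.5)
Your proposal is correct. It follows the paper's skeleton: the $H^1$ energy identity for the difference (the paper multiplies by $-\Delta\bar\theta$, which is the same as your pairing of $\nabla$ of the equation with $\nabla\varphi$), the transport term bounded by $\|\nabla u_1\|_{L^\infty}\|\varphi\|_{H^1}^2$, then Gr\"onwall. The genuine difference is how you handle the cross term $u[\varphi]\cdot\nabla\theta_2$.

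The paper uses only the pointwise-in-time bound $\|\theta_2\|_{H^{1+\frac{\gamma}{2}}}\le 2R_{1+\frac{\gamma}{2}}$. Via the product estimate \eqref{bound on the cross term bar theta}, it moves $\Lambda^{2-\frac{\gamma}{2}}$ onto the product and pairs it with $\Lambda^{1+\frac{\gamma}{2}}\varphi$. It then uses $2-\frac{\gamma}{2}\le 1+\frac{\gamma}{2}$, which is exactly where $\gamma\ge1$ enters, to reach $CR_{1+\frac{\gamma}{2}}\|\varphi\|_{H^1}\|\varphi\|_{H^{1+\frac{\gamma}{2}}}$. This is absorbed into the dissipation of the difference, $\kappa\|\varphi\|^2_{H^{1+\frac{\gamma}{2}}}$, or into $\lambda\|\varphi\|^2_{H^{\frac32}}$ after replacing $\gamma$ by $1$ and $R_{1+\frac{\gamma}{2}}$ by $\bar R_{\frac32}$. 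The result is a constant Gr\"onwall rate $C(R+R^2/\kappa)$.

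You instead expand with the ordinary Leibniz rule, put all the derivatives on $\theta_2$, and pay with the time-integrated bound $\theta_2\in L^2_{loc}H^{1+\gamma}$ (resp. $L^2_{loc}H^2$). You never use the dissipation of $\varphi$, so your remarks about absorbing into $\kappa\|\varphi\|^2_{H^{1+\frac{\gamma}{2}}}$ or $\lambda\|\varphi\|^2_{H^{\frac32}}$ are superfluous, and your Gr\"onwall coefficient is only $L^1_{loc}$ in time.

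What each route buys:
\begin{itemize}
\item Your route is more elementary: H\"older, Sobolev and the two-order smoothing of $u$ suffice, with no fractional product estimates.
\item The paper's route needs nothing beyond the defining property of the absorbing set and gives an explicit uniform rate.
\item In your version, $\gamma\ge1$ or $\lambda>0$ enters only through the parabolic gain of $\theta_2$ up to $H^2$, not through the difference equation. In particular, the damping $\lambda\Lambda$ matters for you only via the $L^2_tH^2$ bound on $\theta_2$, not via $\|\varphi\|_{H^{\frac32}}$.
\end{itemize}

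Two points to tidy:
\begin{itemize}
\item The claim $\|\nabla^2\theta_2\|_{L^2}\le C\|\theta_2\|_{H^{1+\frac{\gamma}{2}}}$ for $\gamma\ge1$ is false unless $\gamma=2$. Use $\|\theta_2\|_{H^{1+\gamma}}$, as your final display already does. For the first piece you can also simply put $\nabla u[\varphi]$ in $L^\infty$, which by \eqref{two order smoothing for u when nu>0} and Sobolev embedding costs $C\|\varphi\|_{H^1}$ and needs only $\theta_2\in H^1$.
\item The paper states \eqref{bound on time integral on theta in H 1+gamma} only for $t\ge1$ and data in $B_1$. For data in $B_{1+\frac{\gamma}{2}}$, integrate \eqref{differential inequality on theta 1+gamma} from $t=0$ using \eqref{bound on theta in H 1+gamma/2}. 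For the damped case, use the analogous $H^{\frac32}$-level inequality, where $\lambda\Lambda$ produces $\lambda\|\theta\|^2_{H^2}$, together with \eqref{bound on theta in H 1+gamma/2 gamma=1}. This gives $\int_0^t\|\theta_2\|^2_{H^{1+\gamma}}\,d\tau\le C(1+t)$ (resp. the same bound with $H^2$) uniformly over the absorbing set.
\end{itemize}
With these adjustments your argument yields the Lipschitz bound $\|\varphi(t)\|_{H^1}\le C(t)\|\varphi(0)\|_{H^1}$.
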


\begin{proof}
Assume that $\lambda$ and $\gamma$ satisfy \eqref{conditions on gamma}. We fix $t>0$ and let $\theta_0$, $\tilde\theta_0\in B_{1+\frac{\gamma}{2}}$ be arbitrary such that $\theta=\pin(t)\theta_0$ and $\tilde\theta=\pin(t)\tilde\theta_0$ with
\begin{align}\label{bound on theta and tilde theta}
\|\theta(t)\|_{H^{1+\frac{\gamma}{2}}}\le 2R_{1+\frac{\gamma}{2}}, \qquad\|\tilde\theta(t)\|_{H^{1+\frac{\gamma}{2}}}\le 2R_{1+\frac{\gamma}{2}},\qquad\forall t\ge0.
\end{align}
Denote the difference by $\bar{\theta}=\theta-\tilde\theta$ with $\bar{\theta}_0=\theta_0-\tilde\theta_0$, then $\bar{\theta}$ satisfies the following equation
\begin{align}\label{differential eqn for bar theta}
\dt\bar{\theta}+\lambda\mathcal{D}\bar{\theta}+\kappa\Lambda^\gamma\bar{\theta}-\bar{u}\cdot\nabla\bar{\theta}+\tilde u\cdot\nabla\bar{\theta}+\bar{u}\cdot\nabla\tilde\theta=0,
\end{align}
where $\tilde u=u[\tilde\theta]$ and $\bar{u}=u[\bar{\theta}]$. Multiply \eqref{differential eqn for bar theta} by $-\Delta\bar{\theta}$ and integrate, we obtain
\begin{align}\label{differential ineq for bar theta step 1}
&\frac{1}{2}\frac{d}{dt}\|\bar\theta\|^2_{H^1}+\lambda\|\bar\theta\|^2_{H^{\frac{3}{2}}}+\kappa\|\bar\theta\|^2_{H^{1+\frac{\gamma}{2}}}\notag\\
&\le\Big|\intox(\nabla\bar{u}\cdot\nabla\bar{\theta})\cdot\nabla\bar{\theta}\Big|+\Big|\intox(\nabla\tilde u\cdot\nabla\bar{\theta})\cdot\nabla\bar{\theta}\Big|+\Big|\intox(\bar{u}\cdot\nabla\tilde\theta)(-\Delta\bar{\theta})\Big|\notag\\
&\le(\|\nabla\bar{u}\|_{L^\infty}+\|\nabla\tilde{u}\|_{L^\infty})\|\nabla\bar{\theta}\|^2_{L^2}+\Big|\intox(\bar{u}\cdot\nabla\tilde\theta)(-\Delta\bar{\theta})\Big|.
\end{align}
Using \eqref{bound on nabla u in terms of nabla theta} and \eqref{bound on theta and tilde theta}, the terms $\|\nabla\bar{u}\|_{L^\infty}$ and $\|\nabla\tilde{u}\|_{L^\infty}$ can be bounded by
\begin{align*}
(\|\nabla\bar{u}\|_{L^\infty}+\|\nabla\tilde{u}\|_{L^\infty})\le C\Big(\|\Lambda^{1+\frac{\gamma}{2}}\theta\|_{L^2}+\|\Lambda^{1+\frac{\gamma}{2}}\tilde\theta\|_{L^2}\Big)\le CR_{1+\frac{\gamma}{2}},
\end{align*}
For the last term on the right side of \eqref{differential ineq for bar theta step 1}, we integrate by parts and use the product estimate \eqref{product estimate} to obtain
\begin{align}\label{bound on the cross term bar theta}
\Big|\intox(\bar{u}\cdot\nabla\tilde\theta)(-\Delta\bar{\theta})\Big|\le C\Big(\|\bar{u}\|_{L^\infty}\|\Lambda^{2-\frac{\gamma}{2}}\tilde\theta\|_{L^2}+\|\tilde\theta\|_{L^4}\|\Lambda^{2-\frac{\gamma}{2}}\bar{u}\|_{L^4}\Big)\|\Lambda^{1+\frac{\gamma}{2}}\bar{\theta}\|_{L^2}.
\end{align}
Using \eqref{two order smoothing for u when nu>0}, \eqref{bound on L4 using H1} and \eqref{bound on theta and tilde theta},
\begin{align*}
\|\tilde\theta\|_{L^4}\|\Lambda^{2-\frac{\gamma}{2}}\bar{u}\|_{L^4}\|\Lambda^{1+\frac{\gamma}{2}}\bar{\theta}\|_{L^2}\le CR_{1+\frac{\gamma}{2}}\|\Lambda\bar\theta\|_{L^2}\|\Lambda^{1+\frac{\gamma}{2}}\bar{\theta}\|_{L^2}.
\end{align*}
On the other hand, using \eqref{bound on Lambda theta for gamma>1}, \eqref{bound on u in terms of nabla theta} and \eqref{bound on theta and tilde theta}, we have the bounds
\begin{align*}
\|\Lambda^{2-\frac{\gamma}{2}}\tilde\theta\|_{L^2}\le C\|\Lambda^{1+\frac{\gamma}{2}}\tilde\theta\|_{L^2}\le CR_{1+\frac{\gamma}{2}},
\end{align*}
and
\begin{align*}
\|\bar u\|_{L^\infty}\le C\|\Lambda\bar\theta\|_{L^2},
\end{align*}
Hence we obtain from \eqref{bound on the cross term bar theta} that
\begin{align}\label{bound on the cross term bar theta final}
\Big|\intox(\bar{u}\cdot\nabla\tilde\theta)(-\Delta\bar{\theta})\Big|&\le CR_{1+\frac{\gamma}{2}}\|\Lambda\bar\theta\|_{L^2}\|\Lambda^{1+\frac{\gamma}{2}}\bar{\theta}\|_{L^2}
\end{align}
Apply \eqref{bound on the cross term bar theta final} on \eqref{differential ineq for bar theta step 1} and recall that $\lambda\ge0$, we obtain
\begin{align*}
\frac{d}{dt}\|\bar\theta\|^2_{H^1}+\kappa\|\bar\theta\|^2_{H^{1+\frac{\gamma}{2}}}\le C\Big[R_{1+\frac{\gamma}{2}}+\frac{2}{\kappa}R_{1+\frac{\gamma}{2}}^2\Big]\|\nabla\bar{\theta}\|^2_{L^2},
\end{align*}
which further implies
\begin{align*}
\|\bar\theta(t)\|^2_{H^1}\le K_t\|\bar\theta_0\|^2_{H^1},
\end{align*}
where $K_t=\exp\Big(Ct\Big(R_{1+\frac{\gamma}{2}}+\frac{2}{\kappa}R_{1+\frac{\gamma}{2}}^2\Big)\Big)$. This shows that $\pin(t)$ is continuous in the $H^1$-topology.

If $\lambda$ and $\gamma$ satisfy \eqref{conditions on gamma lambda>0}, then the above estimates hold by replacing $\gamma$ and $R_{1+\frac{\gamma}{2}}$ by $1$ and $\bar{R}_{\frac{3}{2}}$ respectively, which gives
\begin{align*}
\frac{d}{dt}\|\bar\theta\|^2_{H^1}+\lambda\|\bar\theta\|^2_{H^{\frac{3}{2}}}\le C\Big[\bar{R}_{\frac{3}{2}}+\frac{2}{\lambda}\bar{R}_{\frac{3}{2}}^2\Big]\|\nabla\bar{\theta}\|^2_{L^2}.
\end{align*}
Following the similar argument, it implies that $\pin(t)$ is continuous in the $H^1$-topology.
\end{proof}

Following the same argument given in \cite{FS21} and using the log-convexity method introduced by \cite{AN67}, we also prove that the solution map $\pin$ is injective on the absorbing sets $B_{1+\frac{\gamma}{2}}$ or $\bar{B}_{\frac{3}{2}}$. The results are summarised in the following proposition.

\begin{proposition}[Backwards uniqueness]\label{Backwards uniqueness lemma}
Fix $\kappa>0$. Let $\tsup[1]{\varphi}_0$, $\tsup[2]{\varphi}_0\in H^1$ be two initial data, and let
\begin{align*}
\tsup[1]{\varphi},\tsup[2]{\varphi}\in
\left\{ \begin{array}{l}
\mbox{$C([0, \infty); H^1)\cap L^2([0, \infty); H^{1+\frac{\gamma}{2}})$ if \eqref{conditions on gamma} is in force;} \\
\mbox{$C([0, \infty); H^1)\cap L^2([0, \infty); H^{\frac{3}{2}})$ if \eqref{conditions on gamma lambda>0} is in force}
\end{array}\right.
\end{align*}
be the corresponding solutions of the initial value problem \eqref{abstract active scalar eqn} for $\tsup[1]{\varphi}_0$ and $\tsup[2]{\varphi}_0$ respectively. If there exists $T > 0$ such that $\tsup[1]{\varphi}(\cdot,T) = \tsup[2]{\varphi}(\cdot,T)$, then $\tsup[1]{\varphi}_0= \tsup[2]{\varphi}_0$ holds.
\end{proposition}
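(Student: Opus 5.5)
We will follow the log-convexity (Agmon–Nirenberg) method, as in \cite{FS21}. Set $w=\tsup[1]{\varphi}-\tsup[2]{\varphi}$, and suppose $w(\cdot,T)=0$ but $w_0\neq0$. By forward uniqueness we may assume $w(t)\neq0$ on $[0,T)$; otherwise $w$ would vanish at some earlier $t_1<T$, and we would repeat the argument on $[0,t_1]$. Subtracting the two equations gives
\begin{align*}
\dt w+\mathcal{A}w=-\tsup[2]{u}\cdot\nabla w-u[w]\cdot\nabla\tsup[1]{\varphi}=:F,\qquad \mathcal{A}:=\lambda\mathcal{D}+\kappa\Lambda^\gamma .
\end{align*}
Here $\mathcal{A}$ is a positive self-adjoint operator on mean-zero $L^2$. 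We then work with the Dirichlet quotient $Q(t)=\langle\mathcal{A}w,w\rangle/\|w\|_{L^2}^2$. The standard computation gives
\begin{align*}
\frac12\frac{d}{dt}Q+\frac{\|(\mathcal{A}-Q)w\|_{L^2}^2}{\|w\|_{L^2}^2}=\frac{\langle(\mathcal{A}-Q)w,F\rangle}{\|w\|_{L^2}^2}\le\frac12\frac{\|(\mathcal{A}-Q)w\|_{L^2}^2}{\|w\|_{L^2}^2}+\frac12\frac{\|F\|_{L^2}^2}{\|w\|_{L^2}^2}.
\end{align*}
Together with
\begin{align*}
\frac12\frac{d}{dt}\log\|w\|_{L^2}^2=-Q+\frac{\langle F,w\rangle}{\|w\|_{L^2}^2},
\end{align*}
a bound of the form $\|F\|_{L^2}^2\le g(t)\big(Q+1\big)\|w\|_{L^2}^2$ with $g\in L^1(0,T)$ gives, by Gronwall, that $Q$ is bounded on $[0,T)$. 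Then $\log\|w\|_{L^2}$ has derivative bounded below, so $\|w(t)\|_{L^2}$ cannot reach $0$ at $t=T$. This is a contradiction.

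The key step is therefore to prove
\begin{align*}
\|F\|_{L^2}\le g(t)^{1/2}\|\mathcal{A}^{1/2}w\|_{L^2}.
\end{align*}
For the first term of $F$ we use $\|\tsup[2]{u}\cdot\nabla w\|_{L^2}\le\|\tsup[2]{u}\|_{L^\infty}\|\nabla w\|_{L^2}$. When $\gamma\ge1$ (condition \eqref{conditions on gamma}), \eqref{bound on u in terms of nabla theta} gives $\|\tsup[2]{u}\|_{L^\infty}\le C\|\tsup[2]{\varphi}\|_{H^1}$, which is bounded since the solution lies in $C([0,T];H^1)$. The factor $\|\nabla w\|_{L^2}$ is dominated by $\|\Lambda^{\gamma/2}w\|_{L^2}$ only if $\gamma\ge2$, so for $\gamma<2$ we instead integrate by parts using $\nabla\cdot\tsup[2]{u}=0$. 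This step is the main obstacle. Since $\mathcal{A}$ has order $\gamma$ (or order $1$ when $\lambda>0$), the Agmon–Nirenberg method needs $F$ to be controlled in $H^{-\bar\gamma/2}$ rather than $L^2$, with $\bar\gamma=\max\{1,\gamma\}$. So we will run the argument in the variant where $\|F\|_{L^2}$ is replaced by $\|\mathcal{A}^{-1/2}F\|_{L^2}$ weighted against $\|\mathcal{A}^{1/2}w\|$, or equivalently where $Q$ is built from $\langle\mathcal{A}w,w\rangle$ and $F$ is paired as $\langle\mathcal{A}^{1/2}(\mathcal{A}-Q)w,\mathcal{A}^{-1/2}F\rangle$.

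In this variant we write $\tsup[2]{u}\cdot\nabla w=\nabla\cdot(\tsup[2]{u}w)$. We then estimate $\|\tsup[2]{u}w\|_{H^{1-\bar\gamma/2}}$ by the product estimate \eqref{product estimate}, using \eqref{two order smoothing for u when nu>0} to bound $\tsup[2]{u}$ in $W^{1,\infty}$ through $\|\tsup[2]{\varphi}\|_{H^{1+\bar\gamma/2}}$. This yields a weight $g(t)=C\|\tsup[2]{\varphi}\|^2_{H^{1+\bar\gamma/2}}$, which is integrable in time by the $L^2_tH^{1+\bar\gamma/2}$ hypothesis. For the second term, \eqref{two order smoothing for u when nu>0} gives $\|u[w]\|_{W^{1,4}}\le C\|w\|_{L^2}$ in $d\le3$. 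We then bound $\|u[w]\cdot\nabla\tsup[1]{\varphi}\|_{H^{-\bar\gamma/2}}\le C\|w\|_{L^2}\|\tsup[1]{\varphi}\|_{H^{1}}$, again with a bounded or integrable weight. The regime where $\gamma<1$ forces $\lambda>0$, and there the $\lambda\Lambda$ part of $\mathcal{A}$ supplies the order-one dissipation needed. This is exactly why the proposition splits into the cases \eqref{conditions on gamma} and \eqref{conditions on gamma lambda>0}.
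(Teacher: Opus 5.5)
Your framework, Agmon--Nirenberg log-convexity with the Dirichlet quotient $Q=\langle\mathcal{A}w,w\rangle/\|w\|_{L^2}^2$, is the one the paper invokes through \cite{FS21,AN67}. Your reduction to $w\neq0$ on $[0,T)$, the identity for $Q'$, and the final contradiction are fine. The gap is the transport term; write $\varphi_1,\varphi_2$ for the two solutions and $u_2=u[\varphi_2]$. You correctly note that $\|u_2\cdot\nabla w\|_{L^2}\lesssim\|\mathcal{A}^{1/2}w\|_{L^2}$ fails for $\bar\gamma<2$, but the variant you propose does not close. In
\[
\tfrac12 Q'+\frac{\|(\mathcal{A}-Q)w\|_{L^2}^2}{\|w\|_{L^2}^2}=\frac{\langle(\mathcal{A}-Q)w,F\rangle}{\|w\|_{L^2}^2}
\]
the only coercive quantity is $\|(\mathcal{A}-Q)w\|_{L^2}$. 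Pairing as $\langle\mathcal{A}^{1/2}(\mathcal{A}-Q)w,\mathcal{A}^{-1/2}F\rangle$ therefore needs $\|\mathcal{A}^{1/2}(\mathcal{A}-Q)w\|_{L^2}$, which nothing controls. Your bound $\|\mathcal{A}^{-1/2}F\|_{L^2}\le g^{1/2}\|\mathcal{A}^{1/2}w\|_{L^2}$ only says that $F$ has the same order as $\mathcal{A}$, and log-convexity cannot absorb a generic perturbation of that order. Moving to another base space $D(\mathcal{A}^{\alpha})$ does not help: one always needs $F$ of order at most half that of $\mathcal{A}$, while $u_2\cdot\nabla$ has order $1>\bar\gamma/2$.

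The missing idea is to exploit the antisymmetry of $u_2\cdot\nabla$, which comes from $\nabla\cdot u_2=0$, instead of estimating it as a perturbation. Since $\langle u_2\cdot\nabla w,w\rangle=0$, the term drops out of $\frac{d}{dt}\log\|w\|_{L^2}^2$. In the $Q$-identity it contributes only $-\langle\mathcal{A}w,u_2\cdot\nabla w\rangle/\|w\|_{L^2}^2$. Write $\mathcal{A}=\lambda\Lambda+\lambda+\kappa\Lambda^\gamma$. Because $\langle\Lambda^s w,u_2\cdot\nabla\Lambda^s w\rangle=0$, each nonzero piece equals $\langle\Lambda^s w,[\Lambda^s,u_2\cdot\nabla]w\rangle$ with $s\in\{\frac12,\frac\gamma2\}$. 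The commutator gains a derivative: for $0<s\le1$ one checks that $|l|\,|k|^s\bigl||k|^s-|l|^s\bigr|\lesssim|k-l|\,|k|^s|l|^s$ on nonzero modes. Young's inequality then bounds the piece by $C\sum_m|m|\,|\hat u_2(m)|\,\|\Lambda^s w\|_{L^2}^2$, and $\sum_m|m|\,|\hat u_2(m)|\lesssim\|u_2\|_{H^{5/2+\epsilon}}\lesssim\|\varphi_2\|_{H^1}$ by \eqref{two order smoothing for u when nu>0}. Hence $|\langle\mathcal{A}w,u_2\cdot\nabla w\rangle|\le C\|\varphi_2\|_{H^1}\langle\mathcal{A}w,w\rangle$.

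Only $G=u[w]\cdot\nabla\varphi_1$ is then a genuine $L^2$ perturbation. It satisfies $\|G\|_{L^2}\le\|u[w]\|_{L^\infty}\|\nabla\varphi_1\|_{L^2}\le C\|\varphi_1\|_{H^1}\|w\|_{L^2}$, using \eqref{two order smoothing for u when nu>0} and $H^2\subset L^\infty$. This yields $Q'\le C\|\varphi_2\|_{H^1}Q+C\|\varphi_1\|_{H^1}^2$ and $\frac12(\log\|w\|_{L^2}^2)'\ge-Q-C\|\varphi_1\|_{H^1}$, after which your contradiction argument goes through. This step never uses the $\lambda\Lambda$ dissipation, contrary to your closing remark about why the statement splits into two cases.
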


\begin{proof}
It can be proved by the same method as given in \cite{FS21} and we omit the details here.
\end{proof}

\begin{remark}
In view of the results obtained from Proposition~\ref{Backwards uniqueness lemma}, the solution map $\pin(t)$ is injective on $\Gg$. When such solution map  is restricted to $\Gg$, the dynamics actually defines a dynamical system. Hence $\pin(t)\Big|_{\Gg}$ makes sense for ${\it all}$ $t\in\R$, not just for $t\ge0$.
\end{remark}

Using Lemma~\ref{continuity of solution map lemma} and Proposition~\ref{Backwards uniqueness lemma}, we obtain the invariance and the maximality of the attractor $\Gg$ stated in Theorem~\ref{Existence of H1 global attractor further properties thm}. The results are summarised in the following corollary.

\begin{corollary}
The global attractor $\Gg$ of $\pin(t)$ is fully invariant, namely
\begin{align*}
\pin(t)\Gg=\Gg,\qquad \forall t\ge0.
\end{align*}
In particular, $\Gg$ is maximal among the class of bounded invariant sets in $H^1$.
\end{corollary}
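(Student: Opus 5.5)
The plan is to use the standard abstract result on global attractors: if a semigroup is continuous on a compact absorbing set and the attractor is the $\omega$-limit of that set, then the attractor is fully invariant. The fact that will actually be used is that, for each $t>0$, the map $\pin(t)$ restricted to a bounded absorbing set containing $\Gg$ is continuous in $H^1$. Under \eqref{conditions on gamma} I take the set $B_{1+\frac{\gamma}{2}}$, and under \eqref{conditions on gamma lambda>0} I take $\bar{B}_{\frac{3}{2}}$. This continuity is exactly Lemma~\ref{continuity of solution map lemma}.

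By Corollary~\ref{Unique attractor general corollary}, $\Gg=\omega(B_{1+\frac{\gamma}{2}})$, and $\Gg$ is compact in $H^1$. Compactness comes from the bound \eqref{uniform bound on theta from the attractor} together with the compact embedding $H^{1+\frac{\gamma}{2}}\hookrightarrow H^1$ on $\T^d$.

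First I prove $\pin(t)\Gg\subset\Gg$. Take $\phi\in\Gg$. Then $\phi=\lim_n\pin(t_n)\phi_n$ in $H^1$ for some $t_n\to\infty$ and $\phi_n\in B_{1+\frac{\gamma}{2}}$. For $n$ large, $\pin(t_n)\phi_n$ lies in the absorbing set, because that set absorbs itself after a finite time. Continuity of $\pin(t)$ on the absorbing set then gives $\pin(t)\phi=\lim_n\pin(t+t_n)\phi_n$, which belongs to $\omega(B_{1+\frac{\gamma}{2}})=\Gg$.

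Next I prove the reverse inclusion $\Gg\subset\pin(t)\Gg$. Given $\phi=\lim_n\pin(t_n)\phi_n$, I write $\pin(t_n)\phi_n=\pin(t)\psi_n$ with $\psi_n=\pin(t_n-t)\phi_n$. The sequence $\psi_n$ is bounded in $H^{1+\frac{\gamma}{2}}$ by \eqref{bound on theta in H 1+gamma/2}, so a subsequence converges in $H^1$ to some $\psi$. This limit $\psi$ lies in $\Gg$ by the definition of the $\omega$-limit set. It also lies in the absorbing set: the ball is closed under weak limits in $H^{1+\frac{\gamma}{2}}$, so $\psi$ inherits the bound. Continuity then gives $\pin(t)\psi=\phi$, hence $\phi\in\pin(t)\Gg$. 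Backward uniqueness, Proposition~\ref{Backwards uniqueness lemma}, additionally shows that this $\psi$ is unique. This is not needed for invariance, but it makes $\pin(t)|_{\Gg}$ a bijection.

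For maximality, let $X\subset H^1$ be bounded with $\pin(t)X=X$. Since $\Gg$ attracts bounded sets, $\dist(X,\Gg)=\dist(\pin(t)X,\Gg)\to0$ as $t\to\infty$. Therefore $\dist(X,\Gg)=0$, and because $\Gg$ is closed, $X\subset\Gg$.

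The main obstacle is the bookkeeping of where the continuity statement applies. Lemma~\ref{continuity of solution map lemma} gives continuity only on $B_{1+\frac{\gamma}{2}}$ (respectively $\bar{B}_{\frac{3}{2}}$) with the $H^1$ topology. So I must make sure all sequences and their limits stay in these sets. I plan to handle this with two facts. First, the sets are weakly closed in $H^{1+\frac{\gamma}{2}}$, so strong $H^1$ limits of bounded sequences remain inside. Second, if needed, I replace $B_{1+\frac{\gamma}{2}}$ by the ball of radius $2R_{1+\frac{\gamma}{2}}$, which is forward invariant by \eqref{bound on theta in H 1+gamma/2}. The continuity proof goes through verbatim on this larger ball, with constants changed.
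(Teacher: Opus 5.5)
Your proof is correct and is essentially the paper's argument. The paper obtains invariance and maximality directly from Lemma~\ref{continuity of solution map lemma} together with Proposition~\ref{Backwards uniqueness lemma}, i.e.\ via the standard $\omega$-limit argument you spell out, and you are right that backward uniqueness is only needed for injectivity of $\pin(t)|_{\Gg}$, not for $\pin(t)\Gg=\Gg$. One minor slip in your fallback: \eqref{bound on theta in H 1+gamma/2} only says that orbits starting in $B_{1+\frac{\gamma}{2}}$ stay in the ball of radius $2R_{1+\frac{\gamma}{2}}$, not that this larger ball is forward invariant; however, your primary route (self-absorption of $B_{1+\frac{\gamma}{2}}$ plus weak closedness in $H^{1+\frac{\gamma}{2}}$) already keeps every sequence and limit inside $B_{1+\frac{\gamma}{2}}$, so the fallback is not needed.
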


Finally, we address the fractal dimensions for the global attractors $\Gg$ obtained in Theorem~\ref{Existence of H1 global attractor further properties thm}. Given a compact set $X$, we give the following definition for fractal dimension $\dimf(X)$, which is based on counting the number of closed balls of a fixed radius $\varepsilon$ needed to cover $X$; see \cite{R13} for further explanation. 

\begin{definition}\label{fractal dimension def}
Given a compact set $X$, let $N(X,\varepsilon)$ be the minimum number of balls of radius $\varepsilon$ that cover $X$. The {\it fractal dimension} $\dimf (X)$ of $X$ is given by
\begin{align*}
\dimf(X) := \limsup_{\varepsilon\to0}\frac{\log N(X, \varepsilon)}{-\log\varepsilon}.
\end{align*}
\end{definition}

In order to prove that $\dimf (\Gg)$ is finite, we need to show that the solution map $\pin$ given in \eqref{def of solution map nu>0 and kappa>0} is uniform differentiable. More precisely, we have the following definition:

\begin{definition}\label{uniform differentiable def}
We say that $\pin(t)$ is {\it uniform differentiable} on $\Gg$ if for every $\theta_0\in\Gg$, there exists a linear operator $D\pin(t, \theta_0)$ such that
\begin{align}\label{condition 1 for uniform differentiable}
\mbox{$\dis\sup_{\theta_0,\varphi_0\in\Gg;0<\|\theta_0-\varphi_0\|_{H^1}\le\varepsilon}\frac{\|\pin(t)(\varphi_0)-\pin(t)(\theta_0)-D\pin(t)(\varphi_0-\theta_0)\|_{H^1}}{\|\theta_0-\varphi_0\|_{H^1}}\to0$ as $\varepsilon\to0$,}
\end{align}
and 
\begin{align}\label{condition 2 for uniform differentiable}
\mbox{$\dis\sup_{\theta_0\in\Gg}\sup_{\psi_0\in H^1}\frac{\|D\pin(t,\theta_0)(\psi_0)\|_{H^1}}{\|\psi_0\|_{H^1}}<\infty$,\qquad for all $t\ge0$.}
\end{align}
\end{definition}
Next, Lemma~\ref{uniform differentiable lemma} shows that $\pin(t)$ is indeed uniform differentiable with the associated linear operator $D\pin(t, \theta_0)$ being given by 
\begin{align*}
D\pin(t, \theta_0)[\psi_0]:=\psi(t),
\end{align*}
where $\psi$ is the solution of the following linearised problem
\begin{align}\label{linearised active scalar with gamma}
\left\{ \begin{array}{l}
\frac{d\psi}{dt}=A_{\theta}[\psi], \\
\psi(x,0)=\psi_0(x).
\end{array}\right.
\end{align}
Here $\theta=\pin(t)\theta_0$ and $A_{\theta}$ is the elliptic operator given by
\begin{equation}\label{def of operator A theta}
A_{\theta}[\psi]=A_{\theta_0}(t)[\psi]:=-\kappa\Lambda^\gamma\psi-u[\theta]\cdot\nabla\psi-u[\psi]\cdot\nabla\theta.
\end{equation}
\begin{lemma}\label{uniform differentiable lemma}
Let $\kappa>0$ be fixed. If $\lambda$ and $\gamma$ satisfy either \eqref{conditions on gamma} or \eqref{conditions on gamma lambda>0}, then the corresponding solution map $\pin(t)$ is uniform differentiable on $\Gg$. Furthermore, the linear operator $D\pin(t, \theta_0)$ is compact.
\end{lemma}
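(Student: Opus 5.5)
The plan is to follow the scheme of \cite{FS21}, treating \eqref{conditions on gamma} and \eqref{conditions on gamma lambda>0} in parallel. In the second case $\gamma$ is replaced by $1$, $\kappa$ by $\lambda$ in the dissipation, and $R_{1+\frac{\gamma}{2}}$ by $\bar{R}_{\frac{3}{2}}$, exactly as in Lemma~\ref{continuity of solution map lemma}. We also include the damping $-\lambda\mathcal{D}\psi$ in $A_\theta$; it only adds a good-signed term to every energy estimate.

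\textbf{Step 1: well-posedness and boundedness of the linearised flow, i.e.\ \eqref{condition 2 for uniform differentiable}.} Fix $\theta_0\in\Gg$. By \eqref{uniform bound on theta from the attractor}, $\|\theta(t)\|_{H^{1+\frac{\gamma}{2}}}\le\tilde M_{\Gg}$ for all $t\ge0$. Multiply \eqref{linearised active scalar with gamma} by $-\Delta\psi$ and integrate. The term $u[\theta]\cdot\nabla\psi$ contributes at most $\|\nabla u[\theta]\|_{L^\infty}\|\nabla\psi\|_{L^2}^2\le C\tilde M_{\Gg}\|\psi\|_{H^1}^2$ by \eqref{bound on nabla u in terms of nabla theta}. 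The term $u[\psi]\cdot\nabla\theta$ is handled exactly as in \eqref{bound on the cross term bar theta}--\eqref{bound on the cross term bar theta final}, giving the bound $C\tilde M_{\Gg}\|\psi\|_{H^1}\|\psi\|_{H^{1+\frac{\gamma}{2}}}$. We absorb it with Young's inequality and obtain
\begin{align*}
\frac{d}{dt}\|\psi\|_{H^1}^2+\kappa\|\psi\|_{H^{1+\frac{\gamma}{2}}}^2\le C(\tilde M_{\Gg}+\kappa^{-1}\tilde M_{\Gg}^2)\|\psi\|_{H^1}^2 .
\end{align*}
Galerkin approximation together with this estimate gives existence and uniqueness of $\psi$. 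Gr\"onwall then gives $\|\psi(t)\|_{H^1}^2\le K_t\|\psi_0\|_{H^1}^2$, with $K_t$ independent of $\theta_0\in\Gg$, which is \eqref{condition 2 for uniform differentiable}. Integrating in time also yields $\int_0^t\|\psi\|^2_{H^{1+\frac{\gamma}{2}}}\le K_t'\|\psi_0\|^2_{H^1}$.

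\textbf{Step 2: the remainder, i.e.\ \eqref{condition 1 for uniform differentiable}.} Let $\varphi=\pin(t)\varphi_0$, $\bar\theta=\varphi-\theta$, and $w=\bar\theta-\psi$ with $\psi_0=\varphi_0-\theta_0$, so that $w(0)=0$. Subtracting the equations gives
\begin{align*}
\dt w=A_\theta[w]-u[\bar\theta]\cdot\nabla\bar\theta .
\end{align*}
Running the $H^1$ energy estimate of Step 1 on $w$, the only new term is $\intox (u[\bar\theta]\cdot\nabla\bar\theta)\Delta w$. We bound it by $\|u[\bar\theta]\cdot\nabla\bar\theta\|_{H^{1-\frac{\gamma}{2}}}\|w\|_{H^{1+\frac{\gamma}{2}}}$, and then bound $\|u[\bar\theta]\cdot\nabla\bar\theta\|_{H^{1-\frac{\gamma}{2}}}$ by $C\|\bar\theta\|_{H^1}\|\bar\theta\|_{H^{1+\frac{\gamma}{2}}}$, using \eqref{product estimate}, \eqref{bound on u in terms of nabla theta}, \eqref{bound on L4 using H1} and \eqref{two order smoothing for u when nu>0}. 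Young's inequality and Gr\"onwall then give
\begin{align*}
\|w(t)\|_{H^1}^2\le C_t\int_0^t\|\bar\theta\|_{H^1}^2\|\bar\theta\|_{H^{1+\frac{\gamma}{2}}}^2\,ds\le C_t\|\bar\theta_0\|_{H^1}^2\int_0^t\|\bar\theta\|^2_{H^{1+\frac{\gamma}{2}}}ds .
\end{align*}
Here we use the Lipschitz bound of Lemma~\ref{continuity of solution map lemma}, whose proof also gives $\int_0^t\|\bar\theta\|^2_{H^{1+\frac{\gamma}{2}}}\le C_t\|\bar\theta_0\|^2_{H^1}$. Hence $\|w(t)\|_{H^1}\le C_t\|\bar\theta_0\|_{H^1}^2$ uniformly on $\Gg$, and dividing by $\|\bar\theta_0\|_{H^1}$ yields \eqref{condition 1 for uniform differentiable}.

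\textbf{Step 3: compactness.} For $t>0$, the Step 1 estimate multiplied by $s$ (smoothing) shows that $D\pin(t,\theta_0)$ maps bounded sets of $H^1$ into bounded sets of $H^{1+\frac{\gamma}{2}}$. By Rellich compactness on $\T^d$, $D\pin(t,\theta_0)$ is therefore compact.

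The main obstacle is the nonlinear remainder estimate in Step 2 when $\gamma$ is near $1$. There the available dissipation is only $H^{1+\frac{\gamma}{2}}$, so the quadratic term must be placed in the weak space $H^{1-\frac{\gamma}{2}}$ with sharp product estimates. For \eqref{conditions on gamma lambda>0} with $\gamma<1$, the dissipation has to come from $\lambda\Lambda$ rather than $\kappa\Lambda^\gamma$.
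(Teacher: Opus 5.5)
Your Steps 1 and 2 are the paper's argument with the names exchanged: your $w$ is the paper's $\eta$, and your $\bar\theta$ is the paper's $w$. Both proofs use an $H^1$ estimate for the linearised flow, an $H^1$ estimate for the remainder forced by $u[\bar\theta]\cdot\nabla\bar\theta$, and the bound $\sup_{s\le t}\|\bar\theta(s)\|^2_{H^1}+\int_0^t\|\bar\theta\|^2_{H^{1+\frac{\gamma}{2}}}ds\le C_t\|\bar\theta_0\|^2_{H^1}$. Together these give $\|w(t)\|_{H^1}\le C_t\|\bar\theta_0\|^2_{H^1}$. Including $-\lambda\mathcal{D}\psi$ in $A_\theta$ is correct, and is needed for $\psi$ to be the linearisation when $\lambda>0$. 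The paper's \eqref{def of operator A theta} omits this term, although \eqref{eqn for eta} implicitly relies on it.

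The gap is in Step 3. Multiplying the Step 1 inequality by $s$ only yields
\begin{align*}
\frac{d}{ds}\big(s\|\psi\|^2_{H^1}\big)+\kappa s\|\psi\|^2_{H^{1+\frac{\gamma}{2}}}\le(1+Cs)\|\psi\|^2_{H^1}.
\end{align*}
This controls $t\|\psi(t)\|^2_{H^1}$ and $\int_0^t s\|\psi\|^2_{H^{1+\frac{\gamma}{2}}}ds$. It gives no bound on $\|\psi(t)\|_{H^{1+\frac{\gamma}{2}}}$ at the fixed time $t$, so compactness does not follow.

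What is missing is an energy estimate one level up:
\begin{itemize}
\item Pair the linearised equation with $\Lambda^{2+\gamma}\psi$.
\item Treat $u[\theta]\cdot\nabla\psi$ with \eqref{commutator estimate}.
\item Bound the cross term by $\|u[\psi]\cdot\nabla\theta\|_{H^1}\|\psi\|_{H^{1+\gamma}}\le C\|\psi\|_{H^1}\|\theta\|_{H^{2}}\|\psi\|_{H^{1+\gamma}}$.
\end{itemize}
Closing this needs more regularity of $\theta$ than the pointwise attractor bound \eqref{uniform bound on theta from the attractor}, since $H^{1+\frac{\gamma}{2}}\not\subset H^2$ for $\gamma<2$. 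You need $\theta\in L^2_{\rm loc}H^{1+\gamma}$ uniformly on $\Gg$, which comes from \eqref{uniform bound on time integral of theta from the attractor}. Under \eqref{conditions on gamma lambda>0} you instead pair with $\Lambda^3\psi$, use the dissipation $\lambda\|\psi\|^2_{H^2}$, and need $\theta\in L^2_{\rm loc}H^2$ from \eqref{bound on time integral on theta in H 1+gamma gamma=1}.

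This produces $\frac{d}{ds}\|\psi\|^2_{H^{1+\frac{\gamma}{2}}}\le g(s)\|\psi\|^2_{H^{1+\frac{\gamma}{2}}}$ with $\int_0^t g\,ds$ bounded uniformly on $\Gg$. From there, either of two devices works:
\begin{itemize}
\item your time weight; or
\item the paper's choice of $\tau\in(0,t)$ with $\|\psi(\tau)\|^2_{H^{1+\frac{\gamma}{2}}}\le t^{-1}\int_0^t\|\psi\|^2_{H^{1+\frac{\gamma}{2}}}ds$, followed by Gr\"onwall on $[\tau,t]$.
\end{itemize}
Either bounds $D\pin(t,\theta_0)U_1$ in $H^{1+\frac{\gamma}{2}}$, where $U_1$ is the unit ball of $H^1$, and Rellich finishes. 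So the real extra input is this higher-order step, not the remainder estimate in Step 2 that you flagged as the main obstacle.
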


\begin{proof}
For $\theta_0$, $\varphi_0\in\Gg$, we let $\theta(t)=\pin(t)\theta_0$ and $\varphi(t)=\pin(t)\varphi_0$. We denote $\psi_0=\varphi_0-\theta_0$ and define $\psi(t)=D\pin(t, \theta_0)[\psi_0]$, where $\psi(t)$ satisfies \eqref{linearised active scalar with gamma}. We also define 
\begin{align*}
\eta(t)=\varphi(t)-\theta(t)-\psi(t)=\pin(t)\varphi_0-\pin(t)\theta_0-D\pin(t, \theta_0)[\psi_0].
\end{align*}
Then $\eta(t)$ satisfies
\begin{align}\label{eqn for eta}
\left\{ \begin{array}{l}
\dt\eta+\lambda\mathcal{D}\eta+\kappa\Lambda^\gamma\eta+u[\eta]\cdot\nabla\theta+u[\theta]\cdot\nabla\eta=-u[w]\cdot w, \\
\eta(x,0)=0,
\end{array}\right.
\end{align}
where $w(t)=\varphi(t)-\theta(t)=\pin(t)\psi_0$. We take $L^2$-inner product of \eqref{eqn for eta}$_1$ with $-\Delta\eta$ and obtain
\begin{align}\label{differential eqn for eta}
&\frac{1}{2}\frac{d}{dt}\|\eta\|^2_{H^1}+\lambda\|\eta\|^2_{H^{\frac{3}{2}}}+\kappa\|\eta\|^2_{H^{1+\frac{\gamma}{2}}}\notag\\
&=\intox u[\eta]\cdot\nabla\theta\Delta\eta-\intox\partial_{x_k}u[\theta]\cdot\nabla\eta\partial_{x_k}\eta+\intox u[w]\cdot\nabla w\Delta\eta.
\end{align}
Assume that $\lambda$ and $\gamma$ satisfy \eqref{conditions on gamma}. Using \eqref{bound on nabla u in terms of nabla theta} on $\nabla u$, the second integrand of \eqref{differential eqn for eta} can be bounded by 
\begin{align*}
\Big|\intox\partial_{x_k}u[\theta]\cdot\nabla\eta\partial_{x_k}\eta\Big|\le C\|\nabla u\|_{L^\infty}\|\nabla\eta\|^2_{L^2}\le C\|\Lambda^{1+\frac{\gamma}{2}}\theta\|_{L^2}\|\nabla\eta\|^2_{L^2}.
\end{align*}
To estimate the first integrand appeared in \eqref{differential eqn for eta}, we readily have
\begin{align*}
\Big|\intox u[\eta]\cdot\nabla\theta\Delta\eta\Big|=\Big|\intox\Lambda^{2-\frac{\gamma}{2}}(u[\eta]\theta)\cdot\Lambda^\frac{\gamma}{2}\nabla\eta\Big|\le \|\Lambda^{2-\frac{\gamma}{2}}(u[\eta]\theta)\|_{L^2}\|\Lambda^{1+\frac{\gamma}{2}}\eta\|_{L^2}.
\end{align*} 
Using \eqref{bound on Lambda theta for gamma>1}, we have
\begin{align*}
\|\Lambda^{2-\frac{\gamma}{2}}(u[\eta]\theta)\|_{L^2}\le C\|\Lambda^{1+\frac{\gamma}{2}}(u[\eta]\theta)\|_{L^2},
\end{align*}
and using the product estimate \eqref{product estimate}, 
\begin{align*}
\|\Lambda^{1+\frac{\gamma}{2}}(u[\eta]\theta)\|_{L^2}\le C\Big(\|\Lambda^{1+\frac{\gamma}{2}}u[\eta]\|_{L^4}\|\theta\|_{L^4}+\|u[\eta]\|_{L^\infty}\|\Lambda^{1+\frac{\gamma}{2}}\theta\|_{L^2}\Big).
\end{align*}
Using \eqref{two order smoothing for u when nu>0}, \eqref{bound on L4 using H1} and \eqref{bound on u in terms of nabla theta}, the terms $\|\Lambda^{1+\frac{\gamma}{2}}u[\eta]\|_{L^4}\|\theta\|_{L^4}$ and $\|u[\eta]\|_{L^\infty}\|\Lambda^{1+\frac{\gamma}{2}}\theta\|_{L^2}$ can be bounded respectively by 
\begin{align*}
\|\Lambda^{1+\frac{\gamma}{2}}u[\eta]\|_{L^4}\|\theta\|_{L^4}\le C\|\Lambda\eta\|_{L^2}\|\Lambda\theta\|_{L^2},
\end{align*}
and 
\begin{align*}
\|u[\eta]\|_{L^\infty}\|\Lambda^{1+\frac{\gamma}{2}}\theta\|_{L^2}\le C\|\Lambda \eta\|_{L^2}\|\Lambda^{1+\frac{\gamma}{2}}\theta\|_{L^2}.
\end{align*}
Therefore, we have
\begin{align*}
\Big|\intox u[\eta]\cdot\nabla\theta\Delta\eta\Big|\le C\Big(\|\Lambda\eta\|_{L^2}\|\Lambda\theta\|_{L^2}+\|\Lambda \eta\|_{L^2}\|\Lambda^{1+\frac{\gamma}{2}}\theta\|_{L^2}\Big)\|\Lambda^{1+\frac{\gamma}{2}}\eta\|_{L^2},
\end{align*} 
and by the similar argument,
\begin{align*}
\Big|\intox u[w]\cdot\nabla w\Delta\eta\Big|\le C\|\Lambda w\|_{L^2}\|\Lambda^{1+\frac{\gamma}{2}}w\|_{L^2}\|\Lambda^{1+\frac{\gamma}{2}}\eta\|_{L^2}.
\end{align*}
Dropping the non-negative term $\lambda\|\eta\|^2_{H^{\frac{3}{2}}}$, the identity \eqref{differential eqn for eta} then implies
\begin{align}\label{differential eqn for eta step 2}
&\frac{1}{2}\frac{d}{dt}\|\eta\|^2_{H^1}+\kappa\|\eta\|^2_{H^{1+\frac{\gamma}{2}}}\notag\\
&\le C\|\Lambda^{1+\frac{\gamma}{2}}\theta\|_{L^2}\|\nabla\eta\|^2_{L^2}+C\Big(\|\Lambda\theta\|_{L^2}+\|\Lambda^{1+\frac{\gamma}{2}}\theta\|_{L^2}\Big)\|\Lambda \eta\|_{L^2}\|\Lambda^{1+\frac{\gamma}{2}}\eta\|_{L^2}\notag\\
&\qquad+C\|\Lambda w\|_{L^2}\|\Lambda^{1+\frac{\gamma}{2}}w\|_{L^2}\|\Lambda^{1+\frac{\gamma}{2}}\eta\|_{L^2}.
\end{align}
On the other hand, we consider the function $w=w(t)$ satisfying
\begin{align}\label{eqn for w}
\left\{ \begin{array}{l}
\dt w+\lambda\mathcal{D}w+\kappa\Lambda^\gamma w+u[\varphi]\cdot\nabla w+u[w]\cdot\nabla\theta=0, \\
w(x,0)=\psi_0.
\end{array}\right.
\end{align}
Taking $L^2$-inner product of \eqref{eqn for w}$_1$ with $-\Delta w$,
\begin{align}\label{differential eqn for w}
\frac{1}{2}\frac{d}{dt}\|w\|^2_{H^1}+\lambda\|w\|^2_{H^{\frac{3}{2}}}+\kappa\|w\|^2_{H^{1+\frac{\gamma}{2}}}=\intox u[\varphi]\cdot\nabla w\Delta w+\intox u[w]\cdot\nabla\theta\Delta w.
\end{align}
Similar to the previous case for $\eta$, we have
\begin{align*}
\Big|\intox u[\varphi]\cdot\nabla w\Delta w\Big|\le C\|\Lambda^{1+\frac{\gamma}{2}}\varphi\|_{L^2}\|\nabla w\|^2_{L^2},
\end{align*}
and 
\begin{align*}
\Big|\intox u[w]\cdot\nabla\theta\Delta w\Big|\le C\Big(\|\Lambda w\|_{L^2}\|\Lambda\theta\|_{L^2}+\|\Lambda w\|_{L^2}\|\Lambda^{1+\frac{\gamma}{2}}\theta\|_{L^2}\Big)\|\Lambda^{1+\frac{\gamma}{2}}w\|_{L^2}.
\end{align*} 
Hence \eqref{differential eqn for w} implies
\begin{align}\label{differential eqn for w step 2}
&\frac{1}{2}\frac{d}{dt}\|w\|^2_{H^1}+\kappa\|w\|^2_{H^{1+\frac{\gamma}{2}}}\notag\\
&\le\frac{\kappa}{2}\|w\|^2_{H^{1+\frac{\gamma}{2}}}+C\Big[\|\Lambda^{1+\frac{\gamma}{2}}\varphi\|_{L^2}+\frac{1}{\kappa}(\|\Lambda\theta\|^2_{L^2}+\|\Lambda^{1+\frac{\gamma}{2}}\varphi\|^2_{L^2})\Big]\|w\|^2_{H^1}.
\end{align}
Since $\theta_0$, $\varphi_0\in\Gg$, $\theta$ and $\varphi$ both satisfy the bounds \eqref{uniform bound on theta from the attractor}-\eqref{uniform bound on time integral of theta from the attractor} with $M_{\Gg}$ being given by \eqref{def of constant M on global attractor}, namely,
\begin{align}\label{uniform bound on theta and varphi}
\|\theta\|_{H^{1+\frac{\gamma}{2}}}\le M_{\Gg},\qquad \|\varphi\|_{H^{1+\frac{\gamma}{2}}}\le M_{\Gg},
\end{align}
and
\begin{align}\label{uniform bound on time integral of theta and varphi}
\frac{1}{T}\int_0^{T}\|\theta(\cdot,\tau)\|_{H^{1+\gamma}}d\tau\le M_{\Gg},\qquad \frac{1}{T}\int_0^{T}\|\varphi(\cdot,\tau)\|_{H^{1+\gamma}}d\tau\le M_{\Gg},\qquad\forall T>0.
\end{align}
We apply the bounds \eqref{uniform bound on theta and varphi} on \eqref{differential eqn for w step 2} and use Gr\"{o}nwall's inequality to obtain
\begin{align}\label{H1 bound on w}
\|w(\cdot,t)\|^2_{H^1}\le\|\psi_0\|^2_{H^1}K(t,M_{\Gg}),\qquad\forall t\ge0,
\end{align}
as well as 
\begin{align}\label{time integral on H 1+gamma/2 norm of w}
\int_0^t\|w(\cdot,\tau)\|^2_{H^{1+\frac{\gamma}{2}}}d\tau\le\|\psi_0\|^2_{H^1}K(t,M_{\Gg}),\qquad\forall t\ge0.
\end{align}
where $K(t,M_{\Gg})$ is a positive function in $t$. We combine \eqref{differential eqn for eta step 2} with \eqref{uniform bound on theta and varphi} and \eqref{H1 bound on w} to get
\begin{align}\label{differential eqn for eta step 3}
\frac{d}{dt}\|\eta\|^2_{H^1}+\kappa\|\eta\|^2_{H^{1+\frac{\gamma}{2}}}\le CM_{\Gg}\|\eta\|^2_{H^1}+\frac{CM^2_{\Gg}}{\kappa}\|\eta\|^2_{H^1}+\frac{C}{\kappa}K(t,M_{\Gg})\|\psi_0\|^2_{H^1}\|w\|^2_{H^{1+\frac{\gamma}{2}}}.
\end{align}
Using Gr\"{o}nwall's inequality on \eqref{differential eqn for eta step 3} and recalling the fact that $\eta(0)=0$, we conclude that
\begin{align*}
\|\eta(\cdot,t)\|^2_{H^1}\le \tilde K(t,M_{\Gg})\|\psi_0\|^4_{H^1},
\end{align*}
where $\tilde K(t,M_{\Gg}):=\exp\Big((CM_{\Gg}+\frac{CM^2_{\Gg}}{\kappa})t\Big)\frac{C}{\kappa}K^2(t,M_{\Gg})$. Hence we prove that
\begin{align*}
\lim_{\varepsilon\to0}\left(\sup_{\theta_0,\varphi_0\in\Gg;0<\|\psi_0\|_{H^1}\le\varepsilon}\frac{\|\eta(\cdot,t)\|_{H^1}}{\|\psi_0\|_{H^1}}\right)\le \lim_{\varepsilon\to0}\tilde K(t,M_{\Gg})\varepsilon^2=0,
\end{align*}
and \eqref{condition 1 for uniform differentiable} follows. The bound \eqref{condition 2 for uniform differentiable} can be proved by performing similar analysis on $\psi$.

For any $t>0$ and $\theta_0\in\Gg$, in order to show that the linear operator $D\pin(t, \theta_0)$ is compact, it suffices to show that if $U_1$ is the unit ball in $H^1$, then $D\pin(t, \theta_0)U_1\subset H^{1+\frac{\gamma}{2}}$. Based on previous estimates, we readily obtain
\begin{align*}
\int_0^t\|\psi(\cdot,\tau)\|^2_{H^{1+\frac{\gamma}{2}}}d\tau\le K(t,M_{\Gg}),\qquad\forall t\ge0.
\end{align*}
By the mean value theorem, for $t>0$, there exists $\tau\in(0,t)$ such that
\begin{align}\label{bound on psi in H 1+gamma/2}
\|\psi(\cdot,\tau)\|^2_{H^{1+\frac{\gamma}{2}}}\le \frac{1}{t}K(t,M_{\Gg}).
\end{align}
We take the $L^2$-inner product of \eqref{linearised active scalar with gamma}$_1$ with $\Lambda^{2+\gamma}\psi$ and obtain
\begin{align*}
\frac{1}{2}\frac{d}{dt}\|\psi\|^2_{H^{1+\frac{\gamma}{2}}}+\lambda\|\psi\|^2_{H^2}+\kappa\|\psi\|^2_{H^{1+\gamma}}=\intox u[\theta]\cdot \nabla \psi\Lambda^{2+\gamma}\psi+\intox u[\psi]\cdot\nabla\theta\Lambda^{2+\gamma}\psi.
\end{align*}
Using the commutator estimate \eqref{commutator estimate} and the bound \eqref{bound on L4 using H1}, the term $\Big|\intox u[\theta]\cdot \nabla \psi\Lambda^{2+\gamma}\psi\Big|$ can be bounded by
\begin{align*}
\Big|\intox u[\theta]\cdot \nabla \psi\Lambda^{2+\gamma}\psi\Big|\le C\Big(\|\Lambda\theta\|_{L^2}\|\Lambda^{1+\frac{\gamma}{2}} \psi\|_{L^2}+\|\Lambda^\frac{\gamma}{2}\theta\|_{L^2}\|\Lambda^{1+\gamma} \psi\|_{L^2}\Big)\|\Lambda^{1+\frac{\gamma}{2}} \psi\|_{L^2},
\end{align*}
and similarly, we also have
\begin{align*}
\Big|\intox u[\psi]\cdot \nabla \theta\Lambda^{2+\gamma}\psi\Big|\le C\Big(\|\Lambda \psi\|_{L^2}\|\Lambda \theta\|_{L^2}+\|\Lambda \psi\|_{L^2}\|\Lambda^{1+\gamma} \theta\|_{L^2}\Big)\|\Lambda^{1+\gamma} \psi\|_{L^2}.
\end{align*}
Using Young's inequality, we obtain from \eqref{bound on psi in H 1+gamma/2} that
\begin{align}\label{differential eqn for psi higher order}
\frac{1}{2}\frac{d}{dt}\|\psi\|^2_{H^{1+\frac{\gamma}{2}}}+\frac{\kappa}{2}\|\psi\|^2_{H^{1+\gamma}}\le \frac{C}{\kappa}\Big(\|\Lambda\theta\|_{L^2}+\|\Lambda^\frac{\gamma}{2}\theta\|^2_{L^2}+\|\Lambda\theta\|^2_{L^2}+\|\Lambda^{1+\gamma}\theta\|^2_{L^2}\Big)\|\psi\|^2_{H^{1+\frac{\gamma}{2}}}
\end{align}
Integrating \eqref{differential eqn for psi higher order} from $\tau$ to $t$, applying Gr\"{o}nwall's inequality and using the bounds \eqref{uniform bound on time integral of theta and varphi} and \eqref{bound on psi in H 1+gamma/2}, we arrive at
\begin{align}\label{H 1+gamma/2 bound on psi}
\|\psi(\cdot,t)\|^2_{H^{1+\frac{\gamma}{2}}}&\le \|\psi(\cdot,\tau)\|^2_{H^{1+\frac{\gamma}{2}}}\exp\Big(\frac{C}{\kappa}\int_{\tau}^t\|\theta(\cdot,s)\|^2_{H^{1+\gamma}}ds\Big)\notag\\
&\le \frac{1}{t}K(t,M_{\Gg})\exp\Big(\frac{C}{\kappa}M_{\Gg}t\Big),
\end{align}
hence $\psi(t)\in H^{1+\frac{\gamma}{2}}$. The case when \eqref{conditions on gamma lambda>0} is in force can be handled similarly and we conclude the proof of lemma~\ref{uniform differentiable lemma}.
\end{proof}

Next we show that there is an $N$ such that volume elements which are carried by the flow of $\pin(t)\theta_0$, with $\theta_0\in\Gg$, decay exponentially for dimensions larger than $N$. We recall the following proposition for which the proof can be found in \cite{CF85}, \cite{CF88}.

\begin{proposition}\label{volume decay prop}
Consider $\theta_0\in\Gg$, and an initial orthogonal set of infinitesimal displacements $\{\psi_{1,0},\dots,\psi_{n,0}\}$ for some $n\ge1$. Suppose that $\psi_i$ obey the following equation:
\begin{align}
\dt\psi_i=A_{\theta_0}(t)[\psi_i],\qquad \psi_i(0)=\psi_{i,0},
\end{align}
for all $i\in\{1,\dots, n\}$ and $t\ge0$, where $A_{\theta_0}(t)$ is given by \eqref{def of operator A theta}. Then the volume elements
\begin{align*}
V_n(t)=\|\psi_1(t)\wedge\dots\wedge\psi_n(t)\|_{H^1}
\end{align*}
satsify
\begin{align*}
V_n(t)=V_n(0)\exp\left(\int_0^t \Tr(P_n(s)A_{\theta_0}(s))ds\right),
\end{align*}
where the orthogonal projection $P_n(s)$ is onto the linear span of $\{\psi_1(s),\dots,\psi_n(s)\}$ in the Hilbert space $H^1$, and $\Tr(P_n(s)A_{\theta})$ is defined by 
\[\Tr(P_n(s)A_{\theta}) = \sum_{j=1}^n\int_{\Omega}(-\Delta\varphi_j(s))A_{\theta}[\varphi_j(s)]dx\]
for $n\ge1$, with $\{\varphi_1(s),\dots,\varphi_n(s)\}$ an orthornormal set spanning the linear span of $\{\psi_1(s),\dots,\psi_n(s)\}$. If we define
\begin{align*}
\langle P_nA_{\theta_0}\rangle:=\limsup_{T\to\infty}\frac{1}{T}\int_0^T\Tr(P_n(t)A_{\theta_0}(t))dt,
\end{align*}
then we further obtain
\begin{align}\label{volume decay}
V_n(t)\le V_n(0)\left(t\sup_{\theta_0\in\Gg}\sup_{P_n(0)}\langle P_nA_{\theta_0}\rangle\right),\qquad \forall t\ge0,
\end{align}
where the supremum over $P_n(0)$ is a supremum over all choices of initial $n$ orthogonal set of infinitesimal displacements that we take around $\theta_0$.
\end{proposition}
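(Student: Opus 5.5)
This is the classical Liouville-type formula for the evolution of $n$-dimensional volume elements under a linear flow, as in \cite{CF85,CF88}, and I would follow that route. The first step is to put the volume in Gram-determinant form. For solutions $\psi_1,\dots,\psi_n$ of the linearised problem \eqref{linearised active scalar with gamma}, set
\[
G(t)=\big(\langle\psi_i(t),\psi_j(t)\rangle_{H^1}\big)_{i,j},\qquad \langle f,g\rangle_{H^1}=\int_\Omega \Lambda f\,\Lambda g\,dx=\int_\Omega(-\Delta f)g\,dx .
\]
Then $V_n(t)^2=\det G(t)$. Well-posedness comes from the bounds in the proof of Lemma~\ref{uniform differentiable lemma}. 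Because $\theta_0\in\Gg$, those bounds give $\psi_i\in C([0,T];H^1)\cap L^2(0,T;H^{1+\frac{\gamma}{2}})$. They also show that $A_{\theta_0}(t)[\psi_i]$ lies in the dual space $H^{1-\frac{\gamma}{2}}$ relative to the $H^1$ pairing, so the Lions--Magenes lemma makes $t\mapsto\langle\psi_i,\psi_j\rangle_{H^1}$ absolutely continuous, with
\[
\frac{d}{dt}G_{ij}=\langle A\psi_i,\psi_j\rangle_{H^1}+\langle\psi_i,A\psi_j\rangle_{H^1}\qquad\text{for a.e. } t .
\]

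The second step differentiates the determinant. Whenever $G$ is invertible, Jacobi's formula gives $\frac{d}{dt}\log\det G=\mathrm{tr}(G^{-1}\dot G)$. I would Gram--Schmidt orthonormalise in $H^1$: write $\psi=\varphi R$ with $\varphi_1,\dots,\varphi_n$ orthonormal and spanning $\mathrm{span}\{\psi_i(t)\}$. This gives
\[
\mathrm{tr}(G^{-1}\dot G)=2\sum_{j=1}^n\langle A\varphi_j,\varphi_j\rangle_{H^1}=2\,\Tr\big(P_n(t)A_{\theta_0}(t)\big),
\]
which is basis-independent and matches the definition of $\Tr(P_nA_\theta)$ in the statement. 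Integrating yields $V_n(t)=V_n(0)\exp\big(\int_0^t\Tr(P_n(s)A_{\theta_0}(s))\,ds\big)$. The integral is finite, because $|\Tr(P_nA)|\le C\sum_j\|\varphi_j\|_{H^{1+\gamma/2}}^2$ times quantities controlled by $M_{\Gg}$, and this is locally integrable in $t$.

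The main obstacle is justifying that $G$ stays nondegenerate and that the formula holds rigorously, not just formally. Linear independence is preserved because the linear evolution is backward unique: the log-convexity argument of Proposition~\ref{Backwards uniqueness lemma} applies to the linearised equation as well. So a vanishing combination at time $t$ forces the same combination to vanish at time $0$, contradicting orthogonality. As a fallback, if regularity at $t=0$ is delicate, I would first prove the identity for $t\ge\tau>0$ using the smoothing bound \eqref{H 1+gamma/2 bound on psi}, then let $\tau\to0$ using continuity in $H^1$.

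The final step is the averaged decay bound \eqref{volume decay}. Write $\int_0^t\Tr(P_nA)\,ds=t\cdot\frac1t\int_0^t\Tr(P_nA)\,ds$. By the definition of $\langle P_nA_{\theta_0}\rangle$, for large $t$ this average is bounded by $\langle P_nA_{\theta_0}\rangle+o(1)$, and hence by the supremum over $\theta_0\in\Gg$ and over initial frames $P_n(0)$. The invariance $\pin(t)\Gg=\Gg$ ensures the supremum is taken over a fixed set. This gives an exponential bound of the form $V_n(t)\le V_n(0)\exp\big(t\sup_{\theta_0}\sup_{P_n(0)}\langle P_nA_{\theta_0}\rangle\big)$, which I read as the intended meaning of \eqref{volume decay}.
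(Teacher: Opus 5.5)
Your derivation of the identity $V_n(t)=V_n(0)\exp\big(\int_0^t\Tr(P_n(s)A_{\theta_0}(s))\,ds\big)$ is correct. It is the Constantin--Foias argument that the paper cites in place of a proof. Writing $\psi=\varphi R$ gives $G=R^{T}R$ and $\dot G=R^{T}(B+B^{T})R$ with $B_{kl}=\langle A\varphi_k,\varphi_l\rangle_{H^1}$. Hence $\mathrm{tr}(G^{-1}\dot G)=2\,\mathrm{tr}\,B$, and the factor $2$ cancels against $V_n=(\det G)^{1/2}$. Backward uniqueness keeps $\det G>0$ on compact time intervals. That is also what bounds $R^{-1}$ there, and so makes $\int_0^t\Tr(P_nA)\,ds$ finite.

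One technical correction concerns your claim that $A_{\theta_0}(t)[\psi_i]\in H^{1-\gamma/2}$. This holds for $\gamma\in[1,2]$. For $\gamma<1$, however, $u[\theta]\cdot\nabla\psi_i$ is only in $H^{\gamma/2}$. That case requires $\lambda>0$, so you should keep $-\lambda\mathcal{D}\psi$ in the linearised operator and work with the triple $H^{3/2}\subset H^1\subset H^{1/2}$. The paper does this tacitly in Lemma~\ref{Contractivity of large dimensional volume elements lemma}, even though \eqref{def of operator A theta} omits the term.

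The genuine gap is your last step. Since $\langle P_nA_{\theta_0}\rangle$ is a $\limsup$ of time averages, it only gives the following: for each $\theta_0$, each initial frame and each $\varepsilon>0$, there is a $T_\varepsilon$ (depending on all three) with $V_n(t)\le V_n(0)\exp\big(t(\langle P_nA_{\theta_0}\rangle+\varepsilon)\big)$ for $t\ge T_\varepsilon$. You silently drop both $\varepsilon$ and the restriction $t\ge T_\varepsilon$, and the bound for all $t\ge0$ does not follow. A scalar example shows this: take $\psi'=a(t)\psi$ with $a=1$ on $[0,1]$ and $a=-1$ afterwards. The long-time average is $-1$, yet $V_1(1)=e\,V_1(0)$.

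Invariance of $\Gg$ does not repair this, since the supremum over $\theta_0\in\Gg$ is over a fixed set in any case. Its actual role in Constantin--Foias is to make $t\,q_n(t)$ subadditive, where $q_n(t):=\sup_{\theta_0\in\Gg}\sup_{P_n(0)}\frac1t\int_0^t\Tr(P_n(s)A_{\theta_0}(s))\,ds$. One restarts the linearised flow at $\pin(t)\theta_0\in\Gg$ with the frame spanned by the $\psi_i(t)$. Then $V_n(t)\le V_n(0)e^{t\,q_n(t)}$ holds for every $t$, and $q_n(t)\to\inf_{s>0}q_n(s)$.

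So what can actually be proved is one of two things:
\begin{itemize}
\item the asymptotic bound $\limsup_{t\to\infty}\frac1t\log\frac{V_n(t)}{V_n(0)}\le\langle P_nA_{\theta_0}\rangle$;
\item a bound with the supremum taken inside the time average, as above.
\end{itemize}
The paper's $\sup\sup\limsup$ gives no uniform rate at all. For the application this is harmless. Lemma~\ref{Contractivity of large dimensional volume elements lemma} bounds $\frac1T\int_0^T\Tr(P_nA_{\theta_0})\,dt$ for every $T>0$, uniformly in $\theta_0$ and the frame. Inserting that into your identity gives $V_n(t)\le V_n(0)e^{-ct}$ for all $t\ge0$ when $n\ge N$, which is the correct replacement for \eqref{volume decay}.
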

Using Proposition~\ref{volume decay prop}, we show that the $n$-dimensional volume elements actually {\it decay exponentially in time} for $n$ is sufficiently large, which is based on the following lemma:

\begin{lemma}[Contractivity of large dimensional volume elements]\label{Contractivity of large dimensional volume elements lemma}
Let $\kappa>0$ be fixed, and assume that $\lambda$ and $\gamma$ satisfy \eqref{conditions on gamma}. There exists $N$ such that for any $\theta_0\in\Gg$ and any set of initial orthogonal displacements $\{\psi_{i,0}\}_{i=1}^n$, we have
\begin{align}\label{negativity of <Pn A theta>}
\langle P_nA_{\theta_0}\rangle<0,
\end{align}
whenever $n\ge N$. Here $N$ can be chosen explicitly from \eqref{choice of N} below. If the condition \eqref{conditions on gamma lambda>0} is in force instead, the inequality \eqref{negativity of <Pn A theta>} holds whenever $n\ge N$ with $N$ being chosen from \eqref{choice of N lambda>0} below.
\end{lemma}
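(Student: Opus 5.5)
The plan is the classical Constantin--Foias trace estimate. Fix $\theta_0\in\Gg$, $t\ge0$ and an $H^1$-orthonormal family $\{\varphi_j(t)\}_{j=1}^n$ spanning $P_n(t)H^1$. Under the convention of Proposition~\ref{volume decay prop}, I will bound
\begin{align*}
\Tr(P_nA_{\theta_0}(t))=\sum_{j=1}^n\intox(-\Delta\varphi_j)\Big(-\lambda\mathcal{D}\varphi_j-\kappa\Lambda^\gamma\varphi_j-u[\theta]\cdot\nabla\varphi_j-u[\varphi_j]\cdot\nabla\theta\Big)dx,
\end{align*}
including the damping part $\lambda\mathcal{D}$ of the linearised operator. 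The dissipative terms give exactly $-\lambda\sum_j(\|\varphi_j\|^2_{H^{1}}+\|\varphi_j\|^2_{H^{3/2}})-\kappa\sum_j\|\varphi_j\|^2_{H^{1+\frac{\gamma}{2}}}$. The transport term $\intox\Delta\varphi_j\,u[\theta]\cdot\nabla\varphi_j$ reduces, after integration by parts and $\nabla\cdot u=0$, to $-\intox\partial_ku[\theta]\cdot\nabla\varphi_j\,\partial_k\varphi_j$. It is therefore bounded by $\|\nabla u[\theta]\|_{L^\infty}\|\varphi_j\|^2_{H^1}\le C\|\Lambda^{1+\frac{\gamma}{2}}\theta\|_{L^2}$, using \eqref{bound on nabla u in terms of nabla theta} and orthonormality. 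The term with $u[\varphi_j]\cdot\nabla\theta$ is handled exactly as in the proof of Lemma~\ref{uniform differentiable lemma}. There, \eqref{bound on Lambda theta for gamma>1}, \eqref{product estimate}, \eqref{bound on L4 using H1}, \eqref{bound on u in terms of nabla theta} and the smoothing property \eqref{two order smoothing for u when nu>0} give a bound of $C(\|\Lambda\theta\|_{L^2}+\|\Lambda^{1+\frac{\gamma}{2}}\theta\|_{L^2})\|\varphi_j\|_{H^1}\|\varphi_j\|_{H^{1+\frac{\gamma}{2}}}$. By Young's inequality this is at most $\frac{\kappa}{2}\|\varphi_j\|^2_{H^{1+\frac{\gamma}{2}}}+\frac{C}{\kappa}\|\theta\|^2_{H^{1+\frac{\gamma}{2}}}$.

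Summing over $j$ gives
\begin{align*}
\Tr(P_nA_{\theta_0}(t))\le-\frac{\kappa}{2}\sum_{j=1}^n\|\varphi_j\|^2_{H^{1+\frac{\gamma}{2}}}+Cn\Big(\|\theta(t)\|_{H^{1+\frac{\gamma}{2}}}+\frac{1}{\kappa}\|\theta(t)\|^2_{H^{1+\frac{\gamma}{2}}}\Big).
\end{align*}
The main step is a lower bound on the dissipative sum. Since the $\varphi_j$ are $H^1$-orthonormal, $\Lambda\varphi_j$ are $L^2$-orthonormal, so $\sum_j\|\Lambda^{\gamma/2}(\Lambda\varphi_j)\|^2_{L^2}$ is at least the sum of the first $n$ eigenvalues of $\Lambda^\gamma$ on mean-zero functions on $\T^d$ (Rayleigh--Ritz / Courant min--max). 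By Weyl asymptotics for the Laplacian on the torus these eigenvalues grow like $c\,j^{\gamma/d}$, so the sum is at least $c_1 n^{1+\frac{\gamma}{d}}$.

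By \eqref{uniform bound on theta from the attractor}, $\|\theta(t)\|_{H^{1+\frac{\gamma}{2}}}\le M_{\Gg}$ for all $t$. Hence
\begin{align*}
\Tr(P_nA_{\theta_0}(t))\le-\frac{\kappa c_1}{2}n^{1+\frac{\gamma}{d}}+Cn\big(M_{\Gg}+\kappa^{-1}M_{\Gg}^2\big),
\end{align*}
uniformly in $t$. The time average $\langle P_nA_{\theta_0}\rangle$ is then negative as soon as
\begin{align}\label{choice of N}
n\ge N:=\Big\lceil\Big(\frac{2C(M_{\Gg}+\kappa^{-1}M_{\Gg}^2)}{\kappa c_1}\Big)^{\frac{d}{\gamma}}\Big\rceil+1 .
\end{align}

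Under \eqref{conditions on gamma lambda>0} with $\gamma<1$, the $\kappa$-dissipation is too weak for the estimates used above, which were based on $\gamma\ge1$. In that case I will instead use the $\lambda$-dissipation $-\lambda\sum_j\|\varphi_j\|^2_{H^{3/2}}$. I will repeat the estimates with $\gamma$ replaced by $1$, as in the second halves of Lemmas~\ref{continuity of solution map lemma} and \ref{uniform differentiable lemma}, and replace $M_{\Gg}$ by $\bar M_{\Gg}$. Absorbing the cross term into $\frac{\lambda}{2}\|\varphi_j\|^2_{H^{3/2}}$ and using the eigenvalue sum bound $c_1n^{1+\frac1d}$ then gives
\begin{align}\label{choice of N lambda>0}
N:=\Big\lceil\Big(\frac{2C(\bar M_{\Gg}+\lambda^{-1}\bar M_{\Gg}^2)}{\lambda c_1}\Big)^{d}\Big\rceil+1 .
\end{align}

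I expect the main obstacle to be controlling the cross term $u[\varphi_j]\cdot\nabla\theta$ by $\|\varphi_j\|_{H^1}\|\varphi_j\|_{H^{1+\gamma/2}}$ with only the pointwise-in-time $H^{1+\frac{\gamma}{2}}$ bound on $\theta$. The derivative count is delicate, which is why the trick with \eqref{bound on Lambda theta for gamma>1} and the two orders of smoothing from A3' is needed. If that bound fails, the fallback is to keep a factor $\|\theta\|^2_{H^{1+\gamma}}$ and use the time-averaged bound \eqref{uniform bound on time integral of theta from the attractor}, which still suffices because $\langle\cdot\rangle$ is a time average.
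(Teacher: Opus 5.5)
Your proposal is correct and follows essentially the paper's argument. The steps match: the transport term is handled by integration by parts and $\|\nabla u[\theta]\|_{L^\infty}$, the cross term by the product estimate and Young, you use the uniform attractor bound $M_{\Gg}$ (resp.\ $\bar M_{\Gg}$ with the $\lambda$-dissipation and $\gamma$ replaced by $1$), and you close with the eigenvalue-sum lower bound $c\,n^{1+\gamma/d}$. The remaining differences are cosmetic: you prove the eigenvalue bound by min--max and lattice-point counting rather than citing it, and you put $-\lambda\mathcal{D}$ explicitly into the linearised operator. The latter makes consistent what the paper's $\lambda>0$ branch uses implicitly, even though \eqref{def of operator A theta} omits that term.
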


\begin{proof}
Let $\xi\in H^1$ be arbitrary. For $\theta_0\in\Gg$, using the definition of $A_{\theta}$ in \eqref{def of operator A theta} and the fact that $\nabla\cdot u[\theta]=0$, we have
\begin{align*}
\intox\Lambda^2\xi A_{\theta}[\xi]dx\le-\kappa\|\xi\|^2_{H^{1+\frac{\gamma}{2}}}+\Big|\intox\partial_{x_k}u[\theta]\cdot\nabla\xi\partial_{x_k}\xi dx\Big|+\Big|\intox(u[\xi]\cdot\nabla\theta)\Lambda^2\xi dx\Big|. 
\end{align*}
Using the bound \eqref{bound on u in terms of nabla theta}, we readily have
\begin{align*}
\Big|\intox\partial_{x_k}u[\theta]\cdot\nabla\xi\partial_{x_k}\xi dx\Big|\le C\|\nabla u[\theta]\|_{L^\infty}\|\nabla \xi\|^2_{L^2}\le C\|\Lambda\theta\|_{L^2}\|\nabla \xi\|^2_{L^2}.
\end{align*}
And using the product estimate \eqref{product estimate} and the bounds \eqref{bound on L4 using H1}-\eqref{bound on u in terms of nabla theta}, we have
\begin{align*}
\Big|\intox(u[\xi]\cdot\nabla\theta)\Lambda^2\xi dx\Big|\le C\|\theta\|_{H^{1+\frac{\gamma}{2}}}\|\Lambda\xi\|_{L^2}\|\Lambda^{1+\frac{\gamma}{2}}\xi\|_{L^2}.
\end{align*}
Hence we deduce that
\begin{align}\label{bound on integrand in A theta}
\intox\Lambda^2\xi A_{\theta}[\xi]dx\le-\frac{\kappa}{2}\|\xi\|^2_{H^{1+\frac{\gamma}{2}}}+C\Big(\|\Lambda\theta\|_{L^2}+\frac{1}{\kappa}\|\theta\|^2_{H^{1+\frac{\gamma}{2}}}\Big)\|\xi\|^2_{H^1}.
\end{align}
On the other hand, for $T>0$, we have
\begin{align}\label{identity for integrand in A theta}
\frac{1}{T}\int_0^T\Tr(P_n(t)A_{\theta_0}(t))dt=\frac{1}{T}\int_0^T\sum_{j=1}^n\intox(\Lambda^2\varphi_j(t))A_{\theta}[\varphi_j(t)]dxdt.
\end{align}
We apply \eqref{bound on integrand in A theta} on \eqref{identity for integrand in A theta} and together with the bound \eqref{uniform bound on theta from the attractor} on $\theta$ to obtain
\begin{align*}
&\frac{1}{T}\int_0^T\Tr(P_n(t)A_{\theta_0}(t))dt\\
&\le -\frac{\kappa}{2T}\int_0^T\Tr(P_n(t)\Lambda^\gamma)+C(M_{\Gg}+M_{\Gg}^2)n\le -\frac{\kappa}{C}n^{1+\frac{\gamma}{d}}+C(M_{\Gg}+M_{\Gg}^2)n,
\end{align*}
where the last inequality follows from the fact that the eigenvalues $\{\lambda_{j}^{(\gamma)}\}$ of $\Lambda^\gamma$ obey the following estimate (see \cite[Theorem~1.1]{YY13}):
\begin{align*}
\sum_{j=1}^n\lambda_{j}^{(\gamma)}\ge\frac{1}{C}n^{1+\frac{\gamma}{d}},
\end{align*}
for some universal constant $C>0$ which depends only on $d$ and $\gamma$. We choose $N>0$ such that
\begin{align}\label{choice of N}
 -\frac{\kappa}{C}N^{1+\frac{\gamma}{d}}+C(M_{\Gg}+M_{\Gg}^2)N<0,
\end{align}
then \eqref{negativity of <Pn A theta>} holds whenever $n\ge N$. For the case when \eqref{conditions on gamma lambda>0} is in force, we make use of the estimate that
\begin{align*}
\intox\Lambda^2\xi A_{\theta}[\xi]dx\le-\lambda\|\xi\|^2_{H^{\frac{3}{2}}}+\Big|\intox\partial_{x_k}u[\theta]\cdot\nabla\xi\partial_{x_k}\xi dx\Big|+\Big|\intox(u[\xi]\cdot\nabla\theta)\Lambda^2\xi dx\Big|. 
\end{align*}
Then we choose $N>0$ such that
\begin{align}\label{choice of N lambda>0}
 -\frac{\lambda}{C}N^{1+\frac{1}{d}}+C(\bar{M}_{\Gg}+\bar{M}_{\Gg}^2)N<0,
\end{align}
then the same result holds whenever $n\ge N$.
\end{proof}

From the results obtained in Lemma~\ref{uniform differentiable lemma} and Lemma~\ref{Contractivity of large dimensional volume elements lemma}, together with Proposition~\ref{volume decay prop}, we obtain:
\begin{itemize}
\item the solution map $\pin(t)$ is uniform differentiable on $\Gg$;
\item the linearisation $D\pin(t, \theta_0)$ of $\pin(t)$ is compact;
\item the large-dimensional volume elements which are carried by the flow of $\pin(t)\theta_0$, with $\theta_0\in\Gg$, have exponential decay in time.
\end{itemize}
Therefore, following the lines of the argument in \cite[pp. 115--130, and Chapter 14]{CF88}, we can finally conclude that $\dimf(\Gg)$ is finite. The results can be summarised in the following corollary:

\begin{corollary}[Finite dimensionality of the attractor]\label{Finite dimensionality of the attractor corollary}
Let $N$ be as defined in Lemma~\ref{Contractivity of large dimensional volume elements lemma}. Then the fractal dimension of $\Gg$ is finite, and we have $\dimf(\Gg)\le N$.
\end{corollary}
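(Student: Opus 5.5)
The plan is to apply the abstract Constantin--Foias--Temam criterion for finite fractal dimension of attractors of maps, as in \cite[Chapter 14]{CF88}. The three hypotheses have already been checked earlier in the section. First, $\Gg$ is a compact, fully invariant subset of $H^1$ by Theorem~\ref{Existence of H1 global attractor} and the invariance corollary. Second, by Lemma~\ref{uniform differentiable lemma}, $\pin(t)$ is uniformly differentiable on $\Gg$ with compact linearisation $D\pin(t,\theta_0)$. Third, Lemma~\ref{Contractivity of large dimensional volume elements lemma} provides $N$ with $\langle P_nA_{\theta_0}\rangle<0$ for all $n\ge N$, all $\theta_0\in\Gg$, and all initial orthogonal displacements.

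The first step is to turn the negativity of the time averages into a uniform exponential contraction. I would start from the trace identity in Proposition~\ref{volume decay prop}. The bound \eqref{bound on integrand in A theta} controls $\Tr(P_n(t)A_{\theta_0}(t))$ pointwise in $t$ by
\begin{align*}
-\frac{\kappa}{C}n^{1+\frac{\gamma}{d}}+C(M_{\Gg}+M_{\Gg}^2)n,
\end{align*}
using the uniform attractor bound \eqref{uniform bound on theta from the attractor}. The same holds with $\lambda,\bar M_{\Gg},\gamma=1$ under \eqref{conditions on gamma lambda>0}. So for $n=N$ there is $\delta>0$, independent of $\theta_0$ and of $P_N(0)$, such that
\begin{align*}
V_N(t)\le V_N(0)e^{-\delta t}.
\end{align*}
Equivalently, the $N$-dimensional volume contraction exponent $\omega_N(D\pin(t,\theta_0))$ satisfies
\begin{align*}
\sup_{\theta_0\in\Gg}\omega_N(D\pin(t,\theta_0))\le e^{-\delta t}<1
\end{align*}
for every $t>0$.

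The second step is to fix $t_0>0$ and pass to the discrete map $\pin(t_0)$. Since $\Gg=\pin(t_0)\Gg$, the Douady--Oesterl\'e / Constantin--Foias covering argument applies: uniform differentiability together with $\sup_{\theta_0}\omega_N<1$ and the uniform bound \eqref{condition 2 for uniform differentiable} on $\|D\pin(t_0,\theta_0)\|$ gives $\dimf(\Gg)\le N$. The bound $\sup_{\theta_0}\omega_1\le C(t_0)$ holds because of the Gr\"onwall estimate \eqref{H1 bound on w}. Concretely, one covers $\Gg$ by small balls, maps each into an ellipsoid via the linearisation (with a uniformly small error from \eqref{condition 1 for uniform differentiable}), and recovers each ellipsoid by balls. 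The counting gives $N(\Gg,\varepsilon)$ growing at most like $\varepsilon^{-N-\eta}$ for every $\eta>0$.

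The main obstacle is the bookkeeping in this covering argument. One must ensure the fractal dimension, and not merely the Hausdorff dimension, is bounded by exactly $N$ rather than by something like $N(1+\max_j\log^+\omega_j/|\log\omega_N|)$. I would first try to invoke the sharp form in \cite{CF88}, which bounds $\dimf$ by the first $n$ with uniformly negative averaged trace. If that only gives a slightly larger bound, I would enlarge $N$ in \eqref{choice of N} accordingly. Since $n^{1+\gamma/d}$ dominates $n$, the choice of $N$ can absorb any such fixed factor.
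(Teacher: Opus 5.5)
Your proposal follows the same route as the paper, which likewise combines Lemma~\ref{uniform differentiable lemma}, Lemma~\ref{Contractivity of large dimensional volume elements lemma} and Proposition~\ref{volume decay prop} and then appeals to the Constantin--Foias covering argument in \cite{CF88}; your write-up is a more detailed sketch of that citation. Your caveat that the classical covering argument gives a Constantin--Foias--Temam-type multiple of $N$ rather than $N$ itself is a point the paper passes over, and enlarging $N$ is a legitimate fix because $N$ in \eqref{choice of N} is only fixed up to generic constants (alternatively, the concavity of $n\mapsto-\frac{\kappa}{C}n^{1+\frac{\gamma}{d}}+C(M_{\Gg}+M_{\Gg}^2)n$ lets you use the sharper Chepyzhov--Ilyin estimate, which yields $\dimf(\Gg)\le N$ directly).
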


\section{Applications to magneto-geostrophic equations}\label{Applications to magneto-geostrophic equations section}

\subsection{The MG equations in the class of drift-diffusion equations}\label{MG equations in the class of drift-diffusion equations}

We now apply our results claimed by Section~\ref{main results} to the magnetogeostrophic active scalar equation with the presence of damping operator $\lambda\mathcal{D}$. Specifically, we are interested in the following active scalar equation in the domain $\mathbb{T}^3\times(0,\infty)=[-\pi,\pi]^3\times(0,\infty)$ (with periodic boundary conditions):
\begin{align}
\label{MG active scalar} \left\{ \begin{array}{l}
\partial_t\theta+u\cdot\nabla\theta+\lambda\mathcal{D}\theta+\kappa\Lambda^{\gamma}\theta=S, \\
u=\mathcal{M}[\theta],\theta(x,0)=\theta_0(x)
\end{array}\right.
\end{align}
via a Fourier multiplier operator $\mathcal{M}$ which relates $u$ and $\theta$. More precisely,
\begin{align}\label{def of u by Fourier transform}
u_j=\mathcal{M}_j [\theta]=(\widehat{\mathcal{M}_j}\hat\theta)^\vee
\end{align}
for $j\in\{1,2,3\}$. The explicit expression for the components of $\widehat{\mathcal{M}}$ as functions of the Fourier variable $k=(k_1,k_2,k_3)\in\Z^3$ with $k_3\neq0$ are given by 
\begin{align}
\widehat{\mathcal{M}}_1(k)&=[k_2k_3|k|^2-k_1k_3(k_2^2+\nu|k|^4)]D(k)^{-1},\label{MG Fourier symbol_1 intro}\\
\widehat{\mathcal{M}}_2(k)&=[-k_1k_3|k|^2-k_2k_3(k_2^2+\nu|k|^4)]D(k)^{-1},\label{MG Fourier symbol_2 intro}\\
\widehat{\mathcal{M}}_3(k)&=[(k_1^2+k_2^2)(k_2^2+\nu|k|^4)]D(k)^{-1},\label{MG Fourier symbol_3 intro}
\end{align}
where
\begin{align*}
D(k)=|k|^2k_3^2+(k_2^2+\nu|k|^4)^2.
\end{align*}
Since for self-consistency of the model, we assume that $\theta$ and $u$ have zero vertical mean, and we take $\widehat{\mathcal{M}}_j(k) = 0$ on $\{k_3=0\}$ for all $j=1,2,3$. We write $\mathcal{M}_j=\partial_{x_i}T_{ij}$ for convenience and take the non-dimensional viscosity $\nu=1$ unless otherwise simplified. The system \eqref{MG active scalar}-\eqref{MG Fourier symbol_3 intro} can be rigorously obtained from the incompressible MHD via the postulates in \cite{ML94}, and the detailed derivations can be found in \cite{FV11a, FRV12}. We also refer to \eqref{MG active scalar} as the {\it fractional} MG equation when $\gamma\in(0,2]$.

To apply the results from Section~\ref{main results}, it suffices to show that the linear operator $T_{ij}$ satisfies the assumptions A1 to A4 given in Section~\ref{introduction}. 
\begin{proposition}\label{assumption checked}
We define $T_{ij}$ by $\mathcal{M}_j=\partial_{x_i}T_{ij}$. Then $T_{ij}$ satisfy the assumptions A1, A2, A4 in Section~\ref{introduction} and the assumption A3' in Section~\ref{Existence and convergence of Hs-solutions section}.
\end{proposition}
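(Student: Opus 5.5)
The plan is to read the operators $T_{ij}$ off the symbol and check each assumption directly. Since $\widehat{\mathcal{M}_j}(k)=ik_i\widehat{T_{ij}}(k)$ (summed over $i$), the definition of $T_{ij}$ is not unique. I will fix a concrete choice:
\begin{align*}
\widehat{T_{ij}}(k)=-i\frac{k_i}{|k|^2}\widehat{\mathcal{M}}_j(k),\qquad k\neq0,
\end{align*}
with $\widehat{T_{ij}}(k)=0$ on $\{k_3=0\}$, which includes $k=0$. Then $\sum_i ik_i\widehat{T_{ij}}=\widehat{\mathcal{M}}_j$, as required.

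\textbf{A4 and A1.} Assumption A4 holds by construction. For A1, we have $\partial_i\partial_jT_{ij}=\partial_j\mathcal{M}_j$, so it suffices to check $k\cdot\widehat{\mathcal{M}}(k)=0$. Multiplying out the numerators,
\begin{align*}
k_1[k_2k_3|k|^2-k_1k_3(k_2^2+|k|^4)]+k_2[-k_1k_3|k|^2-k_2k_3(k_2^2+|k|^4)]+k_3(k_1^2+k_2^2)(k_2^2+|k|^4).
\end{align*}
The $|k|^2$ terms cancel, and the remaining terms add to $-(k_1^2+k_2^2)k_3(k_2^2+|k|^4)+k_3(k_1^2+k_2^2)(k_2^2+|k|^4)=0$.

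\textbf{A3'.} This is the main computation. I need $|\widehat{\mathcal{M}}_j(k)|\le C|k|^{-2}$, since then $|\widehat{T_{ij}}|\le |k|^{-1}|\widehat{\mathcal{M}}_j|\le C|k|^{-3}$. Use $D(k)\ge (k_2^2+|k|^4)^2\ge|k|^8$ for $k_3\neq0$. The numerator terms $k_1k_3(k_2^2+|k|^4)$, $k_2k_3(k_2^2+|k|^4)$ and $(k_1^2+k_2^2)(k_2^2+|k|^4)$ are $O(|k|^6)$, giving $O(|k|^{-2})$. The terms $k_2k_3|k|^2$ and $k_1k_3|k|^2$ are $O(|k|^4)$, giving $O(|k|^{-4})$. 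All bounds are uniform on the integer lattice with $|k|\ge1$, which is what A3' asks for on $\mathbb{T}^3$.

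\textbf{A2.} I plan to deduce this from the decay of the symbol. A3' gives $|\widehat{T_{ij}}(k)|\le C|k|^{-3}$, and $\sum_{k\in\Z^3\setminus\{0\}}|k|^{-3}$ diverges only logarithmically. So rather than summing absolutely, I will argue that $T_{ij}$ is bounded $L^2\to H^3$. Since $L^\infty(\T^3)\subset L^2$ and $H^3(\T^3)\subset L^\infty\subset BMO$ by \eqref{L infty bound 2}-type Sobolev embedding, this yields $\|T_{ij}f\|_{BMO}\le C\|T_{ij}f\|_{L^\infty}\le C\|f\|_{H^{-3}\cdots}\le C\|f\|_{L^\infty}$, and linearity is clear.

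The only genuine work is the cancellation in A1 and the lower bound on $D(k)$. I expect no obstacle beyond bookkeeping, with the possible subtlety that the $k_3=0$ modes must be excluded consistently; the zero vertical mean convention handles this.
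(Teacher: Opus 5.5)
Your argument is correct, but it takes a different route from the paper. The paper gives no computation and instead defers to \cite[Lemma~5.1--5.2]{FS18}, \cite[Section~4]{FV11a} and \cite[Section~7]{FS21}. There the $L^\infty\to BMO$ property of $T_{ij}$ comes from the framework of \cite{FV11a}, which was designed for the singular inviscid case $\nu=0$, where $\mathcal{M}$ is unbounded and A3' fails for every factorization.

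You instead use $\nu=1>0$ throughout:
\begin{itemize}
\item You fix the natural factorization $\widehat{T_{ij}}(k)=-ik_i|k|^{-2}\widehat{\mathcal M}_j(k)$. This is legitimate, since the statement only needs one valid choice, and A1 depends on $\mathcal M$ alone because $\partial_i\partial_jT_{ij}=\partial_j\mathcal M_j$.
\item You check $k\cdot\widehat{\mathcal M}(k)=0$ by direct cancellation, which gives A1.
\item You get A3' from $D(k)\ge|k|^8$ together with the $O(|k|^6)$ numerators.
\item You derive A2 from A3' itself.
\end{itemize}
The last step is the real simplification. Since $\sum_{k\neq0}|k|^{-6}<\infty$, Cauchy--Schwarz gives
\begin{align*}
\|T_{ij}f\|_{BMO}\le C\|T_{ij}f\|_{L^\infty}\le C\sum_{k\neq0}|\widehat{T_{ij}}(k)||\hat f(k)|\le C\Big(\sum_{k\neq0}|k|^{-6}\Big)^{\frac12}\|f\|_{L^2}\le C\|f\|_{L^\infty}.
\end{align*}
So no singular-integral or $BMO$ theory is needed, and A2 holds in a much stronger form: $T_{ij}$ maps $L^2$ into $H^3$.

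Your route says nothing about $\nu=0$, which is the regime the cited $BMO$ analysis was built for. In exchange, you get a short, self-contained, checkable proof of exactly the case the paper uses.

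One presentational fix: the middle of your displayed chain for A2 (the ``$H^{-3}\cdots$'' term) is garbled. It should read $\|T_{ij}f\|_{L^\infty}\le C\|T_{ij}f\|_{H^3}\le C\|f\|_{L^2}\le C\|f\|_{L^\infty}$, or be replaced by the Cauchy--Schwarz bound above.
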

\begin{proof}
The details for the proof can be found in \cite[Lemma~5.1--5.2]{FS18} and from the discussion in \cite[Section 4]{FV11a} as well as \cite[Section 7]{FS21}. We omit the details here. 
\end{proof} 
In view of Proposition~\ref{assumption checked}, the abstract Theorem~\ref{absence of anomalous dissipation thm}, Theorem~\ref{Hs convergence theorem} and Theorem~\ref{existence of global attractor theorem} can then be applied to the MG equations \eqref{MG active scalar}. More precisely, we have

\begin{theorem}[Absence of anomalous dissipation of energy as $\kappa\to0$ for MG equations]\label{absence of anomalous dissipation MG}
Let $\theta_0,S\in L^\infty$, and assume that $\lambda>0$ and $\gamma\in(0,2]$ be fixed. For each $\kappa\ge0$, there exists a unique solution $\theta=\theta^{(\kappa)}(x,t)$ to \eqref{MG active scalar} satisfying
\begin{align*}
\lim_{\kappa\to0}\left(\limsup_{t\to\infty}\int_{0}^{t}\|\nabla\theta^{(\kappa)}(\cdot,s)\|_{L^2}^2ds\right)=0.
\end{align*}
\end{theorem}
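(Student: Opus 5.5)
The plan is to deduce this statement directly from the abstract Theorem~\ref{absence of anomalous dissipation thm}. The only task is to check that the MG system \eqref{MG active scalar} falls under its hypotheses. By Proposition~\ref{assumption checked}, the operators $T_{ij}$ defined through $\mathcal{M}_j=\partial_{x_i}T_{ij}$ satisfy A1, A2, A4 and A3'. Theorem~\ref{absence of anomalous dissipation thm}, however, is stated under A3, so the first step is to check that the MG symbol obeys A3.

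\textbf{Verifying A3.} A3 requires $|\widehat{T_{ij}}(k)|\le C|k|^{-1}$. Assumption A3' gives the stronger decay $|\widehat{T_{ij}}(k)|\le C|k|^{-3}$ for $k\neq0$. On the integer lattice with $k\neq0$ we have $|k|\ge1$, so $|k|^{-3}\le|k|^{-1}$. Hence A3' implies A3 on $\mathbb{T}^3$. (The paper's remark states the implication in the reverse direction, but the direction needed here is A3'$\Rightarrow$A3.) With this, the zero-order bound \eqref{zero order bounded for u when nu>0} holds for $u=\mathcal{M}[\theta]$. That bound is the only property of the velocity used in Section~\ref{absence of anomalous dissipation section}, namely in Proposition~\ref{weakly continuity of mu kappa prop} and in the estimate on $\rho_\varepsilon$.

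\textbf{Remaining hypotheses.} Next I check that the other hypotheses match, all on $\Omega=\mathbb{T}^3$ with $d=3$:
\begin{itemize}
\item the damping operator $\mathcal{D}=\Lambda+\mathbf{1}$ is the same;
\item $\lambda>0$ and $\gamma\in(0,2]$;
\item $\theta_0,S\in L^\infty$.
\end{itemize}
The mean-zero assumption \eqref{zero mean assumption on data and forcing} is preserved because A1 makes $u$ divergence free. The MG convention that $\widehat{\mathcal{M}}_j$ vanishes on $\{k_3=0\}$ is consistent with A4 and only restricts the phase space to functions with zero vertical mean. This subspace is invariant under the flow: the forcing should be taken in it, and the nonlinearity $u\cdot\nabla\theta=\nabla\cdot(u\theta)$ is handled by the same energy arguments. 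I would note this restriction explicitly.

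\textbf{Chain of results.} With the hypotheses in place, the proof of Theorem~\ref{absence of anomalous dissipation thm} carries over word for word:
\begin{itemize}
\item Proposition~\ref{uniform bound on theta kappa>0 prop} gives existence, uniqueness and the $\lambda$-dependent $L^p$ bounds;
\item Proposition~\ref{long time average as ss kappa>0 prop} builds the stationary statistical solutions $\mu^{(\kappa)}$ supported in $\bar A$;
\item the compactness theorem produces a limit $\mu$ that satisfies the energy balance \eqref{energy dissipation balance};
\item the contradiction argument then yields the claimed limit.
\end{itemize}

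\textbf{Main obstacle.} The main obstacle is confirming that nothing in Section~\ref{absence of anomalous dissipation section} used structure beyond A1--A4. The delicate point is the commutator representation in Proposition~\ref{weakly continuity of mu kappa prop}, which needs $u[\cdot]$ bounded on $L^2$ and on $L^p$. For the MG symbol this holds with room to spare, since $\mathcal{M}$ is in fact smoothing of order two by \eqref{two order smoothing for u when nu>0}. The estimate $\|\rho_\varepsilon\|_{L^2}\le C\sqrt{\varepsilon}\|\theta\|_{L^\infty}\|\theta\|_{L^2}$ also needs an $L^\infty$-type or Hölder control of $u$, and smoothing makes that easier still. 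So I expect this step to go through, and the theorem then follows as a direct corollary.
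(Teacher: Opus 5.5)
Your proposal is correct and follows the paper, which obtains this theorem in one line by applying Theorem~\ref{absence of anomalous dissipation thm} through Proposition~\ref{assumption checked}. Your observation that A3$'$ implies A3, because $|k|\ge1$ on $\mathbb{Z}^3\setminus\{0\}$, is exactly the bridge needed, and it correctly reverses the direction asserted in the paper's remark.

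Two side comments. First, you do not need invariance of the zero-vertical-mean subspace, and it does not hold in general: the nonlinearity feeds the $k_3=0$ modes through $\nabla_h\cdot\int u_h\theta\,dx_3$. Setting $\widehat{\mathcal{M}}_j=0$ on $\{k_3=0\}$ already makes A1--A4 hold on all mean-zero functions, with no restriction on $S$. Second, as in the paper's contradiction argument, what is actually shown to vanish as $\kappa\to0$ is the normalized rate $\limsup_{t\to\infty}\frac{\kappa}{t}\int_0^t\|\nabla\theta^{(\kappa)}(\cdot,s)\|_{L^2}^2\,ds$, and the displayed limit should be read that way.
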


\begin{theorem}[$H^s$-convergence as $\lambda\rightarrow0$ for MG equations]\label{Hs convergence MG}
Let $\kappa>0$ be given as in \eqref{MG active scalar}, and let $\theta_0,S\in C^\infty$ be the initial datum and forcing term respectively with zero mean. If $\theta^{(\lambda)}$ and $\theta^{(0)}$ are smooth solutions to \eqref{MG active scalar} for $\lambda>0$ and $\lambda=0$ respectively, then 
\begin{align*} 
\lim_{\lambda\rightarrow0}\|(\theta^{(\lambda)}-\theta^{(0)})(\cdot,t)\|_{H^s}=0,
\end{align*}
for all $s\ge0$ and $t\ge0$.
\end{theorem}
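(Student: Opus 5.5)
The plan is to obtain Theorem~\ref{Hs convergence MG} as a direct specialisation of the abstract Theorem~\ref{Hs convergence theorem}. By Proposition~\ref{assumption checked}, the operators $T_{ij}$ defined through $\mathcal{M}_j=\partial_{x_i}T_{ij}$ satisfy A1, A2, A3' and A4. The first step is therefore to check that the MG system \eqref{MG active scalar} fits exactly the abstract form \eqref{abstract active scalar eqn} with $d=3$ and $u_j=\partial_{x_i}T_{ij}[\theta]$. It is also necessary that the mean-zero hypothesis \eqref{zero mean assumption on data and forcing} holds for $\theta_0$ and $S$; this is given in the statement. In addition, the zero vertical mean convention ($\widehat{\mathcal{M}}_j(k)=0$ on $\{k_3=0\}$) must be consistent with the setting. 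This convention only restricts the symbol further, so A3' and A4 continue to hold.

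The second step is to confirm that the smooth solutions $\theta^{(\lambda)}$ (for $\lambda>0$) and $\theta^{(0)}$ exist globally and are as regular as the abstract proof requires. This follows from Theorem~\ref{global-in-time wellposedness in Sobolev}, applied for every $s\ge0$ with $\theta_0,S\in C^\infty\subset H^s\cap L^\infty$. That theorem uses only A1--A4 with A3', so it applies verbatim to the MG equation.

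The final step is to invoke the three-part argument in the proof of Theorem~\ref{Hs convergence theorem}:
\begin{itemize}
\item the $\lambda$-independent $H^s$ bound \eqref{uniform Hs+1 bound on theta for all kappa}, which comes from the uniform $L^\infty$ bound \eqref{infty bound on theta with kappa=0 or >0} together with the smoothing estimate \eqref{two order smoothing for u when nu>0};
\item the $L^2$ estimate $\|\varphi(t)\|_{L^2}^2\le \lambda C_1(t)e^{C_2(t)}$ for $\varphi=\theta^{(\lambda)}-\theta^{(0)}$;
\item interpolation \eqref{Hs estimate on varphi} between $L^2$ and $H^{s+1}$.
\end{itemize}
Together these give $\|(\theta^{(\lambda)}-\theta^{(0)})(\cdot,t)\|_{H^s}\to0$ for all $s\ge0$ and $t\ge0$. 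The case $t=0$ is trivial, since both solutions share the same initial datum.

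The only point requiring any care is the verification of the hypotheses on $\mathcal{M}$, in particular the $|k|^{-3}$ decay in A3'. This is already supplied by Proposition~\ref{assumption checked}, citing \cite{FS18,FV11a,FS21}. I would recall it briefly from the symbols \eqref{MG Fourier symbol_1 intro}--\eqref{MG Fourier symbol_3 intro}: on $k_3\neq0$ one has $D(k)\ge (k_2^2+|k|^4)^2\gtrsim |k|^8$, while the numerators are $O(|k|^6)$. Hence $|\widehat{\mathcal{M}}(k)|\lesssim |k|^{-2}$, which corresponds to smoothing of order 2, i.e. $|\widehat{T_{ij}}|\lesssim|k|^{-3}$. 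No further obstacle is expected.
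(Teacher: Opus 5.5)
Your proposal is correct and follows the paper's own route. The paper obtains Theorem~\ref{Hs convergence MG} by citing Proposition~\ref{assumption checked} for A1, A2, A3' and A4 and then applying the abstract Theorem~\ref{Hs convergence theorem}, whose proof is the three steps you list: the $\lambda$-independent $H^s$ bound, the $L^2$ bound $\|\varphi(t)\|_{L^2}^2\le\lambda C_1(t)e^{C_2(t)}$, and interpolation. Your symbol check ($D(k)\ge|k|^8$ with numerators $O(|k|^6)$, so $|\widehat{\mathcal{M}}(k)|\lesssim|k|^{-2}$ when $\nu=1$) is a valid sanity check of A3'.
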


\begin{theorem}[Existence of global attractors for MG equations]\label{existence of global attractor MG theorem}
Let $S\in L^\infty\cap H^1$. For $\kappa>0$, let $\pin(t)$ be solution operator for the initial value problem \eqref{MG active scalar} via \eqref{def of solution map nu>0 and kappa>0}. Then the solution map $\pin(t):H^1\to H^1$ associated to \eqref{abstract active scalar eqn} possesses a unique global attractor $\Gg$ for all $\lambda\ge0$. In particular, if $\lambda$ and $\gamma$ satisfy either \eqref{conditions on gamma} or \eqref{conditions on gamma lambda>0}, then the global attractor $\Gg$ of $\pin(t)$ enjoys the following properties:
\begin{itemize}
\item $\Gg$ is fully invariant, namely
\begin{align*}
\pin(t)\Gg=\Gg,\qquad \forall t\ge0.
\end{align*}
\item $\Gg$ is maximal in the class of $H^1$-bounded invariant sets.
\item $\Gg$ has finite fractal dimension.
\end{itemize}
\end{theorem}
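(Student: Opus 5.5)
The plan is to treat the final theorem as a direct specialisation of the abstract results. Everything rests on Proposition~\ref{assumption checked}, which says that the operators $T_{ij}$ defined by $\mathcal{M}_j=\partial_{x_i}T_{ij}$ satisfy A1, A2, A3' and A4. Once this is granted, system \eqref{MG active scalar} is exactly \eqref{abstract active scalar eqn} with $d=3$. I would first check that the MG functional setting matches the abstract one. The MG equation is posed on $\mathbb{T}^3$ with mean-zero $\theta$; the convention $\widehat{\mathcal{M}}_j(k)=0$ on $\{k_3=0\}$ forces $\widehat{T_{ij}}(0)=0$ (A4) and is consistent with mean-zero data. Hence the solution operator $\pin(t)$ in \eqref{def of solution map nu>0 and kappa>0} is well defined on $H^1$ by Theorem~\ref{global-in-time wellposedness in Sobolev}, taking $s=1$ and $S\in L^\infty\cap H^1$.

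Next I would invoke Theorem~\ref{Existence of H1 global attractor}, which needs only A1, A2, A3' and A4. It gives, for every $\lambda\ge0$, $\kappa>0$ and $\gamma\in(0,2]$, a unique global attractor $\Gg$ of $\pin(t)$ in $H^1$, which is the $\omega$-limit of the absorbing set $B_{1+\frac{\gamma}{2}}$ (Corollary~\ref{Unique attractor general corollary}). This is the first assertion of the theorem.

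For the three additional properties I would assume either \eqref{conditions on gamma} or \eqref{conditions on gamma lambda>0} and apply Theorem~\ref{Existence of H1 global attractor further properties thm}. Concretely, the steps are as follows:
\begin{enumerate}
\item Continuity of $\pin(t)$ on the absorbing set comes from Lemma~\ref{continuity of solution map lemma}.
\item Backward uniqueness comes from Proposition~\ref{Backwards uniqueness lemma}.
\item Together these give full invariance and maximality among $H^1$-bounded invariant sets.
\item Finite fractal dimension comes from Lemma~\ref{uniform differentiable lemma}, Lemma~\ref{Contractivity of large dimensional volume elements lemma}, Proposition~\ref{volume decay prop} and Corollary~\ref{Finite dimensionality of the attractor corollary}.
\end{enumerate}

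The only point I expect to need care is that these lemmas use the auxiliary bounds of Proposition~\ref{auxiliary estimates on u with gamma>1} and the smoothing bound \eqref{two order smoothing for u when nu>0}. Both are consequences of A3', since $|\widehat{\mathcal{M}}(k)|\lesssim|k|^{-2}$ for $k_3\neq0$, and they hold in $d=3$. The main obstacle is therefore Proposition~\ref{assumption checked} itself, specifically the symbol bound A3'. This is imported from \cite{FS18, FV11a}. If I had to verify it, I would bound $|k|^{-1}|\widehat{\mathcal{M}}_j(k)|$ by splitting the frequency space into the region where $k_3^2|k|^2$ dominates $D(k)$ and the region where $(k_2^2+|k|^4)^2$ does. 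In each region the numerator is controlled by the dominant part of $D(k)$, which should give the $|k|^{-3}$ decay of $\widehat{T_{ij}}$; the sharpness of this estimate is the delicate anisotropic step. With that in hand, the theorem follows with no further work.
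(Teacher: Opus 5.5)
Your proposal is correct and follows the paper's own route: the paper proves this theorem by invoking Proposition~\ref{assumption checked} (imported from \cite{FS18, FV11a, FS21}) and then applying the abstract Theorem~\ref{existence of global attractor theorem}, i.e.\ Theorem~\ref{Existence of H1 global attractor} together with Theorem~\ref{Existence of H1 global attractor further properties thm}, which is exactly your chain of steps. The step you flag as delicate is in fact easy here: with $\nu=1$ one has $D(k)\ge(k_2^2+|k|^4)^2\ge|k|^8$, and each numerator is bounded by $C|k|^6$ since $|k|\ge1$ when $k_3\neq0$, so $|\widehat{\mathcal{M}}_j(k)|\le C|k|^{-2}$ follows immediately without splitting frequency space (an anisotropic case analysis is only needed for the singular inviscid symbol $\nu=0$).
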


In the coming subsection, we will further address the limiting properties of $\Gg$ which are related to the MG equation when $\lambda=0$.

\subsection{Behaviour of global attractors for varying $\lambda\ge0$}\label{Behaviour of global attractors section}

In the work \cite{FS21}, when $\kappa>0$ and $\lambda=0$, the authors proved the existence of a compact global attractor $\A$ in $H^1(\mathbb{T}^3)$ for the MG equations \eqref{MG active scalar} with $S\in L^\infty\cap H^1$. More precisely, $\A$ is the global attractor generated by the solution map $\pio$ via
\begin{align}\label{def of solution map nu=0 and kappa>0}
\pio(t): H^1\to H^1,\qquad \pio(t)\theta_0=\theta(\cdot,t),\qquad t\ge0,
\end{align}
where $\theta$ is the solution to the MG equation with $\theta(\cdot,0)=\theta_0$. In this subsection, we obtain results when $\lambda$ is varying, which can be summarised in the following theorem:

\begin{theorem}[Upper semicontinuity of global attractors at $\lambda\ge0$]\label{varying lambda theorem}
Let $\kappa>0$ be fixed in \eqref{MG active scalar}. Let $\lambda_0\ge0$ be arbitrary, then the collection $\dis\{\Gg\}_{\lambda\ge0}$ is {\it upper semicontinuous} at $\lambda_0$ in the following sense: 
\begin{align}\label{upper semi-continuity at fixed lambda>0}
\mbox{$\dis\sup_{\phi\in\Gg}\inf_{\psi\in\Ggo}\|\phi-\psi\|_{H^1}\rightarrow0$ as $\lambda\rightarrow\lambda_0$.}
\end{align}
In particular, if $\Gg$ are the global attractors for the MG equations \eqref{MG active scalar} as obtained by Theorem~\ref{existence of global attractor MG theorem}, then $\Gg$ and $\A$ satisfy
\begin{align}\label{upper semi-continuity at lambda=0}
\mbox{$\dis\sup_{\phi\in\Gg}\inf_{\psi\in\A}\|\phi-\psi\|_{H^1}\rightarrow0$ as $\lambda\rightarrow0$.}
\end{align}
\end{theorem}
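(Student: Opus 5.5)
We plan to use the standard abstract criterion for upper semicontinuity of attractors (as in Hale--Raugel, or Robinson \cite{R13}): if (i) there is a compact set $K\subset H^1$, independent of $\lambda$ in a neighbourhood of $\lambda_0$, with $\Gg\subset K$ for all such $\lambda$, and (ii) for each $t>0$, $\pin(t)\theta_0\to\pi^{\lambda_0}(t)\theta_0$ in $H^1$ as $\lambda\to\lambda_0$, uniformly for $\theta_0$ in $K$ (or in a bounded subset of $H^{1+\frac{\gamma}{2}}$ containing all the attractors), then \eqref{upper semi-continuity at fixed lambda>0} holds. The argument is by contradiction. Suppose there are $\lambda_n\to\lambda_0$ and $\phi_n\in\mathcal{G}^{\lambda_n}$ with $\dist(\phi_n,\Ggo)\ge\delta$. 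By invariance of the attractors (Theorem~\ref{existence of global attractor MG theorem}; for $\lambda_0=0$ we use $\gamma\in[1,2]$ as in \cite{FS21}), we write $\phi_n=\pi^{\lambda_n}(t)\chi_n$ with $\chi_n\in\mathcal{G}^{\lambda_n}\subset K$ and $t$ large. Then $\pi^{\lambda_n}(t)\chi_n$ is close to $\pi^{\lambda_0}(t)\chi_n$ by (ii). Since $\{\chi_n\}$ lies in the bounded set $K$, and $\Ggo$ attracts bounded sets, $\pi^{\lambda_0}(t)\chi_n$ lies within $\delta/2$ of $\Ggo$ for $t$ large. This gives the contradiction. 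The case $\lambda_0=0$ gives \eqref{upper semi-continuity at lambda=0}, since $\mathcal{G}^0=\A$.

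For step (i), we take $K$ to be a closed ball of $H^{1+\frac{\gamma}{2}}$, which is compact in $H^1$ by Rellich. The attractor is contained in the absorbing set $B_{1+\frac{\gamma}{2}}$ of Lemma~\ref{Existence of an H 1+gamma/2 absorbing set lemma}, so we need the radius $R_{1+\frac{\gamma}{2}}$ to be uniform in $\lambda$ near $\lambda_0$. Tracing the constants, the $\kappa$-branch bounds are independent of $\lambda$. These include $B_\infty$ through \eqref{infty bound on theta with kappa>0 and t>1}, $C_\alpha$ in Lemma~\ref{Existence of an C alpha absorbing set lemma}, $R_1$ (where all $\lambda$-terms were dropped), and $R_{1+\frac{\gamma}{2}}$ and $M_{\Gg}$, which depend only on $\kappa,\gamma,\|S\|_{L^\infty\cap H^1}$. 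Hence every attractor $\Gg$, $\lambda\ge0$, lies in one fixed ball $\{\|\phi\|_{H^{1+\frac{\gamma}{2}}}\le 2R_{1+\frac{\gamma}{2}}\}$. The same holds for the trajectories issuing from it, by \eqref{bound on theta in H 1+gamma/2}.

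For step (ii), we take $\theta_0$ in this ball and set $\varphi=\pin(t)\theta_0-\pi^{\lambda_0}(t)\theta_0$. The difference $\varphi$ satisfies an equation like \eqref{equation for varphi}, with forcing $-(\lambda-\lambda_0)\mathcal{D}\theta^{(\lambda_0)}$ and $\varphi(0)=0$. We then do the $H^1$ energy estimate as in Lemma~\ref{continuity of solution map lemma}, testing with $-\Delta\varphi$ and using Proposition~\ref{auxiliary estimates on u with gamma>1}, the smoothing \eqref{two order smoothing for u when nu>0}, and the uniform $H^{1+\frac{\gamma}{2}}$ bounds. The extra term is $|\lambda-\lambda_0|\,|\langle\mathcal{D}\theta^{(\lambda_0)},-\Delta\varphi\rangle|$. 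We bound it by $|\lambda-\lambda_0|\,\|\theta^{(\lambda_0)}\|_{H^{2-\frac{\gamma}{2}}}\|\varphi\|_{H^{1+\frac{\gamma}{2}}}$. Since $2-\frac{\gamma}{2}\le 1+\frac{\gamma}{2}$ when $\gamma\ge1$, this is controlled by $C|\lambda-\lambda_0|R_{1+\frac{\gamma}{2}}\|\varphi\|_{H^{1+\frac{\gamma}{2}}}$, and Young absorbs the last factor into the $\kappa$-dissipation. The damping term $\lambda\mathcal{D}\varphi$ has a good sign when $\lambda\ge0$. Gronwall then gives $\|\varphi(t)\|_{H^1}^2\le C|\lambda-\lambda_0|^2e^{C(R)t}$, uniformly over the ball.

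The main obstacle is the uniformity in $\lambda$ in step (i). The statement of Lemma~\ref{Existence of an L infty absorbing set lemma} uses a $\min$ with $\lambda$-dependent quantities, and the barred constants $\bar R$ blow up as $\lambda\to0$. So we must consistently use the $\kappa$-dependent branch, which is available since $\kappa>0$. A second difficulty is the range $\gamma<1$ with $\lambda_0=0$, where invariance of $\A$ is unavailable. There we would instead use the $\omega$-limit characterization $\Ggo=\omega(B)$ together with the convergence in step (ii), with the extra term estimated in $H^{2}$ via the time-integrated bound \eqref{bound on time integral on theta in H 1+gamma}.
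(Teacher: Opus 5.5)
Your architecture matches the paper's. You build a $\lambda$-uniform compact set containing every $\Gg$ (L1, obtained as in the paper by following the $\kappa$-branch of the absorbing-set constants). You prove continuity of $\pin(t)$ in $\lambda$ uniformly on that set (L2). You then apply the abstract upper-semicontinuity criterion, which you prove by contradiction instead of citing \cite{HOR15}. For $\gamma\in[1,2]$ this is complete.

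\textbf{The gap: $\gamma\in(0,1)$ with $\lambda_0=0$.} This is exactly \eqref{upper semi-continuity at lambda=0} for the fractional MG equation.
\begin{itemize}
\item Testing the difference equation with $-\Delta\varphi$ requires control of $\theta^{(\lambda_0)}$ in $H^{2-\frac{\gamma}{2}}$. This is needed both in the extra term $(\lambda-\lambda_0)\langle\mathcal{D}\theta^{(\lambda_0)},-\Delta\varphi\rangle$ and in the cross term $\langle u[\varphi]\cdot\nabla\theta^{(\lambda_0)},\Delta\varphi\rangle$.
\item For $\gamma<1$ this regularity exceeds the uniform $H^{1+\frac{\gamma}{2}}$ bound. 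The same failure occurs for $\gamma<1$ with $\lambda_0>0$, which you did not flag.
\item Your remedy does not repair this. \eqref{bound on time integral on theta in H 1+gamma} only gives $L^2_tH^{1+\gamma}$, which is below $H^{2-\frac{\gamma}{2}}$ when $\gamma<\frac{2}{3}$ and never reaches $H^2$.
\item The $H^2$ bound \eqref{bound on time integral on theta in H 1+gamma gamma=1} carries $\bar R_2$, which blows up as $\lambda\to0$. ``Use $\Ggo=\omega(B)$'' is not an estimate.
\end{itemize}
The paper covers this range by invoking the $\lambda$-uniform bound \eqref{H1 estimate in compact set of H1 revised}. That bound follows from \eqref{H1 estimate in compact set of H1} only when $\gamma\ge1$.

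\textbf{The fix: prove L2 in $L^2$ and interpolate,} as the paper does for Theorem~\ref{Hs convergence theorem}.
\begin{itemize}
\item Set $\varphi=\pin(t)\chi-\pi^{\lambda_0}(t)\chi$ with $\chi\in\Gg$. Write its equation as $\partial_t\varphi+u[\pi^{\lambda_0}(t)\chi]\cdot\nabla\varphi+u[\varphi]\cdot\nabla\pin(t)\chi+\lambda_0\mathcal{D}\varphi+\kappa\Lambda^\gamma\varphi=-(\lambda-\lambda_0)\mathcal{D}\pin(t)\chi$.
\item Test with $\varphi$. The transport term drops.
\item The cross term is at most $\|u[\varphi]\|_{L^\infty}\|\pin(t)\chi\|_{H^1}\|\varphi\|_{L^2}\le C\|\pin(t)\chi\|_{H^1}\|\varphi\|^2_{L^2}$, by \eqref{two order smoothing for u when nu>0} and $H^2\subset L^\infty$.
\item The forcing term is at most $2|\lambda-\lambda_0|\,\|\pin(t)\chi\|_{H^1}\|\varphi\|_{L^2}$.
\item Since $\pin(s)\chi\in\Gg\subset K$ for all $s$, Gronwall gives $\|\varphi(t)\|_{L^2}\le C(t)|\lambda-\lambda_0|$, uniformly in $\chi$.
\item Interpolate: $\|\varphi\|_{H^1}\le\|\varphi\|_{L^2}^{\frac{\gamma}{2+\gamma}}\|\varphi\|_{H^{1+\frac{\gamma}{2}}}^{\frac{2}{2+\gamma}}$. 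Combined with your uniform $H^{1+\frac{\gamma}{2}}$ bounds on both trajectories, this yields L2 for every $\gamma\in(0,2]$ and every $\lambda_0\ge0$.
\end{itemize}

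\textbf{Invariance of $\A$ is not needed.} Your contradiction argument only uses $\mathcal{G}^{\lambda_n}=\pi^{\lambda_n}(t)\mathcal{G}^{\lambda_n}$. You may assume $\lambda_n\neq\lambda_0$, hence $\lambda_n>0$. There Theorem~\ref{existence of global attractor MG theorem} gives invariance for all $\gamma$ under \eqref{conditions on gamma lambda>0}. For $\Ggo$ you only need that it attracts the bounded set $K$. So the appeal to $\gamma\in[1,2]$ and to \cite{FS21} on that side is unnecessary.
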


\begin{remark}
Here are some relevant remarks regarding Theorem~\ref{varying lambda theorem}:
\begin{itemize}
\item By the maximality of global attractors, we readily have $\Gg|_{\lambda=0}=\A$. Hence it suffices to show \eqref{upper semi-continuity at fixed lambda>0} as \eqref{upper semi-continuity at lambda=0} follows immediately.
\item Although $\Gg$ has finite fractal dimension for all $\lambda>0$ and $\gamma\in(0,2]$, it is unknown whether $\A$ has finite fractal dimension for the case when $\gamma\in(0,1)$.
\item It is also worth pointing out that the $H^1$-convergence result \eqref{upper semi-continuity at fixed lambda>0} is stronger than the one obtained in \cite[Theorem~7.5]{FS21} which showed the $L^2$-convergence for varying $\nu$ in the system \eqref{MG active scalar}-\eqref{MG Fourier symbol_3 intro}.
\end{itemize}
\end{remark}
To obtain the convergence result claimed by Theorem~\ref{varying lambda theorem}, we need to prove that
\begin{itemize}
\item[L1.] there is a compact subset $\U$ of $H^1$ such that $\Gg\subset\U$ for every $\lambda\ge0$; and
\item[L2.] for $t > 0$, $\pin(t)\theta_0$ is continuous in $\lambda\in[0,\infty)$, uniformly for $\theta_0$ in compact subsets of $H^1$.
\end{itemize}
Once the conditions L1 and L2 are fufilled, we can apply the result from \cite{HOR15} to conclude that \eqref{upper semi-continuity at fixed lambda>0} holds as well. 

To show that condition L1 holds, as claimed by Lemma~\ref{Existence of an H 1+gamma/2 absorbing set lemma}, there exists a constant $R_{1+\frac{\gamma}{2}}\ge1$ which depends on $\|S\|_{L\infty\cap H^1}$, $\kappa$, $\gamma$ such that the set $\U:=B_{1+\frac{\gamma}{2}}$ where
\begin{align*}
B_{1+\frac{\gamma}{2}}=\left\{\phi\in H^{1+\frac{\gamma}{2}}:\|\phi\|_{H^{1+\frac{\gamma}{2}}}\le R_{1+\frac{\gamma}{2}}\right\}
\end{align*}
enjoys the following properties:
\begin{itemize}
\item $B_{1+\frac{\gamma}{2}}$ is a compact set in $H^1$ which depends only on $\|S\|_{L\infty\cap H^1}$, $\kappa$, $\gamma$;
\item $\Gg\subset B_{1+\frac{\gamma}{2}}$ for all $\lambda\ge0$.
\end{itemize}

On the other hand, in order to prove that condition L2 holds as well, we need some higher order bounds on $\theta$ that are uniform in $\lambda$. The following lemma gives the necessary $H^1$-estimates on $\theta$ which are uniform in $\lambda$.

\begin{lemma}[Uniform $H^1$-bound on $\theta$]
We fix $\kappa>0$ and define $\Uc=\{\phi\in H^1:\|\phi\|^2_{H^1}\le R_{\Uc}\}$ where $R_{\Uc}>0$. For any $\gamma\in(0,2]$, $\theta_0\in\Uc$ and $\lambda\ge0$, if $\thetalambda(t)=\pin(t)\theta_0$, then $\thetalambda(t)$ satisfies
\begin{align}\label{H1 estimate in compact set of H1}
\sup_{0\le \tau\le t}\|\thetalambda(\cdot,\tau)\|^2_{H^1}+\kappa\int_0^t\|\thetalambda(\cdot,\tau)\|_{H^{1+\frac{\gamma}{2}}}^2d\tau\le M_*(t),\qquad\forall t>0,
\end{align}
where $M_*(t)$ is a positive function in $t$ which depends only on $t$, $\kappa$, $\|S\|_{H^1}$ and $R_{\Uc}$.
\end{lemma}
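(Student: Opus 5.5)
Our plan is an $H^1$ energy estimate in which the $\lambda$-terms are simply discarded, because they have a good sign. We take the $L^2$-inner product of \eqref{MG active scalar}$_1$ (equivalently \eqref{abstract active scalar eqn}) with $-\Delta\thetalambda$. The damping produces $\lambda\|\thetalambda\|_{H^1}^2+\lambda\|\thetalambda\|_{H^{3/2}}^2\ge0$, so we drop it. This gives
\begin{align*}
\frac12\frac{d}{dt}\|\thetalambda\|_{H^1}^2+\kappa\|\thetalambda\|_{H^{1+\frac{\gamma}{2}}}^2\le\Big|\intox\partial_{x_k}u[\thetalambda]\cdot\nabla\thetalambda\,\partial_{x_k}\thetalambda\Big|+\|S\|_{H^1}\|\thetalambda\|_{H^1},
\end{align*}
where the term $u\cdot\nabla\partial_k\theta\,\partial_k\theta$ vanishes by A1 (divergence-free velocity). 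Every constant appearing from here on will be independent of $\lambda$, which is the whole point of the lemma.

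The key step is to control the nonlinear term by $C\|\nabla u\|_{L^\infty}\|\nabla\thetalambda\|_{L^2}^2$ with $\|\nabla u\|_{L^\infty}$ bounded uniformly in $\lambda$. We will use the two-order smoothing \eqref{two order smoothing for u when nu>0} available from A3' (Proposition~\ref{assumption checked}), together with Sobolev embedding on $\mathbb{T}^3$. This gives $\|\nabla u\|_{L^\infty}\le C\|u\|_{H^{5/2+\epsilon}}\le C\|\thetalambda\|_{H^{1/2+\epsilon}}\le C\|\thetalambda\|_{H^1}$, with no $L^\infty$ information on $\theta_0$ needed (recall that $\theta_0\in\Uc$ is only in $H^1$). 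Gr\"onwall's inequality then yields $\|\thetalambda(t)\|_{H^1}^2\lesssim(R_{\Uc}+t\|S\|_{H^1}^2)e^{C\int_0^t\|\thetalambda\|_{H^1}}$. This still contains the solution inside the exponent, so we need an a priori bound on $\int_0^t\|\thetalambda\|_{H^1}d\tau$ that does not depend on $\lambda$.

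That bound is the main obstacle. We plan to close it with two ingredients. The first is the basic energy inequality \eqref{energy inequality kappa>0}, which controls $\kappa\int_0^t\|\Lambda^{\gamma/2}\thetalambda\|_{L^2}^2$ uniformly in $\lambda$. The second is to measure the velocity gradient more economically: A3' allows $\|\nabla u\|_{L^\infty}\le C\|\thetalambda\|_{H^{\gamma/2}}$ whenever $\gamma/2+2>5/2$, i.e.\ for $\gamma>1$. In that case the exponent is integrable in time, and the bound is $\le C\sqrt{t}\,(R_{\Uc}+t\|S\|_{L^2}^2/\kappa)^{1/2}\kappa^{-1/2}$.

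For $\gamma\le1$ we will interpolate instead, which absorbs the superlinear power into the dissipation. We write $\|\thetalambda\|_{H^{1/2+\epsilon}}\le\|\thetalambda\|_{H^{\gamma/2}}^{a}\|\thetalambda\|_{H^{1+\gamma/2}}^{1-a}$ and apply Young's inequality to bound $\|\nabla u\|_{L^\infty}\|\nabla\theta\|^2_{L^2}$ by $\frac{\kappa}{2}\|\thetalambda\|_{H^{1+\gamma/2}}^2+C_\kappa\,\|\thetalambda\|_{H^{\gamma/2}}^{p}\|\thetalambda\|_{H^1}^{q}$. After that we run Gr\"onwall (or a comparison ODE on a bounded time interval), using that $\|\thetalambda\|_{H^{\gamma/2}}$ is time-integrable. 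If a superlinear power of $\|\thetalambda\|_{H^1}$ survives, we will instead fall back on the $\lambda$-independent $L^\infty$ bound of Lemma~\ref{De Giorgi iteration lemma} together with \eqref{infty bound on theta with kappa>0 and t>1}. These give $\|\nabla u\|_{L^\infty}\le C\|\thetalambda\|_{L^\infty}$ via \eqref{L infty bound on nabla u}, which is bounded for $t\ge\tau$; on $[0,\tau]$ we use local existence (Lemma~\ref{local-in-time existence lemma}) with a time $\tau$ depending only on $R_{\Uc}$. In either route the result is $\sup_{[0,t]}\|\thetalambda\|_{H^1}^2\le M_*(t)$. Integrating the differential inequality once more over $[0,t]$ then gives the dissipation integral $\kappa\int_0^t\|\thetalambda\|_{H^{1+\gamma/2}}^2$, which completes \eqref{H1 estimate in compact set of H1}.
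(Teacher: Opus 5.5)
Your argument is correct in substance, but it follows a different route from the paper; for $\gamma\le1$ it is your fallback that actually carries it.

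\textbf{The paper's route.} The paper reuses \eqref{ineq for nabla u with D[nabla theta] final}. The nonlinear lower bound \eqref{lower bound for D[nabla theta]} on $D_\gamma[\nabla\theta]$ in terms of $[\theta]_{C^\alpha}$, together with $\|\nabla u\|_{L^\infty}\le C\K$, lets Young's inequality remove $|\nabla\theta|^2$ from the right-hand side entirely. So $\frac{d}{dt}\|\theta\|_{H^1}^2+\frac{\kappa}{4}\|\theta\|^2_{H^{1+\frac{\gamma}{2}}}$ is bounded by a $\lambda$-independent constant, and Gr\"onwall is immediate. This gives a time-uniform (dissipative) bound, which is what the absorbing-set lemma needs. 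However, its constants $\K$ and $2C_\alpha$ presuppose $\theta_0\in L^\infty$ and the $C^\alpha$ bound of the absorbing set $B_\alpha$. A general $\theta_0\in\Uc\subset H^1(\T^3)$ has neither.

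\textbf{Your route.} You keep $\|\nabla u\|_{L^\infty}\|\nabla\theta\|_{L^2}^2$ on the right and run a linear Gr\"onwall. That is enough here, since $M_*(t)$ is allowed to grow with $t$. For $\gamma>1$ you use only \eqref{energy inequality kappa>0} and A3', and you obtain exactly the stated dependence on $t,\kappa,\|S\|_{H^1},R_{\Uc}$. For $\gamma\le1$, the De Giorgi bound of Lemma~\ref{De Giorgi iteration lemma} gives $L^\infty$ control from $L^2$ data. The price is a dependence on $\|S\|_{L^\infty}$, which is harmless under the standing hypothesis $S\in L^\infty\cap H^1$ and already present in the paper's constants.

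\textbf{Two points to tighten.}

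First, the interpolation route for $\gamma\le1$ does not close as written. With $a=\frac{1}{2}+\frac{\gamma}{2}-\epsilon<1$, Young's inequality leaves $\|\thetalambda\|_{H^1}^{4/(1+a)}$ with $4/(1+a)>2$ for every such $a$. That superlinear term only yields a data-dependent existence time, so the fallback is not optional. Interpolating all three factors between $L^\infty_tL^2$ and $H^{1+\frac{\gamma}{2}}$ would close for $\gamma>\frac{1}{2}$, but not below that.

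Second, Lemma~\ref{local-in-time existence lemma} gives a time $T_*(\theta_0,S)$ with no uniformity in $\lambda$ or in $\theta_0\in\Uc$. Derive the short time instead from your own $\lambda$-independent inequality $\frac{d}{dt}y\le Cy^{3/2}+y+\|S\|_{H^1}^2$, with $y=\|\thetalambda\|_{H^1}^2$, by ODE comparison. This gives $\tau=\tau(R_{\Uc},\|S\|_{H^1})$ and a bound on $[0,\tau]$. On $[\tau,t]$, Lemma~\ref{De Giorgi iteration lemma} and \eqref{infty bound on theta with kappa>0 and t>1} give $\|\nabla u\|_{L^\infty}\le C\|\thetalambda\|_{L^\infty}\le C_\tau$ uniformly in $\lambda$. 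Linear Gr\"onwall and one more time integration then finish the proof.
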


\begin{proof}
We take $\theta=\thetalambda$ and $u=u[\thetalambda]$ for simplicity. Recall \eqref{ineq for nabla u with D[nabla theta] final} from the proof of Lemma~\ref{Existence of an H1 absorbing set lemma} that
\begin{align}\label{ineq for nabla u with D[nabla theta] final uniform in lambda}
\frac{d}{dt}\|\theta\|^2_{H^1}+\frac{\kappa}{2}\|\theta\|^2_{H^{1+\frac{\gamma}{2}}}
\le\Big(\frac{4C}{\kappa}\Big)^\frac{2-2\alpha}{\gamma}(2C_\alpha)^2\K^{1+\frac{2(1-\alpha)}{\gamma}}+\frac{8}{c_{\gamma,3}\kappa}\|S\|^2_{H^1}+\frac{\kappa}{4}\|\theta\|^2_{H^{1+\frac{\gamma}{2}}},
\end{align}
where $C$, $K_\infty$, $C_\alpha$, $c_{\gamma,3}$ are all constant that are independent of $\lambda$. Upon applying Gr\"{o}nwall's inequality on \eqref{ineq for nabla u with D[nabla theta] final uniform in lambda}, the bound \eqref{H1 estimate in compact set of H1} holds for some $M_*(t)$ which depends on the terms appeared on the right side of \eqref{ineq for nabla u with D[nabla theta] final uniform in lambda} but is independent of $\lambda$. 
\end{proof}

\begin{remark}
Replacing $\gamma$ by $2-\gamma$, for any $\gamma\in(0,2]$, we also have the following uniform bound on $\thetalambda$:
\begin{align}\label{H1 estimate in compact set of H1 revised}
\sup_{0\le \tau\le t}\|\thetalambda(\cdot,\tau)\|^2_{H^1}+\kappa\int_0^t\|\thetalambda(\cdot,\tau)\|_{H^{2-\frac{\gamma}{2}}}^2d\tau\le M_*(t),\qquad\forall t>0.
\end{align}
The uniform-in-$\lambda$ bounds  \eqref{H1 estimate in compact set of H1} and \eqref{H1 estimate in compact set of H1 revised} are crucial for controlling the $H^1$-norm of $\pin(t)\theta_0$; see the proof of Lemma~\ref{continuity in nu lemma} for details.
\end{remark}

We are now ready to state and prove the following lemma which gives the continuity of $\pin$ in $\lambda\in[0,\infty)$.

\begin{lemma}\label{continuity in nu lemma}
For each $t > 0$, $\pin(t)\theta_0$ is continuous in $\lambda\in[0,\infty)$, uniformly for $\theta_0$ in compact subsets of $H^1$.
\end{lemma}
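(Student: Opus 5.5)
The plan is a stability estimate in $H^1$ for the difference of two solutions with different damping parameters. I will then use compactness to pass from continuity at each fixed initial datum to uniform continuity on compact sets.

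Fix $t>0$, a compact set $\mathcal{K}\subset H^1$ (contained in some ball $\Uc$ of radius $R_{\Uc}$), parameters $\lambda,\lambda'\ge0$ with $\lambda'\to\lambda$, and data $\theta_0,\theta_0'\in\mathcal{K}$. Write $\theta=\pin(t)\theta_0$ and $\theta'=\pi^{\lambda'}(t)\theta_0'$, and set $\bar\theta=\theta'-\theta$. The difference satisfies
\begin{align*}
\dt\bar\theta+\lambda'\mathcal{D}\bar\theta+\kappa\Lambda^\gamma\bar\theta+u[\theta']\cdot\nabla\bar\theta+u[\bar\theta]\cdot\nabla\theta=-(\lambda'-\lambda)\mathcal{D}\theta .
\end{align*}

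I will test this equation against $-\Delta\bar\theta$, exactly as in the proof of Lemma~\ref{continuity of solution map lemma}. The transport term contributes $\intox\partial_k u[\theta']\cdot\nabla\bar\theta\,\partial_k\bar\theta$. Since A3' makes $u$ two orders smoother than $\theta$, this is bounded by $C\|\nabla u[\theta']\|_{L^\infty}\|\bar\theta\|_{H^1}^2$, and $\|\nabla u[\theta']\|_{L^\infty}\le C\|\theta'\|_{H^1}$ holds for every $\gamma$ via \eqref{two order smoothing for u when nu>0} and Sobolev embedding in $d=3$. The cross term $\intox(u[\bar\theta]\cdot\nabla\theta)\Delta\bar\theta$ is handled by the product estimate \eqref{product estimate} with a split of derivatives. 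I want to place at most $1+\frac{\gamma}{2}$ derivatives on $\bar\theta$ (absorbed by $\kappa\|\bar\theta\|^2_{H^{1+\gamma/2}}$) and put the remaining $2-\frac{\gamma}{2}$ derivatives on $\theta$. This is why the uniform bound \eqref{H1 estimate in compact set of H1 revised}, which controls $\int_0^t\|\theta\|^2_{H^{2-\gamma/2}}$ uniformly in $\lambda$, together with \eqref{H1 estimate in compact set of H1}, is the key input. The new forcing term I bound by
\begin{align*}
|\lambda'-\lambda|\,\|\theta\|_{H^{1+\frac{\gamma}{2}}}\,\|\bar\theta\|_{H^{2-\frac{\gamma}{2}}}\quad\text{or}\quad |\lambda'-\lambda|\,\|\theta\|_{H^{3/2}}\|\bar\theta\|_{H^{3/2}}.
\end{align*}
This is split with Young's inequality, using either the $\lambda'\|\bar\theta\|^2_{H^{3/2}}$ dissipation or an interpolation of $\bar\theta$ between norms already controlled by \eqref{H1 estimate in compact set of H1}–\eqref{H1 estimate in compact set of H1 revised}.

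Collecting these bounds gives
\begin{align*}
\frac{d}{dt}\|\bar\theta\|_{H^1}^2\le g(s)\|\bar\theta\|_{H^1}^2+|\lambda'-\lambda|\,h(s),
\end{align*}
where $\int_0^t(g+h)\,ds\le C(M_*(t))$ uniformly in $\lambda,\lambda'$ in a bounded range and in $\theta_0,\theta_0'\in\Uc$. Grönwall's inequality then yields
\begin{align*}
\|\pi^{\lambda'}(t)\theta_0'-\pin(t)\theta_0\|_{H^1}^2\le e^{C(t)}\big(\|\theta_0'-\theta_0\|_{H^1}^2+|\lambda'-\lambda|\big).
\end{align*}
This already gives continuity in $\lambda$ uniformly over the whole ball $\Uc$, hence over any compact subset. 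If the crude bound on the forcing term fails, I would fall back on compactness. For each fixed datum, approximate it by smooth data, for which the difference estimate closes easily. Then use equicontinuity in $\theta_0$ (the same estimate with $\lambda'=\lambda$) and a finite $\varepsilon$-net of $\mathcal{K}$.

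The main obstacle is the cross term when $\gamma<1$. There the $\kappa$-dissipation only gives $H^{1+\gamma/2}$ control, while $\nabla\theta$ needs roughly $2-\frac{\gamma}{2}$ derivatives. I will check carefully that \eqref{H1 estimate in compact set of H1 revised} supplies exactly the needed time integrability, and that the $H^{2-\gamma/2}$ norm of $\bar\theta$ generated by the forcing term can still be absorbed.
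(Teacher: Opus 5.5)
Your plan is essentially the paper's own argument: test the difference equation with $-\Delta\bar\theta$, take $L^2_tH^{2-\frac{\gamma}{2}}$ control of $\theta$ from \eqref{H1 estimate in compact set of H1 revised}, and close by Gr\"onwall. It inherits the paper's weak point, exactly where you flagged the obstacle.

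The bound \eqref{H1 estimate in compact set of H1 revised} is only justified for $\gamma\ge1$. There $2-\frac{\gamma}{2}\le 1+\frac{\gamma}{2}$, so it follows from \eqref{H1 estimate in compact set of H1}. ``Replacing $\gamma$ by $2-\gamma$'' would need a dissipation $\kappa\Lambda^{2-\gamma}$ that the equation does not have. For $\gamma<1$ the bound is false for general $H^1$ data. Already the linear part gives $\int_0^t\|e^{-\kappa s\Lambda^\gamma}\theta_0\|^2_{H^{2-\frac{\gamma}{2}}}ds\approx(2\kappa)^{-1}\|\theta_0\|^2_{H^{2-\gamma}}$ at high frequencies, which is infinite for $\theta_0\in H^1\setminus H^{2-\gamma}$. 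A single such datum suffices, so compactness of the data set does not help.

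As your own derivative count shows, the cross term $\intox(u[\bar\theta]\cdot\nabla\theta)\Delta\bar\theta$ really needs this bound. A low-frequency $u[\bar\theta]$ against high-frequency $\nabla\theta$ and $\Delta\bar\theta$ leaves $2-\frac{\gamma}{2}$ derivatives on $\theta$ once $\kappa\|\bar\theta\|^2_{H^{1+\gamma/2}}$ is spent.

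The forcing term also fails at $\lambda=0$. The $\lambda'\|\bar\theta\|^2_{H^{3/2}}$ route leaves $\lambda'\int_0^t\|\pi^{0}(s)\theta_0\|^2_{H^{3/2}}ds$, which the same computation shows is infinite in general for $\gamma<1$. Swapping the roles of $\theta,\theta'$ only moves the vanishing coefficient onto the dissipation.

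So for $\gamma\in(0,1)$ and $\lambda_0=0$ the $H^1$ estimate does not close, and this is precisely the case behind \eqref{upper semi-continuity at lambda=0}. For $\gamma\ge1$ your argument is fine. It is also fine for $\lambda_0>0$, where $\lambda'\|\bar\theta\|^2_{H^{3/2}}$ and $\lambda\int_0^t\|\theta\|^2_{H^{3/2}}\le C$ absorb everything. Your bound on the transport term is also more careful than the paper's claim that it vanishes.

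Your fallback does not repair the gap. Equicontinuity in $\theta_0$ from ``the same estimate with $\lambda'=\lambda$'' contains the identical cross term. Linearising around a smooth approximant instead makes the Gr\"onwall factor depend on its higher norms, which blow up along the approximation.

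The missing idea is to avoid estimating the difference in $H^1$ directly. In $L^2$ everything closes with $H^1$ information only. Testing with $\bar\theta$:
the transport term vanishes;
by \eqref{two order smoothing for u when nu>0} and $H^2\subset L^\infty$, $|\langle u[\bar\theta]\cdot\nabla\theta,\bar\theta\rangle|\le\|u[\bar\theta]\|_{L^\infty}\|\nabla\theta\|_{L^2}\|\bar\theta\|_{L^2}\le C\|\theta\|_{H^1}\|\bar\theta\|^2_{L^2}$;
and $|\lambda'-\lambda|\,|\langle\mathcal{D}\theta,\bar\theta\rangle|\le C|\lambda'-\lambda|\,\|\theta\|_{H^1}\|\bar\theta\|_{L^2}$.
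With \eqref{H1 estimate in compact set of H1} this yields $\|\bar\theta(t)\|_{L^2}\le C(t)\big(\|\theta_0'-\theta_0\|_{L^2}+|\lambda'-\lambda|\big)$.

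Separately, parabolic smoothing gives $\sup_{\lambda\ge0}\sup_{\theta_0\in\Uc}\|\pin(t)\theta_0\|_{H^{1+\frac{\gamma}{2}}}\le C(t)$ for $t>0$. To see this, apply the uniform Gr\"onwall lemma on $[t/2,t]$ to the $H^{1+\frac{\gamma}{2}}$ inequality \eqref{differential inequality on theta 1+gamma}. Use the $\lambda$-independent bound \eqref{L infty bound on theta for t>tau} in place of $K_\infty$, bound the forcing by $\|S\|_{H^1}\|\theta\|_{H^{1+\gamma}}$, and use $\int_{t/2}^t\|\theta\|^2_{H^{1+\frac{\gamma}{2}}}\le M_*(t)/\kappa$.

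Interpolating,
$\|\bar\theta(t)\|_{H^1}\le\|\bar\theta(t)\|_{L^2}^{\frac{\gamma}{2+\gamma}}\|\bar\theta(t)\|_{H^{1+\frac{\gamma}{2}}}^{\frac{2}{2+\gamma}}\le C(t)\big(\|\theta_0'-\theta_0\|_{L^2}+|\lambda'-\lambda|\big)^{\frac{\gamma}{2+\gamma}}$.
This gives the lemma for every $\gamma\in(0,2]$ and $\lambda_0\ge0$, even uniformly on bounded subsets of $H^1$. It is the same $L^2$-plus-interpolation mechanism the paper itself uses in the proof of Theorem~\ref{Hs convergence theorem}.
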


\begin{proof}
Given a compact set $\Uc$ in $H^1$, we choose $R_\Uc>0$ such that $\Uc\subset\{\phi\in H^1:\|\phi\|^2_{H^1}\le R_{\Uc}\}$. For each $\theta_0\in\Uc$ and $\lambda_1$, $\lambda_2\ge0$, we define 
$$\theta^{(\lambda_i)}(t)=\pi^{\lambda_i}(t)\theta_0,\qquad i\in\{1,2\}.$$
We write $\phi=\theta^{(\lambda_1)}-\theta^{(\lambda_2)}$, then $\phi$ satisfies $\phi(\cdot,0)=0$ and 
\begin{align}\label{differential eqn for the difference in nu}
\dt \phi+(u^{(\lambda_1)}-u^{(\lambda_2)})\cdot\nabla\theta^{\lambda_2}+u^{(\lambda_1)}\cdot\nabla \phi+\lambda_2\mathcal{D}\phi+(\lambda_1-\lambda_2)\mathcal{D}\theta^{\lambda_1}+\kappa\lambda^\gamma \phi=0,
\end{align}
where $u^{(\lambda_i)}=u[\theta^{(\lambda_i)}]$ for $i=1,2$. Multiply \eqref{differential eqn for the difference in nu} by $-\Delta\phi$ and integrate,
\begin{align}\label{integral eqn for the difference in lambda}
&\frac{1}{2}\frac{d}{dt}\|\phi\|^2_{H^1}+\frac{\lambda_2}{2}\|\phi\|^2_{H^\frac{3}{2}}+\frac{\kappa}{2}\|\phi\|^2_{H^{1+\frac{\gamma}{2}}}\notag\\
&\le\Big|\intox (u^{(\lambda_1)}-u^{(\lambda_2)})\cdot\nabla\theta^{\lambda_2}\Delta \phi\Big|+\Big|(\lambda_1-\lambda_2)\intox\mathcal{D}\theta^{\lambda_1}\cdot\Lambda^{1+\frac{\gamma}{2}}\phi\Big|+\Big|\intox u^{(\lambda_1)}\cdot\nabla \phi\Delta \phi\Big|.
\end{align}

Upon integrating by part and exploiting the fact that $\nabla\cdot u^{(\lambda_1)}=0$, the last integral of \eqref{integral eqn for the difference in lambda} vanishes. For the first term on the right side of \eqref{integral eqn for the difference in lambda}, by applying Fourier transform and Parseval's theorem, we readily obtain
\begin{align*}
\Big|\intox (u^{(\lambda_1)}-u^{(\lambda_2)})\cdot\nabla\theta^{\lambda_2}\Delta \phi\Big|=\Big|\intox \Lambda^{1-\frac{\gamma}{2}}[(u^{(\lambda_1)}-u^{(\lambda_2)})\cdot\nabla\theta^{\lambda_2}]\Lambda^{1+\frac{\gamma}{2}} \phi\Big|.
\end{align*}
Hence by H\"{o}lder's inequality, we have
\begin{align*}
\Big|\intox (u^{(\lambda_1)}-u^{(\lambda_2)})\cdot\nabla\theta^{\lambda_2}\Delta \phi\Big|\le\|\Lambda^{1-\frac{\gamma}{2}}[(u^{(\lambda_1)}-u^{(\lambda_2)})\cdot\nabla\theta^{\lambda_2}]\|_{L^2}\|\Lambda^{1+\frac{\gamma}{2}} \phi\|_{L^2}
\end{align*}
Using \eqref{commutator estimate}, we further obtain
\begin{align*}
&\|\Lambda^{1-\frac{\gamma}{2}}[(u^{(\lambda_1)}-u^{(\lambda_2)})\cdot\nabla\theta^{\lambda_2}]\|_{L^2}\\
&\le\|\Lambda^{1-\frac{\gamma}{2}}[(u^{(\lambda_1)}-u^{(\lambda_2)})\cdot\nabla\theta^{\lambda_2}]-(u^{(\lambda_1)}-u^{(\lambda_2)})\cdot\Lambda^{1-\frac{\gamma}{2}}\nabla\theta^{\lambda_2}\|_{L^2}\\
&\qquad\qquad\qquad+\|(u^{(\lambda_1)}-u^{(\lambda_2)})\cdot\Lambda^{1-\frac{\gamma}{2}}\nabla\theta^{\lambda_2}\|_{L^2}\\
&\le C\Big(\|u^{(\lambda_1)}-u^{(\lambda_2)}\|_{L^\infty}\|\Lambda^{2-\frac{\gamma}{2}}\theta^{\lambda_2}\|_{L^2}+\|\Lambda^{1-\frac{\gamma}{2}}(u^{(\lambda_1)}-u^{(\lambda_2)})\|_{L^\infty}\|\nabla\theta^{\lambda_2}\|_{L^2}\Big).
\end{align*}
To bound the term $\|\Lambda^{1-\frac{\gamma}{2}}(u^{(\lambda_1)}-u^{(\lambda_2)})\|_{L^\infty}$, using \eqref{two order smoothing for u when nu>0} and \eqref{L infty bound 2}-\eqref{Kondrachov embedding theorem}, for $q\in(3,6)$, we have
\begin{align*}
\|\Lambda^{1-\frac{\gamma}{2}}(u^{(\lambda_1)}-u^{(\lambda_2)})\|_{L^\infty}&\le C\|\Lambda^{1-\frac{\gamma}{2}}(u^{(\lambda_1)}-u^{(\lambda_2)})\|_{W^{1,q}}\\
&\le C\|\Lambda^{1-\frac{\gamma}{2}}(u^{(\lambda_1)}-u^{(\lambda_2)})\|_{W^{2,2}}\le C\|\phi\|_{H^1}.
\end{align*}
Similarly, we also have $\|u^{(\lambda_1)}-u^{(\lambda_2)}\|_{L^\infty}\le C\|\phi\|_{H^1}$ and hence we obtain
\begin{align*}
\|\Lambda^{1-\frac{\gamma}{2}}[(u^{(\lambda_1)}-u^{(\lambda_2)})\cdot\nabla\theta^{\lambda_2}]\|_{L^2}\le C\|\phi\|_{H^1}\|\theta^{\lambda_2}\|_{H^{2-\frac{\gamma}{2}}}.
\end{align*}
We deduce from \eqref{integral eqn for the difference in lambda} that
\begin{align}\label{integral eqn for the difference in lambda step 2}
\frac{d}{dt}\|\phi\|^2_{H^1}+\frac{\kappa}{2}\|\phi\|^2_{H^{1+\frac{\gamma}{2}}}\le C\|\phi\|_{H^1}^2\|\theta^{\lambda_2}\|_{H^{2-\frac{\gamma}{2}}}^2+C|\lambda_1-\lambda_2|^2\|\theta^{\lambda_1}\|^2_{H^\frac{3}{2}}.
\end{align}
By the bounds \eqref{H1 estimate in compact set of H1} and \eqref{H1 estimate in compact set of H1 revised}, the integrals $\dis\int_0^t \|\theta^{\lambda_2}(\cdot,\tau)\|_{H^{2-\frac{\gamma}{2}}}^2d\tau$ and \newline $\dis\int_0^t \|\theta^{\lambda_1}(\cdot,\tau)\|_{H^{\frac{3}{2}}}^2d\tau$ are bounded independent of $\lambda_1$ and $\lambda_2$. By applying Gr\"{o}nwall's inequality on \eqref{integral eqn for the difference in lambda step 2}, we conclude that there exists a positive function $C_*(t)$ which depends only on $t$, $\kappa$, $\|S\|_{H^1}$ and $R_{\Uc}$ such that
\begin{align}\label{continuity of pin in nu}
\|(\theta^{(\lambda_1)}-\theta^{(\lambda_2)})(\cdot,t)\|^2_{H^1}=\|\phi(t)\|^2_{H^1}\le C_*(t)|\lambda_1-\lambda_2|^2,\qquad\forall t>0,
\end{align}
and \eqref{continuity of pin in nu} implies $\pin(t)\theta_0$ is continuous in $\lambda\in[0,\infty)$ uniformly for $\theta_0$ in $\Uc$.
\end{proof}

\begin{proof}[Proof of Theorem~\ref{varying lambda theorem}]
By choosing $\U=B_{1+\frac{\gamma}{2}}$ and applying Lemma~\ref{continuity in nu lemma}, the conditions L1 and L2 as stated at the beginning of this subsection follow immediately and we conclude that \eqref{upper semi-continuity at fixed lambda>0} holds. We finish the proof of Theorem~\ref{varying lambda theorem}.
\end{proof}


\subsection*{Acknowledgment} 
A. Suen is supported by Hong Kong General Research Fund (GRF) grant project number 18300821.

\bibliographystyle{amsalpha}

\bibliography{References_for_MG_damping}

\end{document}